\numberwithin{equation}{section}
\numberwithin{figure}{section}
\theoremstyle{theorem}
  \newtheorem*{cor*}{Corollary}
  \newtheorem*{thm*}{Theorem}
  \newtheorem*{lem*}{Lemma}
  \newtheorem*{claim*}{Claim}
  \newtheorem*{mclaim*}{Main Claim}
  \newtheorem{thm}{Theorem}[section]
  \newaliascnt{lem}{thm}
  \newtheorem{lem}[lem]{Lemma}
  \newaliascnt{klem}{thm}
  \newtheorem{klem}[klem]{Key Lemma}
  \newaliascnt{cor}{thm}
  \newtheorem{cor}[cor]{Corollary}
  \newaliascnt{prop}{thm}  
  \newtheorem{prop}[prop]{Proposition}
\theoremstyle{definition}
  \newaliascnt{defn}{thm}
  \newtheorem{defn}[defn]{Definition}
  \newaliascnt{exmpl}{thm}
  \newtheorem{exmpl}[exmpl]{Example}
\theoremstyle{remark}
  \newaliascnt{rem}{thm}
  \newtheorem{rem}[rem]{Remark}
\newcommand{\bbC}{\mathbb{C}}
\newcommand{\bbE}{\mathbb{E}}
\newcommand{\bbH}{\mathbb{H}}
\newcommand{\bbN}{\mathbb{N}}
\newcommand{\bbP}{\mathbb{P}}
\newcommand{\bbQ}{\mathbb{Q}}
\newcommand{\bbR}{\mathbb{R}}
\newcommand{\bbZ}{\mathbb{Z}}       
\newcommand{\clC}{\mathcal{C}}
\newcommand{\clG}{\mathcal{G}}
\newcommand{\clH}{\mathcal{H}}
\newcommand{\clL}{\mathcal{L}}
\newcommand{\clP}{\mathcal{P}}
\newcommand{\frC}{\mathfrak{C}}
\newcommand{\frS}{\mathfrak{S}}
\newcommand{\scL}{\mathscr{L}}
\newcommand{\kay}{\mathscr{k}}
\DeclareMathOperator{\dd}{{\rm d}}
\DeclareMathOperator{\Gr}{Gr}    
\DeclareMathOperator{\id}{id}          
\DeclareMathOperator{\HH}{H}
\DeclareMathOperator{\LL}{{\rm L}}
\DeclareMathOperator{\Prob}{Prob}
\DeclareMathOperator{\Span}{span}
\newcommand{\Hb}{{\rm H}_{\rm b}}
\newcommand{\Hc}{{\rm H}_{\rm c}}
\newcommand{\Hcb}{{\rm H}_{\rm cb}}
\newcommand{\Linfty}{L^\infty}
\newcommand{\Binfty}{\scL^\infty}
\newcommand{\bcdot}{{\scriptscriptstyle \bullet}}
\newcommand{\IE}{{}^{\rm I}{\rm E}}
\newcommand{\Id}{{}^{\rm I}{\rm d}}
\newcommand{\IIE}{{}^{\rm II}{\rm E}}
\newcommand{\IId}{{}^{\rm II}{\rm d}}
\newcommand{\BX}{\bar{X}}
\DeclareMathOperator{\GL}{GL}         % general linear group
\DeclareMathOperator{\SL}{SL}         % special linear group
\DeclareMathOperator{\SO}{SO}             % special orthogonal group
\DeclareMathOperator{\SU}{SU}             % special unitary group
\DeclareMathOperator{\Sp}{Sp}              % symplectic group
\newcommand{\qand}{\quad \mathrm{and} \quad}
\begin{document}

\title[Stability in bounded cohomology]{Stability in bounded cohomology for classical groups, I:\\ The symplectic case}
%  Author I information
\author{Carlos De la Cruz Mengual}
\address{Departement Mathematik, ETH Z\"urich, R\"amistrasse 101, 8092 Z\"urich, Switzerland}
\email{carlos.delacruz@math.ethz.ch}
\thanks{}

%    Author II information
\author{Tobias Hartnick}
\address{Mathematisches Institut, Arndtstra\ss e 2, 35392 Gie\ss en, Germany}
\email{tobias.hartnick@math.uni-giessen.de}
\thanks{}

\begin{abstract}
We show that continuous bounded group cohomology stabilizes along the sequences of real or complex symplectic Lie groups, and deduce that bounded group cohomology stabilizes along sequences of lattices in them, such as $(\Sp_{2r}(\bbZ))_{r \geq 1}$ or $(\Sp_{2r}(\bbZ[i]))_{r \geq 1}$. Our method is based on a general stability criterion which extends Quillen's method to the functional analytic setting of bounded cohomology. 
This criterion is then applied to a new family of complexes associated to symplectic polar spaces, which we call symplectic Stiefel complexes; similar complexes can also be defined for other families of classical groups acting on polar spaces.
\end{abstract}

\maketitle
\tableofcontents

%%%%%%%%%%%%%%%%%%%%%%%%%%%%%%%%%%%%%%%%%%%%%%%%%%%%%%%%%%%%%%%%%%%%%%
%%%%%%%%%%%%%%%%%%%%%%%%%%%%%%%%%%%%%%%%%%%%%%%%%%%%%%%%%%%%%%%%%%%%%%
\section{Introduction}
\subsection{Statement of results}
Given a sequence $\HH^\bcdot = (\HH^q: \clC \to \clC')_{q \geq 0}$ of functors, one says that $\HH^\bcdot$ \emph{stabilizes} along a sequence $(G_r)_{r \geq 1}$ of objects in $\mathcal C$ provided 
\[
\forall\; q \geq 0\quad\exists\; r_0 = r_0(q)\colon \quad \HH^q(G_{r_0}) \cong  \HH^q(G_{r_0+1}) \cong \HH^q(G_{r_0+2}) \cong \cdots 
\]
Any function $q \mapsto r_0(q)$ satisfying this property will be called a \emph{stability range} for $\HH^\bcdot$ along $(G_r)_{r \geq 0}$. The present article is concerned with stability results of bounded group cohomology along sequences of lattices in simple Lie groups. Here bounded group cohomology $\Hb^\bcdot$ is understood in the sense of Gromov \cite{Gromov}; we refer the reader to\ \cite{Frigerio} for background concerning bounded cohomology. Our main result is as follows.

\begin{thm}\label{ThmLatticeIntro} Let $\kay \in \{\bbR, \bbC\}$ and for every $r \in \mathbb N$, let $\Gamma_r$ be a lattice in $\Sp_{2r}(\kay)$. Then $\Hb^\bcdot$ is stable along $(\Gamma_r)_{r \geq 1}$. In particular, $\Hb^\bcdot$ is stable along $(\Sp_{2r}(\bbZ))_{r \geq 1}$ and  $(\Sp_{2r}(\bbZ[i]))_{r \geq 1}$.
\end{thm}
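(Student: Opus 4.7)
The plan is to deduce \autoref{ThmLatticeIntro} from a corresponding stability statement for the continuous bounded cohomology $\Hcb^\bcdot$ of the ambient Lie groups $\Sp_{2r}(\kay)$ themselves, and to prove the latter by applying a Quillen-type stability machinery, adapted to the bounded setting, to the symplectic Stiefel complexes mentioned in the abstract.

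First I would reduce the lattice statement to the continuous statement. For every lattice $\Gamma$ in a connected semisimple Lie group $G$ with finite center, the restriction map induces an isometric isomorphism $\Hcb^\bcdot(G) \to \Hb^\bcdot(\Gamma)$; this is a classical theorem of Burger--Monod, and applies in particular to any lattice in $\Sp_{2r}(\kay)$. Hence if one establishes that the family of functors $\Hcb^\bcdot$ is stable along the sequence $(\Sp_{2r}(\kay))_{r \geq 1}$, i.e.\ that there exists a function $r_0\colon \mathbb N \to \mathbb N$ such that the inclusions $\Sp_{2r}(\kay) \hookrightarrow \Sp_{2(r+1)}(\kay)$ induce isomorphisms $\Hcb^q(\Sp_{2(r+1)}(\kay)) \to \Hcb^q(\Sp_{2r}(\kay))$ whenever $r \geq r_0(q)$, then the same stability range is automatically inherited by any sequence $(\Gamma_r)$ of lattices $\Gamma_r \leq \Sp_{2r}(\kay)$, yielding the stated result for $\Sp_{2r}(\bbZ)$ and $\Sp_{2r}(\bbZ[i])$ as special cases.

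To prove stability for the continuous bounded cohomology, I would implement the Quillen-type strategy in the functional analytic framework promised in the abstract. Concretely: for each $r$ one constructs a symplectic Stiefel complex $X_r$, a simplicial-type object built from tuples of symplectically independent vectors (or hyperbolic pairs) in a symplectic space of rank $r$, on which $\Sp_{2r}(\kay)$ acts continuously, with the stabilizer of a $k$-simplex being (up to a compact/parabolic factor that is invisible to bounded cohomology) a copy of $\Sp_{2(r-k-1)}(\kay)$. One then views the augmented chain complex of $X_r$ as a resolution and runs the associated equivariant spectral sequence in continuous bounded cohomology. Stability then follows from two ingredients: (i) a sufficiently high degree of ``bounded acyclicity'' of $X_r$ (growing linearly in $r$), so that the $E_2$-page agrees with that of the constant coefficient complex in a range of bidegrees; and (ii) the identification of the stabilizer cohomology with the target $\Hcb^\bcdot(\Sp_{2(r-k-1)}(\kay))$, which allows an induction on $r$ in the Quillen manner to convert the acyclicity range into a stability range.

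The main obstacle—and the reason an entire paper is required—will be step (i): establishing the appropriate \emph{bounded} acyclicity of the symplectic Stiefel complexes. In the classical Quillen setting one only needs high connectivity of a combinatorial complex, but in bounded cohomology one must work with Banach modules of (essentially bounded measurable) cochains, so the acyclicity must come with uniform norm control; the contracting homotopies produced by naive combinatorial arguments will in general not be bounded in the required Banach sense. I expect this to be handled by the general stability criterion alluded to in the abstract, which should axiomatize exactly those features of a $G$-complex that permit the passage from a Quillen-style vanishing statement to a statement about the Hochschild--Serre-type spectral sequence in $\Hcb^\bcdot$; verifying these axioms for the symplectic Stiefel complexes, using the geometry of the symplectic polar space (isotropic flags, Witt's theorem, transitivity of $\Sp_{2r}(\kay)$ on configurations of hyperbolic pairs), will be the heart of the argument.
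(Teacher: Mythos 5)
Your reduction from lattices to Lie groups has a genuine gap. You assert that for any lattice $\Gamma$ in a connected semisimple Lie group $G$ with finite center, the restriction map induces an isometric isomorphism $\Hcb^\bcdot(G) \to \Hb^\bcdot(\Gamma)$, and you call this a classical theorem of Burger--Monod. This is not a theorem. What Burger--Monod prove is that the restriction map is an \emph{isometric injection} $\Hcb^\bcdot(G) \hookrightarrow \Hb^\bcdot(\Gamma)$; surjectivity in all degrees for a fixed higher-rank group is an open problem, closely tied to the Dupont--Monod conjectures mentioned in the introduction of this paper. The statement you want must instead be extracted from Monod's ``Vanishing up to the rank'' circle of ideas: by \cite[Cor.~1.4]{Monod-sot}, if $(G_r)$ is a sequence of simple real Lie groups with $\mathrm{rk}_\bbR G_r \to \infty$ and $\Gamma_r < G_r$ is a lattice, then stability of $\Hcb^\bcdot$ along $(G_r)$ implies stability of $\Hb^\bcdot$ along $(\Gamma_r)$. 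The point is that the restriction map is known to be an isomorphism only in a range of degrees that grows with the rank; since $\mathrm{rk}_\bbR \Sp_{2r}(\kay) = r \to \infty$, this range eventually overtakes any fixed $q$, and combined with stability of $\Hcb^\bcdot$ one gets stability of $\Hb^\bcdot$. This is exactly the paper's reduction. Consequently, your remark that the lattice statement inherits ``the same stability range'' is also not quite right: the lattice stability range compounds the $\Hcb^\bcdot$-stability range with the rank-dependent comparison range from Monod's theorem.

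Your outline of the continuous part---building symplectic Stiefel complexes with stabilizers projecting onto $\Sp_{2(r-k-1)}(\kay)$ up to amenable kernel, running a spectral sequence, and identifying the bottleneck as establishing a bounded (in the paper's terminology, ``measurably connected'') acyclicity range for these complexes---correctly captures the paper's strategy at the level of detail you attempt. One small point of emphasis: the paper does not merely need a bounded contracting homotopy on the $\Binfty$-complex, but has to verify separately that the homotopy descends to the quotient $\Linfty$-complex of function classes (this occupies Section~\ref{sec:magic}), and it must also choose the probability measures on the Stiefel varieties carefully so that the coboundary operators are weak-$*$ continuous (admissibility, proved via the co-area formula); these are not automatic and are a substantial part of the work.
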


This result should be compared to results of Monod, which imply stability of $\Hb^\bcdot$ along the families $(\SL_r(\bbZ))_{r \geq 1}$ and  $(\SL_r(\bbZ[i]))_{r \geq 1}$, or more generally, families of lattices in $\SL_r(\kay)$ for a local field or skew-field $\kay$ (e.g. $\bbR, \bbC, \bbQ_p$ or $\bbH$). To derive these results from \cite{Monod-sot, Monod-Stab}, one first deduces from \cite[Cor. 1.4]{Monod-sot} that if $(G_r)$ is a sequence of simple real Lie groups with real ranks ${\rm rk}_\bbR G_r \to \infty$ and for every $r \in \bbN$ we choose a lattice $\Gamma_r < G_r$, then bounded cohomology stabilizes along $(\Gamma_r)_{r \geq 1}$ provided \emph{continuous bounded cohomology} $\Hcb^\bcdot$ stabilizes along $(G_r)_{r \geq 1}$. Since $\Hcb^\bcdot$ is stable along $(\SL_r(\bbR))_{r \geq 1}$ by \cite[Thm. 1.1]{Monod-Stab}, one then obtains stability of $\Hb^\bcdot$ along $(\SL_r(\bbZ))_{r \geq 1}$. The case of $(\SL_r(\bbZ[i]))_{r \geq 1}$ can be handled similarly via stability of the family $(\SL_r(\bbC))_{r \geq 1}$. By the same argument our main theorem is implied by the following.

\begin{thm}\label{ThmIntro} $\Hcb^\bcdot$ is stable along $(\Sp_{2r}(\kay))_{r \geq 1}$ for $\kay \in \{\bbR, \bbC\}$.
\end{thm}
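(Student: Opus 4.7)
The plan is to adapt Quillen's classical stability method for group homology to the functional-analytic setting of continuous bounded cohomology, as announced in the abstract, and then to feed into the resulting criterion a family of highly acyclic $\Sp_{2r}(\kay)$-complexes: the symplectic Stiefel complexes. These are the semi-simplicial sets whose $p$-simplices are ordered $(p+1)$-tuples of vectors in $\kay^{2r}$ whose span is isotropic for the standard symplectic form, with face maps given by omission of a coordinate and the obvious $\Sp_{2r}(\kay)$-action. A stability range for $\Hcb^\bcdot$ along $(\Sp_{2r}(\kay))_{r \geq 1}$ will then drop out of the spectral sequence produced by the general criterion, once two inputs about the symplectic Stiefel complex are established: acyclicity in the correct category, and a favourable stabilizer calculation.

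The general stability criterion plays the role that Quillen's spectral sequence does in classical homological stability. Concretely, if $G$ acts on a semi-simplicial set $X_\bcdot$ in such a way that the cochain complex of essentially bounded measurable $E$-valued functions on $X_\bcdot$ furnishes an admissible resolution of $E$ by relatively injective Banach $G$-modules, then a double-complex spectral sequence computes $\Hcb^\bcdot(G;E)$ from the bounded cohomology of stabilizers of simplices, and a comparison between $G = \Sp_{2r}(\kay)$ and $G' = \Sp_{2(r+1)}(\kay)$ acting on a compatible pair of such complexes yields isomorphisms in a range growing with $r$. The substantive point, beyond the classical algebraic setup, is that acyclicity must be verified in the category of dual Banach modules, not merely topologically.

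The application to symplectic groups then proceeds in two steps. First, one computes stabilizers: by Witt extension, $\Sp_{2r}(\kay)$ acts transitively on the set of $p$-simplices, and the pointwise stabilizer of a $p$-simplex spanning an isotropic subspace $V$ of dimension $p+1$ is an extension of $\Sp_{2(r-p-1)}(\kay)$ (acting on the hyperbolic reduction $V^\perp/V$) by a unipotent radical, which is amenable and hence invisible to $\Hcb^\bcdot$. This is the source of the crucial shift $r \mapsto r-(p+1)$ that drives stabilization. Second, one establishes acyclicity: the natural approach is an inductive contraction in which a fixed isotropic line $\ell$ provides a partial cone operation sending a simplex $(v_0,\ldots,v_p)$ disjoint from $\ell$ to $(u,v_0,\ldots,v_p)$ for a suitable $u \in \ell$, while the link decomposition (the link of an isotropic line in the symplectic Stiefel complex on $\kay^{2r}$ reproduces a symplectic Stiefel complex on $\kay^{2r-2}$) closes the induction on $r$.

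The main obstacle, I expect, is making the acyclicity argument work in the functional-analytic rather than the purely combinatorial category. The naive prepend-a-vector contraction is only partially defined and is not continuous as a function of the simplex, so it must be replaced by a suitable section of the fibration of isotropic extensions transverse to a given isotropic span, with the resulting chain homotopy descending to essentially bounded measurable cochains with uniform norm bounds. A parallel technical point is that the stabilizer of an isotropic $p$-simplex remains non-amenable for $r-p-1 \geq 1$, so one cannot simply discard it; the spectral sequence and induction must be arranged so that the bounded cohomology of the smaller symplectic group $\Sp_{2(r-p-1)}(\kay)$ appears on the $E_1$-page, and the induction is closed by invoking the already-established stability at smaller rank. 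Matching the ranges of acyclicity and of the stabilizer shift to produce an honest stability range, uniformly in $\kay \in \{\bbR, \bbC\}$, is where the technical heart of the argument will lie.
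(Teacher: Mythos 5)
Your high-level architecture matches the paper's: a bounded-cohomological version of Quillen's method (a double-complex spectral sequence built from $L^\infty(G^{\bcdot+1} \times X_\bcdot)^G$, fed by semi-simplicial objects satisfying measurable connectivity, transitivity, and compatibility with an amenable-kernel stabilizer shift), applied to symplectic Stiefel complexes. The stabilizer calculation you sketch — Witt extension gives transitivity, and the point stabilizer of a $p$-simplex surjects onto $\Sp_{2(r-p-1)}(\kay)$ with amenable (diagonal torus times unipotent) kernel — is exactly the compatibility axiom (QM3) and agrees with the paper. One small discrepancy: the paper's $p$-simplices are $(p+1)$-tuples of \emph{projective points} (lines in $\kay^{2r}$) spanning an isotropic subspace, not tuples of vectors; this matters because the quasi-invariant probability measures needed for admissibility live naturally on projective spaces.

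The genuine gap is in the two hardest technical ingredients, both of which your proposal either misidentifies or omits. First, \emph{admissibility}: the coboundary maps $\dd^q$ on $L^\infty(X_\bcdot)$ must be \emph{dual} maps (weak-$*$ continuous), which requires constructing explicit probability measures $\mu_k$ on the Stiefel varieties such that integration against $\mu_k$ is pre-dual to the face operators. Since the symplectic Stiefel variety $X_k$ is not a product of copies of $\clP$, Fubini does not give this for free as it does for $\GL_r$. The paper constructs $\mu_k$ recursively via ``perpendicular measures'' $\nu_{p_0,\dots,p_k}$ and proves the required symmetry of $\mu_k$ (\autoref{thm:symmetry_muk}) using the \emph{co-area formula}; this symmetry is precisely what makes the face operators dual. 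Your proposal says acyclicity ``must be verified in the category of dual Banach modules'' but does not flag that even making $L^\infty(X_\bcdot)$ into a complex of coefficient modules is already nontrivial.

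Second, the \emph{measurable contractibility} strategy you propose — cone off a fixed isotropic line $\ell$ and close an induction on $r$ via the link decomposition — is not what the paper does, and it runs into obstacles that the paper's actual method is designed to avoid. A fixed-line cone is only partially defined (it fails on simplices not in $\bbP(\ell^\omega)$) and any Borel or continuous section has no uniform bound compatible with the measure class; worse, the link of $\ell$ is not literally the Stiefel complex of $\ell^\omega/\ell$ as a \emph{measured} $G$-object, so one would need a separate measure-theoretic comparison to close the induction, and the bound on the homotopy would deteriorate at each step without control. The paper instead builds, in a single shot, a contracting homotopy $h^k$ by \emph{averaging} over a ``random chaining'' — a compatible system of auxiliary random perpendicular points $t_I$ indexed by finite subsets $I$ of $\clP$ — and this is what yields boundedness and $G$-quasi-equivariance simultaneously. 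Even so, the homotopy is first constructed on $\Binfty$ (honest functions) and a separate argument (\autoref{MagicLemma}, the $G$-quasi-equivariance of the averaging kernel combined with transitivity of the action on $X_k$) is needed to see that it descends to $L^\infty$ (function classes); your proposal does not address this descent. Finally, the random-chaining method gives only an exponential stability range $\gamma(r) = \sup\{q : 2^q + \lceil(q+1)/2\rceil \leq r\}$, so if your link-induction approach could be made to work it would likely give a sharper linear range, but as stated the key steps are missing.
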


So far, this is the only other stability result for families of simple Lie groups beyond the case of $\SL_r(\kay)$. The only other related result that we are aware of concerns the rank-one families $(\SO(r,1))_{r \geq 1}$ and $(\SU(r,1))_{r \geq 1}$. For these, Pieters \cite{Pieters} proved that for $k+1 \leq r$ there are injections
\[
\Hcb^k(\SO(r,1)) \hookrightarrow \Hcb^k(\SO(r+1,1)) \qand \Hcb^k(\SU(r,1)) \hookrightarrow \Hcb^k(\SU(r+1,1)).
\]
Surjectivity of these is not known for any non-trivial range. There is, however, a much richer conjectural picture: according to conjectures usually attributed to Dupont \cite{Dupont} and Monod \cite{Monod-Survey}, the continuous bounded cohomology of a simple Lie group with finite center should be isomorphic to its continuous group cohomology. The latter can be computed explicitly by virtue of theorems by van Est and Cartan, and existing computations of the cohomology of symmetric spaces; see e.g. \cite{Stas} and \cite{GHV}. Thus, the conjecture would imply stability for $\Hcb^\bcdot$ along many classical families, including in particular the remaining classical families of split Lie groups over $\bbR$ and $\bbC$, namely $(\SO_{2r}(\bbC))$, $(\SO(r,r))$, $(\SO_{2r+1}(\bbC))$ and $(\SO(r, r+1))$, and with a stability range which is essentially linear in the respective real ranks. It seems that the conjectures of Dupont and Monod are currently out of reach, whereas we suggest here a general method to obtain stability results for all of the split families above, thus providing new evidence for the conjectures. So far, our method has only been fully implemented for the real and complex symplectic groups, but we believe that it can be extended (with some additional measure-theoretic difficulties) to other classical families over any local field or skew-field.

\subsection{Bounded-cohomological Quillen's method and symplectic Stiefel complexes}
The meth-od we propose here comprises two parts: Firstly, we give a general criterion that guarantees stability of $\Hcb^\bcdot$ along a given family of Lie groups $(G_r)$. This criterion is essentially a very general version of the argument used by Monod in his stability proof. To apply this criterion one needs to construct a suitable family of complexes for the groups $G_r$. In Monod's case, the desired complexes can be readily constructed using the fact that $\SL_r(\kay)$ acts essentially multiply transitively on the corresponding projective space. It is known that no other family of classical groups admits essentially highly multiply transitive actions on flag varieties \cite{Popov, Devyatov}; therefore, Monod's method does not apply beyond the special linear groups. In order to establish our main theorem, we construct more complicated complexes related to symplectic groups, which we call \emph{symplectic Stiefel complexes}. Establishing that these complexes have the required properties will occupy most of the current article, and is significantly much harder to check than in the case of special linear groups. 

Let us comment briefly on both parts of our proof. Our stability criterion is based on a functional-analytic version of a classical method in group cohomology, sometimes known as \emph{Quillen's method}; see \cite{Bestvina} and the references mentioned therein. In the purely algebraic setting, the method works as follows: Given a nested sequence $G_0 \subset G_1 \subset G_2 \subset G_3 \subset \cdots$ of groups, suppose one constructs for every index $r$ a semi-simplicial set\footnote{Roughly speaking, a set endowed with a structure that enables the definition of its simplicial (co)homology. Semi-simplicial sets are also known as $\Delta$-complexes in the literature, though we will refrain from the use of this terminology here. For the precise definition of a \emph{semi-simplicial object} in a category, see the first paragraph of Section \ref{sec:abstract_stability}.} $X_{r,\bcdot}$ such that
\begin{enumerate}[(Q1)]
	\item $X_{r,\bcdot}$ is \emph{increasingly connected} in function of $r$, i.e. the reduced simplicial homology $\tilde{\HH}_\bcdot(X_{r,\bcdot};\bbZ)$ vanishes up to a certain degree $\gamma(r)$, and $\gamma(r) \to \infty$ as $r \to \infty$;
	\item $X_{r,\bcdot}$ is \emph{increasingly transitive} in function of $r$, i.e. the group $G_r$ acts transitively on the collection $X_{r,k}$ of $k$-simplices for all $k \in \{0,\ldots,\tau(r)\}$, and $\tau(r) \to \infty$ as $r \to \infty$; and
	\item the semi-simplicial sets $X_{r,\bcdot}$ are \emph{compatible} with the sequence $(G_r)$, i.e.\ the point stabilizer of the transitive $G_r$-action on $X_{r,k}$ is isomorphic to $G_{r-k-1}$ (or at least up to a finite kernel). 
\end{enumerate} 
Then, it is possible to show by a spectral sequence argument that the existence of these semi-simplicial sets implies $\HH^\bcdot$-stability up to a range that relates directly to how much connectivity and transitivity one is able to show for them. This idea has been applied successfully in a number of contexts, see e.g. \cite{vdK, Harer, HatVogt, HatVogtWahl, Essert}. Our bounded-cohomological version of Quillen's method will replace conditions (Q1)-(Q3) above by corresponding measurable versions. Our complexes will be semi-simplicial objects in a suitable category of measure spaces; we will define a notion of measurable connectivity based on measurable bounded functions; transitivity will be understood up to null sets; and point stabilizers have to be isomorphic to smaller groups in the series only up to amenable kernels. For a precise statement, see \autoref{thm:abstract_stability} below.

The complexes to which we will apply our criterion are going to be defined as follows: Recall that the non-compact Stiefel variety $X_{r,k}$ of bases of $k$-dimensional subspaces of $\kay^r$ is a fiber bundle over the corresponding Grassmannian $\Gr_k(\kay^r)$. Symplectic Stiefel varieties can be defined similarly, by considering first the ``symplectic Grassmannian'' consisting of $k$-dimensional \emph{isotropic} subspaces of a symplectic vector space $(\kay^{2r}, \omega)$. Over these, one then has corresponding ``symplectic Stiefel varieties'' consisting of ordered bases, and these varieties can be arranged into a complex in which the face maps are given simply by forgetting one of the basis vectors. It is easy to see that the symplectic groups act increasingly transitively on these symplectic Stiefel complexes. The hardest part of the proof is to establish increasing measurable connectivity; for this we introduce a new method of constructing ``random homotopies'' in measure spaces, which may be of independent interest. 

In our proofs we have to fight some additional technical difficulties which are intrinsic to bounded cohomology. Namely, bounded cohomology is not exact along arbitrary long exact sequences of coefficient modules, but only along so-called \emph{dual sequences}. In practice, this means that one has to construct explicit probability measures in the canonical invariant measure class of symplectic Stiefel varieties, such that integration against these measures is dual to the boundary maps in the associated complex of $\Linfty$-function classes. This is not an issue in the case of special linear groups, where the Stiefel varieties are measurably isomorphic to products of projective spaces, and any choice of product measures is appropriate. In the case of symplectic groups there does not seem to be a similarly simple work-around; we resolve the problem by applying the co-area formula from geometric measure theory. There are also additional problems related to the fact that one is always working with function classes rather than actual functions, and finally there are a number of measurability questions; these technical issues are responsible for the length of the current article.

\subsection{Outlook and future work}
The definition of symplectic Stiefel complexes can be given in terms of the associated symplectic polar spaces, and similar Stiefel complexes can be constructed for more general polar spaces. We observe that all the other classical families of split groups mentioned above can also be realized as automorphism groups of polar spaces. This suggests a strategy to establish stability for these families by applying our general criterion to the corresponding Stiefel complexes. We believe that this approach does indeed work, but since some of the necessary technical verifications become substantially harder in these cases, we leave them for another time. Another problem not addressed in the current article concerns finding the optimal stability range in the symplectic case. The stability range we obtain here for symplectic groups is exponential in the real rank, compared to a conjecturally linear stability range. It is possible that the symplectic Stiefel complexes have better connectivity properties than what we prove here, but establishing better bounds would most likely require a deeper understanding of the finer combinatorial properties of symplectic Stiefel complexes. Finding the optimal connectivity result concerning symplectic Stiefel complexes is an interesting combinatorial problem even for symplectic groups (and other classical groups acting on polar spaces) over finite fields.

\subsection{Structure of the article}
The structure of this article is as follows. In Section \ref{sec:abstract_stability}, we state \autoref{thm:abstract_stability}, the bounded-cohomological Quillen's method for sequences of nested lcsc groups, giving precise definitions of the category in which our semi-simplicial objects will lie and of the measure-theoretical versions of Properties (Q1)--(Q3); the proof of our criterion is deferred to Section \ref{sec:proof_abstract_stability}. We then demonstrate our criterion by recovering Monod's $\Hcb^\bcdot$-stability theorems.\footnote{The reason for writing this example in detail is that there is an inaccuracy in the induction step that yields the original stability bounds in \cite{Monod-Stab}; this is mentioned by its author in the Note before Lemma 10 in \cite{Monod-Vanish}. The bounds are amended in \autoref{MonodRange} and \autoref{MonodRange2} below.} We then explain the proof of our main theorem, assuming the existence of a sequence of semi-simplicial objects $X_{r,\bcdot}$ with suitable properties. In Section \ref{sec:stiefel}, we introduce formally the symplectic Stiefel complex $X_{\bcdot}$ associated to a symplectic vector space $(V,\omega)$ and establish all of their desired properties except for ``measurable connectivity''. Its proof is tackled in Sections \ref{sec:contractibility} and \ref{sec:magic}. In the former we construct the required contracting homotopy at the level of bounded measurable functions, and in the latter we show that our homotopy descends to function classes.

We include three appendices with background material. Appendix \ref{sec:monodcoeff} summarizes the properties of Monod's category of coefficient modules, which underlies the functorial approach to continuous bounded cohomology of locally compact groups. Appendix \ref{sec:symplectic} summarizes basic topological properties of symplectic Grassmannians, and Appendix \ref{sec:measuretheory} collects some basic facts from measure theory that we use throughout.

{\bf Notational conventions.} We adopt the convention that $0 \in \bbN$. For any $R\in \bbN$, we denote by $[R]$ the subset $\{0,\ldots,R\}$ of $\bbN$, and we set $[\infty] := \bbN$. The symmetric group on $k$ letters will be denoted by $\frS_k$. Given a measure space $(X, \mu)$ and $0 \leq p < \infty$, we denote by $\scL^p(X, \mu)$ (or simply by $\scL^p(X)$ when the measure $\mu$ is understood from the context) the space of $\mu$-measurable functions $f :X \to \bbR$ such that $\mu(|f|^p) < \infty$. For $p = \infty$, $\Binfty(X)$ denotes the space of bounded measurable functions on $X$. For distinction, the corresponding spaces of function classes will be denoted by $L^p(X, \mu)$ or $L^p(X)$ for $1 \leq p \leq \infty$. We will denote by $\clH_X^d$ the $d$-dimensional Hausdorff measure on a metric space $X$, omitting mention of $X$ when the space is clear from the context. 
%A summary of homological-algebraic results required in this paper, concerning the construction of the spectral sequences associated to a double complex, can be found in [REF]. \vspace{8pt}

{\bf Acknowledgements.} We would like to thank Marc Burger for suggesting the question that led to the main result of this paper and for his valuable input throughout the development of the project. We are particularly indebted to Alexis Michelat for suggesting the use of the co-area formula in the proof of \autoref{thm:symmetry_muk}. We also thank Nicol\'as Matte Bon for profitable conversations concerning the probabilistic language used in Section \ref{sec:contractibility}, and Alessandra Iozzi, Nir Lazarovich and Maria Beatrice Pozzetti for their help and encouragement. Finally, we are grateful for the hospitality of the following institutions where different parts of this work were carried out: the Technion in Haifa, MSRI in Berkeley, Paderborn University, ETH Zurich, and JLU Gie\ss en.

%%%%%%%%%%%%%%%%%%%%%%%%%%%%%%%%%%%%%%%%%%%%%%%%%%%%%%%%%%%%%%%%%%%%%%
%%%%%%%%%%%%%%%%%%%%%%%%%%%%%%%%%%%%%%%%%%%%%%%%%%%%%%%%%%%%%%%%%%%%%%
\section{The bounded-cohomological Quillen's method} \label{sec:abstract_stability}
\subsection{Admissible $G$-objects and associated $\Linfty$-complexes}\label{subsec:GObjects} 
A \emph{semi-simplicial object} $X_\bcdot$ in a category $\clC$ is a sequence of objects $(X_k)^\infty_{k = 0}$, together with morphisms $\delta_{i,k}:\, X_{k+1} \to X_k$  for all $k$ and $i \in [k]$, called the \emph{face maps} of the semi-simplicial object, such that 
\[
	\delta_{i,k-1} \, \circ \, \delta_{j,k} = \delta_{j-1,k-1} \, \circ \, \delta_{i,k} \qquad \mbox{whenever } i<j\text{.}
\]
If $k$ is clear from the context we will usually denote the face map $\delta_{i,k}$ simply by $\delta_{i}$. In the situations we are interested in, $\mathcal C$ will always be a concrete category. Elements of the set underlying $X_k$ will then be referred to as \emph{$k$-simplices} of the semi-simplicial object. If the set underlying $X_k$ is empty for all $k \geq n+1$, then we say that $X_\bcdot$ is \emph{$n$-dimensional} and write $\dim(X_\bcdot)=n$. 

We are going to consider semi-simplicial objects in the following category (cf. \autoref{defn:regspace}):
\begin{defn} \label{defn:regspace:Intro}
A \emph{regular $G$-space} is a standard Borel space $X$, endowed with a Borel $G$-action and a Borel probability measure $\mu$ on $X$ that is $G$-quasi-invariant in the sense that $g_\ast \mu \sim \mu$ for all $g\in G$. We denote by ${\rm Reg}_G$ the category whose objects are regular $G$-spaces and morphisms are Borel $G$-maps. 
\end{defn}
\begin{defn}\label{DefGObject} A semi-simplicial object $X_\bcdot$ in ${\rm Reg}_G$ is called a \emph{measured $G$-object}.
\end{defn}
Let $G$ be a lcsc group, let $X_\bcdot$ be an $n$-dimensional measured $G$-object (with possibly $n=\infty$), and for every $k \in [n]$, let $\mu_k$ denote the choice of a $G$-quasi-invariant probability measure on $X_k$. By \autoref{LInfty}, the pairs $(L^1(X_k),\Linfty(X_k))$ are then coefficient $G$-modules in the sense of \autoref{DefCoefficientModule}. Note that the face maps $\delta_i:\, X_{k+1} \to X_k$ of the $G$-object $X_\bcdot$ induce operators $\delta^i:\Linfty(X_k) \to \Linfty(X_{k+1})$. Since the operators $\dd^k = \sum_{i=0}^{k+1} \, (-1)^i \delta^i$ satisfy $\dd^{k+1} \circ \dd^{k} = 0$ for all $k \geq 0$, the sequence
\begin{equation} \label{eq:linfty}
	0 \to \Linfty(X_0) \xrightarrow{\dd^0} \Linfty(X_1) \xrightarrow{\dd^1} \Linfty(X_2) \xrightarrow{\dd^2} \cdots \xrightarrow{\dd^{r-2}} \Linfty(X_{n}) 
\end{equation}
is a complex of vector spaces that we call the \emph{$\Linfty$-complex associated to $X_\bcdot$}. We also denote by $\dd^{-1}: \bbR \to \Linfty(X_0)$ the inclusion of constants and refer to
\begin{equation} \label{eq:augmentedlinfty}
	0 \to \bbR \xrightarrow{\dd^{-1}} \Linfty(X_0) \xrightarrow{\dd^0} \Linfty(X_1) \xrightarrow{\dd^1} \Linfty(X_2) \xrightarrow{\dd^2} \cdots \xrightarrow{\dd^{r-2}} \Linfty(X_n) 
\end{equation}
as the \emph{augmented} $\Linfty$-complex associated to $X_\bcdot$. 

In general, the functors $L^\infty(G^{n}; -)^G$ are only well-behaved in terms of exactness as in \autoref{Exactness} along complexes of coefficient modules, i.e.\ complexes in which all differentials are weak-$*$-continuous. We thus have to require this as an additional condition on the augmented $\Linfty$-complex above.
\begin{defn} \label{DefAdmissibility}
A measured $G$-object is called \emph{admissible} if its associated augmented $\Linfty$-complex is a complex of coefficient $G$-modules, i.e.\ if each of the morphisms $\dd^k$ is dual to a morphism $L^1(X_{k+1}) \to L^1(X_k)$. 
\end{defn}
We emphasize that while the augmented $\Linfty$-complex of $X_\bcdot$ only depends on the underlying $G$-invariant measure classes on the standard Borel spaces $X_k$, the notion of admissibility depends on the specific choice of probability measures on these spaces. 

\subsection{Measured Quillen families and stability}
Our next goal is to formulate versions of Quillen's conditions (Q1)--(Q3) from the introduction in our measurable context.
 
\begin{defn} \label{def:conn+trans}
Let $\BX_\bcdot$ be an $n$-dimensional measured $G$-object and let $q \in [n]$ be an integer. We say that $\BX_{\bcdot}$ is:  
\begin{enumerate}[(i)]
	\item \emph{measurably $q$-connected} if the homology of its augmented $\Linfty$-complex \eqref{eq:augmentedlinfty} vanishes up to degree $q$. 
	\item \emph{$q$-transitive} if for all $k \in [q]$, the $G$-action on $\BX_k$ is transitive. 
	\item \emph{essentially $q$-transitive} if for all $k \in [q]$, the $G$-action on $\BX_k$ has a Borel orbit $X_k$ of full measure and such that $\delta_i(X_k) \subset X_{k-1}$ for all $i \in [k]$ whenever $k \geq 1$. 
\end{enumerate}
As a convention, we will say that a measured $G$-object $\BX_\bcdot$ is measurably $(-\infty)$-connected if $\BX_\bcdot$ is not measurably $q$-connected for any $q \in [n]$. If $n = \infty$ and $\BX_\bcdot$ is measurably $q$-connected for all $q$ we also say that $\BX_\bcdot$ is measurably $\infty$-connected.
\end{defn}

\begin{rem} If $\BX_\bcdot$ is measurably $q_1$-connected and essentially $q_2$-transitive, then we can pass from $\BX_\bcdot$ to $X_\bcdot$ to obtain a measurably $q_1$-connected and $q_2$-transitive $G$-object. Thus, assuming actual transitivity (as opposed to essential transitivity) is not a restriction.
\end{rem}

We now need a measurable notion of compatibility for families of $G$-objects. Fix $R \in \bbN \cup \{\infty\}$ and assume that $(G_r)_{r \in [R]}$ is an \emph{ascending series} of lcsc groups, i.e. a sequence of lcsc groups with inclusions $G_{r} \hookrightarrow G_{r+1}$ for all $r<R$. Furthermore assume that for every $r \in [R]$ we are given a $G_r$-object $X_{r, \bcdot}$ that is $\tau(r)$-transitive, where $\tau: [R] \to \bbN$ is a function. 

\begin{defn} \label{def:compatibility} We say that $(G_r)_{r \in [R]}$ and $(X_{r, \bcdot})_{r \in [R]}$ are \emph{measurably $\tau$-compatible} if for every $r \in [R]$ and every $q \in [\tau(r)]$ there exist point stabilizers $H_{r,q}$ for the action of $G_r$ on $X_{r,q}$ and surjective homomorphisms $H_{r,p} \twoheadrightarrow G_{r-1-q}$ with amenable kernel such that for all $q<\tau(r)$ there is an inclusion $H_{r,q+1} \hookrightarrow H_{r,q}$ making the following diagram commute:
\begin{equation} \label{eq:inclusions}
		\begin{gathered}	
		\xymatrixcolsep{1.5pc}
		\xymatrix{H_{r,q+1} \ar@{^{(}->}[r] \ar@{->>}[d] & H_{r,q} \ar@{->>}[d] \\
		G_{r-q-2} \ar@{^{(}->}[r] & G_{r-q-1}} 
		\end{gathered}
	\end{equation}
\end{defn}

\begin{rem}
Here $G_r$ is understood as the trivial group whenever $r < 0$. One can define a similar notion if $G_r$ is merely \emph{essentially} $\tau(r)$-transitive on $\bar X_{r,p}$; in this case one has to require $H_{r,p}$ to be the stabilizer of a generic point. \vspace{2pt}
\end{rem}

\begin{defn} \label{def:Quillen_family} Let $R \in \bbN \, \cup \{\infty\}$, let $\gamma\!: [R] \to \bbN \, \cup \{-\infty, \infty\}$ and $\tau\!: [R] \to \bbN$ be functions, let $(G_r)_{r \in [R]}$ be an ascending series of lcsc groups, and, for every $r \in [R]$, let $X_{r, \bcdot}$ be an admissible $G_r$-object. We say that $(G_r, X_{r, \bcdot})_{r \in [R]}$ is a \emph{measured Quillen family} of length $R$ with parameters $(\gamma, \tau)$ if
\begin{enumerate}[(QM1)]
	\item $X_{r,\bcdot}$ is measurably $\gamma(r)$-connected for every $r \in [R]$;
	\item $X_{r,\bcdot}$ is $\tau(r)$-transitive for every $r \in [R]$;
	\item $(G_r)_{r \in [R]}$ and $(X_{r, \bcdot})_{r \in [R]}$ are measurably $\tau$-compatible. \vspace{5pt}
\end{enumerate} 
\end{defn}

\begin{klem}\label{thm:abstract_stability} Assume that $(G_r, X_{r, \bcdot})_{r \in \mathbb N}$ is an infinite measured Quillen family with parameters $(\gamma, \tau)$. If $\gamma(r) \to \infty$ and $\tau(r) \to \infty$ as $r \to \infty$, then $\Hcb^\bcdot$ stabilizes along $(G_r)_{r \in \mathbb N}$.
\end{klem}
\autoref{thm:abstract_stability} will be established in Section \ref{sec:proof_abstract_stability}. From the proof one obtains an explicit stability range in terms of the parameters $\gamma$ and $\tau$. We state a quantitative version of \autoref{thm:abstract_stability} in \autoref{thm:abstract_stability_explicit} below. From this quantitative version one also obtains a version of \autoref{thm:abstract_stability} for finite-length measured Quillen families, provided the parameters grow fast enough.

\subsection{Monod's stability result revisited}
To illustrate \autoref{thm:abstract_stability}, we explain how it implies Monod's stability results from \cite{Monod-Stab}. We use this opportunity to record the corrected bounds. Let $\kay$ be a local field or skew-field (e.g. $\bbR$, $\bbC$, $\bbQ_p$ or $\bbH$), and abbreviate $G_r := {\rm GL}_r(\kay)$. We consider $G_r$ as a subgroup of $G_{r+1}$ included into the bottom-right corner. Given $r \in \mathbb N$, we define
$\bar X_{r,k} := (\mathbb P^{r-1}(\kay))^{k+1}$. The homogeneous space $\mathbb P^{r-1}(\kay)$ admits a unique $G_r$-invariant measure class; we fix a probability measure $\mu$ in this class once and for all, and define a $G$-quasi-invariant measure on $\bar X_{r,k}$ by $\mu_{r,k} := \mu^{\otimes{(k+1)}}$. We denote by $X_{r,k} \subset \bar X_{r,k}$ the full-measure subset
\[
X_{r,k} := \{(p_0, \dots, p_{k}) \in \bar X_{r,k}\mid \dim{\rm span}(p_I) = \min\{|I|, r\} \},
\]
where for a subset of indices $I \subset [k]$, we set $p_I := \{p_i \mid i \in I\}$, and ${\rm span}(p_I)$ denotes the linear span in $\kay^r$ of the lines contained in $p_I$. For $k \in [r]$, $X_{r,k}$ is the unique $G_r$-orbit of full measure of $X_{r,k}$; for $k < r$, it is a non-compact Stiefel variety. Denote by $\delta_i: \bar X_{r,k+1} \to \bar X_{r,k}$ the face map given by deleting the $i$-th coordinate; these maps restrict to $X_{r,\bcdot}$.
\begin{lem}\label{GLQuillen} The pair $(G_r, X_{r, \bcdot})_{r\geq 1}$ is a measured Quillen family.
\end{lem}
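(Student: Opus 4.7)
The plan is to verify admissibility and the three conditions (QM1)--(QM3) for the data $(G_r, X_{r,\bcdot})$ specified just before the lemma. Once this is done, the associated parameters will turn out to be $\gamma(r) = \infty$ and $\tau(r) = r$, both of which tend to infinity.

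Admissibility is immediate. Each $\bar X_{r,k}$ carries the product measure $\mu^{\otimes(k+1)}$, and the face map $\delta_i$ is projection forgetting the $i$-th factor. By Fubini, the operator $L^1(\bar X_{r,k+1}) \to L^1(\bar X_{r,k})$ of integration against $\mu$ in the omitted variable is a predual of the pullback $\delta^i\colon \Linfty(\bar X_{r,k}) \to \Linfty(\bar X_{r,k+1})$; hence every differential $\dd^k$ is weak-$\ast$ continuous, and the augmented $\Linfty$-complex is one of coefficient $G_r$-modules. Since $X_{r,k} \subset \bar X_{r,k}$ has full measure and the face maps restrict to it, this passes verbatim to $X_{r,\bcdot}$.

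For (QM2) and (QM3) I would argue jointly. Transitivity of $G_r$ on ordered $(k+1)$-tuples of lines in general position holds for $k+1 \leq r$ by basis extension, and for $k+1 = r+1$ by the fundamental theorem of projective geometry (projective frames form a single $G_r$-orbit); this yields $\tau(r) = r$. For compatibility, take as basepoint in $X_{r,k}$ the tuple $(\langle e_0 \rangle, \dots, \langle e_k \rangle)$ in the standard basis of $\kay^r$. For $k \leq r-1$ the stabilizer $H_{r,k}$ consists of the block matrices
\[
\begin{pmatrix} D & B \\ 0 & A \end{pmatrix},
\]
with $D$ an invertible $(k+1)\times(k+1)$ diagonal, $B$ an arbitrary $(k+1)\times(r-k-1)$ matrix, and $A \in \GL_{r-k-1}(\kay)$. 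The projection $g \mapsto A$ gives a surjection $H_{r,k} \twoheadrightarrow G_{r-k-1}$ whose kernel---a semidirect product of the additive group $\kay^{(k+1)(r-k-1)}$ by the group $(\kay^\times)^{k+1}$---is amenable for every local (skew-)field $\kay$. For $k = r$ the stabilizer of a projective frame is the scalar subgroup $\kay^\times \cdot I$, again amenable, mapping onto the trivial group $G_{-1}$. Compatibility of the inclusions $H_{r,k+1} \hookrightarrow H_{r,k}$ with the bottom-right embeddings $G_{r-k-2} \hookrightarrow G_{r-k-1}$ is then read off the block form.

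The main obstacle in the abstract setting is (QM1), measurable connectivity, but here the product structure trivialises it. Since $X_{r,k}$ equals $\bar X_{r,k} = Y^{k+1}$ (with $Y := \bbP^{r-1}(\kay)$) up to null sets, the augmented $\Linfty$-complex is canonically the Amitsur complex of $(Y,\mu)$. Setting
\[
(s_k f)(y_0, \dots, y_k) := \int_Y f(t, y_0, \dots, y_k) \, \dd\mu(t)
\]
for $k \geq 0$ and $s_{-1}(f) := \int_Y f \, \dd\mu$, the usual bar-complex calculation, using that $\mu$ is a probability measure, yields $\dd^{k-1} s_{k-1} + s_k \dd^k = \mathrm{id}$ in every degree $k \geq 0$. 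Hence the augmented complex is contractible, so $X_{r,\bcdot}$ is measurably $\infty$-connected and $\gamma(r) = \infty$. This completes the verification that $(G_r, X_{r,\bcdot})_{r \geq 1}$ is an infinite measured Quillen family.
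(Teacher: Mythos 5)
Your proof is correct and follows essentially the same route as the paper's sketch: Fubini for admissibility, the standard bar-complex contraction $s_k$ (integration over the first variable) for measurable $\infty$-connectivity, and the explicit upper-triangular block description of the stabilizer for compatibility, yielding the same parameters $\gamma(r) = \infty$ and $\tau(r) = r$. The only quibble is that transitivity of $\GL_r(\kay)$ on projective frames is an elementary fact about change of basis rather than the Fundamental Theorem of Projective Geometry (which concerns collineations), but the underlying claim is correct.
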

We sketch the proof: It is well known that $X_{r, \bcdot}$ is $r$-transitive. As mentioned in the introduction, this high essential multiple transitivity on flag varieties is a unique feature of general and special linear groups. Compatibility follows from the fact that $G_r$ is the Levi factor of the point stabilizer of $G_{r+1}$ acting on projective space. It remains to show admissibility and measurable connectivity; both are easy because $\mu_{r,k} $ is a \emph{product measure}, another luxury that we will not encounter in other classical families. Concerning admissibility, the fact that $\mu_{r,k}$ is a product measure implies that the maps
\[\partial_i: L^1(\BX_{r,k+1}) \to L^1(\BX_{r,k}), \quad  \partial_i \phi(p_0,\ldots,p_k) = \textstyle \int_{\BX_{r,0}} \phi(p_0,\ldots,p_{i-1},t,p_i,\ldots,p_k) \, \dd\!\mu(t)\]
are well-defined by Fubini's theorem, and they are easily seen to be pre-dual to the face operators $\delta^i$ for every $i=0,\ldots,k$. The product structure also allows us to define a
morphism $h^\bcdot:\,\Linfty(X_{r,\bcdot+1}) \to \Linfty(X_{r,\bcdot})$ (by integration over the first variable) that satisfies $\dd^{k-1} h^{k-1} + h^k \dd^k = \id$ for all $k \geq 0$. This proves that $X_{r, \bcdot}$ is $\infty$-connected, thus establishing \autoref{GLQuillen}, and thereby stability of the family $(G_r)_{r \geq 1}$.

\begin{rem}[Stability range for $\GL_r$]\label{MonodRange}
Let us compute the stability range using the quantitative version of \autoref{thm:abstract_stability} in \autoref{thm:abstract_stability_explicit} below: We have $\gamma(r) = \infty$, $\tau(r) = r$ and $q_0 = 2$, hence $\widetilde{\gamma}(q,r) = \infty$ and $\widetilde{\tau}(q,r)  = r-2q+4$ with the notation of \autoref{thm:abstract_stability_explicit}. We thus get isomorphisms $\Hcb^q(\GL_{r+1}(\kay)) \to \Hcb^q(\GL_{r}(\kay))$ for all $q \geq 2$ and $r \geq 2q-3$, and an injection for $r = 2q-4$. \end{rem}

\begin{rem}[Stability for $\SL_r$ and its range]\label{MonodRange2}
Observe that the sequence $(\SL_r(\kay),X_{r,\bcdot})_r$ does not satisfy the measurable compatibility axiom (QM3) from \autoref{def:Quillen_family}, and hence, is not a measured Quillen family. To obtain stability for $({\rm SL}_r(\kay))_{r \in \mathbb N}$, we need to consider for any $r \geq 2$ the finite-length measured Quillen familiy 
\[\big(({\rm GL}_0(\kay), X_{0, \bcdot}), ({\rm GL}_1(\kay), X_{1, \bcdot}), \dots, ({\rm GL}_{r}(\kay), X_{r, \bcdot}), ({\rm SL}_{r+1}(\kay), X_{r+1, \bcdot})\big),\]
Note that in general $\SL_{r+1}(\kay)$ is only $r$-transitive, rather than $(r+1)$-transitive on $X_{r+1, \bcdot}$. One deduces from this and \autoref{thm:abstract_stability_explicit} that $\Hcb^q(\SL_{r+1}(\kay)) \to \Hcb^{q}(\GL_{r}(\kay))$ is an isomorphism whenever $r \geq \max\{2q-3, q+1\}$ and an injection if $r = \max\{2q-4, q\}$. Combining this with the result for $\GL_r(\kay)$, we deduce that there exists an isomorphism $\Hcb^q(\SL_{r+1}(\kay)) \cong \Hcb^q({\rm SL}_r(\kay))$ for $r \geq \max\{2q-2,q+2\}$. 

In the case $\kay = \bbC$, it is possible to obtain a better stability range. As pointed out in Corollary 8.5.5 of \cite{Monod-Book}, the inclusion $\SL_r(\bbC) \hookrightarrow \GL_r(\bbC)$ induces an isomorphism in continuous bounded cohomology. Hence, by \autoref{MonodRange}, the map $\Hcb^q(\SL_{r+1}(\bbC)) \to \Hcb^q(\SL_r(\bbC))$ is an isomorphism for all $q \geq 2$ and $r \geq 2q-3$, and an injection for $r = 2q-4$.
\end{rem}

\begin{rem}[Corollaries in degree three for $\SL_r$]
The two previous remarks fix the stability ranges given in \cite{Monod-Vanish}. While the correct bounds are in general worse than claimed in loc.\ cit., we still get injections
\[
\Hcb^3(\SL_r(\bbR)) \hookrightarrow \Hcb^3(\GL_2(\bbR)) \qand \Hcb^3(\SL_r(\bbC)) \hookrightarrow \Hcb^3(\SL_2(\bbC))
\]
for all $r \geq 3$. They have as corollaries that $\Hcb^3(\SL_r(\bbR)) = 0$ (see Theorem 1.2 in \cite{Monod-Stab}) and that $\dim \Hcb^3(\SL_r(\bbC)) \leq 1$ for all $r \in \bbN$, respectively. In particular, the latter statement is used in \cite{BBI} to conclude that the comparison map $\Hcb^3(\SL_r(\bbC)) \to \Hc^3(\SL_r(\bbC))$ is an isomorphism for every $r \in \bbN$.
\end{rem}

\subsection{Summary of results in the symplectic case}

The remainder of this article, with the exception of Section \ref{sec:proof_abstract_stability}, is devoted to the proof of the following result, which implies \autoref{ThmIntro} in view of \autoref{thm:abstract_stability}.

\begin{thm}\label{MainThmConvenient} For $\kay \in \{\bbR, \bbC\}$ there exists a measured Quillen family $(\Sp_{2r}(\kay), X_{r, \bcdot})$ with parameters 
\[
\gamma(r) = \sup \{q \in \mathbb N \mid 2^q + \lceil (q+1)/2\rceil \leq r\}
\]
and $\tau(r) = r-1$.
\end{thm}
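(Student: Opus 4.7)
The plan is to take $X_{r,k}$ to be the space of ordered $(k+1)$-tuples $(v_0,\ldots,v_k) \in (\kay^{2r})^{k+1}$ that are linearly independent and pairwise $\omega$-orthogonal, with face maps $\delta_i$ given by omission of $v_i$. Since a maximal isotropic subspace of $(\kay^{2r},\omega)$ has dimension $r$, the complex has dimension $r-1$. Transitivity of the $\Sp_{2r}(\kay)$-action on every $X_{r,k}$ with $k\le r-1$ follows from Witt's extension theorem, giving (QM2) with $\tau(r)=r-1$. For a standard nested isotropic frame $(e_1,\ldots,e_{k+1})$ in a symplectic basis, the point stabilizer $H_{r,k}$ fits into a short exact sequence
\[
 1 \to U_{r,k} \to H_{r,k} \to \Sp\bigl(\langle e_1,\ldots,e_{k+1}\rangle^\perp/\langle e_1,\ldots,e_{k+1}\rangle\bigr) \to 1,
\]
with $U_{r,k}$ unipotent hence amenable, so projection onto $\Sp_{2(r-k-1)}(\kay)=G_{r-k-1}$ has amenable kernel; the inclusions $H_{r,k+1}\hookrightarrow H_{r,k}$ induced by the nested frames give the commutative diagram required for (QM3).

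\textbf{Measures and admissibility.} In contrast with the $\SL_r$ case (\autoref{GLQuillen}), the unique $\Sp_{2r}(\kay)$-invariant measure class on $X_{r,k}$ is \emph{not} a product on $(\kay^{2r})^{k+1}$: the face maps are algebraic submersions whose fibers depend on the base point. The plan is to fix once and for all a smooth, rapidly decaying probability density $\rho$ on $\kay^{2r}$ and to define $\mu_{r,k}$ by weighting the Hausdorff measure on $X_{r,k}\subset (\kay^{2r})^{k+1}$ by $\rho^{\otimes(k+1)}$. The co-area formula then produces an explicit Jacobian expressing $\mu_{r,k+1}$ as a fibered integral of $\mu_{r,k}$ along each $\delta_i$, and with it pre-dual operators $\partial_i\colon L^1(X_{r,k+1})\to L^1(X_{r,k})$ whose adjoints are the face operators $\delta^i\colon L^\infty(X_{r,k})\to L^\infty(X_{r,k+1})$. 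This establishes admissibility in the sense of \autoref{DefAdmissibility}; choosing $\rho$ invariant under a maximal compact ensures that the simplicial identities among the $\partial_i$ hold everywhere rather than only almost everywhere.

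\textbf{Measurable connectivity, the main obstacle.} The remaining axiom (QM1) carries the bulk of the work and drives the bound $\gamma(r)=\sup\{q\mid 2^q+\lceil (q+1)/2\rceil\le r\}$. The plan is to prove inductively on $q\le\gamma(r)$ that the augmented $L^\infty$-complex of $X_{r,\bcdot}$ is exact at position $q$, by building contracting homotopies in two stages. In the first stage, carried out in Section \ref{sec:contractibility} at the level of bounded measurable \emph{functions} $\Binfty(X_{r,q+1})$, one constructs operators of the schematic form
\[
 (h^q\phi)(v_0,\ldots,v_q) \;=\; \int \phi(w,v_0,\ldots,v_q)\,\dd\nu_{v_0,\ldots,v_q}(w),
\]
where $\nu_{v_0,\ldots,v_q}$ is a probability measure supported on isotropic vectors in the symplectic complement $\langle v_0,\ldots,v_q\rangle^\perp$. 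Unlike in Monod's case, one cannot use a single fixed measure on projective space, because the sampling constraints vary with the input; the ``random homotopies'' of the introduction are produced by iteratively resampling from supplies of isotropic directions that stay disjoint from those already used. At the $q$-th inductive step this doubling of reserves accounts for the factor $2^q$, while $\lceil(q+1)/2\rceil$ extra directions are needed to ensure the measurable selection $w\mapsto \nu_{\bcdot}$ is Borel.

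The second and hardest stage, occupying Section \ref{sec:magic}, shows that $h^q$ descends from $\Binfty$ to $L^\infty$. This is not automatic: the fiber-integration formula above is \emph{a priori} undefined on function classes, since a $\mu_{r,q+1}$-null set need not push forward to a $\mu_{r,q}$-null set. The plan is to exploit the co-area structure of the $\mu_{r,k}$ together with the Fubini-type tools of Appendix \ref{sec:measuretheory} to verify exactly this null-set compatibility for the specific measures $\nu_{\bcdot}$ produced in the first stage. Combining the descent with the telescoping chain-homotopy identity yields measurable $\gamma(r)$-connectedness, completing all three Quillen axioms and hence the theorem; by \autoref{thm:abstract_stability}, this implies \autoref{ThmIntro}, and via the argument recalled in the introduction also \autoref{ThmLatticeIntro}.
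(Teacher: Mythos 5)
Your blueprint follows the paper's contours at a high level, but the formula you give for the contracting homotopy is wrong, and what replaces it is the real content of Sections~\ref{sec:contractibility}--\ref{sec:magic}. Your schematic
\[
(h^q\phi)(v_0,\ldots,v_q) \;=\; \int \phi(w,v_0,\ldots,v_q)\,\dd\nu_{v_0,\ldots,v_q}(w)
\]
does \emph{not} satisfy $h^{q}\dd^{q}+\dd^{q-1}h^{q-1}=\id$. Expanding, one gets $\phi(v_0,\ldots,v_q)$ plus a residual
\[
\sum_{i=0}^{q}(-1)^{i}\int \phi(w,v_0,\ldots,\widehat{v_i},\ldots,v_q)\,\bigl[\dd\nu_{v_0,\ldots,\widehat{v_i},\ldots,v_q}(w)-\dd\nu_{v_0,\ldots,v_q}(w)\bigr],
\]
which does not cancel because the perpendicular measure depends on the index set. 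This is precisely why the paper does not use a single integral: \autoref{DefHomotopy} takes a \emph{signed sum} $h^{k}=\sum_{C\in\frC_k}{\rm sgn}(C)\,h_C$ over descending chains $C$ of subsets of $[k]$, fed by a \emph{random $q$-chaining} $\{t_I\mid I\in\clP^{[\le q+1]}\}$ of mutually dependent auxiliary perpendiculars indexed by \emph{all} subsets of the input. The chain-homotopy identity then holds only through a telescoping cancellation between adjacent-length chains and their opposite signs (proof of \autoref{ContractibilityRefined}(i)). Your phrase ``iteratively resampling from supplies of isotropic directions'' gestures at this, but without the descending-chain bookkeeping and its sign convention you have no identity $h\dd+\dd h=\id$, so no proof of measurable connectivity.

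Two further discrepancies are worth noting. Your account of the bound is off: the factor $2^q$ comes from the $2^{q+1}-1$ auxiliary variables $t_J$ ($J\subsetneq I$, $|I|\le q+1$) that a chaining must carry, and $\lceil(q+1)/2\rceil$ is purely the rounding artifact of the dimension count $q+2^{q+1}\le 2r-1$ in \autoref{GammaCondition}, not an overhead for Borel selection (measurability is handled by stratifying $\nu$ in \autoref{thm:nu_Borel}). Your measure construction ($\rho^{\otimes(k+1)}$-weighted Hausdorff on a vector-tuple model) is genuinely different from the paper's recursive $\mu_{k+1}=\int\nu_{\mathbf p}\,\dd\mu_k$ on the projective model; yours buys $\frS_{k+1}$-symmetry for free, whereas the paper proves it via co-area (\autoref{thm:symmetry_muk}), but you would still owe a verification that each face submersion pushes $L^1$ to $L^1$. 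Finally, the descent $\Binfty\to\Linfty$ in the paper is \emph{not} a co-area/Fubini computation: \autoref{MagicLemma} reduces it to $G$-quasi-equivariance of the sampling map together with uniqueness of the $G$-invariant measure class on the homogeneous target, which is both cleaner and less fragile than what you propose.
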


\begin{rem} To determine the stability range implied by \autoref{MainThmConvenient} and \autoref{thm:abstract_stability_explicit}, let us abbreviate $\widehat{\gamma}(q) := 2^q + \lceil (q+1)/2\rceil $. We claim that the map $\Hcb^q({\rm Sp}_{2r+2}(\kay)) \to \Hcb^q({\rm Sp}_{2r}(\kay))$ is an isomorphism if $r \geq \widehat{\gamma}(q) -1$. 

As explained in \autoref{IC}, we may take $q_0 = 2$ in \autoref{thm:abstract_stability_explicit}. Fix $q \geq q_0 + 1 = 3$, and observe that $\widetilde{\tau}(q,r) = r-2q+3$. Therefore, we have
\begin{equation} \label{eq:tausp}
	\widetilde{\tau}(q,r) -1 \geq 0 \quad \mbox{ if and only if } \quad r \geq 2q-2.
\end{equation}
Note also that $\widehat{\gamma}$ is strictly increasing, and hence 
\[
	\gamma\big(r+1-2(q-j)\big)-j \geq 0 \quad \mbox{ if and only if } \quad r \geq 2q - 1 + (\widehat{\gamma}(j)-2j). 
\]
for all $j \in \{3,\ldots,q\}$. Thus, 
\begin{equation}  \label{eq:gammasp}
\widetilde{\gamma}(q,r) \geq 0 \quad \mbox{ if and only if } \quad r \geq 2q-1 + \max_{j=3}^q \, (\widehat{\gamma}(j)-2j).
\end{equation}
Since the function $\{3,\ldots,q\} \ni j \mapsto \widehat{\gamma}(j)-2j$ is, too, strictly increasing, we have $\max_{j=3}^q \, (\widehat{\gamma}(j)-2j) = \widehat{\gamma}(q)-2q$. As a result of the equivalences \eqref{eq:tausp} and \eqref{eq:gammasp}, we deduce from \autoref{thm:abstract_stability_explicit} that the desired isomorphism in continuous bounded cohomology holds if $r \geq \max\{2q-2,\widehat{\gamma}(q)-1\}=\widehat{\gamma}(q)-1$, completing the proof of the claim. 

We point out that because of the rapid growth of the function $\widehat{\gamma}$, we are not able to profit from the injectivity statement in \autoref{thm:abstract_stability_explicit}, as the corresponding range would be also given by the function $q \mapsto \max\{2q-3,\widehat{\gamma}(q)-1\} = \widehat{\gamma}(q)-1$.
\end{rem}

%%%%%%%%%%%%%%%%%%%%%%%%%%%%%%%%%%%%%%%%%%%%%%%%%%%%%%%%%%%%%%%%%%%%%%
%%%%%%%%%%%%%%%%%%%%%%%%%%%%%%%%%%%%%%%%%%%%%%%%%%%%%%%%%%%%%%%%%%%%%%
\section{The symplectic Stiefel complexes} \label{sec:stiefel}

\subsection{Symplectic Stiefel varieties}
Throughout this section let $(V, \omega)$ be a symplectic vector space over $\kay \in \{\bbR, \bbC\}$ of finite positive dimension $n=2r$, and abbreviate $G:=\Sp(V,\omega)$. We refer the reader to Appendix \ref{sec:symplectic} for background and notation concerning symplectic vector spaces. We recall from there that for $l \in [r]$ the \emph{symplectic Grassmannian} of type $l$ as defined by
\[\clG_l(V, \omega) := \{W \in {\rm Gr}_{l+1}(V) \mid W \text{ is isotropic}\}\]
is a homogeneous space under $G$. We also recall from \autoref{lem:symp_grass} that the Chabauty--Fell topology on these spaces coincides with the quotient topology, and that with respect to this topology the symplectic Grassmanians are compact. Since $(V, \omega)$ will be fixed throughout this section, we will abbreviate $\clG_l := \clG_l(V, \omega)$. We will also abbreviate $\clP := \clG_0 = \bbP(V)$ and $\clG := \bigsqcup_l \clG_l$. 

For $0 \leq k \leq r-1$, we define
\begin{align*}
\BX_k &:= \{(p_0, \dots, p_k) \in \clP^{k+1}  \mid \Span(p_0, \dots, p_k) \in \clG\}; \mbox{ and}\\
X_k &:= \{(p_0, \dots, p_k) \in \BX_k \mid {\rm span}(p_0, \dots, p_k) \in \mathcal G_k\}. 
\end{align*}
The space $\BX_k$ is a closed (hence compact) subspace of the product space $\clP^{k+1}$ for every $k$. On the other hand, $X_k$ is an open, dense subset of $\BX_k$, being Zariski open with respect to the natural variety structure on $\clP^{k+1}$. Note that both $\BX_k$ and $X_k$ admit $\frS_{k+1}$-actions by permuting the coordinates, and 
continuous $G$-actions by restricting accordingly the diagonal action on $\clP^{k+1}$. Also, the map 
\[
	\Span: X_k \to \clG_k, \quad (p_0,\ldots,p_k) \mapsto \Span(p_0,\ldots,p_k)
\]
is a continuous (\autoref{SpanCts}), $G$-equivariant, $\frS_{k+1}$-invariant surjection. In fact, $X_k$ is a fiber bundle over $\clG_k$ via this map. In analogy to the classical objects, we define:
\begin{defn} \label{def:Stiefel_variety}
The homogeneous space $X_k$ is called the \emph{$k$-Stiefel variety} of $(V,\omega)$, and $\BX_k$ is called the \emph{compactified} $k$-Stiefel variety of $(V,\omega)$.  
\end{defn}

For $i \in [k]$, the face maps $\delta_i: \clP^{k+1} \to \clP^k$ that delete the $i$-th component in a $(k+1)$-tuple restrict to continuous, $G$-equivariant maps $\BX_{k+1} \to \BX_k$ and $X_{k+1} \to X_k$ that we also denote by $\delta_i$. Thus, $\BX_\bcdot$ and $X_\bcdot$ are semi-simplicial objects in the category of topological $G$-spaces, i.e.\ of topological spaces with a continuous $G$-action by homeomorphisms. 

Note that, by \autoref{thm:transitivity}, the group $G$ acts transitively on $X_k$ for all $k \in [r-1]$, whence there exists a unique $G$-invariant measure class on $X_k$. If we choose for every $k \in [r-1]$ an arbitrary probability measure on $X_k$ in the $G$-invariant measure class, then $X_\bcdot$ together with these measures will always be an $(r-1)$-transitive measured $G$-object. However, its admissibility will depend on the choice of probability measures on $X_k$. Thus, our next task will be to construct explicit quasi-invariant probability measures on each $X_k$. The remainder of the section will then be devoted to the proof of admissibility of the resulting $G$-object.

\subsection{Perpendicular measures} 
We keep the notation of the previous subsection. We are going to introduce a class of probability measures on the projective space $\mathcal P = \mathbb P(V)$. These measures will be used in the next subsection to construct explicit quasi-invariant probability measures on the Stiefel varieties $X_k$ and will also feature prominently in our proof of measurable connectivity of the Stiefel $G$-object in the next section.

From now on we fix an inner product $\langle \cdot,\cdot \rangle$ on $V$ and denote by $K(V)$ the maximal compact subgroup of $\GL(V)$ that preserves $\langle \cdot, \cdot \rangle$. We denote the induced norm and metric by $\|\cdot\|$ and $d_V$ respectively.

Given a non-zero linear subspace $W \subset V$ we denote by $B_W$ the intersection of $W$ with the unit ball with respect to $d_V$. We then denote by $\clL_W$ the unique multiple of the Lebesgue measure on $W$ normalized to $\clL_W(B_W) = 1$. Finally we denote by $\bbP(W)$ the associated projective space and by $\pi_W: W\smallsetminus \{0\} \to \bbP(W)$ the associated quotient map. We then define a probability measure $\lambda_W$ on $\bbP(W)$ by
\[
	\int_{\bbP(W)} f \, \dd\!\lambda_W := \int_{B_W} \pi_W^\ast f \, \dd\!\clL_W, \quad f \in C(\bbP(W)).
\]
We will consider $\lambda_W$ as a measure on $\mathcal P = \bbP(V)$ supported on the compact subset $\bbP(W)$; this defines a map
\[
\lambda: {\rm Gr}(V)\setminus\{0\} \to {\rm Prob}(\mathcal P), \quad W \mapsto \lambda_W
\]
where ${\rm Gr}(V)$ is the union of the Grassmannians of $V$, equipped with the Chabauty topology, as in \eqref{Grassmann}.

\begin{rem} \label{rem:GptheoreticCharacterization} In group theoretic terms, we can characterize the measures $\lambda_W$ as follows. If $W\subset V$ is a linear subspace and $W^\perp$ denotes its orthogonal complement with respect to $\langle \cdot, \cdot \rangle$, then $V = W \oplus W^\perp$. Hence, we can extend every $g \in \GL(W)$ to an automorphism of $V$ by acting identically on $W^\perp$, and thereby define an embedding $\GL(W) \hookrightarrow \GL(V)$. Under this embedding, the group $K(W) := \GL(W) \cap K(V)$ is the unique maximal compact subgroup of $\GL(W)$ preserving the inner product $\langle \cdot,\cdot \rangle\!\mid_{W \times W}$. Moreover, if $P(V,W) := {\rm Stab}_{G(V)}(W)$ denotes the set-stabilizer of $W$ in $\GL(V)$, then 
\[
	{\rm Stab}_{K(V)}(W) = P(V,W) \cap K(V) = K(W) \times K(W^\perp).
\]
Now $\bbP(W)$ is a homogeneous space of both $\GL(W)$ and $K(W)$, and $\lambda_W$ is invariant under $K(W)$ by construction. It is thus the unique $K(W)$-invariant probability measure supported on $\bbP(W)$, and its measure class $[\lambda_W]$ is the unique $\GL(W)$-invariant measure class on $\bbP(W)$, and also invariant under $P(V,W)$. 
\end{rem}

Using \autoref{rem:GptheoreticCharacterization}, we can establish the following properties of the map $\lambda$.
\begin{lem}\label{KInvariantMeasuresQuasiEquivariant} 
The map $\lambda$ is continuous with respect to the weak-$*$-topology on ${\rm Prob}(\mathcal P)$. Moreover, it is $K(V)$-equivariant and ${\rm GL}(V)$-\emph{quasi-equivariant}, i.e.\ for every $g\in \GL(V)$ and every $W \in \Gr(V)$, the probability measures $g_*\lambda_W$ and $\lambda_{gW}$ are mutually absolutely continuous.  
\end{lem}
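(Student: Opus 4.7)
The plan is to treat the three claims separately. The key observation underlying all of them is that $\lambda_W$ depends only on the inner-product data of $W$, which varies continuously with $W$ within a fixed Grassmannian.

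For continuity, I would first verify that Chabauty limits in $\Gr(V)\smallsetminus\{0\}$ preserve dimension: if $W_n \in \Gr_k(V)$ Chabauty-converges to $W$, then extracting a convergent subsequence of orthonormal bases of $W_n$ via compactness of the unit sphere produces an orthonormal basis $(e_1,\dots,e_k)$ of a $k$-dimensional subspace $W^*$. The inclusion $W^* \subseteq W$ follows because every $v = \sum a_i e_i$ is the limit of $\sum a_i e_i^{(n)} \in W_n$, and the reverse inclusion follows by writing each approximating sequence $v_n \in W_n$ in the orthonormal basis of $W_n$ and extracting boundedly-many-coordinate subsequences. Hence each $\Gr_k(V)$ is clopen in $\Gr(V)\smallsetminus\{0\}$, and the Chabauty topology restricts there to the standard manifold topology. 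Around any $W_0 \in \Gr_k(V)$ one can then choose a continuous local section $W \mapsto (e_1(W),\dots,e_k(W))$ of orthonormal frames; the resulting isometries $\iota_W\colon \bbR^k \to W$, $t \mapsto \sum t_i e_i(W)$, map $B^k$ onto $B_W$ and push forward $\mathrm{vol}(B^k)^{-1}\, \dd t$ onto $\clL_W|_{B_W}$. For $f \in C(\clP)$, one thus obtains
\[
\int_{\clP} f \, \dd\lambda_W \;=\; \frac{1}{\mathrm{vol}(B^k)} \int_{B^k} f\bigl(\pi_W(\iota_W(t))\bigr) \, \dd t,
\]
and continuity in $W$ follows by dominated convergence, using the joint continuity of $(t,W)\mapsto \pi_W(\iota_W(t))$.

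$K(V)$-equivariance is essentially immediate: any $g \in K(V)$ restricts to a linear isometry $W \to gW$ sending $B_W$ onto $B_{gW}$, so $g_*\clL_W = \clL_{gW}$; combined with the identity $\pi_{gW}\circ g = g\circ\pi_W$ on $W\smallsetminus\{0\}$, a direct change of variables in the defining integral yields $g_*\lambda_W = \lambda_{gW}$. For $\GL(V)$-quasi-equivariance, both $g_*\lambda_W$ and $\lambda_{gW}$ are Borel probability measures supported on $\bbP(gW)$, so by \autoref{rem:GptheoreticCharacterization} it suffices to show that $[g_*\lambda_W]$ is $\GL(gW)$-invariant. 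Given $h \in \GL(gW)$, the conjugate $g^{-1}hg$ preserves $W$ and restricts to an element of $\GL(W)$; hence $[(g^{-1}hg)_*\lambda_W] = [\lambda_W]$, and pushing this equality forward by $g$ gives $[h_* g_*\lambda_W] = [g_*\lambda_W]$, as required.

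The only genuinely delicate step is the dimension-preservation fact underlying the continuity argument: without it, a Chabauty-convergent sequence of $k$-dimensional subspaces could degenerate to a lower-dimensional limit and the measures $\lambda_{W_n}$, whose supports are essentially $(k-1)$-dimensional projective subspaces, would have no reason to weak-$*$-converge to $\lambda_W$. Once dimension preservation is secured, everything else reduces to standard continuity of parametric Lebesgue integrals and a uniqueness argument for invariant measure classes.
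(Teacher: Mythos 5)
Your proof is correct, but it differs from the paper's argument in each of the three steps, so a brief comparison is in order. For continuity on a fixed $\Gr_k(V)$, the paper avoids any explicit parametrization by exploiting homogeneity: writing $W_n = k_n W$ with $k_n \in K(V)$ and passing to a convergent subsequence $k_n \to k \in \Stab_{K(V)}(W)$, so that $\lambda_{W_n} = (k_n)_*\lambda_W \to k_*\lambda_W = \lambda_W$ by continuity of the $\GL(V)$-action on $\Prob(\clP)$; you instead build a continuous local orthonormal frame and invoke dominated convergence, which is more hands-on but also more work, and it makes you re-derive the clopenness of $\Gr_k(V)$ in $\Gr(V)$ (which the paper already established via \eqref{Grassmann} and the closedness of each $\Gr_k(V)$; your re-derivation is sound but redundant). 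For $K(V)$-equivariance, you compute $g_*\lambda_W = \lambda_{gW}$ directly by change of variables, whereas the paper instead observes that $k_*\lambda_W$ is a $K(kW)$-invariant probability measure on $\bbP(kW)$ and appeals to uniqueness from \autoref{rem:GptheoreticCharacterization}; both are fine. For $\GL(V)$-quasi-equivariance, the paper factors $g = (gk^{-1})\,k$ with $gW = kW$ and uses that $gk^{-1} \in P(V,kW)$ preserves the class $[\lambda_{kW}]$; you instead show directly that $[g_*\lambda_W]$ is $\GL(gW)$-quasi-invariant by conjugating $h \in \GL(gW)$ to $g^{-1}hg \in \GL(W)$ and then invoking uniqueness of the quasi-invariant measure class on the homogeneous space $\bbP(gW)$. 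Your conjugation argument is arguably cleaner, since it bypasses the choice of $k$ entirely; what the paper's route buys is a uniform template (first reduce to $K(V)$, then use $P(V,W)$-quasi-invariance) that parallels the continuity proof.
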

\begin{proof} We first establish the $K(V)$-equivariance and $\GL(V)$-quasi-equivariance of $\lambda$. Let $W \in \Gr(V)$. For $k \in K(V)$, the probability measure $k_*\lambda_W$ is supported on $kW$ and invariant under $kK(W)k^{-1} = K(kW)$, hence coincides with $\lambda_{kW}$ by uniqueness. This shows that $\lambda$ is $K(V)$-equivariant. Now, if $g \in \GL(V)$, there exists $k \in K(V)$ such that $gW = kW$ and hence $gk^{-1} \in P(V, kW)$, so $g_* \lambda_W = (gk^{-1})_* k_*\lambda_W = (gk^{-1})_*\lambda_{kW}$. Since $\lambda_{kW}$ is quasi-invariant under $P(V, kW)$ the latter is
equivalent to $\lambda_{kW} = \lambda_{gW}$, showing that the action is $\GL(V)$-quasi-equivariant. 

For the continuity of $\lambda$, in view of \eqref{Grassmann}, it suffices to establish continuity for each of the restrictions $\lambda_i := \lambda|_{\Gr_{i+1}(V)}$, where $i \in [n-1]$. For this purpose, fix $i$ and let $(W_n)_n$ be a sequence in $\Gr_{i+1}(V)$ converging to a subspace $W \in \Gr_{i+1}(V)$. Since $K(V)$ act transitively on $\Gr_{i+1}(V)$ we find $k_n \in K(V)$ with $W_n = k_n W$, and we may assume by passing to a subsequence that $k_n$ converges to some $k \in K(V)$. By continuity of the action, we then have $kW = \lim k_nW = W$, i.e. $k \in {\rm Stab}_{K(V)}(W) = K(W) \times K(W^\perp)$, and hence $k_*\lambda_W = \lambda_W$. We deduce that
\[
 \lim_{n \to \infty} \lambda_{W_n} = \lim_{n \to \infty} \lambda_{k_nW} = \lim_{n \to \infty}(k_n)_*\lambda_W = k_*\lambda_W = \lambda_W,
\]
where the second-to-last equality follows from the continuity of the $\GL(V)$-action on ${\rm Prob}(\clP)$. 
\end{proof} 
In probabilistic language, $\lambda_W$ is the distribution of a \emph{random point} $p \in \clP$ subject to the condition that $p \in \bbP(W)$. Similarly, the following definition describes the distribution of a ``random symplectic perpendicular'', i.e.\ a random point that is perpendicular with respect to $\omega$ to a given finite set of points in $\clP$. As in Subsection \ref{SympComp}, given a subset $S \subset V$, we write $S^\omega$ for the symplectic complement of $S$ with respect to $\omega$. 
\begin{defn}\label{DefNu}
Given an integer $ k \in [2r-2]$, we define  
\[
 \nu^k: \clP^{k+1} \to \Prob(\clP), \quad (p_0, \ldots, p_k) \mapsto \nu^k_{(p_0,\ldots,p_k)} := \lambda_{\Span(p_0, \dots, p_k)^\omega}.
\]
The measure $\nu^k_{(p_0,\ldots,p_k)}$ is called \emph{perpendicular measure to} $(p_0,\ldots,p_k)$. 
\end{defn}
The restriction of the definition to $k \leq 2r-2$ is important, since it may happen for  $k > 2r-2$ that $\Span(p_0,\ldots,p_k) = V$ and hence $\Span(p_0,\ldots,p_k)^\omega= \{0\} $. The following proposition summarizes basic properties of the map $\nu^k$. Here we recall our abbreviation $G := {\rm Sp}(V, \omega)$, and choose a maximal compact subgroup of $G$ by setting $K:= G \cap K(V)$. 
%Here, given $0 \leq k \leq 2r-2$ and $1 \leq l \leq k$ we denote
%\[\clP^{k+1}_l := \{(p_0,\ldots,p_k) \in \clP^{k+1} \mid \dim(\Span(p_0,\ldots,p_k))=l\},\]
%and observe that each of these sets is a Borel subset of $\clP^{k+1}$ and $\clP^{k+1} = \bigsqcup \clP^{k+1}_l$. 
\begin{prop} \label{thm:nu_Borel}
For every $k \in [2r-2]$, the map $\nu^k$ is an $\frS_{k+1}$-invariant, $K$-equivariant and $G$-quasi-equivariant Borel map. For every $l \in [k+1]$, it is continuous on the subset $\clP^{k+1}_l := \{(p_0, \dots, p_k) \mid \dim \,{\rm span}\{p_0, \dots, p_k\} = l\} \subset \clP^{k+1}$.
\end{prop}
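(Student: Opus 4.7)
The plan is to decompose $\nu^k$ as the composition
$$
\mathcal{P}^{k+1} \xrightarrow{\;\mathrm{Span}\;} \mathrm{Gr}(V) \xrightarrow{\;(\cdot)^\omega\;} \mathrm{Gr}(V) \xrightarrow{\;\lambda\;} \mathrm{Prob}(\mathcal{P})
$$
and to read off each of the claimed properties from the corresponding property of the three factors. The symmetry statements are essentially formal. First, $\mathrm{Span}(p_0,\ldots,p_k)$ is visibly invariant under permutations of the entries, which gives $\mathfrak{S}_{k+1}$-invariance. For $K$-equivariance and $G$-quasi-equivariance, observe that for any $g \in G$ the identity $(gW)^\omega = g(W^\omega)$ holds because $g$ preserves $\omega$, and that $\mathrm{Span}(gp_0,\ldots,gp_k) = g\,\mathrm{Span}(p_0,\ldots,p_k)$; combining this with the $K(V)$-equivariance and $\mathrm{GL}(V)$-quasi-equivariance of $\lambda$ from \autoref{KInvariantMeasuresQuasiEquivariant} yields $\nu^k_{(gp_0,\ldots,gp_k)} = g_\ast \nu^k_{(p_0,\ldots,p_k)}$ when $g \in K$ and mutual absolute continuity when $g \in G$.

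For continuity on $\mathcal{P}^{k+1}_l$, I would chain together three continuity statements. The restriction of $\mathrm{Span}$ to $\mathcal{P}^{k+1}_l$ is a continuous map into $\mathrm{Gr}_l(V)$ by \autoref{SpanCts}. The map $W \mapsto W^\omega$ is continuous on each fixed-dimension stratum $\mathrm{Gr}_l(V) \to \mathrm{Gr}_{n-l}(V)$ (one can see this via the Chabauty--Fell topology, or by noting that the perpendicular of a subspace varies smoothly with the subspace). Finally, $\lambda$ is continuous into $\mathrm{Prob}(\mathcal{P})$ by \autoref{KInvariantMeasuresQuasiEquivariant}. Composing the three gives the continuity of $\nu^k|_{\mathcal{P}^{k+1}_l}$.

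For the global Borel measurability, the idea is to partition $\mathcal{P}^{k+1}$ according to the dimension of the span. For each $l \in [k+1]$, the subset $\bigsqcup_{l' \leq l}\mathcal{P}^{k+1}_{l'}$ is Zariski closed (it is cut out by the vanishing of all $(l+1) \times (l+1)$ minors of any matrix representative), so each $\mathcal{P}^{k+1}_l$ is locally closed, hence Borel. Since $\nu^k$ is continuous on each of the finitely many pieces $\mathcal{P}^{k+1}_l$ by the previous paragraph, and $\mathcal{P}^{k+1} = \bigsqcup_{l=1}^{k+1} \mathcal{P}^{k+1}_l$ is a disjoint Borel decomposition, $\nu^k$ is Borel as a map into $\mathrm{Prob}(\mathcal{P})$ equipped with the Borel $\sigma$-algebra of the weak-$*$ topology.

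The only mildly delicate point is verifying that $(\cdot)^\omega$ is continuous on each $\mathrm{Gr}_l(V)$; otherwise everything reduces to the symmetry properties of $\lambda$ already recorded in \autoref{KInvariantMeasuresQuasiEquivariant} and to the stratification of $\mathcal{P}^{k+1}$ by the dimension of the span. Continuity of the perpendicular operation on each Grassmannian is a standard fact and can be argued exactly as in the continuity portion of the proof of \autoref{KInvariantMeasuresQuasiEquivariant}: pick a convergent sequence $W_n \to W$ in $\mathrm{Gr}_l(V)$, write $W_n = k_n W$ for $k_n \in K$ (using transitivity of $K$ on $\mathrm{Gr}_l(V)$, cf.\ \autoref{rem:GptheoreticCharacterization}), and extract a subsequence with $k_n \to k$ fixing $W$; then $W_n^\omega = k_n W^\omega \to W^\omega$ because $k \in \mathrm{Stab}_K(W)$ also fixes $W^\omega$.
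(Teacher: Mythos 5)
Your decomposition of $\nu^k$ as $\lambda \circ (-)^\omega \circ \Span$, the appeal to \autoref{SpanCts} and \autoref{KInvariantMeasuresQuasiEquivariant} for the two outer factors, the symmetry/equivariance arguments, and the stratification-by-dimension argument for global Borel measurability all coincide with the paper's proof.

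The one place you diverge is your sketch of the continuity of the symplectic polarity on each $\Gr_l(V)$, and that sketch has a genuine gap. You propose to write $W_n = k_n W$ using ``transitivity of $K$ on $\Gr_l(V)$'' and then pass $k_n$ through the polarity. But neither candidate group works. The maximal compact $K = \Sp(V,\omega)\cap K(V)$ does commute with $(-)^\omega$, but it is \emph{not} transitive on $\Gr_l(V)$ for general $l$: already $\Sp(V,\omega)$ has several orbits on $\Gr_l(V)$ (e.g.\ symplectic vs.\ isotropic $2$-planes), and $K$ is a subgroup. Conversely $K(V)$, to which \autoref{rem:GptheoreticCharacterization} actually refers, \emph{is} transitive on $\Gr_l(V)$, but it does not preserve $\omega$, so $(k_n W)^\omega \neq k_n(W^\omega)$ in general and the manipulation $W_n^\omega = k_n W^\omega$ breaks. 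The paper avoids this trap: \autoref{thm:omega_Borel} proves the continuity of $(-)^\omega$ on each stratum directly from the Chabauty convergence criterion of \autoref{thm:chabauty_conv}, using only the bilinearity and continuity of $\omega$, with no transitivity input at all. Replacing your transitivity argument with that direct argument (or, alternatively, writing $W^\omega = J(W^\perp)$ for a compatible complex structure $J$ and using $K(V)$-transitivity for $W\mapsto W^\perp$) repairs the proof.
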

\begin{proof} Observe that $\nu^k$ can be written as the composition
\begin{equation}\label{nukDec}
\clP^{k+1} \xrightarrow{\Span} \Gr(V)\setminus\{V\} \xrightarrow{(-)^\omega} \Gr(V)\setminus\{0\} \xrightarrow{\lambda} \Prob(\clP).
\end{equation}
Now, the map $\Span\colon \clP^{k+1} \to \Gr(V)$ is Borel (and continuous on each $\clP^{k+1}_l$) by \autoref{SpanCts}, and clearly $\frS_{k+1}$-invariant and $\GL(V)$-equivariant; the symplectic polarity $(-)^\omega: {\rm Gr}(V) \to {\rm Gr}(V)$ is $G$-equivariant and continuous by \autoref{thm:omega_Borel}; and $\lambda$ is continuous, $K$-invariant and $G$-equivariant by \autoref{KInvariantMeasuresQuasiEquivariant}.
\end{proof}
\begin{rem} \label{rem:random_perp}
From now on, unless specificity is necessary, we will avoid making the index $k$ explicit and refer to all the maps $\nu^k$ simply as $\nu$ and write $\nu_{p_0, \dots, p_k} := \nu^k_{(p_0, \dots, p_k)}$. Note that the latter only depends on the set $\{p_0, \dots, p_k\}$. By construction,  $\nu_{p_0,\ldots,p_k}$ is supported on the subspace $\bbP(\Span(p_0,\ldots,p_k)^\omega)$ of $\clP$ and thus 
\begin{equation}\label{RandomPerpendicular}
\omega(p, p_j) = 0 \quad \text{for all }j\in \{0,\ldots,k\} \text{ and } \nu_{p_0, \dots, p_k}\text{-almost all }p\in \mathcal P.
\end{equation}
Because of this, we refer to a random variable distributed according to the measure $\nu_{p_0, \dots, p_n}$ as a \emph{random perpendicular} to $p_0, \dots, p_n$. The following lemma ensures that generically a random perpendicular to $(p_0, \dots, p_k)$ is linearly independent of $(p_0, \dots, p_k)$.
\end{rem}
\begin{lem} \label{thm:pre_Xk_full_meas} 
For all $k \in [r-2]$ and all $(p_0, \dots, p_k) \in X_k$, the Borel set $\{p \in \clP \mid (p_0, \dots, p_k, p) \in X_{k+1}\}$ has full $\nu_{p_0, \dots, p_k}$-measure. 
\end{lem}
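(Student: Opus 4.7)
The plan is to reduce the statement to a purely linear-algebraic and measure-theoretic claim about proper subspaces. Set $W := \Span(p_0, \ldots, p_k)$. By assumption $(p_0, \ldots, p_k) \in X_k$, so $W \in \clG_k$ is an isotropic subspace of $V$ of dimension exactly $k+1$. A tuple $(p_0, \ldots, p_k, p) \in \BX_{k+1}$ lies in $X_{k+1}$ iff $W' := \Span(p_0, \ldots, p_k, p)$ is isotropic of dimension $k+2$. Since $W$ is already isotropic and $\omega$ is alternating, $W'$ is isotropic iff $\omega(p, p_i) = 0$ for every $i \in [k]$, that is, iff $p \in \bbP(W^\omega)$; moreover $\dim W' = k+2$ iff $p \notin \bbP(W)$. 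Thus the set in question is precisely
\[
A := \bbP(W^\omega) \smallsetminus \bbP(W).
\]

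Next I would unpack the measure. By \autoref{DefNu}, $\nu_{p_0, \ldots, p_k} = \lambda_{W^\omega}$, and by construction this measure is supported on $\bbP(W^\omega)$. Consequently
\[
\nu_{p_0, \ldots, p_k}(A) \;=\; 1 \,-\, \lambda_{W^\omega}\!\bigl(\bbP(W)\bigr),
\]
so the statement reduces to showing that $\lambda_{W^\omega}(\bbP(W)) = 0$. Using the defining formula of $\lambda_{W^\omega}$ as the $\pi_{W^\omega}$-push-forward of the normalized Lebesgue measure $\clL_{W^\omega}$ on the unit ball $B_{W^\omega}$, this amounts to showing that $W \cap B_{W^\omega}$ is $\clL_{W^\omega}$-null in $W^\omega$.

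The remaining ingredient is the dimension count, which is where the hypothesis $k \leq r-2$ enters. Since $W$ is isotropic of dimension $k+1$, \autoref{SympComp} gives $\dim W^\omega = 2r - (k+1)$, and the hypothesis $k \leq r-2$ yields $\dim W = k+1 < 2r-k-1 = \dim W^\omega$. Thus $W$ sits in $W^\omega$ as a proper linear subspace, hence has Lebesgue measure zero, and the claim follows. I do not foresee a real obstacle here: the work is entirely in correctly identifying the set $A$ from the definition of $X_{k+1}$ and in verifying that the bound $k \leq r - 2$ makes $W$ a strict subspace of $W^\omega$; the null-set statement is then standard.
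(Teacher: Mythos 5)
Your proposal is correct and follows essentially the same route as the paper's proof: identify the set as $\bbP(W^\omega)\smallsetminus\bbP(W)$, observe that $\nu_{p_0,\dots,p_k}=\lambda_{W^\omega}$, and conclude because $W$ is a proper subspace of $W^\omega$ (the paper phrases the codimension step as ``$W\subsetneq W^\omega$ since equality holds only in the Lagrangian case,'' while you make the dimension count explicit, but these are the same observation). The only cosmetic remark is that the dimension identity $\dim W+\dim W^\omega=\dim V$ is equation \eqref{eq:sympl_complement} rather than the subsection label \autoref{SympComp}.
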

\begin{proof} Let $W := \Span(p_0, \dots, p_k)$ and let $A := \{p \in \clP \mid (p_0, \dots, p_k, p) \in X_{k+1}\}$. Then we have that $W \subsetneq W^\omega$, since equality holds only in the Lagrangian case. Moreover, the equality $A = \bbP(W^\omega) \smallsetminus \bbP(W)$ holds, and as a positive-codimension, closed embedded submanifold of $\bbP(W^\omega)$, the projective subspace $\bbP(W)$ is Borel and a Lebesgue null set. Hence, 
\[
	\nu_{p_0, \dots, p_k}(A) = \lambda_{W^\omega}(\bbP(W^\omega) \smallsetminus \bbP(W)) = 1. \vspace{-8pt}
\]
\end{proof}

\subsection{Measures for symplectic Stiefel complexes}
We are going to define recursively probability measures $\mu_k$ on the compactified Stiefel varieties $\BX_k$ for all $ k \in [r-1]$, using perpendicular measures. For $k = 0$, we have $\BX_0 = X_0 = \bbP(V) = \clP$ and we define $\mu_0 := \lambda_V$. Given $k=1, \dots, r-1$, we set
\[
\int_{\BX_{k+1}} f(\mathbf{p},p_{k+1}) \, \dd\!\mu_{k+1}(\mathbf {p},p_{k+1}) := \int_{\bar X_k} \int_{\clP} f(\mathbf{p}, p_{k+1}) \; d\nu_{\mathbf{p}}(p_{k+1}) \, \dd\!\mu_{k}(\mathbf{p}) \quad (f \in C(\BX_{k+1})).
\]
To see that this is well-defined, we observe that the inner integral defines a Borel measurable function on $\BX_k$ by \autoref{GeneralMeasurability} and \autoref{thm:nu_Borel}.
\begin{prop} \label{thm:Xk_full_meas} For every $k \in [r-1]$, the measure $\mu_k$ restricts to a $G$-quasi-invariant probability measure on $X_k$.
\end{prop}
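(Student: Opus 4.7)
I proceed by induction on $k$, proving simultaneously at each stage that $\mu_k$ is a well-defined Borel probability measure on $\BX_k$ with $\mu_k(X_k)=1$, and that its restriction to $X_k$ is $G$-quasi-invariant. The base case $k=0$ is immediate: $\mu_0 = \lambda_V$ is a probability measure supported on $X_0 = \clP$, and its $\GL(V)$-quasi-invariance (hence $G$-quasi-invariance) is the special case $W = V$ of \autoref{KInvariantMeasuresQuasiEquivariant}. The measurability needed to make sense of the iterated integral defining $\mu_{k+1}$ for the induction is ensured by the Borel property of $\nu$ from \autoref{thm:nu_Borel}, combined with \autoref{GeneralMeasurability}; taking the test function $f \equiv 1$ shows that the total mass is $1$.

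For the full-measure step at level $k+1 \leq r-1$, unfolding the definition gives
\[
\mu_{k+1}(X_{k+1}) = \int_{\BX_k} \nu_\mathbf{p}\bigl(\{p \in \clP : (\mathbf{p}, p) \in X_{k+1}\}\bigr) \, \dd\!\mu_k(\mathbf{p}).
\]
A pair $(\mathbf{p}, p)$ lies in $X_{k+1}$ only if $\Span(\mathbf{p},p)$ is a $(k+2)$-dimensional isotropic subspace, which in particular forces $\mathbf{p} \in X_k$. Hence the integrand vanishes on $\BX_k \smallsetminus X_k$, while for $\mathbf{p} \in X_k$ it equals $1$ by \autoref{thm:pre_Xk_full_meas}. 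Combined with the inductive hypothesis $\mu_k(X_k) = 1$, this yields $\mu_{k+1}(X_{k+1}) = 1$.

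For the $G$-quasi-invariance of $\mu_{k+1}$, fix $g \in G$ and a Borel set $A \subset \BX_{k+1}$, and write $A_\mathbf{p} := \{p \in \clP : (\mathbf{p},p) \in A\}$ for its vertical section. The diagonal $G$-action yields $(gA)_\mathbf{p} = g \cdot A_{g^{-1}\mathbf{p}}$, so unfolding the defining formula and substituting $\mathbf{q} = g^{-1}\mathbf{p}$ gives
\[
\mu_{k+1}(gA) = \int_{\BX_k} \bigl((g^{-1})_*\nu_{g\mathbf{q}}\bigr)(A_\mathbf{q}) \, \dd\bigl((g^{-1})_*\mu_k\bigr)(\mathbf{q}).
\]
By the pointwise $G$-quasi-equivariance of $\nu$ from \autoref{thm:nu_Borel}, one has $(g^{-1})_*\nu_{g\mathbf{q}} \sim \nu_\mathbf{q}$ for \emph{every} $\mathbf{q}$, and by the inductive hypothesis $(g^{-1})_*\mu_k \sim \mu_k$. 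Together these equivalences imply that $\mu_{k+1}(gA) = 0$ if and only if $\nu_\mathbf{q}(A_\mathbf{q}) = 0$ for $\mu_k$-a.e.~$\mathbf{q}$, if and only if $\mu_{k+1}(A) = 0$. Since $X_{k+1}$ is $G$-invariant, restriction from $\BX_{k+1}$ to $X_{k+1}$ preserves this null-set equivalence, closing the induction.

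The only real subtlety is the null-set bookkeeping in this last step: because $\nu$ is merely quasi-equivariant rather than equivariant, the Radon--Nikodym derivative between $(g^{-1})_*\nu_{g\mathbf{q}}$ and $\nu_\mathbf{q}$ depends on $\mathbf{q}$, and one must use the \emph{pointwise} (as opposed to a.e.) character of the quasi-equivariance in \autoref{thm:nu_Borel} to legitimize the section-by-section comparison after the change of variable. Once this is noted, the argument is a routine iterated-integral manipulation.
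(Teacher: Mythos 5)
Your argument is correct and follows the paper's approach: an induction on $k$ in which the full-measure statement is handed off to \autoref{thm:pre_Xk_full_meas} and quasi-invariance is propagated using the $G$-quasi-equivariance of $\nu$ from \autoref{thm:nu_Borel}. The paper compresses the quasi-invariance induction to a single sentence, whereas you carry out the change of variables and null-set bookkeeping explicitly; your calculation is, in effect, a special case of the argument the paper later formalizes as \autoref{QuasiInvarianceUnderIntegration}.
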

\begin{proof} Since $\mu_0$ is $G$-quasi-invariant and $\nu$ is $G$-quasi-equivariant by \autoref{thm:nu_Borel}, it follows by induction that the measures $\mu_1, \dots, \mu_{r-1}$ are all $G$-quasi-invariant. 

To show that $\mu_k(X_k) = 1$, we argue by induction on $k$. For $k = 0$, the lemma follows from $\BX_0  = X_0$. For $k + 1$, we have, using the induction hypothesis that $\mu_{k}(X_{k}) = 1$, that
\begin{align*}
	\mu_{k+1}(X_{k+1}) &= \int_{X_{k}} \nu_{p_0, \dots, p_k}\big(\{p_{k+1} \in \clP \mid  (p_0, \dots, p_{k},p_{k+1}) \in X_{k+1}\}\big) \dd\!\mu_{k}(p_0, \dots, p_{k}).
\end{align*}
By \autoref{thm:pre_Xk_full_meas}, the integrand equals to $1$ for every fixed $(p_0, \dots, p_{k}) \in X_{k}$, and $\mu_k(X_k) = \mu_{k-1}(X_{k-1}) = 1$ as claimed.
\end{proof}
In the sequel we will always consider $X_k$ as a probability space with respect to the measure $\mu_k$. By \autoref{thm:Xk_full_meas}, the probability measure $\mu_k$ represents the canonical measure class on $X_k$. We can summarize our results so far as follows.
\begin{cor} The pair $(X_\bcdot, \delta_\bullet)$ is a measured $G$-object in the sense of \autoref{DefGObject}.\qed
\end{cor}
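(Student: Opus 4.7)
The plan is to unpack Definitions \ref{defn:regspace:Intro} and \ref{DefGObject} and verify each ingredient, relying almost entirely on what has already been established. First I would check that each $X_k$, equipped with $\mu_k$, is a regular $G$-space. As an open subset of the compact metrizable space $\BX_k \subset \clP^{k+1}$, $X_k$ is Polish, hence standard Borel in its induced Borel structure. The diagonal $G$-action on $\clP^{k+1}$ is continuous, it preserves $X_k$ (indeed it acts transitively on $X_k$ by \autoref{thm:transitivity}), and so restricts to a Borel $G$-action on $X_k$. Finally, \autoref{thm:Xk_full_meas} provides $\mu_k$ as a $G$-quasi-invariant Borel probability measure on $X_k$. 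Hence each $X_k$ is an object of ${\rm Reg}_G$.

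Next, I would check that the face maps $\delta_i\colon X_{k+1}\to X_k$ are morphisms in ${\rm Reg}_G$, i.e.\ Borel $G$-maps. They are restrictions of the coordinate-omission projections $\clP^{k+2}\to\clP^{k+1}$, which are continuous and commute with the diagonal $G$-action; we already observed that these restrictions land in $X_k$. Continuity implies Borel measurability, and commutation with the diagonal action gives $G$-equivariance, so each $\delta_i$ is a Borel $G$-map as required. The semi-simplicial identities $\delta_{i,k-1}\circ \delta_{j,k} = \delta_{j-1,k-1}\circ \delta_{i,k}$ for $i<j$ hold because they hold for the ambient coordinate-omission projections on $\clP^{\bullet+1}$, and they are preserved by restriction.

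Collecting the two paragraphs above shows that $(X_\bcdot,\delta_\bullet)$ is a semi-simplicial object in ${\rm Reg}_G$, which is exactly the content of \autoref{DefGObject}. There is no genuine obstacle here: the only non-formal input is the quasi-invariance of $\mu_k$, which is precisely \autoref{thm:Xk_full_meas}; everything else is a continuity/equivariance check for restrictions of standard projections.
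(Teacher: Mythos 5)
Your proof is correct and is exactly the argument the paper intends; the paper marks the corollary with \qed because it follows immediately from \autoref{thm:Xk_full_meas} together with the definitions and the continuity/equivariance observations made earlier in the section, and your write-up simply makes those formal verifications explicit.
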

\begin{defn} The measured $G$-object $X_\bcdot$ is called the \emph{symplectic Stiefel complex} associated to the symplectic vector space $(V, \omega)$.
\end{defn}
Since $\dim V = 2r$, then the symplectic Stiefel complex associated with $(V, \omega)$ is $(r-1)$-transitive by \autoref{thm:transitivity}.

\subsection{Admissibility of Stiefel complexes}
The purpose of this subsection is to establish admissibility of the symplectic Stiefel complexes:
\begin{prop} \label{thm:admissibility} For every symplectic vector space $(V, \omega)$, the associated Stiefel complex $(X_\bcdot, \delta_\bcdot)$ is admissible.
\end{prop}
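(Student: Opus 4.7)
The plan is to show that each face operator $\delta^i : \Linfty(X_k) \to \Linfty(X_{k+1})$, $i \in [k+1]$, is weak-$\ast$-continuous, or equivalently, admits a bounded pre-dual operator $\partial_i : L^1(X_{k+1}) \to L^1(X_k)$. Since $\dd^k = \sum_{i=0}^{k+1} (-1)^i \delta^i$, admissibility will then follow immediately; the augmentation $\dd^{-1} : \bbR \to \Linfty(X_0)$ is trivially dual, as $\bbR$ is finite-dimensional.

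I would first handle the case $i = k+1$ directly, using the recursive definition of $\mu_{k+1}$. Applying this definition to $f(\mathbf{p},p_{k+1}) = \psi(\mathbf{p},p_{k+1}) \cdot \phi(\mathbf p)$ yields
\[
\int_{X_{k+1}} \psi \cdot (\phi \circ \delta_{k+1}) \, \dd\!\mu_{k+1} \;=\; \int_{X_k} \phi(\mathbf p) \left(\int_\clP \psi(\mathbf p,p) \, \dd\!\nu_\mathbf p(p)\right) \dd\!\mu_k(\mathbf p),
\]
so $\partial_{k+1} \psi(\mathbf p) := \int_\clP \psi(\mathbf p,p) \, \dd\!\nu_\mathbf p(p)$ is the desired pre-dual of $\delta^{k+1}$. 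Applying the same identity to $|\psi|$ shows that $\partial_{k+1}$ is contractive from $L^1(X_{k+1})$ into $L^1(X_k)$, and in particular well-defined on equivalence classes by Fubini.

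Next, to handle arbitrary $i \in [k+1]$, I would reduce to the previous case via a coordinate permutation. Let $\sigma_i \in \frS_{k+2}$ cycle the $i$-th entry to the last position (fixing entries $0, \dots, i-1$ and shifting entries $i+1, \dots, k+1$ left by one); it induces a Borel automorphism $\sigma_i \colon X_{k+1} \to X_{k+1}$, and a direct computation gives $\delta_i = \delta_{k+1} \circ \sigma_i$, so $\delta^i = \sigma_i^\ast \circ \delta^{k+1}$. Because $k+1 \leq r-1$, the space $X_{k+1}$ is $G$-homogeneous by \autoref{thm:transitivity}, so it carries a unique $G$-quasi-invariant measure class. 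Since $\sigma_i$ commutes with the diagonal $G$-action, $(\sigma_i)_\ast \mu_{k+1}$ is again $G$-quasi-invariant, and is therefore equivalent to $\mu_{k+1}$. Setting $\rho_i := \dd(\sigma_i)_\ast \mu_{k+1} / \dd\!\mu_{k+1}$, a change-of-variables argument shows that $\sigma_i^\ast$ is pre-dual to the bounded operator $T_{\sigma_i} \colon L^1(X_{k+1}) \to L^1(X_{k+1})$ defined by $g \mapsto \rho_i \cdot (g \circ \sigma_i^{-1})$. Composing, $\partial_i := \partial_{k+1} \circ T_{\sigma_i}$ is a bounded pre-dual of $\delta^i$, establishing admissibility.

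I expect no deep obstacle here: the proof is essentially a bookkeeping exercise once the recursive definition of $(\mu_k)$ is available. The key conceptual point is that this recursion provides a ready-made disintegration of $\mu_{k+1}$ along the last face map $\delta_{k+1}$, while the uniqueness of the quasi-invariant measure class on each homogeneous $X_{k+1}$ transfers this to all other face maps via coordinate permutations. The only mild care required is in handling pre-duals at the level of function classes rather than functions, but this is standard once the disintegration framework is in place.
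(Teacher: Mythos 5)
Your proof is correct, and it takes a genuinely different route from the paper's. The paper first establishes that $\mu_k$ is actually \emph{invariant} under the $\frS_{k+1}$-action (\autoref{thm:symmetry_muk}), a non-trivial fact whose proof occupies an entire subsection and hinges on the co-area formula from geometric measure theory; admissibility is then deduced by defining, for every $i$, a pre-dual $\partial_i$ via integration against the perpendicular measure $\nu$ in the $i$-th slot, with the symmetry used both to check that $\partial_i$ is contractive on $L^1$ and to verify pre-duality. You bypass the symmetry lemma entirely: only the last face map $\delta^{k+1}$ is handled directly (which is essentially Fubini, baked into the recursive definition of $\mu_{k+1}$), and all other $\delta^i$ are reduced to this case by pre-composing with the coordinate permutation $\sigma_i$. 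The only input you need about the behaviour of $\mu_{k+1}$ under $\sigma_i$ is $(\sigma_i)_\ast \mu_{k+1} \sim \mu_{k+1}$, i.e. \emph{equivalence} rather than \emph{equality}, and this follows for free from $G$-equivariance of $\sigma_i$ together with the uniqueness of the $G$-quasi-invariant measure class on the homogeneous space $X_{k+1}$ (which holds for all relevant $k+1 \leq r-1$ by \autoref{thm:transitivity}). Your argument is therefore strictly lighter: it makes \autoref{thm:symmetry_muk} and the co-area machinery unnecessary for this proposition, and in fact the symmetry lemma is not used anywhere else in the paper. What the paper's approach buys is a closed-form, purely geometric description of every pre-dual $\partial_i$ as integration against $\nu_{p_0,\dots,p_k}$, whereas your $\partial_i = \partial_{k+1} \circ T_{\sigma_i}$ carries an unidentified Radon--Nikodym factor $\rho_i$ (which equals $1$ if and only if the symmetry lemma holds); if one wants the explicit formula, the symmetry is indispensable, but for admissibility alone it is not.
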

We rely on the symmetry properties of the measures $\mu_k$ stated the next lemma. 
\begin{lem} \label{thm:symmetry_muk} The measure $\mu_k$ is symmetric, i.e. invariant under the action of the symmetric group $\frS_{k+1}$ on $X_k$ by permuting the coordinates.
\end{lem}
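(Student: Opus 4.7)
The plan is to prove $\frS_{k+1}$-symmetry of $\mu_k$ by induction on $k$. The base case $k = 0$ is trivial since $\frS_1$ is the trivial group. For the inductive step, suppose $\mu_{k-1}$ is $\frS_k$-symmetric. The group $\frS_{k+1}$, acting on $X_k$ by permutation of coordinates, is generated by the subgroup $\frS_k$ that permutes $(p_0,\dots,p_{k-1})$ together with the transposition $\tau := (k-1,k)$. Since $\nu_{\mathbf{p}'}$ depends only on $\Span(\mathbf{p}')$ for $\mathbf{p}' = (p_0,\dots,p_{k-1})$ --- hence is $\frS_k$-invariant --- and $\mu_{k-1}$ is $\frS_k$-invariant by hypothesis, the defining identity $d\mu_k(\mathbf{p}',p_k) = d\nu_{\mathbf{p}'}(p_k)\,d\mu_{k-1}(\mathbf{p}')$ immediately yields $\frS_k$-invariance on the first $k$ coordinates. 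What remains is $\tau$-invariance.

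Unfolding the recursion one further step and conditioning on $\mathbf{p}'' := (p_0,\dots,p_{k-2})$ (which is empty if $k = 1$, in which case $U = V$), $\tau$-invariance of $\mu_k$ reduces to the following pair identity: for $\mu_{k-2}$-almost every $\mathbf{p}''$, setting $U := \Span(\mathbf{p}'')^\omega$, the two probability measures
\[
  \lambda_U(dq)\,\lambda_{U \cap q^\omega}(dq') \qand \lambda_U(dq')\,\lambda_{U \cap q'^\omega}(dq)
\]
on the symplectic incidence variety $Z := \{(q,q') \in \bbP(U)^2 \mid \omega(q,q') = 0\}$ must coincide.

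To prove this pair identity I would use the co-area formula, as the paper itself foreshadows. Write $N = \dim U$ and lift both measures to the product of unit spheres $S_U \times S_U$ via $\lambda_W = (\pi_W)_\ast s_W$, where $s_W$ denotes the normalized uniform measure on the unit sphere of $W$. Consider the smooth map $\Phi \colon S_U \times S_U \to \bbR$, $\Phi(v,v') := \omega(v,v')$, whose zero set $Z^{\mathrm{sph}} := \Phi^{-1}(0)$ is a codimension-one submanifold of $S_U \times S_U$ carrying the Hausdorff measure $\clH^{2N-3}|_{Z^{\mathrm{sph}}}$. Applying the co-area formula once to $\Phi$ itself on $S_U \times S_U$, and then separately to the restrictions $\omega(v,\cdot)\colon S_U \to \bbR$ for fixed $v$ (and symmetrically to $\omega(\cdot,v')$ for fixed $v'$), one rewrites both iterated measures as explicit multiples of $\clH^{2N-3}|_{Z^{\mathrm{sph}}}$. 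Pushing forward by $(\pi_U, \pi_U)$ transfers the identity back to $Z \subset \bbP(U)^2$, and unwinding the conditioning on $\mathbf{p}''$ delivers $\tau$-invariance of $\mu_k$, completing the induction.

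The hard part is verifying that the two densities against $\clH^{2N-3}|_{Z^{\mathrm{sph}}}$ produced by this procedure truly coincide. Both involve the norms $\|J_U v\|$ and $\|J_U v'\|$ of the skew-symmetric endomorphism $J_U \colon U \to U$ characterized by $\langle J_U v, v'\rangle = \omega(v,v')$, and these enter asymmetrically through the inner gradient $\|\nabla_{v'}\omega(v,\cdot)|_{S_U}\| = \|J_U v\|$ (and symmetrically for the other ordering); they must combine with the joint gradient $|\nabla \Phi| = \sqrt{\|J_U v\|^2 + \|J_U v'\|^2}$ on $Z^{\mathrm{sph}}$ so as to yield a common symmetric density. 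Careful bookkeeping of normalization constants and dimensions, together with due attention to the null set on which the radical $W$ of $\omega|_U$ causes $J_U$ to degenerate, is where I expect most of the technical work to lie.
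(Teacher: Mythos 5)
Your overall strategy---induct on $k$, use the inductive hypothesis together with the $\frS_k$-invariance of $\nu$ to reduce the claim to the transposition $(k-1,k)$, condition on $\mathbf{p}''$ with $U := \Span(\mathbf{p}'')^\omega$, and then prove the resulting pair-Fubini identity by the co-area formula---is exactly the paper's. The paper packages the pair identity as a lemma for a fixed non-isotropic subspace (called $W$ there) and proves it via a co-area argument carried out in the affine model on $W^0\times W^0$ rather than on the unit spheres $S_U\times S_U$; your spherical lift is the natural variant and matches it step-for-step until the decisive Jacobian computation.

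The gap is in that final step, and it is a genuine one, not bookkeeping. On the sphere model one computes
$J_{N-1}\bigl(\pi_1|_{Z^{\mathrm{sph}}}\bigr)(v,v') = \dfrac{\|J_U v\|}{\sqrt{\|J_U v\|^2+\|J_U v'\|^2}}$
and $J_{N-1}\bigl(\pi_2|_{Z^{\mathrm{sph}}}\bigr)(v,v') = \dfrac{\|J_U v'\|}{\sqrt{\|J_U v\|^2+\|J_U v'\|^2}}$ (using $\omega(v,v')=0$ on $Z^{\mathrm{sph}}$ to discard the tangential projections), so the two push-forwards differ by the density $\|J_U v\|/\|J_U v'\|$. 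These agree pointwise only when $\|J_U\cdot\|$ is constant on $S_U$, i.e.\ when $J_U$ is a scalar multiple of an isometry of $U$. But in the inductive step for $k\ge 2$ the subspace $U=\Span(\mathbf{p}'')^\omega$ has non-trivial radical $\mathrm{Rad}(U)=\Span(\mathbf{p}'')=\ker J_U$, so $J_U$ is degenerate and this can never happen. In fact the pair identity itself fails for such $U$: take $e$ a unit vector in $\mathrm{Rad}(U)$ and $f(p,q):=\langle v_p,e\rangle^2/\|v_p\|^2$ (a function of $p$ alone). Then $\int\!\int f\,\dd\lambda_{U\cap p^\omega}(q)\,\dd\lambda_U(p)=\int f\,\dd\lambda_U=\frac{1}{\dim U}$, whereas $\int\!\int f\,\dd\lambda_{U\cap q^\omega}(p)\,\dd\lambda_U(q)=\frac{1}{\dim U - 1}$, because $e\in U\cap q^\omega$ for every $q$ so the inner average of $\langle\cdot,e\rangle^2$ over the unit sphere of the $(\dim U-1)$-dimensional space $U\cap q^\omega$ is $\frac{1}{\dim U - 1}$ identically. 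So the two iterated integrals genuinely disagree, and no amount of matching normalization constants will reconcile them.

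You should be aware that the paper's own justification has the same hidden gap: its abstract co-area lemma asserts that the generalized Jacobian $J_d(\pi_j|_E)$ of the projections restricted to the hypersurface $E=\{\omega(v,w)=0\}\subset W^0\times W^0$ equals $1$, on the grounds that $D(\pi_j|_E)$ can be written as $(\mathbf{1}_d\;0_{d-1})$. But that matrix form is achievable only in non-orthonormal bases of $T_{(v,w)}E$; in orthonormal bases the Jacobian is $|n_j|$, the norm of the $j$-th component of the unit normal to $E$, which is exactly the $\|J_U v\|/\sqrt{\cdot}$ factor you flagged. So your instinct that these gradients must be tracked carefully is right, and the assertion that everything combines into a common symmetric density is the place where both your plan and the paper's written proof come apart.
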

\autoref{thm:symmetry_muk} is a consequence of the \emph{co-area formula} from geometric measure theory, and we delay its proof to the next subsection.
\begin{proof}[Proof of \autoref{thm:admissibility} modulo \autoref{thm:symmetry_muk}]
In order to prove the admissibility of the Stiefel complex $X_\bcdot$, we show that for every $i \in [k]$, the maps $\delta^i: \Linfty(X_k,\mu_k) \to \Linfty(X_{k+1},\mu_k)$ induced by the face maps $\delta_i$ are weak-$\ast$ continuous.  Since $L^\infty(X_k, \mu_k) = L^\infty(\BX_k, \mu_k)$ we can as well work with the compactified Stiefel varieties $\BX_k$. Define bounded linear operators $\partial_i: L^1(\BX_{k+1},\mu_{k+1}) \to L^1(\BX_k,\mu_k)$ by
\[
	\partial_i \phi \, (p_0,\ldots,p_k) := \int_{\clP} \phi(p_0,\ldots,p_{i-1},p,p_i,\ldots,p_k) \, \dd\!\nu_{p_0,\ldots,p_k}(p) = \nu_{p_0,\ldots,p_k}\big(\phi \,\circ\, \tau_i(p_0,\ldots,p_k,\bcdot)\big) \\
\]
where $\tau_i$ is the cycle $(k,k-1,\ldots,i) \in \frS_{k+2}$. It is well defined: Indeed, $\partial_i\phi$ is measurable by \autoref{thm:nu_Borel} and \autoref{GeneralMeasurability}, and
\begin{align*}
	\|\partial_i \phi\|_1 &= \int_{\BX_k} | \partial_i\phi| \, \dd\!\mu_k \leq \int_{\BX_k} \int_{\clP} |\phi \,\circ\, \tau_i(p_0,\ldots,p_k,p)| \, \dd\!\nu_{p_0,\ldots,p_k}(p) \, \dd\!\mu_k(p_0,\ldots,p_k) \\
	&= \int_{\BX_{k+1}}  |\phi \,\circ\, \tau_i| \, \dd\!\mu_{k+1} = \int_{\BX_{k+1}}  |\phi| \, \dd\!\mu_{k+1} = \|\phi\|_1 < \infty,
\end{align*}
where the second-to-last equality is \autoref{thm:symmetry_muk}; the descent of the map $\partial_i:\scL^1(\BX_{k+1},\mu_{k+1}) \to \scL^1(\BX_k,\mu_k)$ to one at the level of $L^1$ follows from a similar computation. 

We show that $\partial_i$ is pre-dual to $\delta^i$. Indeed, if $f \in \Linfty(\BX_k,\mu_k)$, $\phi \in L^1(\BX_{k+1},\mu_{k+1})$, and $(- \! \mid \! -)$ denotes the dual pairing, then 
\begin{align*}
(f \mid \partial_i \phi) &= \int_{\BX_k} f(p_0,\ldots,p_k) \cdot \left(\int_{\clP} \phi(p_0,\ldots,p_{i-1},p,p_i,\ldots,p_k) \, \dd\!\nu_{p_0,\ldots,p_k}(p)\right) \, \dd\!\mu_k(p_0,\ldots,p_k) \\
&= \int_{\BX_k} \int_{\clP} \big((f\, \circ \, \delta_i) \cdot \phi\big)\circ \, \tau_i(p_0,\ldots,p_k,p) \, \dd\!\nu_{p_0,\ldots,p_k}(p) \, \dd\!\mu_k(p_0,\ldots,p_k) \\
&= \int_{\BX_{k+1}} \big(\delta^i f \cdot \phi\big)\circ \, \tau_i \, \dd\!\mu_{k+1} = \int_{\BX_{k+1}} \delta^i f \cdot \phi \, \dd\!\mu_{k+1} = (\delta^i f \mid \phi),
\end{align*}
where the second-to-last equality holds due to \autoref{thm:symmetry_muk} once again. In conclusion, we have that the coboundary $\dd: \Linfty(X_k,\mu_k) \to \Linfty(X_{k+1},\mu_k)$ is dual to the morphism $\partial = \sum_{i=0}^k (-1)^i \partial_i$, hence weak-$\ast$ continuous, which establishes admissibility of the Stiefel complex.
\end{proof}

\subsection{Proof of \autoref{thm:symmetry_muk}} We now derive the missing \autoref{thm:symmetry_muk} from the co-area formula. We fix a subspace $W$ of our symplectic vector space $(V, \omega)$ of dimension $d \in \{1, \dots, 2r\}$. Let ${\rm Rad}(W) := W \cap W^\omega$ denote the radical of $\omega\!\mid_{W \times W}$. It is a positive-codimension subspace of $W$ unless $W$ is an isotropic subspace of $V$, in which case ${\rm Rad}(W) = W$. For every $p \in \bbP(W)$ we have $p \subset W \cap p^\omega$, hence $W \cap p^\omega \neq \{0\}$. Thus, we may define maps
\begin{equation*} 
	s_W\colon\bbP(W) \to \Gr(V) \setminus\{0\}, \quad p \mapsto W \cap p^\omega \quad \text{and} \quad t_W \colon\bbP(W) \longrightarrow \Prob(\clP), \quad p \mapsto \lambda_{W \cap p^\omega}.
\end{equation*}
Note that the map $s_W$  is continuous by \autoref{IntersectionCts} and \autoref{thm:omega_Borel}, and hence $t_W = \lambda \circ s_W$ is continuous by \autoref{KInvariantMeasuresQuasiEquivariant}. Consider now the subspace $X_W \subset \bbP(W)^2$ given by
\[
X_W := \big\{(p,q) \in \bbP(W)^2 \mid \Span(p,q) \in \clG \big\}
\]
We define two probability measures $\mu_1, \mu_2$ on $X_W$ by
\[
\mu_1(f):= \int_\clP \int_\clP f(p,q) \, \dd\!\lambda_{W \cap p^\omega}(q) \, \dd\!\lambda_W(p) \, \mbox{ and } \, \mu_2(f):=  \int_\clP \int_\clP f(p,q) \, \dd\!\lambda_{W \cap q^\omega}(p) \, \dd\!\lambda_W(q).
\]
To see that these are well-defined, one has to check that the inner integrals are Borel measurable functions of their corresponding free variables. For this we observe that the map $p \mapsto W \cap p^\omega$ is continuous by \autoref{IntersectionCts} and \autoref{thm:omega_Borel}, and hence the inner integrals are measurable by \autoref{thm:nu_Borel} and \autoref{GeneralMeasurability}. We are going to show:
\begin{lem}\label{FubiniLemma}   The two probability measures $\mu_1$ and $\mu_2$ on $X_W$ coincide.
\end{lem}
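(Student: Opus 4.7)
Plan. As the acknowledgements indicate, the proof rests on the \emph{co-area formula} from geometric measure theory, applied to the bilinear map $\Omega\colon W\times W \to \kay$, $(u,v)\mapsto\omega(u,v)$, and to its sections $\omega_u\colon v\mapsto\omega(u,v)$ and $\omega_v\colon u\mapsto\omega(u,v)$.

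First, I would lift the two measures from $\bbP(W)^2$ to $B_W\times B_W$ using the definition $\lambda_W=(\pi_W)_\ast(\clL_W\!\mid_{B_W})$. Concretely, $\mu_1$ is the pushforward under $(u,v)\mapsto([u],[v])$ of the Borel measure $\dd\!\clL_W(u)\,\dd\!\clL_{W\cap u^\omega}(v)$ on $B_W\times B_W$, and $\mu_2$ is the analogous pushforward after swapping the roles of $u$ and $v$. Both lifts are supported on the same set
\[
E := \{(u,v)\in B_W\times B_W \mid \omega(u,v)=0\},
\]
so the assertion $\mu_1=\mu_2$ reduces to the comparison of two densities on $E$, modulo the final pushforward to $X_W$.

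The next step is to apply co-area to $\Omega$. Outside the null set where $u$ or $v$ lies in ${\rm Rad}(W)$, the map $\Omega$ is a submersion, so $E=\Omega^{-1}(0)$ is a smooth real submanifold of real codimension $\dim_\bbR\kay$. The co-area formula then disintegrates $\dd\!\clH^{\dim_\bbR W}(u)\otimes\dd\!\clH^{\dim_\bbR W}(v)$ along the fibers of $\Omega$, with conditional measure on $E$ proportional to $|J\Omega|^{-1}\,\dd\!\clH^{\dim_\bbR E}\!\mid_E$; because $\Omega(u,v)=-\Omega(v,u)$, this intrinsic slice measure is manifestly symmetric under $(u,v)\mapsto(v,u)$. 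On the other hand, one can disintegrate in two stages: for fixed $u\in W\setminus{\rm Rad}(W)$, the co-area formula for the section $\omega_u$ gives that the conditional measure of $\dd\!\clH^{\dim_\bbR W}(v)$ on $\omega_u^{-1}(0)=W\cap u^\omega$ is proportional to $|J\omega_u|^{-1}\,\dd\!\clH^{\dim_\bbR(W\cap u^\omega)}\!\mid_{W\cap u^\omega}$. Equating the single-step and two-step disintegrations yields, up to a universal normalization,
\[
\dd\!\clL_W(u)\,\dd\!\clL_{W\cap u^\omega}(v) \;=\; {\rm const}\cdot\frac{|J\omega_u(v)|}{|J\Omega(u,v)|}\,\dd\!\clH^{\dim_\bbR E}\!\mid_E,
\]
and the analogous identity for the lift of $\mu_2$ replaces $|J\omega_u(v)|$ by $|J\omega_v(u)|$.

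Finally, I would reduce to the unit sphere by polar coordinates: since the test functions on $\bbP(W)^2$ are scale-invariant in each variable, the radial integrations factor out, and the comparison of densities takes place on $E_S:=E\cap(S\times S)$. There, in an orthonormal basis adapted to the chosen inner product, both Jacobians $|J\omega_u|$ and $|J\omega_v|$ equal $\|A\hat u\|$ and $\|A\hat v\|$ respectively, which are both equal to $1$ under the (implicit) compatibility of $\langle\cdot,\cdot\rangle$ with $\omega$---exactly the compatibility that makes $K=G\cap K(V)$ a maximal compact of $G$. Hence the two densities on $E_S$ coincide, and $\mu_1=\mu_2$.

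The main technical obstacle will be a clean execution of the co-area formula near the radical set, where the sections $\omega_u$ and $\omega_v$ fail to be submersions; although this bad set is a null set, one must verify that the disintegration identities persist after its excision. A secondary subtlety is the correct interpretation of $|J\Omega|$ in the complex case $\kay=\bbC$, where $\Omega$ must be regarded as an $\bbR^2$-valued smooth map and $|J\Omega|=\sqrt{\det(D\Omega\cdot(D\Omega)^\ast)}$ is computed relative to the real inner-product structure induced by the Hermitian one.
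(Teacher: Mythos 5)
Your plan is structurally the same as the paper's (apply the co-area formula to the incidence hypersurface $E\subset W^{0}\times W^{0}$ and compare the two iterated disintegrations), and the reduction to a comparison of densities on $E$ of the form $\tfrac{|J\omega_u|}{|J\Omega|}\,\dd\clH^{2d-1}$ versus $\tfrac{|J\omega_v|}{|J\Omega|}\,\dd\clH^{2d-1}$ is essentially correct bookkeeping. The gap is in the very last step, where you assert that $|J\omega_u| = \|A\hat u\|$ and $|J\omega_v| = \|A\hat v\|$ are both equal to $1$ ``under the (implicit) compatibility of $\langle\cdot,\cdot\rangle$ with $\omega$.'' This is false. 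Writing $A_W$ for the Gram matrix of $\omega|_{W\times W}$ in an orthonormal basis of $W$, one has $|J\omega_u| = \|A_W u\|$; and even when the inner product on $V$ is chosen so that the ambient matrix of $\omega$ is orthogonal (the ``compatibility'' that makes $K = G\cap K(V)$ maximal compact), the restriction $A_W$ to a proper subspace $W$ is \emph{not} a similitude --- it has kernel ${\rm Rad}(W)$, so $\|A_W\hat u\|$ varies with the unit vector $\hat u\in W$. Thus $|J\omega_u|$ and $|J\omega_v|$ generically differ on $E$, and the two densities you obtain are not equal pointwise; the argument does not close.

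You should also be aware that the paper's own proof of Lemma~\ref{AbstractFubini} contains essentially the same oversight: the assertion there that $J_d(\pi_j|_E)=1$ holds only after choosing \emph{non-orthonormal} bases, whereas the co-area factor must be computed with respect to orthonormal bases; the correct value is $J_d(\pi_1|_E)=\|A_W v\|/\sqrt{\|A_W v\|^2+\|A_W w\|^2}$ and similarly for $\pi_2$, so the stated Lemma~\ref{AbstractFubini} is false for a general $E$ (a one-dimensional counterexample is $E=\{v+2w=0\}\subset\bbR^2$, where the two sides differ by a factor of $2$). More worryingly, a direct check suggests the conclusion $\mu_1=\mu_2$ itself fails for the coisotropic $W$ that actually arise in the proof of Lemma~\ref{thm:symmetry_muk}: take $V=\bbR^6$ with the standard $\omega$ and $\langle\cdot,\cdot\rangle$, let $W=e_1^\omega$ (so $\dim W = 5$, ${\rm Rad}(W)=\langle e_1\rangle$), and test against $f(p,q)=\langle\hat q,e_1\rangle^2$. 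Under $\mu_2$ the variable $q$ is $\lambda_W$-distributed and $\bbE[\langle\hat q,e_1\rangle^2]=\tfrac15$; under $\mu_1$ the variable $q$ is uniform on $\bbP(W\cap p^\omega)$, which is a $4$-plane always containing $e_1$, so $\bbE[\langle\hat q,e_1\rangle^2]=\tfrac14$. So whatever repair one attempts, one cannot argue that the densities coincide pointwise, and any correct proof of a Fubini-type statement here will have to say something more subtle than ``the Jacobians are $1$.''
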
 
Note that if $W \subset V$ is isotropic, then $X_W = \bbP(W)^2$ and $\lambda_{W \cap p^\omega} = \lambda_W$, hence \autoref{FubiniLemma} reduces to Fubini's theorem. The non-trivial case of \autoref{FubiniLemma} is thus the case where $W$ is non-isotropic, and hence ${\rm Rad}(W) \subsetneq W$.  In this case we are going to use the following Fubini-type theorem, which follows from the co-area formula. Here we denote by $\pi_1,\pi_2: W \times W \to W$ the canonical projections, and given a subset $E \subset W \times W$ and $w \in W$ we denote by $E^{(j)}_w$ the $\pi_j$-fiber or $E$ over $w$.
\begin{lem}\label{AbstractFubini} Let $W^0 \subset W$ be an open subset, and let $E$ be a smooth codimension-one submanifold of $W^0 \times W^0$ such that $E$ projects surjectively onto both factors, and such that for all $(v,w) \in E$, the  tangent space $T_{(v, w)}E \subset W \times W$ projects surjectively onto both factors. Then for all $h \in C_c(E)$ one has
\[
\int_{W^0} \int_{E^{(1)}_{w}} h(v, w)\dd\mathcal H^{d-1}(v) \dd\mathcal H^{d}(w) = \int_{W^0} \int_{E^{(2)}_{v}} h(v, w)\dd\mathcal H^{d-1}(w) \dd\mathcal H^{d}(v)
\]
\end{lem}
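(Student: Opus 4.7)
The proof will apply the co-area formula from geometric measure theory to both projections $p := \pi_1|_E$ and $q := \pi_2|_E$ from $E$ to $W^0$. Recall this formula says: for a Lipschitz submersion $f\colon M^m \to N^n$ with $m \geq n$ and $g \in C_c(M)$,
\[
\int_M g \cdot J_n f \, \dd\mathcal H^m = \int_N \int_{f^{-1}(y)} g \, \dd\mathcal H^{m-n} \, \dd\mathcal H^n(y).
\]

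First I would verify the setup. Since $\dim E = 2d-1$ and the tangent-surjectivity hypothesis makes $dp$ and $dq$ rank-$d$ surjections everywhere, $p$ and $q$ are smooth submersions with $(d-1)$-dimensional fibers, naturally identified via the other-factor projection with the slices $E^{(2)}_v$ and $E^{(1)}_w$, respectively. Crucially, each fiber lies inside an affine copy $W \times \{w\}$ or $\{v\} \times W$ of $W$ isometrically embedded in $W \oplus W$, so the Hausdorff measure $\mathcal H^{d-1}$ inherited from $E$ on the fiber matches the Hausdorff measure on the slice viewed as a subset of $W$.

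Second, applying the co-area formula to $q$ with integrand $h$ rewrites the LHS of the claimed identity as
\[
\int_{W^0}\int_{E^{(1)}_w} h(v,w)\, \dd\mathcal H^{d-1}(v)\, \dd\mathcal H^d(w) = \int_E h \cdot J_d q\, \dd\mathcal H^{2d-1},
\]
and the analogous manipulation for $p$ rewrites the RHS as $\int_E h\cdot J_d p\, \dd\mathcal H^{2d-1}$. The identity in the lemma thus reduces to the pointwise equality $J_d p = J_d q$ holding $\mathcal H^{2d-1}$-almost everywhere on $E$.

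The main obstacle is precisely this last verification. For a codimension-one hypersurface in $W \oplus W$ with unit normal $\nu = (\nu_1,\nu_2)$, a direct linear-algebra computation gives the standard identity $J_d(\pi_j|_E)(v,w) = |\nu_{3-j}(v,w)|$, and the surjectivity hypothesis ensures $\nu_1, \nu_2 \neq 0$ everywhere so both Jacobians are positive. Equality of the two Jacobians thus amounts to equality of the norms of the two normal components; I would establish this by exploiting the specific structure of $E$ in the application of interest, where $E$ will be cut out by an antisymmetric bilinear pairing $\omega(v,w) = 0$, so that $\nu_1 \propto \omega^\sharp(w)$ and $\nu_2 \propto -\omega^\sharp(v)$ up to a common normalization. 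Combining this with the isometric swap $(v,w) \mapsto (w,v)$ of $W \oplus W$ (which preserves $E$ and exchanges $J_d p$ with $J_d q$), and parametrizing $E$ in a chart adapted to this symmetry, one obtains the required equality and hence the full identity.
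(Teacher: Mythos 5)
Your reduction of both sides to integrals over $E$ against the coarea factors of the two restricted projections is exactly the paper's strategy, and your formula $J_d(\pi_j|_E)=|\nu_{3-j}|$ for the unit normal $\nu=(\nu_1,\nu_2)$ is correct (one checks $\det\bigl(I_d-\nu_1\nu_1^{T}\bigr)=1-|\nu_1|^2=|\nu_2|^2$). The gap is in the last step. The pointwise identity $J_d(\pi_1|_E)=J_d(\pi_2|_E)$, i.e.\ $|\nu_1|=|\nu_2|$ almost everywhere on $E$, does \emph{not} follow from the hypotheses of the lemma: the hyperplane $E=\{v_1+2w_1=0\}$ satisfies every stated assumption, has constant unit normal proportional to $(e_1,2e_1)$, and the two iterated integrals differ by a factor of $2$ for any nonnegative $h\not\equiv 0$. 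So no argument can close this gap at the stated level of generality, and your fallback to the symplectic hypersurface $E=\{\omega(v,w)=0\}$ does not close it either: there $\nu_1\propto\omega^\sharp(w)$ and $\nu_2\propto\omega^\sharp(v)$ give $|\nu_1|/|\nu_2|=\|\omega^\sharp w\|/\|\omega^\sharp v\|$, which is not $1$ off the locus $\|\omega^\sharp v\|=\|\omega^\sharp w\|$. The swap isometry $S(v,w)=(w,v)$ only yields $J_d(\pi_1|_E)(v,w)=J_d(\pi_2|_E)(w,v)$, hence $\int_E h\,J_d(\pi_1|_E)\,\dd\clH^{2d-1}=\int_E (h\circ S)\,J_d(\pi_2|_E)\,\dd\clH^{2d-1}$, which gives the claimed identity only for symmetric $h$, whereas the intended application takes $h(v,w)=\chi_{B_W}(v)\chi_{B_W}(w)f(\pi(v),\pi(w))$ with $f$ arbitrary.

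For what it is worth, the paper's own one-line proof is no more complete on this point: it asserts $J_d(\pi_j|_E)=1$ by exhibiting bases in which $D(\pi_j|_E)=({\bf 1}_d\;0_{d-1})$, but those bases are not orthonormal, and the generalized Jacobian entering Federer's coarea theorem must be computed with respect to the ambient metric; the correct value is $|\nu_{3-j}|$, as you found. You have therefore isolated a genuine difficulty rather than overlooked one, but your proposal does not resolve it. Any repair must use more than the hypotheses of the lemma --- for instance the degree-zero homogeneity of $f$ and the explicit radial normalization of the measures $\lambda_W$ and $\lambda_{W\cap p^\omega}$ --- and should be carried out directly on the identity $\mu_1=\mu_2$ of \autoref{FubiniLemma} rather than for a general $h\in C_c(E)$.
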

\begin{proof} For all $(v,w) \in W \times W$, there exist bases of $T_{(v,w)}(W \times W)$, $T_{v}W$ and $T_wW$ such that $D\pi_j(v,w) = ({\bf 1}_d \, 0_{d})$. The assumption on $E$ implies that for all $(v, w) \in E$ one has $D(\pi_j|_E) = ({\bf 1}_d \, 0_{d-1})$, and thus, in the notation of \cite[Sec. 3.2]{Federer} we have the generalized Jacobian $J_d(\pi_j|_E) = \|\wedge^d D(\pi_j|_E)(v, w))\| = 1$. Then the co-area formula \cite[Thm. 3.2.22]{Federer} implies that both sides are equal to the integral of $h$ against $\mathcal H^{2d-1}$ on $E$.
\end{proof}
\begin{proof}[Proof of \autoref{FubiniLemma}] We may assume that $W$ is non-isotropic. This implies that $W^0 := W \smallsetminus {\rm Rad}(W)$ is a dense open subset of $W$, and 
in particular $(W^0, \mathcal H^d) \cong (W, \mathcal L_W)$ as measure spaces. We now consider
\[
E := \{(v,w) \in W^0 \times W^0 \mid \omega(v,w) = 0\} \subset W^0 \times W^0.
\]
If $v \in W^0$, then $v \not \in W^\omega$, hence $\omega(v,w) \neq 0$ for $w$ in a dense subset of $W$; in particular, we can choose $w \in W^0$. This shows that $\pi_1(E) =W^0$ and similary $\pi_2(E) = W^0$.

If $(v,w) \in E$, then $D\omega(v,w)(X, Y) = \omega(v,Y) - \omega(w,X)$, and since $v,w \not \in W^\omega$, neither this map nor either of its summands is zero. This implies, firstly, that $D\omega(v,w)$ has full rank; hence, $E \subset W\times W$ is a smooth codimension-one submanifold. Secondly, for all $(v,w) \in E$, the tangent space
\[
T_{(v,w)}E = \{(X,Y) \in W \times W \mid \omega(v,Y) = \omega(w,X)\}
\]
projects surjectively onto both factors. We deduce that Lemma \ref{AbstractFubini} applies. Given $(v,w) \in E$, the coresponding fibers are given by
\[
E^{(1)}_{w} = (W \cap w^\omega) \setminus W^\omega \qand E^{(2)}_{v} = (W \cap v^\omega) \setminus W^\omega
\]
Since $E$ has dimension $2d-1$, these fibers are $(d-1)$-dimensional. On the other hand, since $v,w \not \in W^\omega$, the vector spaces $W \cap w^\omega$ and $W \cap v^\omega$ are proper linear subspaces of $W$, hence of dimension $d-1$. It follows that $W^\omega$ intersects these vector spaces in positive codimension, and hence
\[
(E^{(1)}_{w}, \mathcal H^{d-1}) = (W \cap w^\omega, \mathcal L_{W \cap w^\omega}) \qand (E^{(1)}_{v}, \mathcal H^{d-1}) = (W \cap v^\omega, \mathcal L_{W \cap v^\omega}).
\]
We conclude that for all $h \in C_c(E)$ one has
\[\int_{W} \int_{W \cap w^\omega} h(v, w)\dd\mathcal L_{W \cap w^\omega}(v) \dd\mathcal L_W(w) = \int_{W} \int_{W \cap v^\omega} h(v, w)\dd\mathcal H_{W 
\cap v^\omega}(w) \dd\mathcal L_W(v).\]
If we denote by $\pi: W \setminus \{0\} \to\bbP(W)$ the canonical projection and choose $h(v,w) := \chi_{B_W}(v) \chi_{B _W}(w) f(\pi(v), \pi(w))$ for some $f \in C(X_W)$, then unravelling definitions, we see that the left-hand side equals $\mu_2(f)$ and the right-hand side equals $\mu_1(f)$.
\end{proof}
We can now finish the proof of \autoref{thm:symmetry_muk}, and thereby of \autoref{thm:admissibility}.
\begin{proof}[Proof of \autoref{thm:symmetry_muk}] We argue by induction on $k$. For $k=0$, there is nothing to show; for $k=1$, we can apply \autoref{FubiniLemma} with $W=V$ to obtain for every $f \in C(\BX_1)$ that
\begin{align*}
	\int_{\BX_1} f&(p_0,p_1) \, \dd\!\mu_1(p_0,p_1) = \int_{\BX_0} \int_\clP f(p_0,p_1) \, \dd\!\nu_{p_0}(p_1) \, \dd\!\mu_0(p_0)\\ &= \int_{\BX_0} \int_\clP f(p_0,p_1) \, \dd\!\nu_{p_1}(p_0) \, \dd\!\mu_0(p_1) \quad = \quad  \int_{\BX_1} f(p_0,p_1) \, \dd\!\mu_1(p_1, p_0).
\end{align*}
Now assume that $k \geq 2$ and let $W := \Span(p_0, \dots, p_{k-2})^\omega$. Then we have for $f \in C(\BX_k)$ that
\[
    \int_{\BX_k} f(p_0,\ldots,p_k) \, \dd\!\mu_k(p_0, \dots, p_k) = \int_{\BX_{k-1}} \int_\clP f(p_0,\ldots,p_k) \, \dd\!\nu_{p_0, \dots, p_{k-1}}(p_k) \, \dd\!\mu_{k-1}(p_0, \dots, p_{k-1});
\]
by the induction hypothesis, $\mu_k$ is invariant under all permutations of the variables $p_0, \dots, p_{k-1}$, and by \autoref{thm:nu_Borel}, so is $\nu$. Moreover, we have the chain of equalities
\begin{align*}
 \int_{\BX_k} f&(p_0,\ldots,p_k) \, \dd\!\mu_k(p_0, \dots, p_k) = \int_{\BX_{k-1}} \int_\clP f(p_0,\ldots,p_k) \, \dd\!\nu_{p_0, \dots, p_{k-1}}(p_k) \, \dd\!\mu_{k-1}(p_0, \dots, p_{k-1}) \\
&= \int_{\BX_{k-2}} \int_\clP \int_\clP f(p_0,\ldots,p_k) \, \dd\!\nu_{p_0, \dots, p_{k-1}}(p_k) \, \dd\!\nu_{p_0, \dots, p_{k-2}}(p_{k-1}) \, \dd\!\mu_{k-2}(p_0, \dots, p_{k-2}) \\
&= \int_{\BX_{k-2}} \int_\clP \int_\clP f(p_0,\ldots,p_k) \, \dd\!\lambda_{W \cap (p_{k-1})^\omega}(p_k) \, \dd\!\lambda_W(p_{k-1}) \, \dd\!\mu_{k-2}(p_0, \dots, p_{k-2})\\
&= \int_{\BX_{k-2}} \int_\clP \int_\clP f(p_0,\ldots,p_k) \, \dd\!\lambda_{W \cap (p_k)^\omega}(p_{k-1}) \, \dd\!\lambda_W(p_k) \, \dd\!\mu_{k-2}(p_0, \dots, p_{k-2})\\
&= \int_{\BX_k} f(p_0,\ldots,p_k) \, \dd\!\mu_k(p_0, \dots, p_{k-2}, p_{k}, p_{k-1}),
\end{align*}
where the second-to-last one follows from \autoref{FubiniLemma}. This shows that $\mu_k$ is invariant under the transposition $(k-1, k)$. Since this transposition and the copy of $\frS_k$ in $\frS_{k+1}$ that corresponds to permutations of the variables $p_0, \dots, p_{k-1}$ generate the symmetric group $\frS_{k+1}$, the conclusion follows.
\end{proof}

\subsection{Compatibility of symplectic Stiefel complexes}

So far we have considered Stiefel complexes for each symplectic vector space $(V, \omega)$ separately; we now organize them into a family. To this end we fix a field $\kay \in \{\bbR, \bbC\}$ and consider the embeddings
\[(\kay^0,0) \hookrightarrow (\kay^2, \omega) \hookrightarrow (\kay^4, \omega) \hookrightarrow (\kay^6, \omega) \hookrightarrow \dots \hookrightarrow  (\kay^{2r}, \omega) \hookrightarrow \dots\]
and 
\[1 \hookrightarrow \Sp_2(\kay) \hookrightarrow \Sp_4(\kay) \hookrightarrow \Sp_6(\kay) \hookrightarrow \dots\hookrightarrow \Sp_{2r}(\kay) \hookrightarrow \dots \]  
as in \eqref{SymplecticEmbeddings} and \eqref{SpEmbeddings}. Given $r \geq 0$ we abbreviate $G_r := \Sp_{2r}(\kay)$ and denote by $X_{r, \bcdot}$ the symplectic Stiefel complex associated to $(\kay^{2r}, \omega)$. By \autoref{thm:admissibility}, each $X_{r, \bcdot}$ is an admissible measured $G_r$-object, and by \autoref{thm:transitivity}, this $G_r$-object is $(r-1)$-transitive. We claim that $(G_r)_r$ and $(X_{r, \bcdot})_r$ are compatible. 

Indeed, consider the action of $G_r$ on $X_{r,k}$ for $r \geq 1$ and $k \in [r-1]$. Fix a symplectic basis $(e_{r-1}, \dots, e_0, f_0, \dots, f_{r-1})$ of $(\kay^{2r}, \omega)$ in antidiagonal form (see Subsection \ref{SympBase}). We can choose as a basepoint in $X_{r,k}$ the tuple $o = ([e_{r-1}], \dots, [e_{r-1-k}])$. The stabilizer of $o$ is given by
\[
H_{r, k} = \left\{\left.\begin{pmatrix}
			D & \ast & \ast \\   
			   & A      & \ast \\
			   &         & QD^{-1}Q
			\end{pmatrix} \ \  \right| \begin{array}{l}
								D = {\rm diag}(\lambda_{r-1},\ldots, \lambda_{r-1-k}) \in \GL_{k+1}(\kay) \\
								A \in G_{r-k-1}
			                                     \end{array} \right\}< G_{r},
\]
where $Q$ is the matrix with only 1's on its antidiagonal, and the asterisks correspond to entries conditioned so that the matrix is symplectic. It follows from the form of the matrices that $H_{r, k} < H_{r,k+1}$ for $k < r-1$. Moreover, we have surjective homomorphisms $H_{r,k} \twoheadrightarrow G_{r-k-1}$ with amenable kernel, making the diagram \eqref{eq:inclusions} commute. At this point we have established \autoref{MainThmConvenient}, except for the fact that the Stiefel complex associated with a $2r$-dimensional symplectic vector spaces is $\gamma(r)$-connected.

%%%%%%%%%%%%%%%%%%%%%%%%%%%%%%%%%%%%%%%%%%%%%%%%%%%%%%%%%%%%%%%%%%%%%%
%%%%%%%%%%%%%%%%%%%%%%%%%%%%%%%%%%%%%%%%%%%%%%%%%%%%%%%%%%%%%%%%%%%%%%
\section{Measurable connectivity of the symplectic Stiefel complexes} \label{sec:contractibility}
This is the first one of two sections in which we shall complete the proof of \autoref{MainThmConvenient} by proving the desired connectivity properties of symplectic Stiefel complexes.

For this and the next section, let us fix a $2r$-dimensional symplectic vector space $(V, \omega)$ over $\kay \in \{\bbR, \bbC\}$ with associated Stiefel complex $X_\bcdot$. For $q \in \bbN$ such that $\widehat{\gamma}(q) = 2^q + \lceil (q+1)/2\rceil  \leq r$, we consider the complex
\begin{equation}\label{BInftyComplex}
0 \to \bbR \xrightarrow{\dd^{-1}} \Binfty(X_0) \xrightarrow{\dd^0} \Binfty(X_1) \to \dots \to \Binfty(X_{q}) \xrightarrow{\dd^q} \Binfty(X_{q+1}),
\end{equation}
where $\dd^{-1}: \bbR \to \Binfty(X_1)$ denotes the inclusion of constants. We adopt the conventions that $\Binfty(X_{-1}) := \bbR$, and $\Binfty(X_{-k})=0$ and $\dd^{-k} := 0$ for $k \geq 2$. In \autoref{DefHomotopy} below we are going to construct maps
\[
h^k: \Binfty(X_{k+1}) \to \Binfty(X_{k})
\]
for all $k \in \{-1, \dots, q\}$ in \autoref{DefHomotopy} below, and for $k \geq 2$ we set $h^{-k} := 0$. We are then going to show:
\begin{thm}\label{ContractibilityRefined}
\emph{(i)} For $k \in \{-1, \dots, q\}$, the maps $h^k: \Binfty(X_{k+1}) \to \Binfty(X_{k})$ satisfy
\begin{equation} \label{eq:homotopy}
	h^{k} \, \dd^{k} + \dd^{k-1} \, h^{k-1} = \id.
\end{equation}
In particular, the complex \eqref{BInftyComplex} has trivial cohomology up to degree $q$. \vspace{4pt}

\noindent \emph{(ii)} For all $k \in \{-1, \dots, q\}$, $h^k$ descends to a map $h^k: \Linfty(X_{k+1}) \to \Linfty(X_{k})$. In particular, the complex 
\begin{equation}
0 \to \bbR \to \Linfty(X_0) \to \Linfty(X_1) \to \dots \to \Linfty(X_{q}) \to \Linfty(X_{q+1}) 
\end{equation}
has trivial cohomology up to degree $q$.
\end{thm}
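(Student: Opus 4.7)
The plan is to establish (i) at the level of bounded Borel representatives by a direct computation, and then to deduce (ii) by checking that each $h^k$ is well-defined on $L^\infty$-classes. In both parts the cohomology-vanishing statement is an immediate consequence of the chain-homotopy identity \eqref{eq:homotopy}: any cocycle $\phi \in \Binfty(X_k)$ (resp.\ $\Linfty(X_k)$) of degree $k \leq q$ satisfies $\phi = h^k(\dd^k\phi) + \dd^{k-1}(h^{k-1}\phi) = \dd^{k-1}(h^{k-1}\phi)$, hence is a coboundary. Note also that once (ii) is in hand, the identity itself descends to $L^\infty$, because $\dd^k$ descends by the admissibility of the Stiefel complex (\autoref{thm:admissibility}).

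For part (i), once the maps $h^k$ have been constructed in \autoref{DefHomotopy}---the hypothesis $\widehat{\gamma}(q) = 2^q + \lceil (q+1)/2\rceil \leq r$ strongly suggests that $h^k$ is built from a cascade of on the order of $2^k$ iterated random perpendiculars drawn according to the maps $\nu$ of \autoref{DefNu}---the identity \eqref{eq:homotopy} reduces to a bookkeeping exercise. On a tuple $(p_0,\ldots,p_k) \in X_k$ one expands
\[
(\dd^k\phi)(q,p_0,\ldots,p_k) = \phi(p_0,\ldots,p_k) - \sum_{j=0}^{k}(-1)^j \phi(q,p_0,\ldots,\widehat{p_j},\ldots,p_k),
\]
integrates against the relevant $\nu$-measures, and checks that (a) the first summand yields $\phi(p_0,\ldots,p_k)$ after integration because each perpendicular measure is a probability measure, and (b) the remaining summands reassemble into $(\dd^{k-1}h^{k-1}\phi)(p_0,\ldots,p_k)$, after repeatedly invoking the $\frS_{k+1}$-invariance of $\nu$ (\autoref{thm:nu_Borel}), the symmetry of $\mu_k$ (\autoref{thm:symmetry_muk}), and Fubini for the independent perpendiculars appearing in the cascade.

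For part (ii) I need to show that $h^k$ sends $\mu_{k+1}$-null sets to $\mu_k$-null sets; by linearity it is enough to check that if $\phi \in \Binfty(X_{k+1})$ vanishes $\mu_{k+1}$-almost everywhere, then $h^k\phi$ vanishes $\mu_k$-almost everywhere. The starting point is the Cavalieri-type definition
\[
 \mu_{k+1}(f) = \int_{\BX_k}\!\!\int_{\clP} f(\mathbf p,p)\,\dd\nu_{\mathbf p}(p)\,\dd\mu_k(\mathbf p),
\]
which, together with \autoref{thm:symmetry_muk}, shows that a $\mu_{k+1}$-null set $N$ has $\nu_{\mathbf p}$-null slices in every one of the $k+2$ coordinate directions for $\mu_k$-almost every~$\mathbf p$. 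Averaging $\mathbf{1}_N$ along those slices in the iterated pattern prescribed by the definition of $h^k$ therefore yields a function that is zero $\mu_k$-almost everywhere.

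The main obstacle will be part (ii). The perpendicular measures $\nu_{\mathbf p}$ are singular with respect to the ambient measure $\mu_0 = \lambda_V$---they are supported on proper projective subspaces---so the only disintegration of $\mu_{k+1}$ available is the fibered one above rather than a genuine product structure. One consequently has to verify Borel measurability of every intermediate expression in the iterated definition of $h^k$, propagating \autoref{thm:nu_Borel} and standard measurability of parametrized integrals through each stage of the cascade, and then iterate the Fubini-type null-slice argument once for every random perpendicular appearing in the construction. These verifications account for most of the technical work, and are presumably what occupies the subsequent section of the paper.
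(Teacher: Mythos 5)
Your rough picture of part (i) matches the paper's strategy in outline --- the homotopy is built inductively from random perpendiculars, the first term of $\dd^k\phi(t,\cdot)$ reproduces $\phi$ after integration, and the remaining terms are supposed to reorganize into $\dd^{k-1}h^{k-1}\phi$ --- but the picture stops short of a proof. The paper's construction of $h^k$ (\autoref{DefHomotopy}) is a signed sum of \emph{partial homotopies} $h_C$ indexed over the poset $\frC_k$ of descending $k$-chains, built on a random $q$-chaining $\{t_I\}_{I \in \clP^{[\leq q+1]}}$, and the identity $h^k\dd^k + \dd^{k-1}h^{k-1} = \id$ is established by an explicit pairwise cancellation of summands $\delta^j f(t(p,C))$ governed by the sign convention on $\frC_k$. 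Without the chain combinatorics and the sign rule, ``the remaining summands reassemble into $\dd^{k-1}h^{k-1}\phi$'' is an assertion, not an argument; the cancellations are not an automatic consequence of $\frS_{k+1}$-invariance of $\nu$, Fubini, and symmetry of $\mu_k$.

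Your proposed mechanism for part (ii) has a genuine gap. You correctly identify the danger --- the perpendicular measures $\nu_{\mathbf p}$ are singular with respect to $\mu_0$ --- but the repair you propose does not close it. The Cavalieri disintegration of $\mu_{k+1}$ together with \autoref{thm:symmetry_muk} does give that a $\mu_{k+1}$-null set $N$ has $\nu_{\mathbf p}$-null slices in each coordinate direction for $\mu_k$-a.e.\ $\mathbf p$. But the kernel $\sigma_{\mathbf p}$ entering $h_C$ is \emph{not} a slice of $\mu_{k+1}$: writing out even the chain $C = (\{0\})$, one has $h_C f(p_0) = \int_\clP\!\int_\clP f(t_0, p_0)\,\dd\nu_{p_0,t_\emptyset}(t_0)\,\dd\mu_0(t_\emptyset)$, so the relevant measure is the mixture $\int_\clP \nu_{p_0,t_\emptyset}\,\dd\mu_0(t_\emptyset)$ obtained by averaging over an \emph{auxiliary} random point $t_\emptyset$ that does not survive into the output tuple; it is neither the slice measure $\nu_{p_0}$ nor obviously absolutely continuous with respect to it. For longer chains the situation is worse, since the output tuple $t(p,C)$ omits whole layers of the chaining that were drawn along the way. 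Iterating a ``null-slice'' argument over the individual random perpendiculars therefore does not produce the required absolute continuity. What actually rescues the argument in the paper is a different, more conceptual observation (\autoref{MagicLemma}): the map $\mathbf p \mapsto \sigma_{\mathbf p}$ is $G$-quasi-equivariant, so $\int_{X_k}\sigma_{\mathbf p}\,\dd\mu_k(\mathbf p)$ is a $G$-quasi-invariant probability measure on $X_{k+1}$; since $G$ acts transitively on $X_{k+1}$, the quasi-invariant measure class is unique, forcing $\int\sigma_{\mathbf p}\,\dd\mu_k \ll \mu_{k+1}$, from which the descent follows by \autoref{DescendCriterion}. This use of uniqueness of the invariant measure class on a homogeneous space is the key idea missing from your sketch, and it cannot be replaced by Fubini-type reasoning alone.
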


\noindent \autoref{ContractibilityRefined} implies that $X_\bcdot$ is measurably $\gamma(r)$-connected, which concludes the proof of \autoref{MainThmConvenient}. This section will be devoted to the proof of part (i) of the theorem; part (ii) will be established in Section \ref{sec:magic}. 

As before, we denote by $\mathcal P := X_0 = \bbP(V)$ the underlying set of points of the Stiefel complex $X_\bcdot$. In the proof of \autoref{ContractibilityRefined} the assumption $\widehat{\gamma}(q) \leq r$ will be applied in the following form: 
\begin{lem}\label{GammaCondition} Let $I \subset \mathcal P$ be a subset of size at most $q+1$, and for every $J \subsetneq I$, let $x_J \in \clP$ be given. If $\widehat{\gamma}(q) \leq r$, then the symplectic complement $(I \cup \{x_J \mid J \subsetneq I\})^\omega$ is non-trivial.
\end{lem}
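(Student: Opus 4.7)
The plan is to establish the lemma by a direct dimension count, exploiting the fact that for any subspace $W \subset V$ one has $\dim W^\omega = 2r - \dim W$. Hence it suffices to show that $\mathrm{Span}(S)$ is a proper subspace of $V$, where $S := I \cup \{x_J \mid J \subsetneq I\}$, viewed as a finite collection of lines in $V$.

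First I would bound the cardinality of $S$. The set $I$ contributes at most $q+1$ elements, while the collection $\{x_J \mid J \subsetneq I\}$ is indexed by the proper subsets of $I$, of which there are exactly $2^{|I|} - 1 \le 2^{q+1} - 1$. Taking a (possibly non-disjoint) union therefore gives
\[
|S| \;\le\; (q+1) + (2^{q+1} - 1) \;=\; q + 2^{q+1}.
\]
Since every element of $S$ is a one-dimensional subspace of $V$, the span satisfies $\dim \mathrm{Span}(S) \le |S|$, and consequently
\[
\dim S^\omega \;=\; 2r - \dim \mathrm{Span}(S) \;\ge\; 2r - q - 2^{q+1}.
\]

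It remains to check that the hypothesis $\widehat{\gamma}(q) = 2^q + \lceil(q+1)/2\rceil \le r$ is strong enough to make the right-hand side strictly positive. Multiplying by $2$ yields
\[
2r \;\ge\; 2^{q+1} + 2\lceil(q+1)/2\rceil \;\ge\; 2^{q+1} + (q+1),
\]
so $\dim S^\omega \ge 1$, i.e.\ $S^\omega \neq \{0\}$, as required. There is no serious obstacle in this argument; the only subtle point is the parity consideration in the last inequality. When $q$ is even, one has $\lceil(q+1)/2\rceil = (q+2)/2$ rather than $(q+1)/2$, and it is precisely this extra half-unit that allows the doubling $2\lceil(q+1)/2\rceil \ge q+1$ to go through. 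This is presumably the reason the bookkeeping function $\widehat{\gamma}$ in the main theorem carries the ceiling rather than a naive $(q+1)/2$.
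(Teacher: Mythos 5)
Your proof is correct and follows essentially the same route as the paper's: bound the cardinality of $S = I \cup \{x_J \mid J \subsetneq I\}$ by $q + 2^{q+1}$, hence $\dim \mathrm{Span}(S) \leq q + 2^{q+1} \leq 2r-1$, and conclude $\dim S^\omega \geq 1$ by the dimension formula for symplectic complements. Your side remark about the ceiling is also accurate — with a floor in place of the ceiling one would only get $\dim S^\omega \geq 0$ for even $q$, which is not enough.
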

\begin{proof}
If $i := |I| \leq q+1$, then the set $I \cup \{x_J \mid J\subsetneq I\}$ has cardinality $i + 2^i - 1 \leq q+2^{q+1}$. In particular, we have 
\[
	\dim {\rm span}(I \cup(x_J \mid J \subsetneq I\})) \leq q+2^{q+1} \leq 2\widehat{\gamma}(q) -1 \leq 2r-1.
\]
We deduce that $\dim (I \cup \{x_J \mid J \subsetneq I\})^\omega \geq 2r-(2r-1) = 1$.
\end{proof}

\subsection{Finding a formula for $h^k$}
We have to guess a formula for the homotopies $h^k: \Binfty(X_{k}) \to \Binfty(X_{k-1})$. To get some inspiration, we first consider a toy case.
\begin{exmpl}\label{ex:product}
Let $X$ be a probability space and consider
\[
0 \to \bbR \xrightarrow{d^{-1}} \Binfty(X) \xrightarrow{d^0} \Binfty(X^2)  \xrightarrow{d^1}  \Binfty(X^3)  \to \dots
\]
We define $h^{-1}: \Binfty(X) \to \bbR$ by
\[
	h^{-1}f := \int_X f(t) \, \dd\!\mu(t) , \quad \mbox{for } f \in \Linfty(X).
\]
One then constructs $h_1, h_2, \dots$ satisfying \eqref{eq:homotopy} inductively. For example, if $h^0$ is assumed to satisfy \eqref{eq:homotopy}, then
\[
	h^0(\dd^0\!\varphi)(p_0) \overset{!}{=} \varphi(p_0) -d^{-1}h^{-1}\varphi(p_0) = \int_X (\varphi(p_0) - \varphi(t)) \, \dd\!\mu(t) =  \int_X \dd^0 \! \varphi(t,p_0) \, \dd\!\mu(t),\\
\]
which suggests to define $h^0:  \Binfty(X^2)\to \Binfty(X)$ by 
\[
h^0f(p_0) :=  \int_X f(t,p_0) \, \dd\!\mu(t).
\]
Inductively, one finds the formulas
\begin{equation} \label{eq:homotopy_product}
h^k:  \Binfty(X^{k+2})\to \Binfty(X^k), \quad h^k f(p_0,\ldots,p_k) = \int_{X} f(t,p_0,\ldots,p_k) \, \dd\!\mu(t),
\end{equation}
To interpret these formulas geometrically, think of elements of $X^{k}$ as $(k-1)$-simplices. If $\Delta \in X^{k}$ is such a $(k-1)$-simplex and  $f \in \Binfty(X^{k+1})$ is a function on $k$-simplices, then $h^k f(\Delta)$ is the expected value of the random variable $f(t(\Delta))$, where $t(\Delta)$ is a random $k$-simplex subject to the condition that $\Delta$ is the $0$-th face of $t(\Delta)$. The distribution of the $k$-simplex $t(\Delta)$ is given by the product measure $\mu \otimes \delta_{\Delta}$, where $\delta_{\Delta} \in {\rm Prob}(X^k)$ is the Dirac measure at $\Delta$.
\end{exmpl}
We now try to argue similarly in our case of interest: the case of the Stiefel complex $X_\bcdot$.
\begin{rem}[Degree 0]\label{RemDeg0} We may choose again $h^{-1}: \Binfty(X_0) \to \bbR$ to be the integral with respect to $\mu_0$. The condition we derive for $h^0: \Binfty(X_1) \to \Binfty(X_0)$ is
\[
h^0(\dd^0\!\varphi)(p_0) \overset{!}{=} \varphi(p_0) -d^{-1}h^{-1}\varphi(p_0) = \int_{X_0} \varphi(p_0)-\varphi(t)  \, \dd\!\mu_0(t).
\]
The first attempt would be to rewrite the integrand $\varphi(p_0) - \varphi(t)$ as $\dd^0\!\varphi(t,p_0)$, but this rewriting is illegal, since $(t,p_0)$ is generically not an element of $X_1$. The correct way to rewrite the integrand is to observe that for every $t_0$ perpendicular to both $t$ and $p_0$ we have
\[
\varphi(p_0) - \varphi(t)  = \varphi(p_0)-\varphi(t_0)+\varphi(t_0)-\varphi(t) = \dd^0\!\varphi(t_0,p_0)-\dd^0\!\varphi(t_0,t).
\]
In particular, we can choose $t_0$ to be an auxiliary \emph{random} perpendicular to $t$ and $p_0$. Passing to the expectation then yields the condition
\[
h^0(\dd^0 \varphi)(p_0)  \overset{!}{=} \int_{X_0}\left( \int_{\clP} \big(\dd^0\!\varphi(t_0,p_0)-\dd^0\!\varphi(t_0,t)\big) \, \dd\!\nu_{t,p_0}(t_0)\right)\, \dd\!\mu_0(t),
\]
where $\nu_{t,p_0}$ denotes the perpendicular measure from \autoref{DefNu}. Note that the function of $t$ in brackets above is Borel measurable by \autoref{GeneralMeasurability}, hence integrable. We may thus choose
\[
	h^0 f(p_0) := \int_{X_0} \int_{\clP} \big(f(t_0,p_0)-f(t_0,t)\big) \, \dd\!\nu_{t,p_0}(t_0) \, \dd\!\mu_0(t).
\]
If we consider $t$ and $t_0$ as (dependent) random variables, then we can write this formula as $h^0f(p_0) = \bbE(f(t_0,p_0)-f(t_0,t))$.
If we continue to higher degrees, we have to choose more and more (mutually dependent) auxiliary random variables, and we need to introduce some form of bookkeeping device to keep track of the dependencies among these auxiliary random variable. This will lead us to the notion of a random chaining in the next subsection.
\end{rem}

\subsection{Random chainings}
Given $q \in \mathbb N$, we denote by $\mathcal P^{[\leq q+1]}$ the collection of all finite subsets of $\mathcal P$ of cardinality at most $q+1$. 
\begin{defn} \label{DefChaining} A \emph{random $q$-chaining} of $\clP$ is a collection $t=\{t_I \mid I \in \clP^{[\leq q+1]}\}$ of (mutually dependent) $\clP$-valued random variables with the following properties:
\begin{enumerate}[(i)]
\item The variable $t_\emptyset$ is distributed according to $\mu_0$.
\item If $I \in \mathcal P^{[\leq q+1]}$ and $p \in I$, then $\omega(t_I, p)=0$ almost surely.
\item If $I \in \mathcal P^{[\leq q+1]}$ and $J \subseteq I$, then $\omega(t_I , t_J) = 0$ almost surely.
\item If $g \in G$ and $I \in \mathcal P^{[\leq q+1]}$, then the distributions of $t_{g.I}$ and $g.t_I$ are mutually absolutely continuous.
\end{enumerate}
\end{defn}

The existence of chainings on $\clP$ will be treated in \autoref{ExistsChaining}. Using a random $q$-chaining we can define $\bar X_n$-valued random values for every $n \in [q]$, as follows: Assume we are given a tuple $(p_0, \dots, p_n) \in \BX_n$, and set $t_0 := t_{\{p_0\}}$, $t_{01}:= t_{\{p_0, p_1\}}$, \dots, $t_{01\dots n} := t_{\{p_0, \dots, p_n\}}$. Then expressions like
\[
(t_{01\dots n}, \ldots, t_{01}, t_0, t_\emptyset), \;(t_{01\dots n}, \ldots, t_{01}, t_0, p_0), \; (t_{01\dots n}, \dots, t_{01}, p_0, p_1), \text{ etc.}
\]
define $\BX_{n+1}$-valued random variables. We will formalize this idea in \autoref{ChainingGivesWhatWeWant} after setting up some notation.

\begin{defn} Let $k \in \bbN$. Given $0\leq n \leq k+1$, a \emph{descending $k$-chain} of \emph{length} $n$ is a sequence of the form
\begin{equation}\label{ChainC}
C = (C_0 \supset C_1 \supset \dots \supset C_n)
\end{equation}
with $C_i \subset [k]$ and $|C_i| = k+1-i$. The sets $C_j$ is called the $j$-th \emph{component} of $C$ and $C_n$ is called its \emph{final component}. We also define its \emph{ordered final component} to be ${\rm ord}(C_n) := (i_0, \dots, i_{k-n})$ if  $C_n = \{i_0, \dots, i_{k-n}\}$ and $i_0 < \dots < i_{k-n}$.
\end{defn}
In the sequel, we will denote by $\frC_k$, $\mathfrak C_k^\emptyset$ and $\mathfrak C_k^+$ the collection of all descending $k$-chains, all descending $k$-chains of length $k+1$ (i.e. of maximal length) and all descending $k$-chains of length at most $k$ respectively.
\begin{exmpl} \label{ex:chains}
\begin{enumerate}[(i)]
\item Let $k=0$. Then $\mathfrak C_0^+ = \{(\{0\})\}$ and $\mathfrak C_0^\emptyset = \{(\{0\} \supset \emptyset)\}$. 
\item Let $k=1$. Then $\mathfrak C_1^+$ contains the three chains $(\{0,1\})$,  $(\{0,1\} \supset \{1\})$ and $(\{0,1\} \supset \{0\})$, and the last two can be extended uniquely into chains in $\frC_1^\emptyset$.
\item Let $k=2$. Then $\mathfrak C_k^+$ consists of 10 chains given by
\begin{align*}
&(\{0,1,2]), \;(\{0,1,2\}\supset \{1,2\}),\; (\{0,1,2\}\supset \{0,2\}), \;(\{0,1,2\}\supset \{0,1\}),\\
&(\{0,1,2\}\supset \{1,2\}\supset 2\}),\; (\{0,1,2\}\supset \{1,2\}\supset \{1\}),\; (\{0,1,2\}\supset \{0,2\}\supset \{2\}),\\
&(\{0,1,2\}\supset \{0,2\}\supset \{0\}),\; (\{0,1,2\}\supset \{0,1\}\supset \{1\}),\; (\{0,1,2\}\supset \{0,1\}\supset \{0\})
\end{align*}
Each of the last six chains can be uniquely prolonged (by $\emptyset$) into a chain in $\mathfrak C_k^\emptyset$.
\end{enumerate}
\end{exmpl}
\begin{defn}\label{def:ofc} Assume we are given a $q$-chaining $t$ of $\mathcal P$ and an element  $p = (p_0, \dots, p_k) \in \clP^{k+1}$ for some $k \leq q$. Given $C \in \frC_k$ of length $n$ with ordered final component $(i_0, \dots, i_{k-n})$, we define a $\mathcal P^{k+2}$-valued random variable $t(p, C)$ by
\begin{equation}\label{tpC}
t(p, C) := (t_{p_{C_0}},\, t_{p_{C_1}} \dots,\,  t_{p_{C_n}} ,\, p_{i_0},\, \dots, p_{i_{k-n}}),
\end{equation}
where for a subset $S \subset [k]$ we write ${p}_S := \{p_s \mid s \in S\}$.
\end{defn}
Note that if $n = k+1$, then $C_n = C_{k+1} = \emptyset$, and \eqref{tpC} has to be understood as $t(p, C) := (t_{p_{C_0}}, \dots, t_{p_{C_n}})$.
\begin{lem}\label{ChainingGivesWhatWeWant} For every $p = (p_0, \dots, p_k) \in \BX_k$ and every $k$-chain $C$, we have $t(p, C) \in \BX_{k+1}$ almost surely.
\end{lem}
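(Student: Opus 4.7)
The plan is to verify directly that the tuple $t(p,C) \in \clP^{k+2}$ has an isotropic span almost surely, by pairwise checking that $\omega$ vanishes between any two of its $k+2$ coordinates and invoking axioms (ii) and (iii) of a random $q$-chaining from \autoref{DefChaining}. Since only $\binom{k+2}{2}$ pairs of coordinates arise, the intersection of the finitely many almost-sure events produced in this way is again almost sure, and the spanning subspace being isotropic is equivalent, by bilinearity and alternation of $\omega$, to $\omega$ vanishing on all such pairs of generators.

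Writing $C = (C_0 \supset C_1 \supset \cdots \supset C_n)$ with ${\rm ord}(C_n) = (i_0, \dots, i_{k-n})$, so that
\[
t(p,C) = (t_{p_{C_0}}, t_{p_{C_1}}, \dots, t_{p_{C_n}}, p_{i_0}, \dots, p_{i_{k-n}}),
\]
I would treat three cases of pair separately. For two ``$p$-entries'' $p_{i_l}, p_{i_{l'}}$, both belong to $\{p_0,\dots,p_k\}$, whose span is isotropic by the hypothesis $p \in \BX_k$, so $\omega(p_{i_l}, p_{i_{l'}}) = 0$ deterministically. For a ``$t$-entry'' $t_{p_{C_j}}$ paired with a $p$-entry $p_{i_l}$, the inclusion $i_l \in C_n \subseteq C_j$ yields $p_{i_l} \in p_{C_j}$, so applying axiom (ii) of \autoref{DefChaining} with $I := p_{C_j}$ and $p := p_{i_l}$ gives $\omega(t_{p_{C_j}}, p_{i_l}) = 0$ almost surely. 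Finally, for two $t$-entries $t_{p_{C_j}}$ and $t_{p_{C_{j'}}}$ with $j < j'$, the descending-chain condition $C_{j'} \subseteq C_j$ implies $p_{C_{j'}} \subseteq p_{C_j}$, so axiom (iii) provides $\omega(t_{p_{C_j}}, t_{p_{C_{j'}}}) = 0$ almost surely.

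Intersecting these finitely many almost-sure events yields a full-measure subset of the underlying probability space on which every pair of coordinates of $t(p,C)$ is $\omega$-orthogonal, i.e.\ on which $t(p,C) \in \BX_{k+1}$. I do not foresee any genuine obstacle here: the axioms of \autoref{DefChaining} are tailored precisely so that chainings produce almost-surely isotropic tuples of this exact shape, and the lemma is in essence a bookkeeping check confirming that design. The only edge cases worth mentioning are the extreme length $n = k+1$, in which $t(p,C)$ carries only $t$-entries and only the third case occurs, and the well-definedness of each $t_{p_{C_j}}$, which is ensured by the bound $|p_{C_j}| \leq |C_j| \leq k+1 \leq q+1$ so that each indexing subset lies in $\clP^{[\leq q+1]}$.
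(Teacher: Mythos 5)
Your proof is correct and follows essentially the same argument as the paper, namely a finite pairwise $\omega$-orthogonality check on the coordinates of $t(p,C)$ using axioms (ii) and (iii) of \autoref{DefChaining} together with the isotropy coming from $p \in \BX_k$. In fact your write-up is slightly more complete and precise than the paper's: you explicitly cover the $p$-entry versus $p$-entry case, you check all $t$-$t$ pairs $j<j'$ rather than just those involving $C_n$, and you cite the axioms with the correct roles, whereas the paper's proof elides the $p$-$p$ case and appears to have interchanged the labels (ii) and (iii) in its citations.
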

\begin{proof} We observe that $i_0, \dots, i_{k-n}$ are contained in each of the sets $C_j$, hence $p_{i_0},\ldots,p_{i_{k-n}}$ are perpendicular with respect to $\omega$ to every $t_{p_{C_j}}$ almost surely by property (iii) in \autoref{DefChaining}. Moreover, if $m<n$, then $C_n \subsetneq C_m$ and hence $\omega(t_{p_{C_n}}, t_{p_{C_m}}) = 0$  almost surely by property (ii).
\end{proof}

\begin{defn} A random $q$-chaining is called \emph{generic} if for all $k \leq q$ and $(p_0, \dots, p_k) \in X_k$ and all $k$-chains $C$, we have $t(p, C) \in X_{k+1} \subset \bar X_{k+1}$ almost surely.
\end{defn}

\begin{rem}[Geometric interpretation] Assume that $t$ is a generic random $q$-chaining and let $k\leq q$. If we think of $p =(p_0,\ldots,p_k) \in X_k$ as a $k$-simplex, then every $C \in \mathfrak C_k$ defines a random $(k+1)$-simplex $t(p, C) \in X_{k+1}$, and the following hold: 
\begin{itemize}
\item If $C = ([k])$ has length $0$, then $t(p,C)$ is a random $(k+1)$-simplex in $X_\bullet$ with base given by the $k$-simplex $p$ and tip $t_{\{p_0, \dots, p_k\}}$. 
\item If we prolong a given chain $C = (C_0 \supset \dots \supset C_n)$ to a chain $C' = (C_0 \supset \dots \supset C_n \supset C_{n+1})$, then the random $(k+1)$-simplices $t(p, C)$ and $t(p, C')$ have a common face.
\end{itemize}
\end{rem}

\begin{exmpl} \label{ex:picture}
Based on the previous remark, we expand on the geometric interpretation for a low value of $k$. Assume that $t$ is a generic random $q$-chaining of $\clP$ for $q \geq 1$. Fix $k = 1$ and let $p=(p_0,p_1) \in X_1$. There are five chains in $\frC_1$, which were listed in \autoref{ex:chains} (ii). Evaluating them in $t(p,\bcdot)$ gives rise to the following five random variables in $X_2$: 
\[ \begin{array}{ll}
	t(p,(\{0,1\})) = (t_{\{p_0,p_1\}},p_0,p_1)\, ; & t(p,(\{0,1\} \supset \{0\})) = (t_{\{p_0,p_1\}},t_{\{p_0\}},p_0)\,; \\
	t(p,(\{0,1\} \supset \{1\})) = (t_{\{p_0,p_1\}},t_{\{p_1\}},p_1) \, ; & t(p,(\{0,1\} \supset \{0\} \supset \emptyset)) = (t_{\{p_0,p_1\}},t_{\{p_0\}},t_\emptyset) \, ; \\
	t(p,(\{0,1\} \supset \{1\} \supset \emptyset)) = (t_{\{p_0,p_1\}},t_{\{p_1\}},t_\emptyset)
\end{array} \]
Set $t_i := t_{\{p_i\}}$ for $i \in [1]$, and $t_{01}= t_{\{p_0,p_1\}}$. Geometrically, each one of them corresponds to a random 2-simplex of $X_\bcdot$. We can visualize their arrangement as the simplicial complex in \autoref{fig:thepicture}. \par
\begin{figure}[!h]
\begin{center}
\includegraphics[scale=0.6]{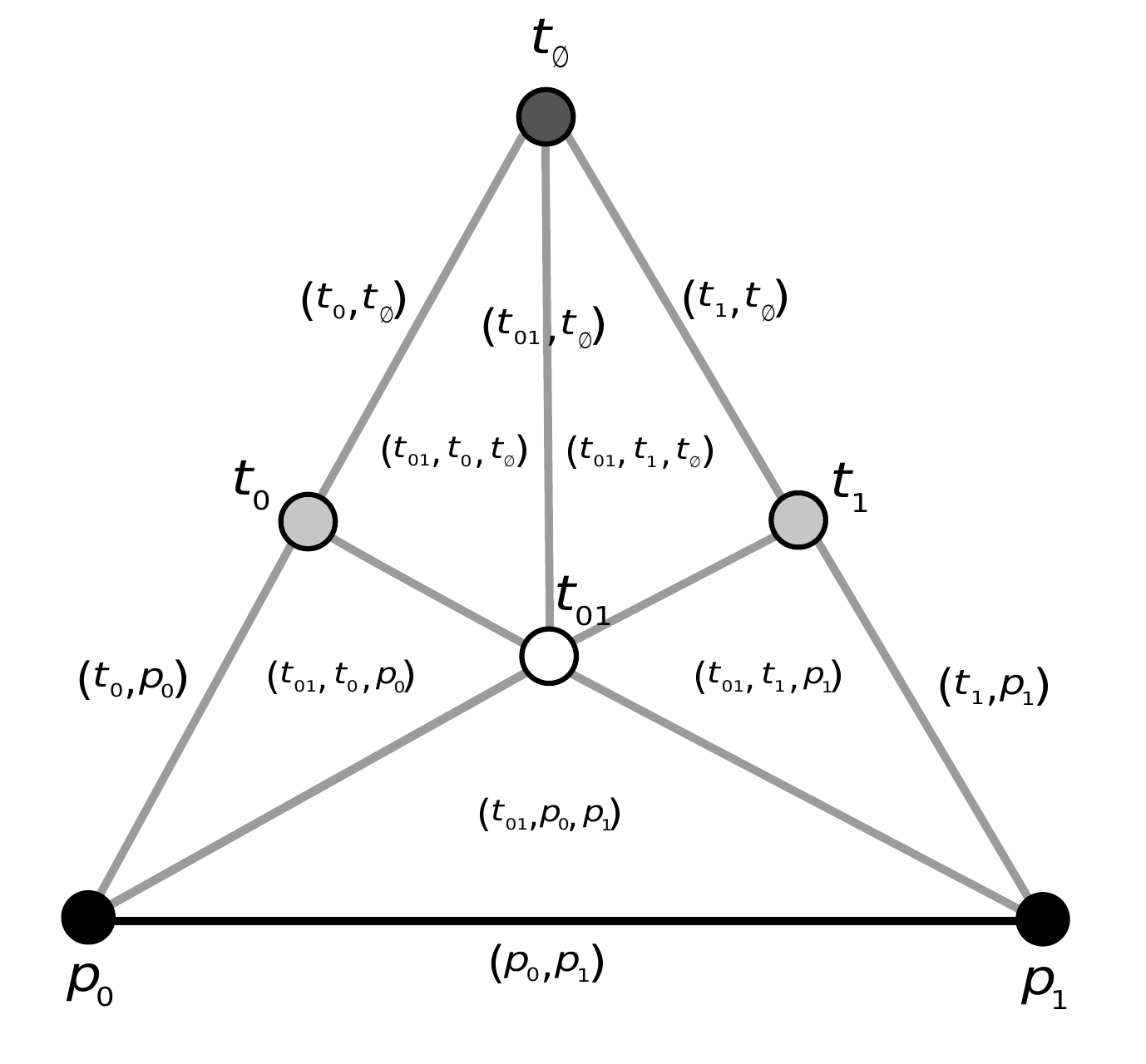} \vspace{-15pt}
\end{center}
\caption{Geometric interpretation of a generic random chaining, and the images of $1$-chains.}
\label{fig:thepicture}
\end{figure}
\noindent The vertices in this complex are the points $p_0, p_1 \in \clP$ and all the $\clP$-valued random variables $t_I$ for every subset $I \subset \{p_0,p_1\}$. We place an edge between two of these vertices, say $u_0$ and $u_1$, if and only if $(u_0,u_1)$ lies in $X_1$ almost surely. Similarly, we place a 2-simplex joining vertices $u_1$, $u_2$ and $u_3$ if and only if $(u_0,u_1,u_2) \in X_2$ almost surely. The genericity assumption on the chaining guarantees that the complex above is ``non-degenerate'', for instance, in the sense that all of its cells are distinct and that apart from the ones in \autoref{fig:thepicture}, there exist no further edges or 2-cells in the arrangement.
\end{exmpl}

In the next subsection, we shall prove that if $\widehat{\gamma}(q) \leq r$, then there exists a generic random $q$-chaining of $\clP$. 

\begin{rem} So far, the necessity of property (iv) in \autoref{DefChaining} and of the genericity of a random chaining have not yet become evident. We anticipate that these features will be highly exploited only in Section \ref{sec:magic}. \vspace{10pt}
\end{rem}

\subsection{Constructing a generic random chaining on $\clP$} Let $q$ be a natural number such that $\widehat{\gamma}(q) \leq r$. We are going to construct a generic random $q$-chaining on $\clP$. For this we have to define a random variable $t_I$ for every $I \in \clP^{[\leq q+1]}$.

We consider first the case of two distinct point $p_0, p_1 \in \clP$. In this case the joint distribution of the four random variables $t_\emptyset$, $t_0 := t_{\{p_0\}}
$,  $t_1 := t_{\{p_1\}}$ and $t_{01} := t_{\{p_0, p_1\}}$ will be given by
\[
\dd\!t_{01}\,\dd\!t_0 \,\dd\!t_1 \,  \dd\!t_\emptyset :=  \dd\!\nu_{p_0,p_1,t_\emptyset,t_0,t_1}(t_{01}) \, \dd\!\nu_{p_0,t_\emptyset}(t_0) \, \dd\!\nu_{p_1,t_\emptyset}(t_1)\, \dd\!\mu_0(t_\emptyset),
\]
which means that for every $f \in C(\mathcal P^4)$ the expectation $\mathbb E(f(t_\emptyset,t_0,t_1,t_{01})) $ of the random variable $f(t_\emptyset,t_0,t_1,t_{01})$ is given by
\[
 \int_{\clP} \left( \int_{\clP} \left(\int_{\clP} \left( \int_\clP f(t_\emptyset,t_0,t_1,t_{01}) \, \dd\!\nu_{p_0,p_1,t_\emptyset,t_0,t_1}(t_{01}) \right) \,  \dd\!\nu_{p_0,t_\emptyset}(t_0) \right)\, \dd\!\nu_{p_1,t_\emptyset}(t_1) \right)\,  \dd\!\mu_0(t_\emptyset)
\]
Here, $\nu$ denotes the assignment of perpendicular measures as in \autoref{DefNu}, and the integrability of the functions given by the inner integrals follows from  \autoref{thm:nu_Borel} and \autoref{GeneralMeasurability}.

With the same notation, the formula in the general case is as follows: If $I = \{p_0, \dots, p_n\}$, then the joint distribution of the random variables $\{t_J\mid J \subset I\}$ is given by
\[
\prod_{J \subset I} \dd\!t_J = \left(\prod_{\emptyset \neq J \subset I} \dd\!\nu_{
J \cup \{t_{J'} \mid J' \subset J\}}(t_J)\right) \dd\!\mu_0(t_\emptyset).
\]
Here the terms in both products are arranged according to the cardinality of $J$ from largest to smallest as in the example above.

To see that this is well-defined we make the following two observations: Firstly, the assumption that $\widehat{\gamma}(q) \leq r$ guarantees in view of \autoref{GammaCondition} that $(J \cup \{t_{J'} \mid J' \subset J\})^\omega \neq \{0\}$, hence the measures $\nu_{\{p \mid p \in J\} \cup \{t_{J'} \mid J' \subset J\}}$ are well-defined. Secondly, as in the previous case the integrability of the functions given by the inner integrals in the iterated integral $\mathbb E[f(t_J)]$ follows from  \autoref{thm:nu_Borel} and \autoref{GeneralMeasurability}.

A crucial property of the construction above is that if $J, J' \subset I$ satisfy $|J| = |J'|$, then $t_{J}$ and $t_{J'}$ are independent relative to the variables $\{t_{J''} \mid J'' \subset J \cup J', |J''| < |J| \}$.

\begin{prop} \label{ExistsChaining}
The collection $\{t_I \mid I \in \clP^{[\leq q+1]}\}$ of random variables defines a generic random $q$-chaining $t$ on $\clP$. 
\end{prop}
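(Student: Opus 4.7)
The plan is to verify the four conditions of \autoref{DefChaining} and the genericity requirement separately. Well-definedness of the iterated integral defining the joint law is the first point: by \autoref{GammaCondition}, applied under the hypothesis $\widehat{\gamma}(q) \leq r$, the set $J \cup \{t_{J'} : J' \subsetneq J\}$ has nontrivial symplectic complement, so the perpendicular measure $\lambda_{W_J^\omega}$ with $W_J := \Span(J \cup \{t_{J'} : J' \subsetneq J\})$ is defined; and measurability of the nested integrands is provided by \autoref{thm:nu_Borel} together with \autoref{GeneralMeasurability}. Arranging the product by decreasing $|J|$ guarantees that, when sampling $t_J$, the conditioning variables $\{t_{J'} : J' \subsetneq J\}$ have already been realized.

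Properties (i)-(iii) of \autoref{DefChaining} follow directly from the construction: (i) is the outermost factor $\dd\!\mu_0(t_\emptyset)$; (ii) holds because $\lambda_{W_I^\omega}$ is supported on vectors $\omega$-perpendicular to $W_I \supseteq I$ by \eqref{RandomPerpendicular}; (iii) holds because $J \subsetneq I$ forces $t_J \in W_I$ by construction, giving $\omega(t_I, t_J) = 0$ almost surely, while the case $J = I$ is trivial by the antisymmetry of $\omega$. Property (iv), the $G$-quasi-equivariance, I would verify by induction on $|I|$: the base case uses the $G$-quasi-invariance of $\mu_0$, and the inductive step uses the $G$-quasi-equivariance of $\nu$ (\autoref{thm:nu_Borel}), which yields $g_\ast \lambda_{W_I^\omega} \sim \lambda_{(gW_I)^\omega}$, combined with the inductive hypothesis applied to the $t_{J'}$ with $J' \subsetneq I$ to identify $gW_I$ with $W_{g.I}$ up to equivalence of laws. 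Marginalization then yields (iv).

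The crux is genericity: for $p = (p_0,\ldots,p_k) \in X_k$ and $C \in \frC_k$ of length $n$, the tuple $t(p, C) \in \BX_{k+1}$ must lie in $X_{k+1}$ almost surely. Isotropicity of $\Span(t(p,C))$ is immediate from (ii)-(iii), as in \autoref{ChainingGivesWhatWeWant}. To upgrade this to an isotropic subspace of full dimension $k+2$, I would sample the random vectors $t_{p_{C_n}}, t_{p_{C_{n-1}}}, \ldots, t_{p_{C_0}}$ in this order and, writing $U_j := \Span\bigl(p_{i_0},\ldots,p_{i_{k-n}},\, t_{p_{C_n}},\ldots, t_{p_{C_{j+1}}}\bigr)$, show inductively that $t_{p_{C_j}} \notin U_j$ almost surely. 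Since $\{p_{i_l}\} \subseteq p_{C_n} \subseteq p_{C_j}$ and each $t_{p_{C_{j'}}}$ for $j' > j$ is among the $t_{J'}$ with $J' \subsetneq p_{C_j}$, we have $U_j \subseteq W_{p_{C_j}}$, so the bad event collapses to $\bigl\{t_{p_{C_j}} \in U_j \cap W_{p_{C_j}}^\omega\bigr\}$. By absolute continuity of $\lambda_{W_{p_{C_j}}^\omega}$ on $\bbP(W_{p_{C_j}}^\omega)$, this event has conditional probability zero provided $W_{p_{C_j}}^\omega \not\subseteq U_j$.

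The main obstacle, and where I expect the proof to require genuine work, is excluding the containment $W_{p_{C_j}}^\omega \subseteq U_j$ almost surely. A naive dimension count is too coarse: $\dim W_{p_{C_j}}$ could in principle exceed $r$, making $\dim W_{p_{C_j}}^\omega$ too small to be compared directly with $\dim U_j$. I would instead argue by a secondary induction: using the $G$-quasi-equivariance established above, reduce to the case of a fixed symplectic frame adapted to $p$, then exhibit a single explicit realization of the $t_{J'}$ in the joint support for which $W_{p_{C_j}}^\omega \not\subseteq U_j$ — for instance, by choosing the $t_{J'}$ inductively within sufficiently large symplectic perpendiculars so that their symplectic orthogonal retains directions transverse to $\Span(p)$. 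The set of realizations violating the containment is a closed algebraic subset of a joint support on which the law is absolutely continuous with respect to a product Lebesgue measure; its being proper therefore forces it to have measure zero. A union bound over the finitely many chains in $\frC_k$ and simplices $p$ (or rather, an almost-sure statement for each fixed $p$) then completes the argument.
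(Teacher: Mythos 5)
Your plan follows the paper's proof quite closely in its overall structure: properties (i)--(iv) of \autoref{DefChaining} are read off the construction and \autoref{thm:nu_Borel}, and genericity is attacked by a backward induction on the sampling order, showing that each new $t_{p_{C_j}}$ avoids the projective span $\bbP(U_j)$ of the previously realized points. (Your $U_j$ is the paper's $\Span(T_{m-1})$ under the substitution $m = n-j-1$.) You also correctly isolate the pivot of the argument: one must rule out the containment $W_{p_{C_j}}^\omega \subseteq U_j$, and a bare dimension count does not do it since $\dim W_{p_{C_j}}$ may be large. Up to this point the two proofs agree.

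Where your proposal has a genuine gap is exactly in resolving that pivot. You propose to exhibit one witness realization of the earlier variables $t_{J'}$ for which the containment fails, and then to invoke an algebraicity/measure-zero argument. But the description of the witness --- ``choosing the $t_{J'}$ inductively within sufficiently large symplectic perpendiculars so that their symplectic orthogonal retains directions transverse to $\Span(p)$'' --- carries no more content than the genericity statement you are trying to prove; as written, the construction is circular. The algebraicity step is also not innocuous: the dimension of $W_{p_{C_j}}$ jumps on strata, so the ``bad event'' is not a single determinantal locus, and the assertion that it has measure zero under the recursive tower of conditional laws would need a careful Fubini-type justification rather than a direct appeal to product Lebesgue measure.

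The paper sidesteps both problems with a purely symplectic-linear-algebra device that you do not have. For every line $a_i$ appearing in $T_{m-1}$, it chooses a partner line $b_i$ among the already-defined random variables $t_{J'}$ (with $J'\subsetneq p_{C_{n-m}}$) or the fixed points $p_l$, arranged so that $\omega(a_i,b_i)\neq 0$, $\omega(a_i,a_{i'})=0$, and $\omega(a_i,b_{i'})=0$ for $i\neq i'$. By \autoref{thm:lemma_genericity}, $W:=\Span(\{a_i\}\cup\{b_i\})$ is then a symplectic subspace. Since all the chosen $a_i$ and $b_i$ lie inside $\Span\bigl(p_{C_{n-m}} \cup \{t_{J'} \mid J'\subsetneq p_{C_{n-m}}\}\bigr)$, the conditional support of $t_{p_{C_{n-m}}}$ is contained in $W^\omega$, and hence $W^\omega\cap\Span(T_{m-1})\subseteq W^\omega\cap W=(0)$. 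So the new random line avoids $\bbP(\Span(T_{m-1}))$ with conditional probability one --- in fact surely on the good event --- with no algebraicity or witness-exhibition argument required. This pairing trick is the essential idea your proposal is missing, and without it or an actual construction of the witness, the genericity claim is not established.
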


\begin{proof} Property (i) of \autoref{DefChaining} holds by definition, Properties (ii) and (iii) follow from \eqref{RandomPerpendicular} and Property (iv) follows from \autoref{thm:nu_Borel}. It remains to show that the chaining is generic. 

For fixed $k \in [q]$, let $p=(p_0,\ldots,p_k) \in X_k$ be a $k$-simplex and $C \in \frC_k$ be a $k$-chain of length $n$, say $C = (C_0 \supset C_1 \supset \cdots \supset C_n)$. If $C \in \frC_k^+$, let ${\rm ord}(C_n) = (i_0,\ldots,i_{k-n})$ be the ordered final component of $C$; otherwise, $C \in \frC_k^\emptyset$ and $C_n = C_{k+1} = \emptyset$. By \autoref{ChainingGivesWhatWeWant}, the random variable $t(p,C)$ as in \eqref{tpC} lies in $\BX_k$ almost surely. Thus, in order to complete the proof of the genericity of $t$, we need to show that $t(p,C) \in X_k$ almost surely, or in other words, that the set
\[ 
	\{t_{p_{C_0}},t_{p_{C_1}},\ldots,t_{p_{C_n}}\} \cup p_{C_n}
\]
is \emph{linearly independent} in $V$ almost surely, i.e. consists of linearly independent 1-dimensional subspaces of $V$ almost surely. Here $p_{C_i}$ is as in \autoref{def:ofc}. In turn, that statement is the case $m=n$ of the next claim, which we prove by induction.
\begin{claim*}
For all $m \in \{-1,0,\ldots,n\}$, the set 
\[
	T_m:=\{t_{p_{C_{n-m}}},\ldots,t_{p_{C_n}}\} \cup p_{C_n}
\]
is almost surely linearly independent in $V$.
\end{claim*}

The case $m=-1$ corresponds to $T_{-1}= p_{C_n}$, which is almost surely a linearly independent set since $p \in X_k$. 
For the induction step, assume that $T_{m-1}$ is almost surely linearly independent in $V$ for an integer $m \in [n]$. Then, the statement follows immediately after showing that $t_{p_{C_{n-m}}} \notin \bbP(\Span(T_{m-1}))$ almost surely. To see this, set first
\begin{longtable}{lll} 
$a_0 := p_{i_0}$, && $b_0:= t_{p_{C_n}\smallsetminus \{p_{i_0}\}}$, \\
$a_1 := p_{i_1}$, && $b_1:= t_{p_{C_n}\smallsetminus \{p_{i_1}\}}$, \\
$\qquad \vdots$ && $\qquad \vdots$ \\
$a_{k-n} := p_{i_{k-n}}$, && $b_{k-n} := t_{p_{C_n} \smallsetminus \{p_{i_{k-n}}\}}$, \\
$a_{k-(n-1)} := t_{p_{C_n}}$, && $b_{k-(n-1)} := p_{C_{n-1} \smallsetminus C_n}$,\\
$a_{k-(n-2)} := t_{p_{C_{n-1}}}$, && $b_{k-(n-2)} := t_{p_{C_{n-2} \smallsetminus (C_{n-1} \smallsetminus C_n)}}$, \\
$\qquad \vdots$ && $\qquad \vdots$ \\
$a_{k-(n-m)} := t_{p_{C_{n-m+1}}}$, && $b_{k-(n-m)}:= t_{p_{C_{n-m} \smallsetminus (C_{n-m+1} \smallsetminus C_{n-m+2})}}$,
\end{longtable}
\noindent and observe that by definition of the random variables, the following identities hold almost surely:
\[ \begin{array}{l}
	\omega(a_i,b_i) \neq 0 \, \mbox{ for all } i \in [k], \quad \omega(a_i,a_j) = 0 \, \mbox{ for all } i,j \in [k], \qand \vspace{2pt} \\
	\omega(a_i,b_j) = 0 \mbox{ for all } i,j \in [k], i \neq j.
\end{array} \]
Thus, by \autoref{thm:lemma_genericity}, the subspace $\Span(T_{m-1})$ of $V$ is, with probability one, symplectic. Now, by definition the random variable $t_{p_{C_{n-m}}}$ is distributed uniformly in the projective subspace $\bbP(\Span(p_{C_{n-m}} \cup \{t_J \mid J \subsetneq C_{n-m}\})^\omega)$ of $\bbP(V)$. But 
\[
	\Span(p_{C_{n-m}} \, \cup\, \{t_J \,\mid \,J \subsetneq p_{C_{n-m}}\})^\omega \cap \Span(T_{m-1}) \subset \Span(T_{m-1})^\omega \cap \Span(T_{m-1}) = (0)
\]
with probability one. Therefore, $t_{C_{n-m}} \notin \bbP(\Span(T_{m-1}))$ almost surely, as asserted above. This completes the proof of the induction step. 
\end{proof}

\subsection{The contracting homotopy}
Let $q \in \mathbb N$ be such that $\gamma(q) \leq r$, and $t$ be a generic random $q$-chaining on $\clP$.

\begin{defn}\label{DefhC} Given $0\leq k \leq q$ and a chain $C \in \frC_k$ we define the associated \emph{partial homotopy} by
\[
h_C: \Binfty(X_{k+1}) \to \Binfty(X_{k}), \quad  h_Cf(p_0, \dots, p_k) = \bbE[f(t(p,C))].
\]
\end{defn}
We now construct the desired homotopy $h^\bcdot$ as follows. 
\begin{defn}\label{DefHomotopy}
We define $h^{-1}:  \Binfty(X_0) \to \bbR$ by $h^{-1}f := \bbE[f(t_\emptyset)]$, and for every $k \in \{0, \dots, q\} $ we set
\begin{equation}\label{Defhk}
h^k:  \Binfty(X_{k+1}) \to \Binfty(X_{k}), \quad h^k = \sum_{C \in \frC_k}{\rm sgn}(C) \cdot h_C,
\end{equation}
where ${\rm sgn}: \mathfrak C_k \to \{\pm 1\}$ is defined by the following convention.
\end{defn}
\begin{rem}[Sign convention for chains] If $I=\{i_0,\ldots,i_l\} \subset [k]$ is such that $i_0 < \cdots < i_l$, we define the operators
\[
	\partial_{a}(I) := I \smallsetminus \{i_a\} \quad \mbox{for any } a \in [l]. 
\]
Now, assume first that $C \in \frC_k^+$ has length $n\leq k$ so that $C_n \neq \emptyset$. There exist unique integers $j_0, \dots, j_{n-1} \in \{0, \dots, k\}$ such that the components of $C$ are given as
\begin{equation}\label{SignOfChains}
C_m = \partial_{j_m} \circ \dots \circ \partial_{j_0}([k]),
\end{equation}
and we define the \emph{sign} of $C$ by
\[
{\rm sgn}(C) := (-1)^{n + j_0 + \dots +j_{n-1}}.
\]
If $C' \in \mathfrak C_k^\emptyset$ has length $k+1$, then we have to modify this definition as follows. In this case, $C_k = \{j\}$ is a singleton, and we define
\[
{\rm sgn}(C') = (-1)^{k+1+j}.
\]
This ensures that if $C$ is a chain of length $k$ and $C'$ is its unique extension to a chain of length $k+1$, then $C$ and $C'$ have opposite signs.
\end{rem}
\begin{exmpl} Let $C = (\{0,1,2\}\supset \{0,2\}\supset \{2\})$ and $C'=(\{0,1,2\}\supset \{0,2\}\supset \{2\}\supset \emptyset)$ be two chains in $\frC_2$. Then $C = \partial_1 \, \circ \, \partial_1([2])$, and thus 
\[ 
	{\rm sgn}(C) = (-1)^{2+1+1} = 1 \qand {\rm sgn}(C') = (-1)^{2+1+2} = -1.
\]
\end{exmpl}

\begin{exmpl}
The following are the expressions of $h^k$ for small values of $k$.
\begin{enumerate}[(i)]
\item For $k=0$, we recover the formula from \autoref{RemDeg0}:
\[
h^0f(p_0) = \bbE[f(t_{\{p_0\}}, p_0) - f(t_{\{p_0\}}, t_\emptyset)].
\]
\item For $k=1$: 
\begin{eqnarray*}
h^1f (p_0, p_1) &=& \bbE\left[f(t_{\{p_0, p_1\}}, p_0, p_1) - f(t_{\{p_0, p_1\}}, t_{\{p_1\}}, p_1) + f(t_{\{p_0, p_1\}}, t_{\{p_0\}}, p_0)\right.\\
&& \quad \left.+ f(t_{\{p_0, p_1\}}, t_{\{p_1\}}, t_\emptyset) - f(t_{\{p_0, p_1\}}, t_{\{p_0\}}, t_\emptyset)\right].
\end{eqnarray*}
\item For $k=2$:
\begin{align*}
& h^2f (p_0, p_1, p_2) = \\
= \bbE & \left[f(t_{012}, p_0, p_1, p_2) - f(t_{012}, t_{12}, p_1, p_2)+f(t_{012}, t_{02}, p_0, p_2) - f(t_{012}, t_{01}, p_0, p_1)\right. \\
 & + f(t_{012}, t_{12}, t_2, p_2) - f(t_{012}, t_{12}, t_1, p_1) - f(t_{012}, t_{02}, t_2, p_2) + f(t_{012}, t_{02}, t_0, p_0)\\
 & + f(t_{012}, t_{01}, t_1, p_1) - f(t_{012}, t_{01}, t_0, p_0) - f(t_{012}, t_{12}, t_2, t_\emptyset) + f(t_{012}, t_{12}, t_1, t_\emptyset) \\
 & + \left. f(t_{012}, t_{02}, t_2, t_\emptyset) - f(t_{012}, t_{02}, t_0, t_\emptyset)- f(t_{012}, t_{01}, t_1, t_\emptyset) + f(t_{012}, t_{01}, t_0, t_\emptyset)\right],
\end{align*}
where we have used the shorthand notations $t_{i} := t_{\{p_i\}}, t_{ij} := t_{\{p_i, p_j\}}$ and $t_{012} := t_{\{p_0, p_1, p_2\}}$.
\end{enumerate}
For larger values of $k$, writing out $h^k$ explicitly gets quite tedious.
\end{exmpl}
%\begin{thm}\label{StiefelContractible} The maps $h^k:  \Binfty(X_{k+1}) \to \Binfty(X_{k})$ defined above satisfy
%\[ h^kd^k +d^{k-1}h^{k-1} = {\rm Id}.\]
%for all $-1 \leq k \leq q$.
%\end{thm}
\begin{proof}[Proof of \autoref{ContractibilityRefined}.(i)] We fix $k \leq q$, $p=(p_0, \dots, p_k) \in X_k$ and $f \in \Binfty(X_k)$. We have to show that 
\begin{equation}\label{ToyToShow}
h^kd^k f(p) +d^{k-1}h^{k-1}f(p)  = f(p).
\end{equation}
The cases $k=-1$ and $k=0$ are immediate from the formulas above, hence we will assume $k \geq 1$. Since $p$ will be fixed throughout our discussion, we will use the shorthand notations $t_A := t_{p_A}$ for $A \subset [k]$ and $t(C) := t(p,C)$ for $C \in \mathfrak C_k$.
Now, by definition,
\[
h^k d^k f(p) \quad =\quad \bbE\left[ \sum_{C \in \frC_k} {\rm sgn}(C) d^k f(t(C))\right] \quad = \quad  \bbE\left[   \sum_{j=0}^{k+1}\sum_{C \in \frC_k}  (-1)^j{\rm sgn}(C) \delta^j f (t(C)) \right].
\]
Let us first deal with the summand $j=0$. We distinguish two cases: First we consider the length $0$ chain $C = ([k])$. In this case we have 
\[
 (-1)^j{\rm sgn}(C) \delta^j f (t(C)) = \delta^0 f(t_{\{0, \dots, k\}},p_0,\ldots,p_k) =  f(p_0, \dots, p_k).
\]
Secondly, let $C$ be a chain of length $n \geq 1$. Assume that  $C_1 = [k] \smallsetminus \{i\}$ and ${\rm ord}(C_n) = (i_0, \dots, i_{k-n})$. Then
\begin{eqnarray*}
 (-1)^j{\rm sgn}(C) \delta^j f (t(C)) &=& {\rm sgn}(C) \delta^0 f(t_{[k]}, t_{[k]\smallsetminus \{i\}}, \dots, t_{\{i_0, \dots, i_{k-n}\}}, p_{i_0}, \dots, p_{i_{k-n}})\\
 &=& {\rm sgn}(C) f(t_{[k]\smallsetminus \{i\}}, \dots, t_{\{i_0, \dots, i_{k-n}\}}, p_{i_0}, \dots, p_{i_{k-n}})
\end{eqnarray*}
Using this identity, it is not hard to see that for a fixed $i \in [k]$,
\[
	\sum_{\hspace{10pt}\underset{\scriptstyle C_1 = [k] \smallsetminus \{i\}}{C \in \frC_k \smallsetminus \{[k]\}}} {\rm sgn}(C) \, \delta^0 f(t(C)) = (-1)^{i+1} \delta^i h^{k-1}f(p),
\]
and the sum over all $i \in [k]$ of the right-hand side equals $-d^{k-1}h^{k-1}f(p)$. Hence we obtain
\[
h^kd^kf(p) +d^{k-1}h^{k-1}f(p)  \; = \; f(p) + \bbE\left[ \sum_{C \in \frC_k}  \sum_{j=1}^{k+1}  (-1)^j{\rm sgn}(C) \delta^j f (t(C)) \right].
\]
Now let $C$ be a chain of length $n\geq 1$ and let $1 \leq j \leq n-1$. Let $C_{j-1}\smallsetminus C_j = \{a\}$ and $C_j \smallsetminus C_{j+1} = \{b\}$ and set
\[
C' := (C_0 \supset \dots \supset C_{j-1} \supset C_{j-1} \smallsetminus \{b\} \supset C_{j+1} \supset \dots C_n)
\]
Then
\[
\delta^j f(t(C)) = \delta^j(t({C'})),
\]
but $C$ and $C'$ have opposite signs, hence these two terms cancel in the sum above. We thus obtain
\begin{equation} \label{eq:midcomphomotopy}
h^kd^kf(p) +d^{k-1}h^{k-1}f(p) - f(p) \,  = \, \bbE\left[ \sum_{C \in \frC_k}  \sum_{j={\rm length}(C)}^{k+1}  (-1)^j{\rm sgn}(C) \delta^j f (t(C)) \right].
\end{equation}
Now assume that $C$ has length $k$ and $C' := (C_0 \supset \dots \supset C_k \supset \emptyset) \in \frC_k^\emptyset$ is the unique extension of $C$ to length $k+1$. On the one hand we observe that
\[
\delta^{k+1}f(t(C)) = \delta^{k+1}f(t({C'})),
\]
and since $C$ and $C'$ have opposite signs, these terms cancel each other.\footnote{This is the reason why we had to define the sign differently for chains of maximal length.} Let us abbreviate by $\frC_k^k \subset \frC_k^+$ the subset of all chains of length $k$. Observe that for any $C \in \frC_k^\emptyset$, the inner sum in the right-hand side of \eqref{eq:midcomphomotopy} consists of a single term, corresponding to $j = k+1 = {\rm length}(C)$; for $C \in \frC_k^k$, the sum has two terms, namely $j=k={\rm length}(C)$ and $j=k+1$.  We thus obtain
\begin{eqnarray*}
&& \hspace{-10pt} h^kd^kf(p) +d^{k-1}h^{k-1}f(p) - f(p)\\
&\hspace{-10pt} =& \hspace{-10pt} \bbE\left[ \sum_{C \in \frC_k^+}  \sum_{j={\rm length}(C)}^{k+1}  (-1)^j{\rm sgn}(C) \delta^j f (t(C)) -
%\underset{{\rm length}(C) = k}{\sum_{C \in \frC_k}} 
\sum_{C \in \frC_k^k}(-1)^{k+1}{\rm sgn}(C) \delta^{k+1} f (t(C)) \right]\\
&\hspace{-10pt} =& \hspace{-10pt} \bbE\left[\sum_{C \in \frC_k^+}{\rm sgn}(C)(-1)^{{\rm length}(C)} \delta^{{\rm length}(C)}f(t(C)) + \sum_{C' \in \frC_k^+\smallsetminus \frC_k^k}\sum_{j={\rm length}(C') + 1}^{k+1}  {\rm sgn}(C') (-1)^j \delta^j f(t(C')) \right].
\end{eqnarray*}
Each term which appears in the first sum also appears in the second sum with the opposite sign and vice versa. Indeed, if $C \in \frC_k^+$ has length $n$, then we can define 
\[
C' := (C_0 \supset \dots \supset C_{n-1}).
\]
Then $C' \in \frC^+_k \smallsetminus \frC_k^k$ (since $C'$ is shorter than $C$) and 
\[
 \delta^{{\rm length}(C)}f(t(C)) = \delta^j f((t(C'))
\]
for a unique $j\geq {\rm length}(C')+1 = n$. Upon checking that the signs in front of these terms are opposite, this finishes the proof.
\end{proof}
\begin{rem} What we have actually proved is that if $\clP$ admits a generic random $q$-chaining, then the statement of \autoref{ContractibilityRefined}.(i) holds. It is a consequence of the rather crude estimate from \autoref{GammaCondition} that such a chaining exists if $\widehat{\gamma}(q) \leq r$. If one were able to obtain a more efficient random chaining, then one would obtain a better bound in \autoref{ContractibilityRefined}, which would result in a better stability range.
\end{rem}

%%%%%%%%%%%%%%%%%%%%%%%%%%%%%%%%%%%%%%%%%%%%%%%%%%%%%%%%%%%%%%%%%%%%%%
%%%%%%%%%%%%%%%%%%%%%%%%%%%%%%%%%%%%%%%%%%%%%%%%%%%%%%%%%%%%%%%%%%%%%%
\section{From $\Binfty$ to $L^\infty$} \label{sec:magic}
The purpose of this section is to establish Part (ii) of \autoref{ContractibilityRefined} which says that the maps $h^k:  \Binfty(X_{k+1}) \to \Binfty(X_{k})$ descend to maps $h^k:  \Linfty(X_{k+1}) \to \Linfty(X_{k})$ for all $k = -1, \dots, q$. This means: if $f_0, f_1 \in  \Binfty(X_{k+1})$ agree $\mu_{k+1}$-almost everywhere, then $h^k f_0$ and $h^k f_1$ agree $\mu_{k}$-almost everywhere. This is obvious for $h^{-1}: L^\infty(X_0) \to \bbR$, since if $f_0$ and $f_1$ agree $\mu_0$-almost everywhere then 
\[
h^{-1}(f_1) = \int_{X_0} f_1(t_\emptyset) \, \dd\!\mu_0(t_\emptyset) = \int_{X_0} f_2(t_\emptyset) \, \dd\!\mu_0(t_\emptyset)= h^{-1}(f_2).
\]
Now if $k \in 0, \dots, q$, then by definition
\[
 h^k = \sum_{C \in \frC_k}{\rm sgn}(C) \cdot h_C,
\]
hence the proof of \autoref{ContractibilityRefined}.(ii) reduces immediately to the following lemma.
\begin{lem}\label{DescentConvenient} Assume that $\mathcal P$ admits a generic $q$-chaining (which holds e.g. if $\hat\gamma(q) \leq r$) and let $k \leq q$. Then for every $C\in \mathfrak C_k$ the map $h_C: \Binfty(X_{k+1}) \to \Binfty(X_{k})$ descends to a map $h_C: \Linfty(X_{k+1}) \to \Linfty(X_{k})$.
\end{lem}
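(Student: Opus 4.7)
Fix $C \in \mathfrak C_k$ and, for $p \in X_k$, write $\pi_p := (t(p,C))_* P$ for the distribution on $X_{k+1}$ of the random $(k+1)$-simplex $t(p,C)$; this is well defined by genericity of the chaining. Then $h_C f(p) = \int_{X_{k+1}} f \, \dd\!\pi_p$, so the descent to $L^\infty$ is equivalent to the assertion that for every $\mu_{k+1}$-null Borel set $N \subset X_{k+1}$ one has $\pi_p(N)=0$ for $\mu_k$-a.e.\ $p$. Equivalently, the averaged measure
\[
\tilde\mu \;:=\; \int_{X_k} \pi_p \, \dd\!\mu_k(p)
\]
on $X_{k+1}$ must be absolutely continuous with respect to $\mu_{k+1}$.

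\textbf{Strategy.} I plan to establish this by proving that $\tilde\mu$ is a $G$-quasi-invariant Borel probability measure on $X_{k+1}$. Since $G$ acts transitively on $X_{k+1}$ by \autoref{thm:transitivity}, the space $X_{k+1}$ is a homogeneous space of the lcsc group $G$, and hence carries a unique $G$-quasi-invariant Borel probability measure class. As $\mu_{k+1}$ lies in this class, we obtain $\tilde\mu \sim \mu_{k+1}$ and in particular $\tilde\mu \ll \mu_{k+1}$, which is precisely what we want. That $\tilde\mu$ is a probability measure supported on $X_{k+1}$ is immediate from genericity: $\pi_p(X_{k+1})=1$ for every $p \in X_k$, whence $\tilde\mu(X_{k+1})=1$.

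The essential point is then $G$-quasi-invariance of $\tilde\mu$. Fix $g \in G$ and a Borel set $A\subset X_{k+1}$; writing
\[
g_*\tilde\mu(A) \;=\; \int_{X_k}(g_*\pi_p)(A)\,\dd\!\mu_k(p),
\]
the task reduces to comparing $g_*\pi_p$ with $\pi_{g\cdot p}$. Granting $g_*\pi_p \sim \pi_{g\cdot p}$ (to be shown), the change of variables $p\mapsto g\cdot p$, combined with $G$-quasi-invariance of $\mu_k$ and Fubini, yields $g_*\tilde\mu \sim \tilde\mu$, as required.

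\textbf{Main obstacle.} The hard part is thus to promote property (iv) of \autoref{DefChaining} from its marginal form to joint quasi-equivariance of the entire chaining: for every $g \in G$ and every $I \in \clP^{[\leq q+1]}$, the joint laws of $(g\cdot t_J)_{J \subset I}$ and of $(t_{g\cdot J})_{J \subset g\cdot I}$ on $\clP^{2^{|I|}}$ should be mutually absolutely continuous. I plan to prove this by induction on $|I|$, following the iterated-integral recipe of \autoref{ExistsChaining}. At each step one integrates against either $\mu_0$ (quasi-invariant) or a perpendicular measure $\nu_{\bullet}$ (quasi-equivariant by \autoref{thm:nu_Borel}), but the subtlety is that the subscript of $\nu$ depends on the previously sampled variables; preserving mutual absolute continuity under such parameter-dependent iterated integrals requires the Radon--Nikodym densities of $g_*\nu_\bullet$ with respect to $\nu_{g\cdot\bullet}$ to be jointly measurable and a.e.\ positive in all variables. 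Once this technical ingredient is in place, pushing joint quasi-equivariance forward through the deterministic assembly map
\[
(t_J)_{J\subset p} \;\longmapsto\; t(p,C) \;=\; (t_{p_{C_0}},\ldots,t_{p_{C_n}},p_{i_0},\ldots,p_{i_{k-n}})
\]
(whose last coordinates are transformed identically by $g$) delivers $g_*\pi_p \sim \pi_{g\cdot p}$, and the argument above concludes the proof.
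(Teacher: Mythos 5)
Your plan is correct and follows essentially the same route as the paper. The paper packages the reduction you describe in \autoref{MagicLemma}: defining $\sigma\colon X_k\to\Prob(X_{k+1})$, $p\mapsto\sigma_p=$ law of $t(p,C)$, one shows $\int_{X_k}\sigma_p\,\dd\!\mu_k(p)$ is $G$-quasi-invariant, invokes uniqueness of the quasi-invariant measure class on the $G$-homogeneous space $X_{k+1}$ to get absolute continuity with respect to $\mu_{k+1}$, and concludes by \autoref{DescendCriterion}. Quasi-equivariance and measurability of $p\mapsto\sigma_p$ are recorded in \autoref{PropertiesSigma}, and the ``quasi-invariance survives parameter-dependent integration'' step is \autoref{QuasiInvarianceUnderIntegration}.

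One remark on the obstacle you single out. The concern about joint measurability of the Radon--Nikodym densities of $g_*\nu_\bullet$ with respect to $\nu_{g\cdot\bullet}$ is sidestepped by the paper's argument: \autoref{QuasiInvarianceUnderIntegration} never manipulates densities as functions of several variables, but compares null sets directly, using only the a.e.-positivity of the cocycle (which is automatic for mutually absolutely continuous probability measures). The same null-set argument, applied iteratively at each sampling step, is what makes the quasi-equivariance of $p\mapsto\sigma_p$ a one-line consequence of the quasi-invariance of $\mu_0$ and the quasi-equivariance of $\nu$ from \autoref{thm:nu_Borel}. So while your proposal leaves the central step as an outline, the gap is one of exposition rather than substance, and the paper's formulation shows how to close it cleanly without the density-measurability bookkeeping you anticipate.
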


From now on we fix $q$ and $k$ as in \autoref{DescentConvenient} and a chain $C \in \mathfrak C_k$. We recall that the map $h_C:  \Binfty(X_{k+1}) \to \Binfty(X_{k})$ is given by $h_C(f) = \bbE[f(t(p,C))]$. For every $p \in X_k$, recall from \eqref{eq:jointdistr} that $\sigma_{p}$ denotes the distribution of the random variable $t(p, C) \in X_{k+1}$. This defines a map
\begin{equation}\label{DefsigmaC}
\sigma: X_k \to {\rm Prob}(X_{k+1}), \; {p} \mapsto \sigma_{p}, \quad \text{such that} \quad h_C(f)({p}) = \int_{X_k} f \, \dd\!\sigma_{p}.
\end{equation}
Note that the measures $\sigma_p$ are probability measure on the \emph{non-compact} space $X_{k+1}$.

\begin{rem}[Probability measures on non-compact spaces]\label{NoncompactMeasures}
Let $X$ be a locally compact second-countable (lcsc) space. If $X$ is compact, then we will always topologize ${\rm Prob}(X)$ as a subspace of $C(X)^*$, where the latter is equipped with the weak-$*$-topology with respect to $C(X)$. Since $X$ is second-countable, it is metrizable and hence ${\rm Prob}(X)$ is again a compact metrizable space. If $X$ is non-compact, then we will always topologize ${\rm Prob}(X)$ as follows: We denote by  $X^+ = X \cup \{\infty\}$ the one-point compactification of $X$, and 
consider ${\rm Prob}(X)$ as a subspace of ${\rm Prob}(X^+)$ given by
\[
{\rm Prob}(X) = \{\mu \in {\rm Prob}(X^+) \mid \mu(\{\infty\}) = 0\} \subset {\rm Prob}(X^+).
\]
We then equip ${\rm Prob}(X) \subset {\rm Prob}(X^+)$ with the subspace topology. In either case, our choice of topology on ${\rm Prob}(X)$ defines a canonical Borel $\sigma$-algebra on ${\rm Prob}(X)$.
\end{rem}
We are going to establish measurability of the map $\sigma$ with respect to the Borel structure just defined.
\begin{prop}\label{PropertiesSigma} The map $\sigma$ from \eqref{DefsigmaC} is continuous and $G$-quasi-equivariant, i.e. for every $\mathbf{p} \in X_k$ and every $g \in G$ the measures $g_*\sigma_{\mathbf{p}}$ and $\sigma_{g\mathbf{p}}$ are mutually absolutely continuous.
\end{prop}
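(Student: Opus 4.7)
The plan is to unpack $\sigma_p$ as an iterated pushforward. By definition, $\sigma_p$ is the distribution of the random tuple
\[
t(p,C) = \big(t_{p_{C_0}},\, t_{p_{C_1}},\, \ldots,\, t_{p_{C_n}},\, p_{i_0},\, \ldots,\, p_{i_{k-n}}\big),
\]
and the joint law of $(t_J)_{J\subseteq C_0}$ underlying it is, as constructed just before \autoref{ExistsChaining}, an iterated integral against $\mu_0$ followed by the perpendicular measures $\nu_{p_J\cup\{t_{J'} : J'\subsetneq J\}}$ layered from smaller to larger $|J|$. The map $t(p,C)$ is then a deterministic function of $p$ and these auxiliary variables, so $\sigma_p$ inherits its analytic properties from those of $\mu_0$ and of the building blocks $\nu$. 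The genericity of the chaining (\autoref{ExistsChaining}) guarantees that this pushforward is concentrated on $X_{k+1}$, as already asserted.

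For continuity, I would test against $f\in C(X_{k+1}^+)$ and show that the iterated integral
\[
\int f\, d\sigma_p \;=\; \int_{\clP}\!\cdots\!\int_{\clP} f\big(t(p,C)\big)\,\Big(\!\!\prod_{\emptyset\neq J\subseteq C_0}\!\! d\nu_{p_J\cup\{t_{J'} : J'\subsetneq J\}}(t_J)\Big)\, d\mu_0(t_\emptyset)
\]
depends continuously on $p\in X_k$. Peeling off integrations from the innermost (largest $|J|$) outward, I would apply bounded convergence at each layer, using that $\nu$ is continuous on each stratum $\clP^{m+1}_l$ by \autoref{thm:nu_Borel}. The key point is that since $p\in X_k$ and the chaining is generic, the arguments of $\nu$ appearing in each layer almost surely lie in the top-dimensional stratum (where $\nu$ is continuous), so the inner integrals are bounded continuous functions of the remaining variables and of $p$, to which bounded convergence applies at the next layer. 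To upgrade weak-$*$ continuity in $\Prob(\bar X_{k+1})$ to continuity into the stronger topology of $\Prob(X_{k+1})$ described in \autoref{NoncompactMeasures} (i.e.\ no escape of mass through $\bar X_{k+1}\smallsetminus X_{k+1}$), I would again invoke genericity: if $p^{(n)}\to p^*$ in $X_k$, the set of auxiliary tuples yielding a degenerate $t(p,C)$ remains negligible in a neighborhood of $p^*$ for the same reason it is null at $p^*$.

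For $G$-quasi-equivariance, fix $g\in G$ and $p\in X_k$. By \autoref{thm:nu_Borel}, $g_\ast \nu_{q_0,\ldots,q_m} \sim \nu_{gq_0,\ldots,gq_m}$ for every tuple, and $g_\ast \mu_0 = g_\ast \lambda_V \sim \lambda_V$ by \autoref{KInvariantMeasuresQuasiEquivariant}. Proceeding by induction on $|J|$, I would show that the joint distribution of $(g\cdot t_J)_{J\subseteq C_0}$ sampled from the construction based at $p$ is mutually absolutely continuous with the joint distribution $(t'_J)_{J\subseteq C_0}$ sampled from the construction based at $gp$; the inductive step is a direct substitution $t_J\mapsto g\cdot t_J$ combined with the quasi-equivariance of the next perpendicular measure. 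Pushing forward along the deterministic map extracting $t(\cdot,C)$ yields $g_\ast\sigma_p \sim \sigma_{gp}$.

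The main obstacle is the delicate interplay between the stratum-wise continuity of $\nu$ and the iterated integration. The resolution is that genericity, together with the standing assumption $p\in X_k$, forces every configuration of auxiliary variables produced by the construction to lie almost surely in the top stratum, so that continuity propagates inductively through all layers of the iterated integral without having to confront the discontinuities of $\nu$ across lower-dimensional strata.
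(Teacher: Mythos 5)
Your proposal is correct and follows essentially the same approach as the paper: quasi-equivariance is inherited directly from that of $\mu_0$ and $\nu$, and continuity is obtained by unraveling the iterated-integral formula for $\sigma_p$, using genericity to ensure that at each integration layer the indexing tuples lie almost surely in the top stratum where $\nu$ is continuous, so that \autoref{ContinuityMeasureFamilies} applies. (One minor phrasing slip: the topology of \autoref{NoncompactMeasures} comes from the one-point compactification $X_{k+1}^+$ rather than from $\bar X_{k+1}$, so there is no separate ``upgrading'' step once one tests against $C(X_{k+1}^+)\supset C_0(X_{k+1})$, which is what you do anyway.)
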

\begin{proof} The assignment $\sigma$ is $G$-quasi-equivariant, since $\mu_0$ is $G$-quasi-invariant and $\nu$ is  $G$-quasi-equivariant by \autoref{thm:nu_Borel}. To see continuity (with respect to the weak-$*$-topology on $X_{k+1}^+$) we have to show that for every $f \in C_0(X_{k+1})$ the map $\mathbf{p} \mapsto \int_{X_{k+1}} f d\sigma_{\mathbf{p}}$ is continuous. For this we first observe that the map $X_{m} \to {\rm Prob}(\mathcal P)$, $(q_0, \dots, q_m) \mapsto \nu_{q_0, \dots, q_m}$ is continuous for every $m$ by \autoref{thm:nu_Borel}. In view of this observation, continuity of $\int_{X_{k+1}} f d\sigma_{\mathbf{p}}$ in $\mathbf{p}$ follows from the explicit formula by iterated application of \autoref{ContinuityMeasureFamilies}; here we use that the chaining is generic, so that at each integration step the random points in the index of $\nu$ are linearly independent almost surely.
\end{proof}
From \autoref{PropertiesSigma}, we deduce that \autoref{DescentConvenient}, and hence \autoref{ContractibilityRefined} (ii), are direct consequences of the following general measure-theoretical result. See \autoref{defn:regspace} for the definition of a regular $G$-space.
\begin{lem}\label{MagicLemma} Let $(X, \mu_X)$ and $(Y, \mu_Y)$ be regular $G$-spaces and let $\sigma: Y \to {\rm Prob}(X)$, $p \mapsto \sigma_p$ be a $G$-quasi-equivariant Borel map. If $G$ acts transitively on $X$, then the map
\[
\sigma^*: \Binfty(X) \to \Binfty(Y), \quad \sigma^*f(y) = \int_X f \, \dd\!\sigma_y
\]
descends to a map $\sigma^*: L^\infty(X, \mu_X) \to L^\infty(Y, \mu_Y)$.
\end{lem}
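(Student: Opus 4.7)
The task reduces to showing that if $f_1, f_2 \in \Binfty(X)$ agree $\mu_X$-almost everywhere, then $\sigma^\ast f_1$ and $\sigma^\ast f_2$ agree $\mu_Y$-almost everywhere; equivalently, that for every $\mu_X$-null Borel set $N \subset X$, the function $y \mapsto \sigma_y(N)$ vanishes $\mu_Y$-almost everywhere. To prove this I will introduce the \emph{averaged measure}
\[
\nu(E) := \int_Y \sigma_y(E) \, \dd\!\mu_Y(y), \qquad E \subset X \text{ Borel},
\]
and reduce the desired conclusion to the statement $\nu \ll \mu_X$. The integrand is Borel in $y$ because $\sigma$ is Borel and the evaluation map $\mu \mapsto \mu(E)$ on ${\rm Prob}(X)$ is Borel (established first for open sets, then extended to all Borel sets by a monotone class argument), so $\nu$ is a well-defined Borel probability measure on $X$.

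Next, I will verify that $\nu$ is $G$-quasi-invariant, using only the $G$-quasi-equivariance of $\sigma$ together with the $G$-quasi-invariance of $\mu_Y$. For $g \in G$ and Borel $E \subset X$ one computes
\[
g_\ast\nu(E) \,=\, \int_Y \sigma_y(g^{-1}E)\, \dd\!\mu_Y(y) \,=\, \int_Y (g_\ast\sigma_y)(E)\, \dd\!\mu_Y(y),
\]
and then uses that $g_\ast\sigma_y \sim \sigma_{gy}$ (so $(g_\ast\sigma_y)(E)=0 \iff \sigma_{gy}(E)=0$) together with the change of variable $y \mapsto gy$ legitimised by $\mu_Y \sim g_\ast\mu_Y$. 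The upshot is that $g_\ast\nu(E)=0$ if and only if $\nu(E)=0$, so $\nu$ and each of its $G$-translates share the same null sets.

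Finally, since $G$ is lcsc and acts transitively on the standard Borel space $X$, Mackey's theorem identifies $X$, as a Borel $G$-space, with a homogeneous space $G/H$ for some closed subgroup $H \leq G$; on such homogeneous spaces the $G$-quasi-invariant $\sigma$-finite measure class is unique (up to equivalence it is the class of $\pi_\ast\lambda_G$, where $\pi : G \to G/H$ is the quotient map and $\lambda_G$ is a Haar measure). Since $\nu$ is a nonzero $G$-quasi-invariant probability measure, we therefore have $\nu \sim \mu_X$, and in particular $\nu \ll \mu_X$. To conclude, for any $\mu_X$-null Borel set $N$ we obtain $\int_Y \sigma_y(N)\, \dd\!\mu_Y(y) = \nu(N) = 0$, forcing $\sigma_y(N) = 0$ for $\mu_Y$-a.e. $y$, which is precisely the required descent property. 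The main delicate step is the invocation of Mackey's uniqueness theorem, and checking that its hypotheses are subsumed by the regular $G$-space structure of \autoref{defn:regspace:Intro}; the rest is routine measure theory.
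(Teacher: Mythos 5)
Your proof is correct and follows essentially the same route as the paper: reduce to showing the averaged measure $\int_Y \sigma_y\,\dd\!\mu_Y(y)$ is absolutely continuous with respect to $\mu_X$ (the paper's \autoref{DescendCriterion}), deduce its $G$-quasi-invariance from the quasi-invariance of $\mu_Y$ and quasi-equivariance of $\sigma$ (the paper's \autoref{QuasiInvarianceUnderIntegration}), and conclude via uniqueness of the quasi-invariant measure class on the transitive $G$-space $X$. The only cosmetic difference is that you spell out the appeal to Mackey's theorem, whereas the paper simply states the uniqueness; both are standard.
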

The proof of  \autoref{MagicLemma} is based on the following general lemma. Let $X$ and $Y$ be lcsc spaces and let $\sigma: Y \to {\rm Prob}(X)$, $y \mapsto \sigma_y$ be a Borel map. We recall from Subsection \ref{subsec:IntegralMeasure} that if $\mu$ is a probability measure on $Y$, then the integral $ \int_Y \sigma_y\, d\mu(y)$ is defined as the unique probability measure on $X$ such that
\[
\int_X f \, \dd\!\left(\int_Y\sigma_y\, \dd\!\mu(y)\right) :=   \int_Y \left(\int_X f \, \dd\!\sigma_y\right) \, \dd\!\mu(y) \quad (f \in C_c(X)).
\]
We are interested in equivariance properties of this construction. Thus assume that a lcsc group $G$ acts continuously on $X$ and $Y$. If $\mu \in {\rm Prob}(Y)$ is $G$-quasi-invariant, i.e.\ $g_*\mu \ll \mu$ for all $g \in G$, then all the measures $\{g_*\mu \mid g \in G\}$ are mutually equivalent, and we denote by 
\[
r_\mu(g,y) := \frac{d(g^{-1}_*\mu)}{d\mu}(y)
\]
the associated Radon-Nikodym cocycle. Note that, with our convention,
\[
\int_Y f(g.y)d\mu(y) = \int_Y f(y) r_\mu(g^{-1},y) d\mu(y).
\]
Also note that $r_\mu(g, \cdot)$ is a $\mu$-almost everywhere positive function. 
\begin{lem}\label{QuasiInvarianceUnderIntegration} Assume that $G$ acts continuously by homeomorphisms on the lcsc spaces $X$ and $Y$ and that
$\sigma: Y \to {\rm Prob}(X)$ is a $G$-quasi-equivariant Borel map.  If $\mu \in {\rm Prob}(Y)$ is $G$-quasi-invariant, then so is $\int_Y \sigma_y d\mu(y) \in {\rm Prob}(X)$.
\end{lem}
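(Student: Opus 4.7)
Setting $\nu := \int_Y \sigma_y \, d\mu(y)$ and fixing $g \in G$, the plan is to show that every $\nu$-null Borel set $N \subset X$ satisfies $\nu(g^{-1}N) = 0$; note that $g^{-1}N$ is Borel since $g$ acts by a homeomorphism. Equivalently, I will show $g_*\nu \ll \nu$, which is the claim.

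The first step is to promote the defining identity for $\nu$ from $C_c(X)$-test functions to indicator functions of Borel sets. Using the Borel measurability of $\sigma: Y \to \mathrm{Prob}(X)$ (together with the fact that evaluation $\eta \mapsto \eta(A)$ is a Borel map on $\mathrm{Prob}(X)$ for any Borel $A \subset X$), a standard monotone class / Dynkin $\pi$-$\lambda$ argument upgrades the defining identity to
\begin{equation*}
	\nu(A) \;=\; \int_Y \sigma_y(A) \, d\mu(y) \qquad \text{for every Borel set } A \subset X.
\end{equation*}
In particular, $\nu(N) = 0$ forces $\sigma_y(N) = 0$ for $\mu$-almost every $y \in Y$.

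The second step is to chain together the two quasi-invariance hypotheses. From $\sigma_y(N) = 0$ $\mu$-a.e.\ and $g_*\mu \ll \mu$ (the $G$-quasi-invariance of $\mu$), one obtains $\sigma_y(N) = 0$ for $g_*\mu$-a.e.\ $y$; equivalently, after the change of variable $y = g.z$, this reads $\sigma_{g.z}(N) = 0$ for $\mu$-a.e.\ $z$. Next, $G$-quasi-equivariance of $\sigma$ provides $g_*\sigma_z \sim \sigma_{g.z}$, so
\begin{equation*}
	\sigma_z(g^{-1}N) \;=\; (g_*\sigma_z)(N) \;=\; 0 \quad \Longleftrightarrow \quad \sigma_{g.z}(N) = 0,
\end{equation*}
whence $\sigma_z(g^{-1}N) = 0$ for $\mu$-a.e.\ $z$. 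Integrating against $\mu$ and invoking the extended identity once more yields $\nu(g^{-1}N) = 0$, which is what was required.

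The only step requiring any genuine care is the promotion of the integral identity from $C_c(X)$ to Borel indicators; this is standard measure theory but should be made explicit, as it is the only place where one leaves the ambit of the literal definition of $\int_Y \sigma_y \, d\mu(y)$. Everything else is a short chain of implications linking the three hypotheses, so I do not expect any further obstacle.
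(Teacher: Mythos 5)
Your proof is correct. The route is genuinely different from the paper's. The paper introduces the Radon--Nikodym cocycles $r_\mu(g,y) = \tfrac{d(g^{-1}_*\mu)}{d\mu}(y)$ and $h(g,x,y)=\tfrac{d(g_*\sigma_y)}{d\sigma_{g.y}}(x)$ and then, through a chain of substitutions, rewrites $g_*\nu(A)$ as $\int_Y\int_A h(g,x,g^{-1}.y)\,r_\mu(g^{-1},y)\,d\sigma_y(x)\,d\mu(y)$; since the density is a.e.\ positive, $g_*\nu(A)=0$ iff $\int_Y\sigma_y(A)\,d\mu(y)=0$, i.e.\ $\nu(A)=0$. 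Your argument bypasses densities entirely: you chase null sets through the two quasi-invariance hypotheses, turning $\nu(N)=0$ into $\sigma_y(N)=0$ $\mu$-a.e., then into $\sigma_{g.z}(N)=0$ $\mu$-a.e.\ via $g_*\mu\ll\mu$, then into $\sigma_z(g^{-1}N)=0$ $\mu$-a.e.\ via $g_*\sigma_z\sim\sigma_{g.z}$, and integrate back. This is somewhat cleaner and sidesteps a point the paper leaves implicit, namely the joint Borel measurability in $(x,y)$ of the cocycle $h$, which is needed to make sense of the paper's final integral. One small remark: you prove $g_*\nu\ll\nu$ for all $g$; the paper's computation gives the two-sided equivalence $g_*\nu\sim\nu$ directly. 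Your conclusion still delivers $\sim$ by the standard trick of applying the one-sided statement to $g^{-1}$ and pushing forward by $g$, and in fact the paper's own informal definition of quasi-invariance immediately before the lemma is stated as the one-sided $g_*\mu\ll\mu$, so you are consistent with the surrounding text.
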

\begin{proof} Let us abbreviate
%denote by $h_g$ the Radon-Nikodyn derivative
\[
h(g, x, y) :=  \frac{d(g_*\sigma_y)}{d\sigma_{g.y}}(x).
\]
Let $A \subset X$ be a Borel subset and let $g \in G$. Then we have
\begin{eqnarray*}
\left(g_*\left(\int_Y \sigma_y d\mu(y)\right)\right)(A) &=& \left(\int_Y \sigma_y d\mu(y)\right)(g^{-1}.A) =  \int_Y \left(\sigma_y(g^{-1}.A)\right) d\mu(y)\\
&=& \int_Y \left(g_*\sigma_y(A)\right) \, d\mu(y) = \int_Y \int_A d(g.\nu_y)(x) \, d\mu(y)\\
&=&  \int_Y \int_A  \frac{d(g_*\sigma_y)}{d\sigma_{g.y}}(x) \, d\sigma_{g.y}(x) \, d\mu(y)\\
&=&  \int_Y \left(\int_A h(g,x, y) \, d\sigma_{g.y}(x)\right) \, d\mu(y)\\
&=& \int_Y \left(\int_A  h(g,x, g^{-1}.y) \, d\sigma_{y}(x)\right)  r_\mu(g^{-1}, y) \, d\mu(y)\\
&=& \int_Y \int_A  h(g,x, g^{-1}.y) \, r_\mu(g^{-1}, y)  \,d\sigma_{y}(x)  \,d\mu(y).
\end{eqnarray*}
Thus $A$ is a null set for $g_*\left(\int_Y \sigma_y d\mu(y)\right)$ if and only if 
\[
 \int_Y \int_A  h(g,x, g^{-1}.y) r_\mu(g^{-1}, y)  d\nu_{y}(x)  d\mu(y) = 0.
\]
Since the integrand is positive, this is equivalent to
\[
 0 = \int_Y \, d\nu_{y}(x)  d\mu(y) = \int_Y \nu_y(A)\, d\mu(y) = \left(\int_Y \sigma_y d\mu(y)\right)(A).
 \]
 This shows that $g_*\left(\int_Y \sigma_y d\mu(y)\right)$ and $\left(\int_Y \sigma_y d\mu(y)\right)$ have the same null sets and finishes the proof.
\end{proof}

\begin{proof}[Proof of Lemma \ref{MagicLemma}] In view of Lemma \ref{DescendCriterion} it suffices to check that
\[
\mu'_X := \int_Y \sigma_y d\mu_Y(y)\ll \mu_X.
\]
Since $\mu_Y$ is $G$-quasi-invariant and $\sigma$ is $G$-quasi-equivariant, the measure $\mu'_X$ is $G$-quasi-invariant  by Lemma \ref{QuasiInvarianceUnderIntegration}. Since $X$ is homogeneous, there is a unique $G$-quasi-invariant measure class on $X$, hence in particular $\mu'_X \ll \mu_X$.
\end{proof}
We have now established \autoref{ContractibilityRefined}, and hence also \autoref{MainThmConvenient}.

%%%%%%%%%%%%%%%%%%%%%%%%%%%%%%%%%%%%%%%%%%%%%%%%%%%%%%%%%%%%%%%%%%%%%%
%%%%%%%%%%%%%%%%%%%%%%%%%%%%%%%%%%%%%%%%%%%%%%%%%%%%%%%%%%%%%%%%%%%%%%
\section{Computations in spectral sequences} \label{sec:proof_abstract_stability}

In this section we are going to establish the following quantitative version of \autoref{thm:abstract_stability}:
\begin{thm} \label{thm:abstract_stability_explicit}
Let $R \in \mathbb N\cup \{\infty\}$, let $(G_r, X_{r, \bcdot})_{r \in [R]}$ be a measured Quillen family of length $R$ with parameters $(\gamma,\tau)$, and let $q_0 \geq 1$ be a natural number such that for every $r \in [R-1]$, the inclusion $G_r \hookrightarrow G_{r+1}$ induces an isomorphism 
\[
	\Hcb^q(G_{r+1}) \xrightarrow{\sim} \Hcb^q(G_r) \quad \mbox{ whenever } q \leq q_0.
\]
Furthermore, let the functions $\widetilde{\gamma}$ and $\widetilde{\tau}$ be defined as 
\begin{equation} \label{eq:parameters_tilde}
	\widetilde{\gamma}(q, r) := \min_{j=q_0+1}^q \gamma\big(r+1-2(q-j)\big) - j \qand \widetilde{\tau}(q,r) :=  \min_{j=q_0+1}^q \tau\big(r+1-2(q-j)\big) - j.
\end{equation}
Then for all $r \in [R-1]$ and $q \geq 0$, the inclusion $G_r \hookrightarrow G_{r+1}$ induces an isomorphism (resp. an injection) $\Hcb^q(G_{r+1}) \to \Hcb^q(G_r)$ whenever 
\[
\min\{\widetilde{\gamma}(q,r), \widetilde{\tau}(q,r)-1\} \geq 0 \qquad (\text{resp. }\min\{\widetilde{\gamma}(q,r), \widetilde{\tau}(q,r)\} \geq 0).
\]
% Then $\Hcb^\bullet$ is stable along $(G_r)_{r \in [R]}$.
\end{thm}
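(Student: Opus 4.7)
The plan is to apply, for each $r \in [R]$, a double-complex spectral sequence argument to the augmented $\Linfty$-complex of $X_{r,\bcdot}$, and then compare the spectral sequences for $r$ and $r+1$ via the embedding $G_r \hookrightarrow G_{r+1}$, proceeding by induction on the cohomology degree $q$.

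Concretely, for each $r$ I will consider the first-quadrant double complex
\[
 C^{p,k}(r) \;:=\; L^\infty_{w^*,\mathrm{alt}}\bigl(G_r^{p+1};\, L^\infty(X_{r,k})\bigr)^{G_r}, \qquad p \geq 0,\; k \geq -1,
\]
with the convention $L^\infty(X_{r,-1}) := \bbR$. The horizontal differential is the bar coboundary computing $\Hcb^\bcdot(G_r;-)$, while the vertical differential is induced by the face maps of the augmented $\Linfty$-complex associated to $X_{r,\bcdot}$. Admissibility of the $G_r$-object guarantees that the vertical differential is a morphism of coefficient modules, so that the exactness of the bar functor along dual exact complexes (\autoref{Exactness}) applies. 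The filtration by rows, together with measurable $\gamma(r)$-connectivity (QM1), causes the associated spectral sequence to collapse in low total degrees, and identifies the abutment with $\Hcb^\bcdot(G_r)$ in a window growing with $\gamma(r)$. The filtration by columns yields a spectral sequence whose $E_1$-page I identify as follows: transitivity (QM2) gives, for $0 \leq k \leq \tau(r)$, an isomorphism $L^\infty(X_{r,k}) \cong L^\infty(G_r/H_{r,k})$ of coefficient modules; Monod's induction isomorphism then yields $\Hcb^p(G_r; L^\infty(X_{r,k})) \cong \Hcb^p(H_{r,k})$; and the amenable-kernel surjection $H_{r,k} \twoheadrightarrow G_{r-k-1}$ from (QM3), combined with amenable-extension invariance of $\Hcb^\bcdot$, finally gives
\[
 E_1^{p,k}(r) \;\cong\; \Hcb^p(G_{r-k-1}), \qquad 0 \leq k \leq \tau(r).
\]

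The compatibility diagram \eqref{eq:inclusions} produces a map of double complexes $C^{\bcdot,\bcdot}(r+1) \to C^{\bcdot,\bcdot}(r)$, inducing a morphism of spectral sequences; on $E_1$ it coincides with the stability map $\Hcb^p(G_{r-k}) \to \Hcb^p(G_{r-k-1})$. I will then argue by induction on $q$, with base case $q \leq q_0$ furnished by hypothesis. For the inductive step, I invoke the inductive hypothesis to declare certain $E_s$-entries ``stable'' (either isomorphism or injection between the two spectral sequences), and run a five-lemma-style chase through the filtration to conclude the same for $\Hcb^q(G_{r+1}) \to \Hcb^q(G_r)$. The shift $r \mapsto r+1-2(q-j)$ in \eqref{eq:parameters_tilde} appears because each successive differential $d_s$ on the $E_s$-page shifts bidegree, so that after $q-j$ inductive steps one is asking about $\Hcb^j$ of groups indexed by $G_{r+1-2(q-j)}$; the loss $j$ inside $\widetilde\gamma$ and $\widetilde\tau$ records the corresponding degradation of connectivity/transitivity at lower-$q$ auxiliary entries. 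Taking the minimum over $j \in \{q_0+1,\dots,q\}$ yields the tightest uniform bound, while the gap between the iso condition $\widetilde\tau(q,r)-1 \geq 0$ and the injection condition $\widetilde\tau(q,r) \geq 0$ comes from the standard one-degree shift separating ``convergence to an iso'' from ``convergence to a mono'' in a spectral sequence comparison.

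The hard part will not be the geometric/measure-theoretic input — all of that is already encoded in admissibility and in (QM1)--(QM3) — but rather the combinatorial bookkeeping: one must verify that at each $E_s$-page, every entry needed in the diagram chase for $(q,r)$ is controlled either by the hypothesis $q \leq q_0$, by an inductive iso/mono at some $(q',r')$ with $q' < q$, or by vanishing imposed by the $\gamma(r)$-connectivity window; and that the precise indices $(p,k)$ arising in this chase match exactly the extremization $\min_{j=q_0+1}^q$ in the definitions of $\widetilde\gamma$ and $\widetilde\tau$. Once this bookkeeping is written out, \autoref{thm:abstract_stability} follows as the qualitative corollary obtained by sending $r \to \infty$.
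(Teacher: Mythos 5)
Your double complex is the right one, and your identification of the column-filtration $E_1$-page via transitivity, Eckmann--Shapiro, and amenable-kernel invariance matches the paper exactly (up to swapping the roles of the two indices). The gap is in the mechanism you propose for extracting the stability map: there is no map of double complexes $C^{\bcdot,\bcdot}(r+1) \to C^{\bcdot,\bcdot}(r)$ of the kind you want. Restricting along $G_r^{p+1} \hookrightarrow G_{r+1}^{p+1}$ is fine, but in the space direction the only natural map $X_{r,k} \to X_{r+1,k}$ embeds $X_{r,k}$ as a \emph{null set} of $X_{r+1,k}$, so restriction of $L^\infty$-classes is ill-defined and no morphism of coefficient modules arises. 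The compatibility diagram \eqref{eq:inclusions} relates point \emph{stabilizers}, not the spaces themselves, so it does not furnish the morphism of spectral sequences your five-lemma chase would need.

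The paper avoids this by never comparing two spectral sequences at all. It works with the \emph{single} double complex attached to $(G_{r+1}, X_{r+1,\bcdot})$ and observes, via the prism-operator computation in \autoref{thm:homotopy_wi} and \autoref{thm:IIE}, that the first differential on the column-filtration page $\IIE_1^{p,q}\cong\Hcb^q(G_{r-p})$ is \emph{already} the stability map $\Hcb^q(\iota_{r-p-1})$ when $p$ is odd and is zero when $p$ is even. So the map $\Hcb^q(G_{r+1})\to\Hcb^q(G_r)$ appears as an internal datum of one spectral sequence: the row-filtration spectral sequence abuts to $\Hcb^q(G_{r+1})$ in the connectivity window, while $\IIE_2^{0,q}\cong\Hcb^q(G_r)$ because $\IId_1^{0,q}=0$, and the induction on $q$ is used to kill the remaining $\IIE_2^{p,q'}$ with $p+q'=q$, $p\geq 1$, by showing that lower-degree stability maps appearing as $\IId_1$ are isomorphisms or injections. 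That is where the shift $r+1-2(q-j)$ and the extremization over $j$ come from; your heuristic about ``degradation over $q-j$ steps'' is in the right direction, but the precise $2$ comes from the fact that on the anti-diagonal $p+q'=q$ both the column index $p$ and the index of the group $G_{r-p}$ move by one per step. To repair your proposal you would need to abandon the map of double complexes and instead invoke \autoref{thm:IIE} to place the stability map inside the $\IIE_1$-differentials, then reproduce the diagram chase on \autoref{fig:IIE_1} and \autoref{fig:IIE_2}.
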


\begin{rem}[Initial conditions]\label{IC} We refer to the condition that the inclusions induce isomorphisms $\Hcb^q(G_{r+1}) \to \Hcb^q(G_{r})$ for all $q \leq q_0$ as the \emph{initial condition}. We can always choose $q_0 := 1$, since $\Hcb^1 = 0$ for trivial coefficients. If the $G_r$ are all connected simple Lie groups and either all of Hermitian type (as in the symplectic case) or all of non-Hermitian type, then we can choose $q_0:=2$ by \cite{Burger-Monod2,Burger-Monod1}. For example this is the case for the families $({\rm Sp}_{2r}(\bbR))$ and $({\rm Sp}_{2r}(\bbC))$.
\end{rem}

Let us reassure ourselves that \autoref{thm:abstract_stability_explicit} implies \autoref{thm:abstract_stability}: Let $(G_r, X_{r, \bcdot})_{r \in \mathbb N}$ be an infinite measured Quillen family with parameters $(\gamma, \tau)$ and assume that $\gamma(r) \to \infty$ and $\tau(r) \to \infty$ as $r \to \infty$. By \autoref{IC} we may choose the initial condition $q_0 := 1$. Since $\gamma(r) \to \infty$ and $\tau(r) \to \infty$ we then find for every $q \geq q_0$ some $r(q) \in \mathbb N$ such that for all $j \in \{q_0+1, \ldots, q\}$ and all $r \geq r(q)$ we have
\[
 \gamma\big(r+1-2(q-j)\big) \geq  j \qand  \tau\big(r+1-2(q-j)\big) \geq j+1.
\]
Then \autoref{thm:abstract_stability_explicit} implies that
\[
\Hcb^q(G_{r(q)})\cong \Hcb^q(G_{r(q)+1}) \cong \Hcb^q(G_{r(q)+2}) \cong \dots
\]
This shows that $(G_r)_{r \in \mathbb N}$ is stable.

The proof of \autoref{thm:abstract_stability_explicit} will be given in Subsection \ref{subsec:proof} below based on a spectral sequence argument. The relevant spetral sequences will be constructed in Subsection \ref{subsec:DoubleComplex}.

\subsection{The double complex and its associated spectral sequences}\label{subsec:DoubleComplex}
Let $G$ be an lcsc group, and $X_\bcdot$ be an admissible $G$-object (see Subsection \ref{subsec:GObjects} for basic definitions about admissible $G$-objects). By \autoref{DefAdmissibility}, the $\Linfty$-complex 
\begin{equation} \label{eq:Sp_linfty}
	0 \to \Linfty(X_0) \xrightarrow{\dd^0} \Linfty(X_1) \xrightarrow{\dd^1} \Linfty(X_2) \xrightarrow{\dd^2} \cdots
\end{equation}
associated to $X_{\bcdot}$ is a complex of coefficient $G$-modules. We recall that for every $q \geq 0$, the operator $\dd^q$ is defined as the alternating sum $\dd^q = \sum_{i=0}^{q+1} \, (-1)^i \delta^i$, where $\delta^i: \Linfty(X_q) \to \Linfty(X_{q+1})$ is the map induced by the $i$-th face operator $\delta_i: X_{q+1} \to X_q$. 

For all $p,q \geq 0$, we define the Banach spaces and morphisms
\[
	\LL^{p,q}:=\Linfty\big(G^{p+1} \times X_{q}\big)^{G} \cong \Linfty\big(G^{p+1}; \Linfty(X_{q})\big)^{G} \vspace{-6pt}
\]
\begin{align*}
	\dd^{p,q}_{\rm H}:\,\LL^{p,q} \to \LL^{p+1,q}, \qquad &  \dd^{p,q}_{\rm H}f(g_0,\ldots,g_p):= \sum_{i=0}^p (-1)^i \,f(g_0,\ldots,\hat{g_i},\ldots,g_p), \\
	\dd_{\rm V}^{p,q}:\,\LL^{p,q} \rightarrow\LL^{p,q+1}, \qquad & \dd^{p,q}_{\rm V}f(g_0,\ldots,g_{p-1}):= \dd^q\big(f(g_0,\ldots,g_{p-1})\big),
\end{align*}
where $G$ is equipped with the $G$-action by left-multiplication, and $\dd^q$ are the coboundary operators in \eqref{eq:Sp_linfty} (see \autoref{LInftyGeneral} and \autoref{thm:explaw}). A computation shows that $(\LL^{\bcdot,\bcdot},\dd_{\rm H},\dd_{\rm V})$ is a first-quadrant double complex. Let $\IE_\bcdot^{\bcdot,\bcdot}$ and $\IIE_\bcdot^{\bcdot,\bcdot}$ be the spectral sequences associated with the horizontal and vertical filtrations of $\LL^{\bcdot,\bcdot}$, respectively, both of which converge to the cohomology of the total complex of $\LL^{\bcdot,\bcdot}$ and whose first-page terms and differentials are given by 
\begin{align*}
\IE_{1}^{p,q} & =\HH^q (\LL^{p,\bcdot},\,\dd_{\rm V}^{p,\bcdot}),\quad\Id_{1}^{p,q}=\HH^q(\dd_{\rm H}^{p,\bcdot}): \,\IE_{1}^{p,q}\rightarrow\IE_{1}^{p+1,q},\\
\IIE_{1}^{p,q} & =\HH^q(\LL^{\bcdot,p},\,\dd^{\bcdot,p}_{\rm H}),\quad\IId_{1}^{p,q}= \HH^q(\dd_{\rm V}^{\bcdot,p}):\,\IIE_{1}^{p,q}\rightarrow\IIE_{1}^{p+1,q}. 
\end{align*}
We say that $\IE^{\bcdot,\bcdot}_\bcdot$ and $\IIE^{\bcdot,\bcdot}_\bcdot$ are the \emph{spectral sequences associated to the pair $(G,X_\bcdot)$}. 

If we make additional assumptions on the measurable connectivity and transitivity of $X_\bcdot$, we are able to gather more information about some of their terms and differentials. Concerning the spectral sequence $\IE^{\bcdot,\bcdot}_\bcdot$, we have the following lemma. 

\begin{lem} \label{thm:IE}
Assume that $X_\bcdot$ is measurably $\gamma_0$-connected. Then for all $p \in [\gamma_0]$, the limit term $\IE_\infty^p$ is isomorphic to $\Hcb^p(G)$.
\end{lem}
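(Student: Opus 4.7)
The plan is to analyse the first page of $\IE_\bcdot^{\bcdot,\bcdot}$, recognise the row $q=0$ as the canonical homogeneous resolution computing $\Hcb^\bcdot(G)$, and observe that all other rows vanish up to row $\gamma_0$; a standard first-quadrant argument will then upgrade this picture to the $\infty$-page.

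First I would fix a column index $p \geq 0$ and study the vertical complex $(\LL^{p,\bcdot}, \dd_{\rm V}^{p,\bcdot})$. Admissibility of $X_\bcdot$ (\autoref{DefAdmissibility}) tells me that the augmented $\Linfty$-complex
\[
0 \to \bbR \to \Linfty(X_0) \to \Linfty(X_1) \to \Linfty(X_2) \to \cdots
\]
is a complex of coefficient $G$-modules, while measurable $\gamma_0$-connectedness says that it is exact up to degree $\gamma_0$. Feeding this complex through the functor $\Linfty(G^{p+1};-)^G$, whose exactness on complexes of coefficient modules is Monod's content recorded in \autoref{Exactness}, yields an augmented complex of $G$-invariants that is still exact up to degree $\gamma_0$. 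Reading off the cohomology of $(\LL^{p,\bcdot}, \dd_{\rm V}^{p,\bcdot})$ then gives $\IE_1^{p,0} = \Linfty(G^{p+1})^G$ and $\IE_1^{p,q} = 0$ for $1 \leq q \leq \gamma_0$, independently of $p$.

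Next I would identify the horizontal differential on the $q=0$ row. By construction of $\dd_{\rm H}$, this is precisely the standard bar coboundary on the homogeneous resolution $\Linfty(G^{\bcdot+1})^G$, whose cohomology is continuous bounded cohomology; this yields $\IE_2^{p,0} = \Hcb^p(G)$ for every $p \geq 0$.

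Finally, I would close the argument by a first-quadrant degree count. For $p \in [\gamma_0]$ and any $r \geq 2$: any outgoing differential $\IE_r^{p,0} \to \IE_r^{p+r,1-r}$ vanishes, since the target lies outside the first quadrant; and any incoming differential $\IE_r^{p-r,r-1} \to \IE_r^{p,0}$ also vanishes, either because $1 \leq r-1 \leq \gamma_0$ already forces the source to be zero on $\IE_1$, or because $r-1 > \gamma_0 \geq p$ forces $p-r < 0$. Hence $(p,0)$ is a permanent cycle with $\IE_\infty^{p,0} = \IE_2^{p,0} = \Hcb^p(G)$. Moreover, for any other bidegree $(p',q')$ with $p'+q' = p$ and $q' \geq 1$ one has $1 \leq q' \leq p \leq \gamma_0$, so $\IE_\infty^{p',q'} = 0$; the filtration on the total-degree-$p$ abutment therefore collapses onto the single nonzero graded piece $\IE_\infty^{p,0}$, giving the desired isomorphism $\IE_\infty^p \cong \Hcb^p(G)$. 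I do not anticipate a genuine obstacle here: the one delicate moment is invoking Monod's exactness theorem for $\Linfty(G^{p+1};-)^G$, which is exactly the reason admissibility (the pre-duality of every face operator to an $L^1$ morphism) was built into \autoref{def:Quillen_family}; everything else is bookkeeping inside a first-quadrant spectral sequence.
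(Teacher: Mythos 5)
Your proof is correct and follows essentially the same route as the paper's: use admissibility plus measurable $\gamma_0$-connectivity, push the exact augmented complex through the exact functor $\Linfty(G^{p+1};-)^G$ to kill rows $1\leq q\leq\gamma_0$ and identify the zeroth row as the homogeneous resolution computing $\Hcb^\bcdot(G)$, then close by a first-quadrant degree count. You simply spell out the incoming/outgoing differential dichotomy and the collapse of the total-degree-$p$ filtration slightly more explicitly than the paper does.
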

\begin{proof}
By the measurable $\gamma_0$-connectivity of $X_\bcdot$, the complex \eqref{eq:Sp_linfty} is exact up to degree $\gamma_0$. Applying the functor $\Linfty(G^{p+1};-)^G$ to it, we obtain
\[
0 \to \Linfty\big(G^{p+1}\big)^G \to \LL^{p,0} \to \LL^{p,1} \to \cdots \to \LL^{p,\gamma_0} \to \LL^{p,\gamma_0+1} \to \cdots,
\]
which is, too, exact up to degree $\gamma_0$ because of \autoref{thm:exactftr}. This implies vanishing of the rows $\IE_1^{\bcdot,q}$ for $1\leq q \leq \gamma_0$, and shows that the zeroth row of the first page $\IE^{\bcdot,\bcdot}_1$ is given by
\[
	\IE_1^{p,0} = \HH^0(\LL^{p,\bcdot}, \dd^{p,\bcdot}_\mathrm{V}) \cong \Linfty\big(G^{p+1}\big)^G,\qquad \mbox{for } p \geq 0.
\]
\begin{figure}[b!]
\hspace{-25pt} \begin{minipage}{\textwidth}
\[
\xymatrix@R=3pt@C=3pt{
  & q \\
\vdots & & \vdots && \vdots && \cdots && \vdots && \vdots && \vdots && \iddots \\
{\scriptstyle \gamma_0+1} & & \ast && \ast && \cdots && \ast && \ast && \ast && \cdots \\
{\scriptstyle \gamma_0} & & 0 && 0 && \cdots && 0 && 0 && 0 && \cdots \\
{\scriptstyle \gamma_0-1} & & 0 && 0 && \cdots && 0 && 0 && 0 && \cdots \\
\vdots & & \vdots && \vdots && \iddots && \vdots && \vdots && \vdots && \cdots \\ 
{\scriptstyle 1} & & 0 && 0 && \cdots && 0 && 0 && 0 && \cdots \\ 
{\scriptstyle 0} & & \Linfty(G)^G \ar[rr] && \Linfty(G^2)^G \ar[rr] && \cdots \ar[rr] && \Linfty(G^{\gamma_0})^G \ar[rr] && \Linfty(G^{\gamma_0+1})^G \ar[rr] && \Linfty(G^{\gamma_0+2})^G \ar[rr] && \cdots \\ 
\ar[rrrrrrrrrrrrrrr] & & & & && && && && && && \hspace{-8pt} p \\
& \ar[uuuuuuuuu] & {\scriptstyle 0} && {\scriptstyle 1} && \cdots && {\scriptstyle \gamma_0-1} && {\scriptstyle \gamma_0} && {\scriptstyle \gamma_0+1} && \cdots }
\]
\vspace{-16pt}
\end{minipage}
\caption{First page $\IE_1^{\bcdot,\bcdot}$}
\vspace{-9pt}
\label{fig:IE_1}
\end{figure}

\autoref{fig:IE_1} is a visualization of the first page $\IE_1^{\bcdot,\bcdot}$. The asterisks denote potentially non-vanishing terms, and the arrows in the bottom row correspond to the maps $\Id_{1}^{p,0}=\HH^0(\dd_{\rm H}^{p,0})=\dd_{\rm H}^{p,0}$ for $p \geq 0$. The cohomology of this row is canonically isomorphic to the continuous bounded cohomology of $G$, so $\IE_2^{p,0}\cong\Hcb^p(G)$ for all $p \geq 0$. The vanishing of the $\gamma_0$ rows above the bottom row implies the lemma. Indeed, for $s \geq 2$, the differentials $\Id_s^{p,q}$ that meet any of the terms $\IE_s^{p,0}$ for $0 \leq p \leq \gamma_0$ emanate from terms at the positions $(p,q)$ for $0 \leq p+q \leq \gamma_0-1$. However, since all these terms already vanished in the second page, the image of the differentials $\Id_s^{p,q}$ will be trivial, leaving the zeroth row unchanged until the limit. In other words, $\IE_\infty^p \cong \IE_\infty^{p,0} \cong \IE_2^{p,0}$ for all $0 \leq p \leq \gamma_0$.
\end{proof}

While the convergence of the spectral sequence $\IE_\bcdot^{\bcdot,\bcdot}$ relies on the measurable connectivity of $X_{\bcdot}$, it will be mostly the transitivity of its $G$-action what will provide information on $\IIE_\bcdot^{\bcdot,\bcdot}$. From now on we assume that $G$ acts $\tau$-transitively on $X_\bcdot$ for some $\tau \in \mathbb N$. We may then assume that for $p \in [\tau]$ we have $X_p = G/H_p$, where $H_0 \supset \dots \supset H_\tau$ are subgroups of $G$.  We denote by $j_p: H_{p+1} \hookrightarrow H_p$ the canonical inclusions, and by \[j_{p+1}^*:\Linfty(G^{\bcdot+1})^{H_p} \hookrightarrow \Linfty(G^{\bcdot+1})^{H_{p+1}} \qand \Hcb^\bcdot(j_{p+1};\id):\Hcb^q(H_{p}) \to \Hcb^q(H_{p+1}),\] 
the induced maps.

\begin{rem}
By \cite[Proposition 10.1.3]{Monod-Book} the Eckmann--Shapiro map 
\[
	\Linfty\big(G^{\bcdot+1}\big)^{H_{p}} \xrightarrow{{\rm ind}} \Linfty\big(G^{\bcdot+1};\Linfty(G/H_{p})\big)^{G}, \quad {\rm ind}(f)(\mathbf{g})(g_0 H_{p}) = f(g_0^{-1}\mathbf{g})
\]
induces for every $p \in [\tau]$ an isomorphism
\[
{\rm ind}: \Hcb^q(H_p) \overset{\sim}{\longrightarrow} \Hcb^q\big(G;\Linfty(G/H_{p})\big) = \Hcb^q\big(G;\Linfty(X_{p})\big) = \IIE^{p,q}_1.
\]
\end{rem}
The remainder of this subsection is devoted to the proof of the following fact:
\begin{prop} \label{thm:IIE}
For every $p< \tau_0$ and $q \geq 0$ define $\Delta^{p,q}: \Hcb^q(H_{p}) \to \Hcb^q(H_{p+1})$ by the following commuting diagram
\begin{equation} \label{eq:diag_IIE}
\begin{gathered}
\xymatrix{\IIE_1^{p,q}\ar[r]^{\IId_1^{p,q}}& \IIE_1^{p+1,q} \\
\Hcb^q(H_{p}) \ar[u]^{\rm ind} \ar[r]^{\Delta^{p,q}} & \Hcb^q(H_{p+1}) \ar[u]_{\rm ind}}
\end{gathered}
\end{equation}
Then $\Delta^{p,q} = \Hcb^q(j_{p+1};\id) $ if $p$ is odd and $\Delta^{p,q} = 0$ if $p$ is even.
\end{prop}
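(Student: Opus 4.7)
The plan is to unpack $\Delta^{p,q}$ as an alternating sum and to identify each piece via the naturality of the Eckmann--Shapiro isomorphism. Since $\dd_\mathrm{V}^{\bcdot,p}=\sum_{i=0}^{p+1}(-1)^i\delta^i$ at the cochain level, one has $\Delta^{p,q}=\sum_{i=0}^{p+1}(-1)^i\Delta^{p,q}_i$, where $\Delta^{p,q}_i\colon\Hcb^q(H_p)\to\Hcb^q(H_{p+1})$ is the map fitting the analogue of diagram \eqref{eq:diag_IIE} with $\IId^{p,q}_1$ replaced by the contribution of $\delta^i$ alone on $\IIE_1^{p,q}$. If each $\Delta^{p,q}_i$ equals $\Hcb^q(j_{p+1};\id)$, then $\Delta^{p,q}=\bigl(\sum_{i=0}^{p+1}(-1)^i\bigr)\Hcb^q(j_{p+1};\id)$, which is $0$ for $p$ even and $\Hcb^q(j_{p+1};\id)$ for $p$ odd, as required.

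To compute $\Delta^{p,q}_i$ I would fix $g_i\in G$ with $g_i\cdot o_p=\delta_i(o_{p+1})$ (available by transitivity; take $g_{p+1}=e$). Under the identifications $X_p\cong G/H_p$ and $X_{p+1}\cong G/H_{p+1}$, the face map $\delta_i$ becomes the $G$-equivariant map $[g]\mapsto[gg_i]$, which in particular forces $H_{p+1}\subset g_iH_pg_i^{-1}$. Naturality of the Eckmann--Shapiro isomorphism $\mathrm{ind}$ under $G$-equivariant morphisms of the coefficient modules $L^\infty(G/H)$ then yields the identification $\Delta^{p,q}_i=\Hcb^q(\alpha_i;\id)$, where $\alpha_i\colon H_{p+1}\to H_p$ is the group homomorphism $\alpha_i(h)=g_i^{-1}hg_i$. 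The choice $g_{p+1}=e$ gives $\alpha_{p+1}=j_{p+1}$ directly, so the real content is to prove $\Hcb^q(\alpha_i;\id)=\Hcb^q(j_{p+1};\id)$ for $i<p+1$.

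The key refinement is to use the remaining freedom in $g_i$ (determined only up to right multiplication by $H_p$) to arrange that $g_i$ permutes the components of the basepoint $o_{p+1}$, and hence normalises $H_{p+1}$; in the symplectic setting this is realised concretely as a symplectic permutation of the basis pairs $(e_{r-1-i},f_{r-1-i}),\ldots,(e_{r-2-p},f_{r-2-p})$. With such a choice $\alpha_i$ factors as $j_{p+1}\circ\phi_i$, where $\phi_i\in\Aut(H_{p+1})$ is conjugation by $g_i^{-1}$, so it suffices to prove $\Hcb^q(\phi_i)=\id$. The compatibility axiom (QM3) provides a surjection $\pi_{p+1}\colon H_{p+1}\twoheadrightarrow G_{r-p-2}$ with amenable kernel, which induces an isomorphism $\Hcb^q(H_{p+1})\cong\Hcb^q(G_{r-p-2})$ under which $\phi_i$ descends to an automorphism $\bar\phi_i$ of $G_{r-p-2}$. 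Since every basis vector shifted by $g_i$ lies in $V_{p+1}$, the automorphism $\bar\phi_i$ acts trivially on the residual symplectic space $V_{p+1}^\omega/V_{p+1}$, forcing $\bar\phi_i=\id$. This last descent verification is the main obstacle: everything else is a formal consequence of Eckmann--Shapiro naturality and the vanishing of continuous bounded cohomology relative to amenable normal subgroups.
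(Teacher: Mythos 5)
Your overall strategy — decompose $\Delta^{p,q}$ as the alternating sum $\sum_{i=0}^{p+1}(-1)^i\Delta^{p,q}_i$ and show each $\Delta^{p,q}_i$ equals $\Hcb^q(j_{p+1};\id)$, so that the parity of $p$ decides everything — is the same as the paper's. Where you diverge is in how you argue that each $\Delta^{p,q}_i$ is this restriction map, and that divergence introduces a genuine gap.

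The proposition is a statement about an arbitrary lcsc group $G$ acting $\tau$-transitively on an admissible $G$-object $X_\bcdot$, with point stabilizers $H_0 \supset H_1 \supset \cdots$; no Quillen-family compatibility, let alone the symplectic Stiefel structure, is assumed at this point. Your argument for $\Hcb^q(\phi_i)=\id$ relies on all of the following: a choice of $g_i$ that permutes a symplectic basis and hence normalizes $H_{p+1}$; the amenable-kernel surjection $H_{p+1}\twoheadrightarrow G_{r-p-2}$ from (QM3); and the geometric fact that a permutation of basis pairs acts trivially on the residual symplectic space. None of these ingredients is available in the setting the proposition lives in. So what you would prove is, at best, the symplectic special case — not the proposition as stated. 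Moreover, even granting the symplectic context, the existence of a representative $g_i$ (determined only modulo right $H_p$-multiplication) that normalizes $H_{p+1}$ needs an explicit verification and is not automatic from transitivity alone; and the intermediate claim ``$\Delta^{p,q}_i=\Hcb^q(\alpha_i;\id)$ by naturality of Eckmann--Shapiro'' is not as formal as you indicate, since what naturality supplies is only the cochain-level commuting square, after which one must still identify the induced map on cohomology.

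The paper's proof (Lemma \ref{thm:homotopy_wi}) avoids all of this: it writes $\delta_i^\ast$ at the cochain level as left translation $\lambda_{w_i}$ on $\Linfty(G^{\bcdot+1})^{H_p}\to\Linfty(G^{\bcdot+1})^{H_{p+1}}$ and then exhibits an explicit prism operator showing $\lambda_{w_i}$ chain homotopic to the inclusion $j_{p+1}^\ast$. This argument uses nothing but the nested-subgroup structure and works for any $w_i\in G$, so it is genuinely general. Your conjugation-based route, once one arranges normalization, still has to establish the homotopy-theoretic fact that translation and inclusion agree on cohomology — and your substitute, the claim $\Hcb^q(\phi_i)=\id$ for a non-inner automorphism $\phi_i$ of $H_{p+1}$, is neither generally true nor derivable from the abstract axioms. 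I would recommend proving the chain homotopy directly, as the paper does, rather than trying to avoid it.
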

For the moment fix $p \in [\tau_0-1]$ and $i \in [p]$. We may then choose elements $w_i \in G$ such that the face map $\delta_i: G/H_{p+1} \to G/H_p$ is given by $\delta_i(eH_{p+1}) = w_iH_p$. Since $\delta_i$ is $G$-equivariant we then have
\begin{equation} \label{eq:Ai}
	\delta_i(gH_{p+1})=gw_iH_p \quad \mbox{for all } g \in G. 
\end{equation}
Left-multiplication by $w_i$ then induces a map
\[
\lambda_{w_i}: \Linfty(G^{q+1})^{H_{p}} \to \Linfty(G^{q+1})^{H_{p+1}}, \quad \lambda_{w_i}f(\mathbf{g}) := f(w_i^{-1}\mathbf{g}),
\]
since for all $f \in \Linfty(G^{q+1})$, $h \in H_{p+1}$ and $\mathbf{g} \in G^{q+1}$ we have
\[
	(h w_i)(f)(\mathbf{g}) = f(w_i^{-1}h^{-1}\mathbf{g}) = f(w_i^{-1}h^{-1}w_iw_i^{-1}\mathbf{g}) = f(w_i^{-1}\mathbf{g})= w_i(f)(\mathbf{g}).
\]
Unravelling definitions shows that if we denote by $\delta_i^\ast$ the map obtained from applying the functor $\Linfty(G^{\bcdot+1} \times -)^G$ to $\delta_i$, then
the diagrams
\begin{equation}\label{wiDiagram}
\begin{gathered}
\xymatrix{\Linfty\big(G^{\bcdot+1};\Linfty(G/H_{p})\big)^{G}\ar[r]^{\delta_i^\ast}&\Linfty\big(G^{\bcdot+1};\Linfty(G/H_{p+1})\big)^{G}\\ %&\IIE_1^{p,q} \ar[r]^{\Hcb^q(\id;\delta_{i}^\ast)} & \IIE_1^{p,q}\\
\Linfty(G^{q+1})^{H_{p}} \ar[u]^{{\rm ind}} \ar[r]^{\lambda_{w_i}} & \Linfty(G^{q+1})^{H_{p+1}} \ar[u]_{{\rm ind}}%&\Hcb^q(H_{p}) \ar[u]^{\rm ind} \ar[r]^{\HH^q(\lambda_{w_i})} & \Hcb^q(H_{p+1}) \ar[u]_{\rm ind}
}\end{gathered}\end{equation}
commute for every $q\geq 0$. We deduce:
\begin{lem} \label{thm:homotopy_wi} Let $p \in [\tau_0-1]$ and $i \in [p]$. Then for every $q \geq 0$ we have a commuting diagram
\[
\xymatrix{\IIE_1^{p,q} \ar[rr]^{\Hcb^q(\id;\delta_{i}^\ast)} && \IIE_1^{p,q}\\
\Hcb^q(H_{p}) \ar[u]^{\rm ind} \ar[rr]^{\HH^q(j_{p+1};\id)} && \Hcb^q(H_{p+1}) \ar[u]_{\rm ind}.
}\]
In particular, $\Hcb^q(\id;\delta_{i}^\ast)$ is independent of $i$.
\end{lem}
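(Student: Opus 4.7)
The plan is to apply $\Hcb^q(-)$ to the cochain-level commutative diagram \eqref{wiDiagram} and then reconcile the resulting cohomology map $\Hcb^q(\lambda_{w_i})$ with the restriction map $\Hcb^q(j_{p+1};\id)$. The first step is formal: since \eqref{wiDiagram} is a square of chain maps between complexes of coefficient $G$-modules, applying $\Hcb^q$ and invoking that both occurrences of $\mathrm{ind}$ are Eckmann--Shapiro isomorphisms yields the identity
\[
  \Hcb^q(\id;\delta_i^\ast)\circ \mathrm{ind} \;=\; \mathrm{ind}\circ \Hcb^q(\lambda_{w_i})
\]
as maps $\Hcb^q(H_p) \to \IIE_1^{p+1,q}$. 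Hence the lemma's diagram commutes as soon as one proves that $\Hcb^q(\lambda_{w_i}) = \Hcb^q(j_{p+1};\id)$ on $\Hcb^q(H_p) \to \Hcb^q(H_{p+1})$.

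The crux is this latter identification. Both chain maps $\lambda_{w_i},\,j_{p+1}^\ast : \Linfty(G^{\bcdot+1})^{H_p} \to \Linfty(G^{\bcdot+1})^{H_{p+1}}$ restrict to the identity on the augmentation $\bbR \hookrightarrow \Linfty(G)$, so the plan is to exhibit a chain homotopy between them. Starting from the standard contracting homotopy $s^q f(g_1,\ldots,g_q) := f(e,g_1,\ldots,g_q)$ of the augmented bar complex $\Linfty(G^{\bcdot+1})$ and its twisted analogue $\tilde s^q f(g_1,\ldots,g_q) := f(w_i^{-1},g_1,\ldots,g_q)$, the natural candidate is the prism-type formula
\[
  h^q f(g_0,\ldots,g_{q-1}) \;=\; \sum_{k=0}^{q-1} (-1)^k\, f\bigl(g_0,\ldots,g_k,\, w_i^{-1} g_k,\, w_i^{-1} g_{k+1},\ldots,\,w_i^{-1} g_{q-1}\bigr),
\]
which interpolates between $j_{p+1}^\ast f$ (no insertions of $w_i^{-1}$) and $\lambda_{w_i}f$ (all arguments pre-multiplied by $w_i^{-1}$). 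A direct telescoping computation, modeled on the low-degree check $dh^1 + h^2 d = \lambda_{w_i} - j_{p+1}^\ast$, confirms that $h^\bcdot$ is indeed a chain homotopy at the cochain level.

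The principal obstacle I anticipate is equivariance: the formula above does not manifestly land in $\Linfty(G^q)^{H_{p+1}}$ when evaluated on an $H_p$-invariant input, since $w_i$ need not commute with $H_{p+1}$. Making $h^q$ descend to $H_{p+1}$-invariants requires a careful refinement of the formula, exploiting both the $H_p$-invariance of $f$ and the defining relation $w_i^{-1} H_{p+1} w_i \subset H_p$ in order to rearrange the terms by $H_p$-translation of $f$'s arguments into a form that is visibly $H_{p+1}$-invariant. Once the refined homotopy is in place and the identification $\Hcb^q(\lambda_{w_i}) = \Hcb^q(j_{p+1};\id)$ is established, substitution into the first display gives the commuting diagram of the lemma, and the independence of $\Hcb^q(\id;\delta_i^\ast)$ on $i$ is immediate, since the diagram equates it with $\mathrm{ind}\circ \Hcb^q(j_{p+1};\id)\circ \mathrm{ind}^{-1}$, an expression in which no $w_i$ appears.
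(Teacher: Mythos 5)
Your outline is the paper's own proof: the reduction through the square \eqref{wiDiagram} to the statement that $\lambda_{w_i}$ and $j_{p+1}^\ast$ induce the same map in cohomology, and then the prism operator $h^q f(g_0,\ldots,g_q)=\sum_{l}(-1)^l f(g_0,\ldots,g_l,w_i^{-1}g_l,\ldots,w_i^{-1}g_q)$, which (up to a shift in how the degree is indexed) is exactly the formula the paper writes down and whose homotopy identity it declares a routine verification. Where you differ is that you stop precisely at the one step that is not formal: whether $h^\bcdot$ carries $\Linfty(G^{\bcdot+2})^{H_p}$ into $\Linfty(G^{\bcdot+1})^{H_{p+1}}$. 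You correctly single this out as the delicate point, but you then only promise that a ``careful refinement'' of the formula, using $H_p$-invariance and $w_i^{-1}H_{p+1}w_i\subset H_p$, will fix it. That promise is not redeemed, and it is the crux: if $h^{q-1}f$ is not $H_{p+1}$-invariant, the identity $\lambda_{w_i}f-j_{p+1}^\ast f=\dd(h^{q-1}f)$ for a cocycle $f$ merely exhibits the difference as a coboundary in the full complex $\Linfty(G^{\bcdot+1})$, which is acyclic, so it says nothing about the classes in $\Hcb^q(H_{p+1})$. So as written, the proposal does not prove the lemma.

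Moreover, the specific repair you sketch (``rearrange the terms by $H_p$-translation of $f$'s arguments'') cannot work term by term. For $h\in H_{p+1}$, the $l$-th term of $(h\cdot h^qf)(g_0,\ldots,g_q)$ is $f(h^{-1}g_0,\ldots,h^{-1}g_l,\,w_i^{-1}h^{-1}g_l,\ldots,w_i^{-1}h^{-1}g_q)$, and the only moves available from diagonal $H_p$-invariance convert it either into $f(g_0,\ldots,g_l,\,hw_i^{-1}h^{-1}g_l,\ldots,hw_i^{-1}h^{-1}g_q)$ (translating by $h$) or into $f(w_i^{-1}hw_ih^{-1}g_0,\ldots,w_i^{-1}hw_ih^{-1}g_l,\,w_i^{-1}g_l,\ldots,w_i^{-1}g_q)$ (translating by $w_i^{-1}hw_i\in H_p$); in either case one block is corrupted by a commutator-type element that the relation $w_i^{-1}H_{p+1}w_i\subset H_p$ does not remove unless $h$ and $w_i$ commute, and the alternating sum offers no visible cancellation of these defects. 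This is exactly the verification the paper compresses into a reference to Hatcher's prism argument, so your instinct about where the difficulty sits is sound; but to have a complete argument you must either produce a homotopy that manifestly preserves the invariants (which will require using more about the representatives $w_i$, or about the semi-simplicial structure, than the bare inclusion $w_i^{-1}H_{p+1}w_i\subset H_p$ — with only that hypothesis the asserted equality of restrictions along $h\mapsto h$ and $h\mapsto w_i^{-1}hw_i$ is not a formal consequence) or establish $\Hcb^q(\lambda_{w_i})=\Hcb^q(j_{p+1};\id)$ by a different mechanism. Until then the proposal has a genuine gap at the point you flagged.
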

\begin{proof} In view of the diagram \eqref{wiDiagram} it suffices to show that the maps $j_{p+1}^*$ and $\lambda_{w_i}$ induce the same map in cohomology. Define  \emph{prism operators} $h^\bcdot: \Linfty(G^{\bcdot+2})^{H_p} \to \Linfty(G^{\bcdot+1})^{H_{p+1}}$ by the formula
\[
h^q f(g_0,\ldots,g_q) = \sum_{l=0}^q (-1)^l f(g_0,\ldots,g_l,w_i^{-1}g_l,\ldots, w_i^{-1}g_q).
\]
It is then a routine verification to check that $\lambda_{w_i} - j_{p+1}^\ast   =  h^q \dd^q + \dd^{q-1}h^{q-1}$; see e.g. the proof of Theorem 2.10 in \cite{Hatcher-AT}.
\end{proof}
\begin{proof}[Proof of \autoref{thm:IIE}]
Under the identification $\IIE^{p,q}_1 \cong \Hcb^q(G;\Linfty(X_{p}))$ the differential is given by
\begin{equation*} 
	\IId_1^{p,q} = \textstyle \sum_{i=0}^p (-1)^i \, \Hcb^q(\id;\delta_i^\ast)
\end{equation*}
for all $p,q \geq 0$. Then the proposition follows from \autoref{thm:homotopy_wi}.
\end{proof}

\subsection{Proof of \autoref{thm:abstract_stability_explicit}} \label{subsec:proof}
From now on let $(G_r,X_{r,\bcdot})_{r\in [R]}$ be a measured Quillen family with parameters $(\gamma,\tau)$, as in \autoref{def:Quillen_family}. Furthermore, let $\iota_r$ denote the inclusion $G_r \hookrightarrow G_{r+1}$. Fix an arbitrary $r \in [R-1]$, and choose compatible point stabilizers $H_{r+1,p}$ of $X_{r+1, p}$ for $p \in [\tau]$. By the construction in the last subsection, each pair $(G_{r},X_{r,\bcdot})$ gives rise to two spectral sequences ${}_{r+1}^{{\rm I}}{\rm E}^{\bcdot, \bcdot}$ and ${}_{r+1}^{\rm II}{\rm E}^{\bcdot, \bcdot}$. From \autoref{thm:IIE} we then obtain commutative diagrams
\begin{equation} \label{eq:diag_IIE}
\begin{gathered}
\xymatrix{{}^{\rm II}_{r+1}{\rm E}_1^{p,q}\ar[r]^{\IId_1^{p,q}}&{}^{\rm II}_{r+1}{\rm E}_1^{p+1,q} \\
\Hcb^q(H_{r+1, p}) \ar[u]^{\rm ind} \ar[r]^{\Delta^{p,q}} & \Hcb^q(H_{r+1, p+1}) \ar[u]_{\rm ind}}
\end{gathered}
\end{equation}
By \autoref{def:compatibility} we have  surjective homomorphisms $\pi_p: H_{r+1,p} \twoheadrightarrow G_{r-p}$ with amenable kernel which by \cite[Corollary 8.5.2]{Monod-Book} induce isomorphisms ${\Hcb^q(\pi_p;\id)}$ in continuous bounded cohomology. Combining these with the induction isomorphisms we obtain isomorphisms
\[
	 \Hcb^q(G_{r-p}) \overset{\sim}{\longrightarrow}{}^{\rm II}_{r+1}E_1^{p,q}.
\]
Since by \eqref{eq:inclusions} the diagram
\begin{equation} \label{eq:diag_IIE2}
\begin{gathered}
\xymatrix{\Hcb^q(H_{r+1,p}) \ar[rr]^{\Hcb^q(j_{p+1},\id)} && \Hcb^q(H_{r+1,p+1}) \\
\Hcb^q(G_{r-p}) \ar[u]^{\Hcb^q(\pi_p;\id)} \ar[rr]^{\Hcb^q(\iota_{r-p-1},\id)} && \Hcb^q(G_{r-p-1}) \ar[u]_{\Hcb^q(\pi_{p+1};\id)}}
\end{gathered}
\end{equation}
commutes for every $p < \tau(r+1)$ and $q \geq 0$, we deduce:
\begin{cor} \label{thm:cor_IIE}
For any $p \in [\tau(r+1)-1]$ and $q \geq 0$ there is a commuting diagram of the form
\[ \xymatrixcolsep{4pc}
\xymatrix{{}^{\rm II}_{r+1}E_1^{p,q} \ar[r]^{\IId_1^{p,q}} &{}^{\rm II}_{r+1}E_1^{p+1,q}  \\
\Hcb^q(G_{r-p}) \ar[u]_{\sim} \ar[r]^{D^{p,q}} & \Hcb^q(G_{r-p-1}) \ar[u]^{\sim}},
\]
where $D^{p,q} = \Hcb^q(\iota_{r-p-1};\id)$ if $p$ is odd and $D^{p,q} = 0$ if $p$ is even.\qed
\end{cor}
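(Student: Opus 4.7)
The plan is to obtain the desired diagram by vertically stacking two commuting squares already at our disposal: the square of \autoref{thm:IIE} applied to the admissible $G_{r+1}$-object $X_{r+1,\bcdot}$, which expresses the first-page differential $\IId_1^{p,q}$ through a map $\Delta^{p,q}\colon\Hcb^q(H_{r+1,p})\to\Hcb^q(H_{r+1,p+1})$ via the Eckmann--Shapiro induction isomorphism, and the compatibility square \eqref{eq:diag_IIE2} coming from axiom (QM3), which relates $\Hcb^q(j_{p+1};\id)$ to $\Hcb^q(\iota_{r-p-1};\id)$ through the surjections $\pi_p\colon H_{r+1,p}\twoheadrightarrow G_{r-p}$. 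The range $p\in[\tau(r+1)-1]$ is exactly what is needed so that both $p$ and $p+1$ lie in $[\tau(r+1)]$, hence both squares are available.

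Concretely, I would first construct the vertical isomorphism $\Hcb^q(G_{r-p})\xrightarrow{\sim}{}^{\rm II}_{r+1}E_1^{p,q}$ as the composition
\[
\Hcb^q(G_{r-p})\xrightarrow{\Hcb^q(\pi_p;\id)}\Hcb^q(H_{r+1,p})\xrightarrow{\rm ind}{}^{\rm II}_{r+1}E_1^{p,q}.
\]
The first arrow is an isomorphism because $\pi_p$ is a continuous surjection with amenable kernel, so \cite[Cor.~8.5.2]{Monod-Book} applies; the second is the Eckmann--Shapiro isomorphism recalled before \autoref{thm:IIE}. Stacking the two commuting squares then yields an outer rectangle whose top edge is $\IId_1^{p,q}$, whose vertical maps are these induced isomorphisms, and whose bottom edge is a map $D^{p,q}\colon\Hcb^q(G_{r-p})\to\Hcb^q(G_{r-p-1})$ to be identified.

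The identification of $D^{p,q}$ then splits according to the parity of $p$ dictated by \autoref{thm:IIE}. For $p$ odd, $\Delta^{p,q}=\Hcb^q(j_{p+1};\id)$; the commutativity of \eqref{eq:diag_IIE2} gives
\[
\Hcb^q(j_{p+1};\id)\circ\Hcb^q(\pi_p;\id)=\Hcb^q(\pi_{p+1};\id)\circ\Hcb^q(\iota_{r-p-1};\id),
\]
so $D^{p,q}=\Hcb^q(\iota_{r-p-1};\id)$ makes the outer rectangle commute. For $p$ even, $\Delta^{p,q}=0$, hence the composition along the upper route is zero; since $\Hcb^q(\pi_{p+1};\id)$ is injective (in fact an isomorphism), this forces $D^{p,q}=0$, as required.

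There is no genuine obstacle here: the corollary is a purely formal diagram chase that bundles together \autoref{thm:IIE}, the compatibility diagram \eqref{eq:diag_IIE2}, and Monod's amenable-kernel principle. The one point where a little care is needed is bookkeeping of the indexing—making sure that the subgroup $H_{r+1,p}$ produced by the $G_{r+1}$-object is the one whose quotient by an amenable normal subgroup is $G_{r-p}$ (rather than, say, $G_{r-p-1}$), which is exactly what axiom (QM3) guarantees on the specified range of $p$.
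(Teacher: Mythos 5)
Your proposal is correct and follows essentially the same route as the paper: the vertical isomorphisms are built as $\mathrm{ind}\circ\Hcb^q(\pi_p;\id)$ using \cite[Cor.~8.5.2]{Monod-Book}, and the identification of $D^{p,q}$ (with its parity dichotomy) comes from stacking the square of \autoref{thm:IIE} with the compatibility square \eqref{eq:diag_IIE2}. The indexing bookkeeping you flag is exactly what (QM3) guarantees, so there is nothing to add.
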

For the rest of this subsection we fix $r \in [R-1]$ and introduce the following shorthand notations:
\begin{itemize}
\item We write ${}^{{\rm I}}{\rm E}^{\bcdot, \bcdot}$ and ${}^{\rm II}{\rm E}^{\bcdot, \bcdot}$ for ${}_{r+1}^{{\rm I}}{\rm E}^{\bcdot, \bcdot}$ and ${}_{r+1}^{\rm II}{\rm E}^{\bcdot, \bcdot}$ respectively.
\item We write $\HH^{q}_{r-p}$ instead of  $\Hcb^{q}(G_{r-p})$ for all $p \in [r]$ and $q \in \bbN$.
\item We write $\HH^q(\iota_r): \HH^q_{r+1} \to \HH^q_r$ for the map induced by the inclusion $\iota_{r}: G_r \hookrightarrow G_{r+1}$.
\end{itemize}
With this notation, \autoref{thm:abstract_stability_explicit} can then be stated as follows:
\begin{mclaim*}
For any $q \geq 0$, the map $\HH^q(\iota_r): \HH^q_{r+1} \to \HH^q_r$ is 
\begin{itemize}
\item[$(A_q)$] an injection if $q$ is such that $\min\{\widetilde{\gamma}(q,r), \widetilde{\tau}(q,r)\} \geq 0$, and  
\item[$(B_q)$] an isomorphism if $q$ is such that $\min\{\widetilde{\gamma}(q,r), \widetilde{\tau}(q,r)-1\} \geq 0$.
\end{itemize}
\end{mclaim*}
We are going to establish the main claim by induction on $q$. For $q \in [q_0]$ both $(A_q)$ and $(B_q)$ hold by our initial condition. Thus let $q > q_0$, and assume as induction hypothesis that the statements $(A_{q'})$ and $(B_{q'})$ in the claim above hold for any $q' < q$. We then have to show that $(A_q)$ and $(B_q)$ hold. We are going to give a detailed proof of the statement $(B_q)$, and then indicate the necessary replacements to convert this proof into an argument for $(A_q)$. These proofs will occupy the remainder of this section.

\subsubsection{Proof of $(B_q)$}
Assume that $\min\{\widetilde{\gamma}(q,r),\widetilde{\tau}(q,r)-1\} \geq 0$. By the definition of $\widetilde{\gamma}$ and $\widetilde{\tau}$ in \eqref{eq:parameters_tilde}, this is equivalent to saying that
\begin{equation} \label{eq:cond_isom_bis}
	\gamma(r+1-2(q-j)) \geq j \qand \tau(r+1-2(q-j)) \geq j + 1 \vspace{-2pt}
\end{equation}
for all $j=q_0+1,\ldots,q$. For the value $j=q$, the inequalities in \eqref{eq:parameters_tilde} read $\gamma(r+1) \geq q$ and $\tau(r+1) \geq q+1$. Thus, \autoref{thm:IE} and \autoref{thm:cor_IIE} imply respectively that \vspace{-1pt}
\[\begin{array}{rcll}
	\IE_\infty^{q'} &\cong & \HH_{r+1}^{q'} &  \mbox{for all } q' \in [q], \qand \\
	\IIE_1^{p',q'} &\cong & \HH^{q'}_{r-p} & \mbox{for all } p' \in [q+1] \mbox{ and all } q' \geq 0.
\end{array} \vspace{-1pt}\]
We now have to show:

\noindent \textbf{Claim:} The limit term $\IIE_\infty^q$ is isomorphic to $\HH^q_r$. In particular, there is an isomorphism
\[
	\HH^q_{r+1} \cong \IE_\infty^q \cong \IIE_\infty^q \cong \HH^q_r,
\]
which is induced by the inclusion $G_r \hookrightarrow G_{r+1}$.

\begin{figure}[b!]

\[\resizebox{\hsize}{!}{
\xymatrix@R=1pt@C=0.5pt{
 & q' \\
{\scriptstyle q} & & \HH^{q}_{r} && \HH^{q}_{r-1} &&\ast && \ast&& \ast&& && && && && \\
{\scriptstyle q-1} & & \HH^{q-1}_{r} && \HH^{q-1}_{r-1} && \HH^{q-1}_{r-2} && \HH^{q-1}_{r-3} && \ast&& && && && && \\
{\scriptstyle q-2} & &\ast && \HH^{q-2}_{r-1} && \HH^{q-2}_{r-2} && \HH^{q-2}_{r-3} && \HH^{q-2}_{r-3} && && && && && \\
\vdots & & && && && && & \ddots & \\
{\scriptstyle 1} & & && && && && && \HH^{1}_{r-q+2} && \HH^{1}_{r-q+1} && \HH^{1}_{r-q} && \HH^{1}_{r-q-1} &&\ast \\  
{\scriptstyle 0} & & && && && && && \ast&& \HH^0_{r-q+1} && \HH^0_{r-q} && \HH^0_{r-q-1} && \IIE_1^{q+2,0} \\ 
\ar[rrrrrrrrrrrrrrrrrrrrrrr] & & && && && && && && && && && && & \hspace{-8pt} & p' \\
& \ar[uuuuuuuu] & {\scriptstyle 0} && {\scriptstyle 1} && {\scriptstyle 2} &&  {\scriptstyle 3} && {\scriptstyle 4} & \cdots & {\scriptstyle q-2} && {\scriptstyle q-1} && {\scriptstyle q} && {\scriptstyle q+1} && {\scriptstyle q+2}
}}
\]	
\vspace{-10pt}

\caption{Some entries of the first page $\IIE_1^{\bcdot,\bcdot}$}
\label{fig:IIE_1}
\end{figure}

\autoref{fig:IIE_1} below displays the arrangement of those first-page terms in $\IIE_1^{\bcdot,\bcdot}$ that are relevant to verify the claim. According to \autoref{thm:cor_IIE}, the maps between these terms are given as follows. The map at the very left of the $q$th row is always given by the zero map
\begin{equation} \label{eq:toparrow}
\HH^q_{r} \xrightarrow{0} \HH^q_{r-1}
\end{equation}
If $1 \leq q' \leq q-1$ then we consider the following three maps in the $q'$th row:
\[
\IIE_1^{q-q'-1,q'} \to  \IIE_1^{q-q',q'}  \to  \IIE_1^{q-q'+1,q'}  \to  \IIE_1^{q-q'+2,q'} \vspace{20pt}
\]
If we set $p := q-q'$, then $1 \leq p \leq q-1$ and our three relevant maps are given by
\begin{equation} \label{eq:midarrows}
\left\{\begin{array}{ll}
			 \HH^{q-p}_{r-p+1} \xrightarrow{0} \HH^{q-p}_{r-p} \xrightarrow{\HH^{q-p}(\iota_{r-p-1})} \HH^{q-p}_{r-p-1} \xrightarrow{0} \HH^{q-p}_{r-p-2}, & \text{if }p \text{ is odd;} \\
			 \HH^{q-p}_{r-p+1} \xrightarrow{\HH^{q-p}(\iota_{r-p})} \HH^{q-p}_{r-p} \xrightarrow{0} \HH^{q-p}_{r-p-1} \xrightarrow{\HH^{q-p}(\iota_{r-p-2})} \HH^{q-p}_{r-p-2}, & \text{if }p \text{ is even.}  
			\end{array} \right.
\end{equation}
Finally consider the bottom row. Here we have the maps
\begin{equation} \label{eq:bottomarrow}
\left\{\begin{array}{ll}
			\HH^{0}_{r-q+1} \xrightarrow{0} \HH^{0}_{r-q} \xrightarrow{\HH^{0}(\iota_{r-q-1})} \HH^{0}_{r-q-1} \to \IIE_1^{q+2,0}, & \text{if }q \text{ is odd;} \\
			\HH^{0}_{r-q+1} \xrightarrow{\HH^{0}(\iota_{r-q})} \HH^{0}_{r-q} \xrightarrow{0} \HH^{0}_{r-q-1} \to \IIE_1^{q+2,0}, & \text{if }q \text{ is even.}  
			\end{array} \right.
\end{equation}
From the arrangement of terms in \autoref{fig:IIE_1}, one can deduce that the following claims are sufficient to conclude that $\IIE_\infty^q \cong \HH^q_r$:
\begin{enumerate}[(a)]
	\item The map $\HH^{q-p}(\iota_{r-p-1})$ is an isomorphism for $p \in \{1,\ldots,q\}$ odd,
	\item The map $\HH^{q-p}(\iota_{r-p})$ is an isomorphism for $p \in \{1,\ldots,q\}$ even,
	\item The map $\HH^{q-p}(\iota_{r-p-2})$ is an injection for $p \in \{1,\ldots,q-1\}$ even, and
	\item The map $\HH_{r-q-1}^0 \to \IIE_{1}^{\scriptscriptstyle q+2,0}$ is an injection if $q$ is even. 
\end{enumerate} 
Indeed, if all these assertions hold, then by \eqref{eq:midarrows} and \eqref{eq:bottomarrow} then the second page has zeros as indicated in \autoref{fig:IIE_2} below, and by \eqref{eq:toparrow} we have $\IIE_2^{0,q} \cong \HH_r^q$. All the zeros on the second page remain zeros on all later pages, and $\IIE_s^{0,q}$ will always be mapped to one of these zeros. We deduce that
\[
	\IIE_\infty^{q} = \bigoplus_{p+q' = q} \IIE_\infty^{p,q'} \cong \HH_r^{q} \oplus 0 \oplus 0 \oplus \dots \oplus 0 \cong \HH_r^{q}.
\]

\begin{figure}[b!]
\hspace{-25pt} 
\[
\xymatrix@R=1pt@C=0.5pt{
 & q' \\
{\scriptstyle q} & & \HH^{q}_{r} && \ast && \ast && \ast && && && && && \\
{\scriptstyle q-1} & & \ast && 0 && 0 && \ast && && && && && \\
{\scriptstyle q-2} & & \ast && \ast && 0 && 0 && && && && && \\
\vdots & & && && && \ddots & \ddots & \\
{\scriptstyle 1} & & && && && && 0 && 0 && \ast && \ast \\  
{\scriptstyle 0} & & && && && && \ast && 0 && 0 && \ast \\ 
\ar[rrrrrrrrrrrrrrrrrrr] & & && && && && && && && && & \hspace{-8pt} & p' \\
& \ar[uuuuuuuu] & {\scriptstyle 0} && {\scriptstyle 1} && {\scriptstyle 2} &&  {\scriptstyle 3} & \cdots & {\scriptstyle q-1} && {\scriptstyle q} && {\scriptstyle q+1} && {\scriptstyle q+2} 
}
\]
\caption{Some entries of the second page $\IIE_2^{\bcdot,\bcdot}$}
\label{fig:IIE_2}
\end{figure}
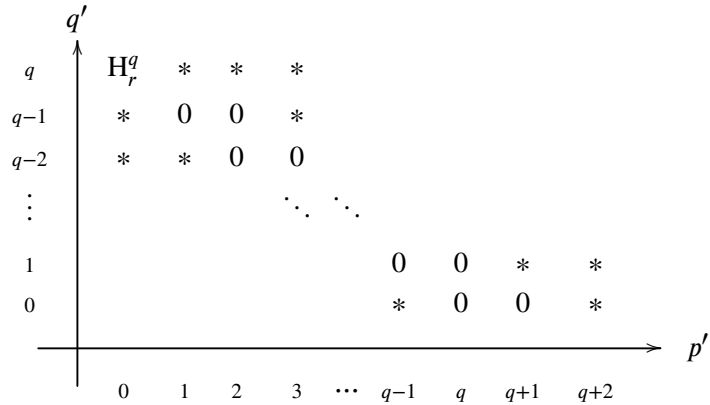
We have thus proved the claim up to establishing (a)--(d). The latter will follow from the induction hypothesis and re-writing the equations \eqref{eq:cond_isom_bis} adequately:

\noindent \emph{Proof of }(a). By the initial condition in \autoref{thm:abstract_stability_explicit}, the map $\HH^{q-p}(\iota_{r-p-1})$ is an isomorphism for every $p \geq q - q_0$, so it remains to show that this map is an isomorphism for all odd $p \in \{1,\ldots,q-q_0-1\}$. Note that $p$ lies in that set if and only if $q-p \in \{q_0 +1, \ldots, q-1\}$. Hence, the claim will follow from the induction hypothesis $(B_{q-p})$ upon verifying that for every odd $p \in \{1,\ldots,q-q_0-1\}$, we have $\min\{\widetilde{\gamma}(q-p, r-p-1),\widetilde{\tau}(q-p, r-p-1)-1\} \geq 0$, or in other words, that 
\[
	\gamma(r-p-2(q-p-j)) \geq j \qand \tau(r-p-2(q-p-j)) \geq j + 1 \vspace{-2pt}
\]
for all $j \in \{q_0+1,\ldots,q-p\}$. Note that 
\[
	j + (p-1)/2 \in \{q_0+1+(p-1)/2,\ldots,q-(p+1)/2\} \subset \{q_0+1,\ldots,q\},
\]
where the number $(p-1)/2$ is an integer because $p$ is odd, and therefore, by \eqref{eq:cond_isom_bis},
\begin{align*}
	\hspace{20pt} \gamma(r-p-2(q-p-j)) &= \gamma\left(r+1-2\big(q-(j+(p-1)/2)\big)\right) \geq j+(p-1)/2 \geq j \qand \\
	\tau(r-p-2(q-p-j)) &= \tau\left(r+1-2\big(q-(j+(p-1)/2)\big)\right) \geq j+1+(p-1)/2 \geq j+1,
\end{align*}
completing the verification. \vspace{5pt}

\noindent \emph{Proof of }(b). As with (a), we need to check that $\min\{\widetilde{\gamma}(q-p,r-p),\widetilde{\tau}(q-p, r-p)-1\} \geq 0$ for every even $p \in \{1,\ldots,q-q_0-1\}$, or in other words, that
\[
	\gamma(r-p+1-2(q-p-j)) \geq j \qand \tau(r-p+1-2(q-p-j)) \geq j + 1 \vspace{-2pt}
\]
for all such $p$ and all $j \in \{q_0+1,\ldots,q-p\}$.  Note that $p/2$ is an integer, $j+p/2 \in \{q_0+1,\ldots,q\}$, and by \eqref{eq:cond_isom_bis},
\begin{align*}
	\hspace{20pt} \gamma(r-p+1-2(q-p-j)) &= \gamma\left(r+1-2\big(q-(j+p/2)\big)\right) \geq j+p/2 \geq j \qand \\
	\tau(r-p+1-2(q-p-j)) &= \tau\left(r+1-2\big(q-(j+p/2)\big)\right) \geq j+1+p/2 \geq j+1. 
\end{align*}
We then conclude by the induction hypothesis $(B_{q-p})$. \vspace{5pt}

\noindent \emph{Proof of }(c). For the injectivity of $\HH^{q-p}(\iota_{r-p-2})$ with $p \in \{1,\ldots,q-1\}$ even, we check must verify that the inequality $\min\{\widetilde{\gamma}(q-p,r-p-1),\widetilde{\tau}(q-p, r-p-1)\} \geq 0$ holds for every even $p \in \{1,\ldots,q-q_0-1\}$. This translates into the collection of inequalities  
\[
	\gamma(r-p-2(q-p-j)) \geq j \qand \tau(r-p-2(q-p-j)) \geq j \vspace{-2pt}
\]
with $j \in \{q_0+1,\ldots,q-p\}$. By the same verification from (b), this is the case, and we conclude by $(A_{q-p})$.
\vspace{5pt}

\noindent \emph{Proof of }(d). Note that $\IIE_1^{q,0} \cong \HH^0(G_{r+1};\Linfty(X_{r+1,q})) \cong \Linfty(X_{r+1,q})^{G_{r+1}}$, that the differential $\IId_1^{q,0}: \IIE_1^{q,0} \to \IIE_1^{q+1,0}$ corresponds under this isomorphism to the coboundary operator $\dd^q$, and that the diagram
\[ \xymatrixcolsep{2pc}
\xymatrix{\Linfty(X_{r+1,q})^{G_{r+1}} \ar[r]^{\dd^{q}} & \Linfty(X_{r+1,q+1})^{G_{r+1}} \\
\bbR \ar[u]_{\sim} \ar@{^{(}->}[ur]} 
\]
commutes, where both maps $\bbR \to \Linfty(X_{r+1,q'})^{G_{r+1}}$ and $\bbR \to \Linfty(X_{r+1,q'+1})^{G_{r+1}}$ are the coefficient inclusions. The former is an isomorphism because of the transitivity of the $G_{r+1}$-action on $X_{r+1,q}$.

\subsubsection{Proof of $(A_q)$}
The proof of $(A_q)$ is completely analogous to the one of $(B_q)$. We go through its main points and omit all estimates that are similar to the ones carried out in the previous subsection. Assume that $\min\{\widetilde{\gamma}(q,r),\widetilde{\tau}(q,r)\} \geq 0$, or equivalently, that 
\begin{equation} \label{eq:cond_inj_bis}
	\gamma\big(r+1-2(q-j)\big) \geq j \qand \tau\big(r+1-2(q-j)\big) \geq j
\end{equation}
hold for all $j=q_0+1,\ldots,q$. As in the previous proof, the value $j=q$ yields the inequalities $\gamma(r+1)\geq q$ and $\tau(r+1) \geq q$. This implies by \autoref{thm:IE} and \autoref{thm:cor_IIE} that \vspace{-1pt}
\[\begin{array}{rcll}
	\IE_\infty^{q'} &\cong & \HH_{r+1}^{q'} &  \mbox{for all } q' \in [q], \qand \\
	\IIE_1^{p',q'} &\cong & \HH^{q'}_{r-p} & \mbox{for all } p' \in [q] \mbox{ and all } q' \geq 0.
\end{array} \vspace{-1pt}\]
The claim to prove now is the following:

\noindent \textbf{Claim:} The limit term $\IIE_\infty^q$ injects into $\HH^q_r$. In particular, we have the inclusion
\[
	\HH^q_{r+1} \cong \IE_\infty^q \cong \IIE_\infty^q \hookrightarrow \HH^q_r,
\]
which is induced by the map $G_r \hookrightarrow G_{r+1}$.
\begin{figure}[b!]
\[
 \xymatrix@R=1pt@C=0.5pt{
   & q' \\
 {\scriptstyle q} & & \HH^{q}_{r} && \HH^{q}_{r-1} && \ast && \ast && && && && && && && \\
 {\scriptstyle q-1} & & \HH^{q-1}_{r} && \HH^{q-1}_{r-1} && \HH^{q-1}_{r-2} && \ast && && && && && && && \\
 {\scriptstyle q-2} & & \ast && \HH^{q-2}_{r-1} && \HH^{q-2}_{r-2} && \HH^{q-2}_{r-3} && && &&  && && && \\
 \vdots & & && && && & \ddots & && \\
 {\scriptstyle 1} & & && && && && \HH^{1}_{r-q+2} && \HH^{1}_{r-q+1} && \HH^{1}_{r-q} && \ast \\  
 {\scriptstyle 0} & & && && && && \ast && \HH^0_{r-q+1} && \HH^0_{r-q} && \IIE_1^{q+1,0} && \\ 
 \ar[rrrrrrrrrrrrrrrrrrr] & & & & && && && && && && && & \hspace{-8pt} & p' \\
 & \ar[uuuuuuuu] & {\scriptstyle 0} && {\scriptstyle 1} && {\scriptstyle 2} &&  {\scriptstyle 3} & \cdots & {\scriptstyle q-2} && {\scriptstyle q-1} && {\scriptstyle q} && {\scriptstyle q+1}}
\]
\caption{Some entries of the first page $\IIE_1^{\bcdot,\bcdot}$}
\label{fig:IIE_1_inj}
\end{figure}

\autoref{fig:IIE_1_inj} below displays the arrangement in the first page $\IIE_1^{\bcdot,\bcdot}$ that are relevant to the proof of this claim. As a consequence of \autoref{thm:cor_IIE}, the maps between these terms are given as follows. As in \eqref{eq:toparrow}, the map at the left of the $q$-th row is always the zero map
\begin{equation} \label{eq:toparrow_inj}
\HH^q_{r} \xrightarrow{0} \HH^q_{r-1}
\end{equation}
If $1 \leq q' \leq q-1$, then we consider the two maps in the $q'$-th row:
\[
\IIE_1^{q-q'-1,q'} \to  \IIE_1^{q-q',q'}  \to  \IIE_1^{q-q'+1,q'}
\]
Setting $p := q-q'$, we have $1 \leq p \leq q-1$ and our relevant maps are given by
\begin{equation} \label{eq:midarrows_inj}
\left\{\begin{array}{ll}
			 \HH^{q-p}_{r-p+1} \xrightarrow{0} \HH^{q-p}_{r-p} \xrightarrow{\HH^{q-p}(\iota_{r-p-1})} \HH^{q-p}_{r-p-1}, & \text{if }p \text{ is odd;} \\
			 \HH^{q-p}_{r-p+1} \xrightarrow{\HH^{q-p}(\iota_{r-p})} \HH^{q-p}_{r-p} \xrightarrow{0} \HH^{q-p}_{r-p-1}, & \text{if }p \text{ is even.}  
			\end{array} \right.
\end{equation}
In the bottom row, we have
\begin{equation} \label{eq:bottomarrow_inj}
\left\{\begin{array}{ll}
			\HH^{0}_{r-q+1} \xrightarrow{0} \HH^{0}_{r-q} \to \IIE_1^{q+1,0}, & \text{if }q \text{ is odd;} \\
			\HH^{0}_{r-q+1} \xrightarrow{\HH^{0}(\iota_{r-q})} \HH^{0}_{r-q} \to \IIE_1^{q+1,0}, & \text{if }q \text{ is even.}  
			\end{array} \right.
\end{equation}

Our claim will follow if the following conditions are fulfilled:
\begin{enumerate}[(a')]
	\item The map $\HH^{q-p}(\iota_{r-p-1},\id)$ is an injection for $p \in \{1,\ldots,q-1\}$ odd,
	\item The map $\HH^{q-p}(\iota_{r-p},\id)$ is an isomorphism for $p \in \{1,\ldots,q\}$ even, and
	\item The map $\HH_{r-q}^0 \to \IIE_{1}^{\scriptscriptstyle q+1,0}$ is an injection if $q$ is odd. 
\end{enumerate} 
\begin{figure}[!]
\hspace{-25pt} 
\[
 \xymatrix@R=1pt@C=1pt{
   & q' \\
 {\scriptstyle q} & & \HH^{q}_{r} && \ast && \ast && && && && && && \\
 {\scriptstyle q-1} & & \ast && 0 && \ast && && && && && && && \\
 {\scriptstyle q-2} & & \ast && \ast && 0 && && && && && \\
 \vdots & & && && & \ddots & && \\
 {\scriptstyle 1} & & && && && 0 && \ast && \ast \\  
 {\scriptstyle 0} & & && && && \ast && 0 && \ast && \\ 
 \ar[rrrrrrrrrrrrrrr] & & & & && && && && && & \hspace{-8pt} & p' \\
 & \ar[uuuuuuuu] & {\scriptstyle 0} && {\scriptstyle 1} && {\scriptstyle 2} & \cdots & {\scriptstyle q-1} && {\scriptstyle q} && {\scriptstyle q+1}}
\]
\caption{Some entries of the second page $\IIE_2^{\bcdot,\bcdot}$}
\label{fig:IIE_2_inj}
\end{figure}
Indeed, by \eqref{eq:midarrows_inj} and \eqref{eq:bottomarrow_inj} then the second page has zeros as indicated in \autoref{fig:IIE_2_inj} above, and by \eqref{eq:toparrow_inj} we have $\IIE_2^{0,q} \cong \HH_r^q$. All the zeros on the second page remain zeros on all later pages. For every $s \geq 2$, the target of the differential $\IIE_s^{0,q} \to \IIE_s^{s,q-s+1}$ will be a non-zero map, since we have no control of the term $\IIE_s^{s,q-s+1}$. This means that as a kernel of that differential, $\IIE_{s+1}^{0,q}$ injects into $\IIE_s^{0,q}$. In the limit page, we have therefore
\[
 	\IIE_{\infty}^{0,q} \hookrightarrow \IIE_2^{0,q} \cong \HH^q_r,
\]
and thus
\[
	\IIE_\infty^{q} = \bigoplus_{p'+q' = q} \IIE_\infty^{p',q'} \hookrightarrow \HH_r^{q} \oplus 0 \oplus 0 \oplus \dots \oplus 0 \cong \HH_r^{q}.
\]
The proof of the assertion (c') follows exactly as statement (d) in the proof of $(B_q)$. Assertions (a') and (b') are consequences of the induction hypothesis upon re-writing adequately the inequalities \eqref{eq:cond_inj_bis}, in an analogous way to the proof of the conditions (a)-(c) in the proof of $(B_q)$. The verification is left to the reader.

\newpage
\appendix

\section{Monod's category of coefficient $G$-modules} \label{sec:monodcoeff}
The entire content of this appendix has been extracted from \cite{Monod-Book}; we refer to it for either proofs of the statements or further references to their proofs. 

Given a locally compact, second-countable (lcsc) topological group $G$ we define the category ${\rm Ban}_G$ as follows: Objects in ${\rm Ban}_G$ are separable Banach spaces over the field of real numbers equipped with a jointly continuous $G$-action by bounded linear isometries; they will be referred to as \emph{separable, continuous Banach $G$-modules}. Morphisms in ${\rm Ban}_G$ are given by $G$-equivariant bounded linear maps, here referred to as \emph{$G$-morphisms}.

\begin{defn} \emph{Monod's category of coefficient $G$-modules} is the opposite category ${\rm Ban}_G^{\rm op}$ of ${\rm Ban}_G$.
\end{defn}

\begin{rem}\label{DefCoefficientModule}
We will use the following concrete model for ${\rm Ban}_G^{\rm op}$: Given a Banach $G$-module $B$, denote by $B^\sharp$ the dual Banach space of $B$ equipped with the contragredient $G$-action. Note that $B^\sharp$ is in general not an object of ${\rm Ban}_G$, since neither needs it be separable, nor needs the contragredient action be continuous. We will refer to the pair $(B, B^\sharp)$ as a \emph{coefficient $G$-module}. Now let $(B^\flat, B)$ and $(C^\flat, C)$ be a pair of coefficient $G$-modules. Then a \emph{dual morphism} $(B^\flat, B) \to (C^\flat, C)$ is a weak-$\ast$ continuous $G$-morphism $f: B \to C$, or equivalently, a $G$-morphism which is dual to a $G$-morphism $f^\flat: C^\flat \to B^\flat$. We may then define ${\rm Ban}_G^{\rm op}$ as the category whose objects are coefficient $G$-modules and whose morphisms are dual morphisms. 
\end{rem}
We recall at this point \autoref{defn:regspace:Intro}, which allows us to produce some examples of coefficient modules.
\begin{defn} \label{defn:regspace}
A \emph{regular $G$-space} is a standard Borel space $X$, endowed with a Borel $G$-action and a Borel probability measure $\mu$ on $X$ that is $G$-quasi-invariant in the sense that $g_\ast \mu \sim \mu$ for all $g\in G$. We denote by ${\rm Reg}_G$ the category whose objects are regular $G$-spaces and morphisms are Borel $G$-maps. 
\end{defn}

\begin{exmpl}\label{LInfty}
If $(X, \mu)$ is a regular $G$-space, then the pair $(L^1(X), L^\infty(X))$ is a coefficient $G$-module, where the action of $G$ on $L^\infty(X)$ is defined by $g.f(x) := f(g^{-1}x)$, and the one on $L^1(X)$ is given by the formula 
\[
g.f(x) := \rho(g, x) f(g^{-1}x),
\]
where $\rho(g, x) := d(g^{-1}_*\mu)/d\mu(x)$ denotes the Radon-Nikodym cocycle of $\mu$. See also Appendix D in \cite{Buehler}.
\end{exmpl}

We will need the following generalization of this example.
\begin{exmpl}\label{LInftyGeneral}
Let $(X, \mu)$ is a regular-$G$-space, let $(B^\flat, B)$ be a coefficient $G$-module and define
\[
 L^\infty(X;B):=\{\phi: X \to B \: : \: \phi \mbox{ is weak-* Borel and essentially bounded}\}/\sim,
\]
where $\sim$ denotes $\mu$-almost everywhere equality. Equipped with the essential supremum norm, this is a Banach space. We endow it with the $G$-action defined by the formula $g.f(x) = g.f(g^{-1}x)$ as in the previous example. Then
\[
L^\infty(X; B) \cong (L^1(X) \hat{\otimes} B^\flat)^\sharp,
\]
as objects of ${\rm Ban}_G$, where $\hat{\otimes}$ denotes the projective tensor product of Banach spaces and $L^1(X) \hat{\otimes} B^\flat$ is endowed with its natural $G$-action. Consequently, the pair $(L^1(X) \hat{\otimes} B^\flat, L^\infty(X, B))$ is a coefficient $G$-module. 
\end{exmpl}
The previous construction satisfies the following so-called exponential law:
\begin{lem} \label{thm:explaw}
	For regular $G$-spaces $(X_1,\mu_1)$ and $(X_2,\mu_2)$ and a coefficient $G$-module $(B^\flat, B)$, there is an isomorphism
\[
	\Linfty(X_1 \times X_2; B) \cong \Linfty(X_1; \Linfty(X_2; B))
\]	
of coefficient $G$-modules.\qed
\end{lem}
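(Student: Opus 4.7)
The plan is to chain together several standard identifications of Banach spaces, and then verify that each one is $G$-equivariant and in fact a morphism in the category of coefficient $G$-modules. The starting point is the identification recorded in \autoref{LInftyGeneral}, namely $\Linfty(X;B) \cong (L^1(X) \hat\otimes B^\flat)^\sharp$, which will be used twice.

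First, I would apply \autoref{LInftyGeneral} to the product space $X_1 \times X_2$ (equipped with the product probability measure $\mu_1 \otimes \mu_2$, which is $G$-quasi-invariant with Radon--Nikodym cocycle equal to the product of those of $\mu_1$ and $\mu_2$), to obtain
\[
\Linfty(X_1 \times X_2; B) \;\cong\; \bigl(L^1(X_1 \times X_2) \,\hat\otimes\, B^\flat\bigr)^{\!\sharp}.
\]
Next I would invoke the classical isomorphism $L^1(X_1 \times X_2) \cong L^1(X_1) \hat\otimes L^1(X_2)$ (see e.g.\ Ryan, \emph{Introduction to Tensor Products of Banach Spaces}), together with associativity of the projective tensor product, to identify the inner space with $L^1(X_1) \hat\otimes \bigl(L^1(X_2) \hat\otimes B^\flat\bigr)$. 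Applying \autoref{LInftyGeneral} a second time with the coefficient $G$-module $(L^1(X_2) \hat\otimes B^\flat,\, \Linfty(X_2; B))$ in place of $(B^\flat, B)$ then gives
\[
\bigl(L^1(X_1) \,\hat\otimes\, (L^1(X_2) \hat\otimes B^\flat)\bigr)^{\!\sharp} \;\cong\; \Linfty\bigl(X_1; \Linfty(X_2; B)\bigr),
\]
and chaining these isomorphisms together yields the desired identification.

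The remaining work is to check that every step is $G$-equivariant, so that we obtain an isomorphism of coefficient $G$-modules (hence, in particular, a dual morphism in the sense of \autoref{DefCoefficientModule}). For the product-measure identification $L^1(X_1 \times X_2) \cong L^1(X_1) \hat\otimes L^1(X_2)$, this is where the Radon--Nikodym cocycles must be carefully compared: the cocycle of the product measure factorizes as $\rho_{\mu_1 \otimes \mu_2}(g,(x_1,x_2)) = \rho_{\mu_1}(g,x_1)\,\rho_{\mu_2}(g,x_2)$, so the natural diagonal $G$-action on the tensor product matches the $G$-action on $L^1$ of the product space coming from \autoref{LInfty}. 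Associativity of $\hat\otimes$ is automatically equivariant for the diagonal action. For the two applications of \autoref{LInftyGeneral}, equivariance of the induced isomorphism is part of the statement.

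The only delicate point, and the place where one must be careful, is the second application of \autoref{LInftyGeneral}: we have to know that $\Linfty(X_2; B)$, together with its predual $L^1(X_2) \hat\otimes B^\flat$, is a bona fide coefficient $G$-module --- that is, that the $G$-action on $\Linfty(X_2; B)$ is weak-$\ast$ continuous and the predual $G$-action on $L^1(X_2) \hat\otimes B^\flat$ is jointly continuous and isometric, so that \autoref{LInftyGeneral} may legitimately be invoked with this module as coefficients. This is precisely the content of the construction in \autoref{LInftyGeneral} applied to $(X_2, \mu_2)$ and $(B^\flat, B)$, and once it is in place the chain of isomorphisms above assembles into an isomorphism of coefficient $G$-modules as claimed.
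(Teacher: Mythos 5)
Your proof is correct and follows the standard approach. The paper leaves this lemma unproven (the tombstone indicates the proof is deferred to Monod's monograph \cite{Monod-Book}, which is cited at the head of the appendix as the source for all statements therein), and your argument --- dualizing the classical isometric identification $L^1(X_1\times X_2) \cong L^1(X_1) \,\hat\otimes\, L^1(X_2)$, using associativity of $\hat\otimes$, and invoking \autoref{LInftyGeneral} twice, once for the product space and once with the nested coefficient module $(L^1(X_2)\hat\otimes B^\flat,\, \Linfty(X_2;B))$ --- is precisely the argument used there. Your observation that the Radon--Nikodym cocycle factorizes as $\rho_{\mu_1\otimes\mu_2}(g,(x_1,x_2)) = \rho_{\mu_1}(g,x_1)\,\rho_{\mu_2}(g,x_2)$ is indeed the crux of the $G$-equivariance of the predual identification, and you are right that \autoref{LInftyGeneral} already certifies the nested pair as a legitimate coefficient $G$-module before the second application.
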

We have the following notion of exactness in ${\rm Ban}_G^{\rm op}$:
\begin{defn}\label{Exactness}
Given a coefficient $G$-module $(B^\flat, B)$ we refer to $B$ as the \emph{underlying vector space}. We say that a sequence 
\begin{equation} \label{eq:SeqCoeffMod1}
	0 \to ((B^0)^\flat, B^0) \to ((B^1)^\flat, B^1) \to ((B^2)^\flat, B^2) \to \cdots
\end{equation}
of coefficient modules and dual morphisms is a \emph{complex}, resp. \emph{exact}, if the underlying sequence 
\begin{equation} \label{eq:SeqCoeffMod2}
	0 \to  B^0 \to B^1 \to B^2 \to \dots
\end{equation}
of vector spaces has the corresponding property. In this case, in order to avoid an overloaded notation, we will omit mention of the preduals and refer simply to \eqref{eq:SeqCoeffMod2} and not to \eqref{eq:SeqCoeffMod1} as the complex, resp. exact sequence of coefficient modules whenever it is affordable. 
\end{defn}
The following lemma is our main technical tool; here ${\rm Vect}$ denotes the category of vector spaces.
\begin{lem} \label{thm:exactftr}
 The functor ${\rm Ban}_G^{\rm op} \to {\rm Vect}$ given by $(A^\flat, A) \mapsto L^\infty(G^n, A)^G$ is exact, i.e. if $0\to A \to B \to C \to 0$ is a short exact sequence in ${\rm Ban}_G^{\rm op}$, then 
\[
	0 \to \Linfty(G^n; A)^G \to \Linfty(G^n; B)^G \to \Linfty(G^n; C)^G \to 0
\]
is an exact sequence of vector spaces.\qed
\end{lem}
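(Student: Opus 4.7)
The plan is to reduce the lemma, via the standard bar shift, to an exactness statement for a Bochner $L^1$-sequence at the level of preduals, which is then established using a measurable selection.

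First I would establish a natural isomorphism of vector spaces
\[
L^\infty(G^n; V)^G \;\cong\; L^\infty(G^{n-1}; V) \qquad (n \geq 1),
\]
where on the right-hand side $V$ is viewed purely as a Banach space, with no residual $G$-structure. The isomorphism is induced by the measure-class-preserving diffeomorphism $\psi\colon G^n \to G^n$, $(x_1,\ldots,x_n) \mapsto (x_1, x_1^{-1}x_2, \ldots, x_1^{-1}x_n)$, under which the diagonal $G$-action becomes left translation on the first factor alone combined with the contragredient action on $V$, while the remaining $n-1$ factors become $G$-invariant. A $G$-invariant class $F$ is therefore uniquely determined by a class $\bar F \in L^\infty(G^{n-1}; V)$ via $F(x_1, x_1 y_2, \ldots, x_1 y_n) = x_1 \cdot \bar F(y_2, \ldots, y_n)$, and the correspondence $F \leftrightarrow \bar F$ is manifestly natural in $V$.

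Second, I would use the identification $L^\infty(G^{n-1}; V) \cong (L^1(G^{n-1}; V^\flat))^\sharp$ from \autoref{LInftyGeneral}. Given a short exact sequence $0 \to A \to B \to C \to 0$ in ${\rm Ban}_G^{\rm op}$, the underlying predual sequence $0 \to C^\flat \to B^\flat \to A^\flat \to 0$ of separable continuous Banach $G$-modules gives, upon tensoring with $L^1(G^{n-1})$, a candidate Bochner $L^1$-sequence
\[
0 \to L^1(G^{n-1}; C^\flat) \to L^1(G^{n-1}; B^\flat) \to L^1(G^{n-1}; A^\flat) \to 0.
\]
Exactness of this sequence would imply the lemma, since dualizing a short exact sequence of Banach spaces preserves exactness (Hahn--Banach supplies surjectivity of the dualized injection on the right, and the closed-range theorem supplies injectivity of the dualized surjection on the left, with middle exactness being automatic).

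The main obstacle is the exactness of this Bochner $L^1$-sequence. Injectivity on the left is immediate from pointwise injectivity of $C^\flat \hookrightarrow B^\flat$. Middle exactness reduces to an application of the Pettis measurability theorem: if $\psi \in L^1(G^{n-1}; B^\flat)$ maps to zero in $L^1(G^{n-1}; A^\flat)$, then $\psi$ takes values almost everywhere in the norm-closed subspace $C^\flat \subset B^\flat$ and hence represents a class in $L^1(G^{n-1}; C^\flat)$. For right surjectivity, I would apply the Bartle--Graves selection theorem to the bounded linear surjection $B^\flat \twoheadrightarrow A^\flat$ of separable Banach spaces: this produces a norm-bounded Borel (nonlinear) section $s\colon A^\flat \to B^\flat$, and post-composing any representative of $\phi \in L^1(G^{n-1}; A^\flat)$ with $s$ yields a Bochner-measurable, norm-integrable lift in $L^1(G^{n-1}; B^\flat)$ mapping to $\phi$. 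Dualizing the resulting short exact $L^1$-sequence finally produces the desired exactness for $L^\infty(G^n; -)^G$.
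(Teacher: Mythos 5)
Your proposal is correct and follows essentially the argument that underlies the paper's citation of Monod's book for this lemma: pass to inhomogeneous cochains via the bar shift to strip off the $G$-invariants, identify $L^\infty(G^{n-1};-)$ as the dual of the Bochner space $L^1(G^{n-1};-^\flat)$, prove that short exact sequences of separable Banach spaces are preserved by $L^1(G^{n-1};-)$ using a Bartle--Graves section, and conclude by dualizing. One step that you use throughout but never justify is that the predual sequence $0\to C^\flat\to B^\flat\to A^\flat\to 0$ is itself short exact: injectivity of $C^\flat\to B^\flat$ and surjectivity of $B^\flat\to A^\flat$ come from the closed range theorem applied to the surjection $B\to C$ (whose predual is then bounded below) and to the closed-range injection $A\hookrightarrow B$, while middle exactness follows because $\ker(\psi^\flat) = (\ker\phi)^{\perp}\cap B^\flat = (W^\perp)^\perp\cap B^\flat = W$ for the norm-closed subspace $W = \phi^\flat(C^\flat)$ of $B^\flat$; this should be recorded, since your appeal to Pettis for the middle exactness of the $L^1$-sequence tacitly presupposes it. A second, minor point: the correspondence $F\leftrightarrow \bar F$ in your first step cannot literally be the evaluation $\bar F(\mathbf y)=F(e,e\mathbf y)$, since $\{e\}\times G^{n-1}$ is a null set; one recovers $\bar F$ either by the standard Fubini argument (for a.e.\ $x_1$ the slice $x_1^{-1}\cdot F(x_1, x_1\mathbf y)$ is independent of $x_1$) or by citing the corresponding identification in Monod's book.
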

\section{Topology of symplectic Grassmannians} \label{sec:symplectic}
\subsection{Symplectic bases}\label{SympBase}

Let $\kay \in \{\bbR, \bbC\}$ and let $V$ be a $\kay$-vector space. A \emph{symplectic form} $\omega:V\times V \to \kay$ is a non-degenerate alternating bilinear form, in which case $(V,\omega)$ is called a \emph{symplectic vector space}. A linear map between symplectic vector spaces is called \emph{symplectic} if it intertwines the given symplectic forms; we 
denote by $\Sp(V, \omega)$ the group of linear symplectic automorphisms of $(V,\omega)$. It is a simple closed algebraic subgroup of $\GL(V)$. 

If $(V, \omega)$ is a symplectic vector space, then $V$ is even-dimensional and there exists a basis $(e_{r-1}, \dots, e_0, f_0, \dots, f_{r-1})$ of $\kay^{2r}$ such that  
\[\omega(e_i, f_j) = \delta_{ij} \qand \omega(e_i, e_j) = \omega(f_i, f_j) = 0 \text{ for all }i, j \in \{0, \dots, r-1\}.\]
In any such basis $\omega$ is represented by the matrix
\[
J = \left(\begin{matrix}0    & Q \\
	-Q & 0\end{matrix}\right), \text{ where } Q = \left(\begin{matrix} &&1\\&\iddots&\\1&& \end{matrix}\right),
\]
hence we refer to $(e_{r-1}, \dots, e_0, f_0, \dots, f_{r-1})$ as a \emph{symplectic basis in antidiagonal form}. Note that if $\mathcal B := (e_{r-1}, \dots, e_0, f_0, \dots, f_{r-1})$ 
and $\mathcal B' := (e'_{r-1}, \dots, e'_0, f'_0, \dots, f'_{r-1})$ are symplectic bases in antidiagonal form for symplectic vector spaces $(V, \omega)$ and $(V', \omega')$ respectively, then there exists a unique linear symplectic isomorphism $A: V \to V'$ which maps $\mathcal B$ to $\mathcal B'$. In particular, there is a precisely one symplectic vector space of dimension $2r$ over $\kay$ up to symplectic linear isomorphism. Let us denote by $(\kay^{2r}, \omega)$ the representative of this isomorphism class, in which the standard basis is in antidiagonal form; we then also write $\Sp_{2r}(\kay)$ for the corresponding automorphism groups.

A subspace $W$ of a symplectic vector space $(V, \omega)$ is called \emph{symplectic} if $\omega$ is non-degenerate on $W$. If $(e_{r-1}, \dots, e_0, f_0, \dots, f_{r-1})$ is a symplectic basis in $(\kay^{2r}, \omega)$ in antidiagonal form, then $V_k :={\rm span}(e_{k}, \dots, e_0, f_0, \dots, f_{k})$ is symplectic for all $0 \leq k \leq r-1$. In particular we obtain embeddings
\begin{equation}\label{SymplecticEmbeddings}
(\kay^0,0) \hookrightarrow (\kay^2, \omega) \hookrightarrow (\kay^4, \omega) \hookrightarrow (\kay^6, \omega) \hookrightarrow \dots \hookrightarrow  (\kay^{2r}, \omega) \hookrightarrow \cdots.
\end{equation}
by sending $(v_{r-1}, \ldots, v_0, w_0, \ldots, w_{r-1})^\top$ to $(0, v_{r-1}, \ldots, v_0, w_0, \ldots, w_{r-1}, 0)^\top$. Moreover, every $A \in {\rm Sp}_{2r}(\kay)$ extends to an automorphism $\widetilde{A} \in {\rm Sp}_{2(r+1)}(\kay)$ acting trivially on ${\rm span}(e_{r}, f_{r})$. This defines embeddings
\begin{equation}\label{SpEmbeddings}
1 \hookrightarrow {\rm Sp}_2(\kay) \hookrightarrow{\rm Sp}_4(\kay) \hookrightarrow{\rm Sp}_6(\kay) \hookrightarrow \dots\hookrightarrow {\rm Sp}_{2r}(\kay) \hookrightarrow \cdots, 
\end{equation}
which on the level of matrices (with respect to the standard bases) are given by.
\[
A \mapsto \widetilde{A} = \left(\begin{matrix}1&&\\&A&\\&&1\end{matrix}\right).
\]
Throughout this article we will always use these embeddings when considering $ {\rm Sp}_{2r}(\kay)$ as a subgroup of ${\rm Sp}_{2(r+1)}(\kay)$.
 
\subsection{Symplectic complements and isotropic subspaces} \label{SympComp}
Given a symplectic vector space $(V,\omega)$ and $v, w\in V$, we write $v \perp_\omega w$ provided $\omega(v,w) = 0$, and given subsets $S, T \subset V$ we write $S \perp_\omega T$ and say that $S$ and $T$ are \emph{(symplectically) perpendicular} if $v\perp_\omega w$ for all $v \in S$ and $w \in T$. We also define the \emph{symplectic complement} $S^\omega$ of a subset $S\subset V$ as
\[
S^\omega := \{v \in V \mid \{v\} \perp_\omega S\}.
\]
This complement is always a linear subspace of $V$ since $S^\omega = ({\rm span}(S))^\omega$, and for any linear subspace $W < V$ we have equalities 
\begin{equation} \label{eq:sympl_complement}
(W^\omega)^\omega = W	\qand \dim W + \dim W^\omega = \dim V.
\end{equation}

A linear subspace $W \subset V$ is then \emph{symplectic} if $W \cap W^\omega = \{0\}$; it is called \emph{isotropic} if $W \subset W^\omega$. Every one-dimensional subspace of $V$ is isotropic, and an isotropic subspace $W$ is maximal in $V$ with respect to inclusion if and only if $W=W^\omega$; in this case $\dim W = 1/2 \dim V$ and $W$ is called \emph{Lagrangian}. If $(e_{r-1}, \dots, e_0, f_0, \dots, f_{r-1})$ is a symplectic basis of $V$ in antidiagonal form, then $(e_{r-1}, \dots, e_0)$ and $(f_0, \dots, f_{r-1})$ are Lagrangian, and $(e_{k},\dots, e_0, f_0, \dots, f_k)$ is symplectic for every $0\leq k \leq r-1$.

\begin{lem}[{\cite[Lemma 2.1.5]{McDuffSalamon}}]\label{McDS} Every isotropic subspace is contained in a Lagrangian, and every basis of a Lagrangian can be extended to a symplectic basis in antidiagonal form.\qed
\end{lem}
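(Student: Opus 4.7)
The plan is to prove the two assertions of \autoref{McDS} separately: first build a Lagrangian above any given isotropic subspace by repeatedly enlarging, then construct the ``dual'' half of a symplectic basis via a quotient-plus-correction argument.

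For the first statement, I would argue by induction on $\dim W^\omega - \dim W$ for $W$ isotropic. If this quantity is $0$, then $W = W^\omega$ is already Lagrangian. Otherwise, pick any $v \in W^\omega \setminus W$ and consider $W' := W + \kay v$. Since $v \in W^\omega$ and $W \subset W^\omega$, we have $W' \subset W^\omega$; moreover, $\omega(v,v) = 0$ by antisymmetry, so in fact $W' \subset (W')^\omega$, i.e.\ $W'$ is isotropic. By \eqref{eq:sympl_complement} the gap $\dim (W')^\omega - \dim W'$ drops by $2$ (or one checks directly that $W'$ has strictly smaller such gap). Finite-dimensionality then forces termination at a Lagrangian containing $W$.

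For the second statement, let $L$ be Lagrangian and let $(e_{r-1},\dots,e_0)$ be an arbitrary basis of $L$. Since $L = L^\omega$, the bilinear pairing $V/L \times L \to \kay$, $([v],e) \mapsto \omega(v,e)$ is non-degenerate (if $\omega(v,L)=0$ then $v \in L^\omega = L$, hence $[v]=0$). Choose vectors $\tilde f_0,\dots,\tilde f_{r-1} \in V$ whose images in $V/L$ form a basis dual to $(e_0,\dots,e_{r-1})$, i.e.\ $\omega(e_i,\tilde f_j) = \delta_{ij}$. Next I would correct these vectors to annihilate the pairings $\omega(\tilde f_i,\tilde f_j)$ without disturbing $\omega(e_i,\cdot)$. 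Set $f_i := \tilde f_i + \sum_k c_{ik} e_k$ with coefficients $c_{ik}\in\kay$ to be chosen. Because $\omega$ vanishes on $L\times L$, the identities $\omega(e_i,f_j)=\delta_{ij}$ are preserved automatically, and a direct computation using $\omega(e_k,\tilde f_j)=\delta_{kj}$ gives
\[
 \omega(f_i,f_j) = \omega(\tilde f_i,\tilde f_j) + c_{ij} - c_{ji}.
\]
Since $a_{ij}:=\omega(\tilde f_i,\tilde f_j)$ is antisymmetric, the system $c_{ij}-c_{ji}=-a_{ij}$ is solved for instance by $c_{ij}=-\tfrac12 a_{ij}$ (using $\Char \kay = 0$), or by a triangular choice $c_{ij}=-a_{ij}$ for $i>j$, $c_{ij}=0$ otherwise. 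The resulting tuple $(e_{r-1},\dots,e_0,f_0,\dots,f_{r-1})$ satisfies all the antidiagonal relations. Finally, linear independence of this tuple follows because its Gram matrix with respect to $\omega$ is the invertible matrix $J$; in particular it is a basis of $V$.

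There is no real obstacle beyond the small bookkeeping for the correction step; the only point that deserves attention is verifying that $\omega(e_i,\cdot)$ is unchanged, which holds precisely because $L$ is isotropic. Both parts are self-contained once one uses the dimension formula $\dim W + \dim W^\omega = \dim V$ and the fact that $L=L^\omega$ for $L$ Lagrangian, both already recorded in \eqref{eq:sympl_complement} and the discussion preceding the statement.
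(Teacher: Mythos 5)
The paper does not actually prove \autoref{McDS}: it is stated with a terminal \verb|\qed| as a direct citation to Lemma 2.1.5 of McDuff--Salamon, so there is no in-text argument to compare against. Your proposal supplies a complete, self-contained proof, and it is correct. The first half (induction on $\dim W^\omega - \dim W$, adding one vector of $W^\omega\setminus W$ at a time) is sound; the gap drops by exactly $2$ at each step since $\dim(W')^\omega = \dim V - \dim W' = \dim W^\omega - 1$, and since the gap is always even you indeed terminate at a Lagrangian. The second half is the standard ``dual basis plus lower-order correction'' argument: the pairing $V/L \times L \to \kay$ is perfect because $L = L^\omega$, the lifts $\tilde f_j$ exist, and the correction $f_i = \tilde f_i + \sum_k c_{ik}e_k$ leaves $\omega(e_\bullet,\cdot)$ unchanged because $L$ is isotropic while the formula $\omega(f_i,f_j) = \omega(\tilde f_i,\tilde f_j) + c_{ij} - c_{ji}$ lets you kill all the $f$-$f$ pairings. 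The Gram-matrix argument for linear independence is fine. This is essentially the same inductive extension-plus-correction argument found in McDuff--Salamon; the only stylistic difference is that they phrase it as a single downward induction on $\dim V$, building the symplectic basis pair by pair, whereas you first pass all the way to a Lagrangian and then build the dual half in one go. Both routes are equally elementary; yours has the small advantage of isolating the two assertions of the lemma so that each gets its own short proof.

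Two minor remarks of care, neither a genuine gap. First, your $\tfrac12$-solution for the antisymmetric system uses $\Char\kay = 0$, which holds here ($\kay\in\{\bbR,\bbC\}$), but the triangular choice you also offer is the one that works in all characteristics, so it is the cleaner default. Second, when you say the images of $\tilde f_j$ in $V/L$ ``form a basis dual to $(e_0,\dots,e_{r-1})$,'' it is worth noting explicitly that this uses the non-degeneracy of the induced pairing to identify $V/L \cong L^*$ and then pulls back the dual basis — you do assert the non-degeneracy, so the chain of reasoning is there, just compressed.
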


\begin{prop}\label{thm:transitivity}  Let $(V, \omega)$ be a symplectic vector space of dimension $2r$ and let $0 \leq k \leq r-1$. Then the group ${\rm Sp}(V,\omega)$ acts transitively on bases of $(k+1)$-dimensional isotropic subspaces of $V$.
\end{prop}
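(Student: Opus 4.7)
The plan is to reduce the claim to the fact (stated in the excerpt just before the proposition) that any two symplectic bases in antidiagonal form are related by a unique linear symplectic automorphism. The bridge between the two statements will be provided by \autoref{McDS}.

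More precisely, let $(v_0,\ldots,v_k)$ and $(v_0',\ldots,v_k')$ be two bases of $(k+1)$-dimensional isotropic subspaces $W, W' \subset V$. First I would use the first half of \autoref{McDS} to find Lagrangian subspaces $L, L' \subset V$ with $W \subset L$ and $W' \subset L'$. Since $\dim L = \dim L' = r$, I can extend the given bases to bases
\[
(v_0,\ldots,v_k,v_{k+1},\ldots,v_{r-1}) \qand (v_0',\ldots,v_k',v_{k+1}',\ldots,v_{r-1}')
\]
of $L$ and $L'$ respectively, which I then reverse to produce ordered bases $(v_{r-1},\ldots,v_0)$ and $(v_{r-1}',\ldots,v_0')$ of $L$ and $L'$. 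By the second half of \autoref{McDS}, each of these ordered bases of a Lagrangian extends to a symplectic basis of $V$ in antidiagonal form:
\[
\clB := (v_{r-1},\ldots,v_0,w_0,\ldots,w_{r-1}) \qand \clB' := (v_{r-1}',\ldots,v_0',w_0',\ldots,w_{r-1}').
\]

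Now, as recalled in Subsection~\ref{SympBase}, between any two symplectic bases of $(V,\omega)$ in antidiagonal form there exists a unique linear symplectic automorphism carrying the first to the second. Applying this to $\clB$ and $\clB'$, I obtain $g \in \Sp(V,\omega)$ with $g(v_i) = v_i'$ for all $i = 0,\ldots,r-1$, and in particular for all $i \in [k]$, as desired.

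No step in this argument is really an obstacle: the only non-triviality is hidden in \autoref{McDS}, which is quoted from the literature, and in the homogeneity statement for antidiagonal symplectic bases, which was recorded in Subsection~\ref{SympBase}. The rest is bookkeeping with the ordering of basis vectors so that the completion produced by \autoref{McDS} is indeed in antidiagonal form.
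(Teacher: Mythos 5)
Your proof is correct and follows essentially the same route as the paper's: extend the given isotropic basis to a basis of a Lagrangian, then to a symplectic basis in antidiagonal form via \autoref{McDS}, and invoke transitivity of $\Sp(V,\omega)$ on such bases. The only cosmetic difference is that you work with two bases and construct the automorphism explicitly, whereas the paper extends a single basis and cites transitivity directly; you also place the initial vectors in the "$e$"-part of the antidiagonal basis rather than the "$f$"-part, but this is immaterial.
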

\begin{proof} Let $W\subset V$ be a $(k+1)$-dimensional isotropic subspace, and let $(w_0, \dots, w_k)$ be a basis of $W$. By Lemma \ref{McDS}, $W$ is contained in a Lagrangian subspace $L$ and we can extend $(w_0, \dots, w_k)$ to a basis $(w_0, \dots, w_{r-1})$ of $L$, and then further to a symplectic basis $(v_{r-1}, \dots, v_{0}, w_{0}, \dots, w_{r-1})$ in antidiagonal form by the same lemma. Since $\Sp(V, \omega)$ acts transitively on such bases, the corollary follows.
\end{proof}

The following lemma will be useful in Section \ref{sec:contractibility}. If $a,b \in \bbP(V)$ are 1-dimensional subspaces of a symplectic vector space $(V,\omega)$, we say that $\omega(a,b) = 0$ (resp. $\omega(a,b) \neq 0$) whenever $\omega(v,w)$ equals zero (resp. does not equal zero) for two non-zero vectors $v \in a$ and $w \in b$. Note that this property is independent of the choices of $v$ and $w$ within $a$ and $b$, respectively. 
\begin{lem} \label{thm:lemma_genericity}
Let $(V,\omega)$ be a symplectic vector space, and let $a_0,\ldots,a_k,b_0,\ldots,b_k \in \bbP(V)$ be 1-dimensional subspaces of $V$ such that 
\[ \begin{array}{l}
	\omega(a_i,b_i) \neq 0 \, \mbox{ for all } i \in [k], \quad \omega(a_i,a_j) = 0 \, \mbox{ for all } i,j \in [k], \qand \vspace{2pt} \\
	\omega(a_i,b_j) = 0 \mbox{ for all } i,j \in [k], i \neq j.
\end{array} \]
Then $W:= \Span\{a_0,\ldots,a_k,b_0,\ldots,b_k\}$ is a symplectic subspace of $V$ of dimension $2(k+1)$. In particular, $\dim V \geq 2(k+1)$. 
\end{lem}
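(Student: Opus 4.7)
The plan is to choose convenient representative vectors and then read both assertions (the dimension count and the non-degeneracy of $\omega|_W$) off the block structure of the Gram matrix.

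First, for each $i \in [k]$ I would pick non-zero vectors $v_i \in a_i$ and $w_i \in b_i$. The hypothesis $\omega(a_i,b_i) \neq 0$ ensures $\omega(v_i,w_i) \neq 0$, and after rescaling $w_i$ I may assume $\omega(v_i,w_i)=1$ for all $i$. The remaining hypotheses translate into $\omega(v_i,v_j)=0$ for all $i,j$ and $\omega(v_i,w_j)=0$ whenever $i\neq j$. Nothing is assumed about the numbers $\omega(w_i,w_j)$; call these entries $e_{ij}$, forming a $(k+1)\times(k+1)$ matrix $E$.

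Next, I would check both conclusions simultaneously. Suppose $x=\sum_i \alpha_i v_i + \sum_i \beta_i w_i$ lies in the kernel of $\omega|_W$, i.e. $\omega(x,y)=0$ for every $y\in W$ (for linear independence, take $x=0$ and the same argument applies to the relation $\sum \alpha_i v_i + \sum \beta_i w_i = 0$, pairing with $v_j$ and $w_j$ using $\omega$). Pairing $x$ with $v_j$ gives
\[
0 \;=\; \omega(x,v_j) \;=\; \sum_i \alpha_i\,\omega(v_i,v_j) + \sum_i \beta_i\,\omega(w_i,v_j) \;=\; -\beta_j,
\]
so every $\beta_j$ vanishes. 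Pairing the remaining $\sum_i \alpha_i v_i$ with $w_j$ then yields $\alpha_j = \omega(\sum_i \alpha_i v_i, w_j) = 0$. Hence $x=0$, which establishes both the linear independence of $\{v_0,\dots,v_k,w_0,\dots,w_k\}$ (so $\dim W = 2(k+1)$) and the non-degeneracy of $\omega|_W$ (so $W$ is symplectic). The final assertion $\dim V \geq 2(k+1)$ is then immediate.

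There is no real obstacle here: the structure of the Gram matrix in the chosen basis is
\[
\begin{pmatrix} 0 & I \\ -I & E \end{pmatrix},
\]
whose determinant is $\pm 1$ regardless of $E$, and the two pairings above are precisely the elementary argument that makes this concrete. The only mild subtlety is noticing that no assumption on $\omega(b_i,b_j)$ is needed, which is exactly what the block-triangular shape of the Gram matrix reflects.
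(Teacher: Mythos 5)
Your proof is correct, and it takes a genuinely different route from the paper's. The paper argues by induction on $k$: assuming $W_0 := \Span\{a_0,\ldots,a_{k-1},b_0,\ldots,b_{k-1}\}$ is already symplectic of dimension $2k$, it uses the splitting $V = W_0 \oplus W_0^\omega$, projects a representative of $b_k$ to $W_0^\omega$, and then shows that this projection together with a representative of $a_k$ spans a two-dimensional symplectic subspace of $W_0^\omega$, so that $W = W_0 \oplus \Span\{v_k, w_{k,1}\}$ is an orthogonal direct sum of symplectic subspaces. Your argument bypasses the induction and the orthogonal splitting entirely: after normalizing $\omega(v_i,w_i)=1$, the block shape $\begin{pmatrix} 0 & I \\ -I & E \end{pmatrix}$ of the Gram matrix (equivalently, the dual pairing against $v_j$ and then $w_j$) yields linear independence of the $2(k+1)$ representatives and non-degeneracy of $\omega|_W$ in one stroke, with the off-diagonal $b$-$b$ entries $E$ playing no role. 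Your approach is more self-contained and arguably more transparent about exactly which hypotheses are used; the paper's is shorter because it can lean on the already-developed symplectic-complement machinery (\eqref{eq:sympl_complement} and the surrounding discussion). Both are perfectly valid.
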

\begin{proof}
We prove the lemma by induction on $k$, where the base case $k=0$ is immediate. We take the statement of the lemma for $k-1$ as our induction hypothesis, and prove it for $k$. Thus, let $a_0,\ldots,a_k,b_0,\ldots,b_k$ be 1-dimensional subspaces as in the assumptions. By the induction hypothesis, the subspace $W_0 := \Span\{a_0,\ldots,a_{k-1},b_0,\ldots,b_{k-1}\}$ is symplectic and of dimension $2k$. In particular, $V$ splits as the direct sum $V = W_0 \oplus W_0^\omega$. 

By assumption, $a_k \subset W_0^\omega$. Now let $v_k$ and $w_k$ be non-zero vectors in $a_k$ and $b_k$, respectively, and let $w_{k,0} \in W_0$ and $w_{k,1} \in W_0^\omega$ be vectors such that $w_k = w_{k,0} + w_{k,1}$. Then 
\[
	0 \neq \omega(v_k,w_k) = \omega(v_k,w_{k,0})+\omega(v_k, w_{k,1})=\omega(v_k, w_{k,1}).
\]
Hence, $\Span\{v_k,w_{k,1}\} \subset W_0^\omega$ is symplectic and of dimension two, and in consequence, $W=\Span(W_0 \cup  \{v_k,w_{k,1}\})$ is, too, symplectic and of dimension $2(k+1)$. 
\end{proof}

\subsection{Topologies on Grassmannians}
If $X$ is an arbitrary lcsc space, then the set $\mathcal{C}(X)$ of closed subsets admits a compact metrizable topology, called the \emph{Chabauty topology}, in which convergence of sequences can be characterized as follows \cite[Proposition E.1.2]{Bened-Petro}: Given subsets $W_m, W \in \mathcal{C}(X)$, we have convergence $W_m \to W$ if and only if the following two conditions hold:
\begin{enumerate}[(C1)]
	\item If $(w_{m_k})_k$ is a sequence in $X$ such that $w_{m_k} \in W_{m_k}$ for every $k$ and such that $w_{m_k} \to w \in X$ as $k \to \infty$, then $w \in W$.
	\item If $w \in W$, then there exists a sequence $(w_m) \subset G$ with $w_m \in W_m$ and such that $w_m \to w$. 
\end{enumerate}
It is immediate from this characterization of convergence that if $V$ happens to be a finite-dimensional topological vector space over $\kay \in \{\bbR, \bbC\}$, then the subset ${\rm Gr}(V) \subset \mathcal C(V)$ of linear subspaces is closed, and that ${\rm GL}(V)$ acts continuously on ${\rm Gr}(V)$. Given $0 \leq k \leq \dim V$, denote by ${\rm Gr}_k(V)$ the $k$-Grassmannian of $V$, i.e. the subset of ${\rm Gr}(V)$ consisting of $k$-dimensional linear subspaces, equipped with the restriction of the Chabauty topology. We observe, firstly, that 
${\rm GL}(V)$ acts transitively on ${\rm Gr}_k(V)$ and, secondly, that ${\rm Gr}_k(V)$ is closed in ${\rm Gr}(V)$, hence compact. Since
\begin{equation}\label{Grassmann}
{\rm Gr}(V) = \bigsqcup_{k=0}^{\dim V}{\rm Gr}_k(V)
\end{equation}
is a finite disjoint union, we deduce that all of the subsets ${\rm Gr}_k(V) \subset {\rm Gr}(V)$ are clopen; in fact they are precisely the connected components of ${\rm Gr}(V)$, since they are connected by the following lemma.
\begin{lem}\label{ChabautyQuotientTop} The Chabauty topology on ${\rm Gr}_k(V)$ coincides with the quotient topology with respect to the ${\rm GL}(V)$-action.
\end{lem}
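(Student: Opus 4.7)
The strategy is standard: produce a continuous bijection between the two topologies and upgrade it to a homeomorphism via a compactness-Hausdorff argument.

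First, fix a basepoint $W_0 \in \mathrm{Gr}_k(V)$ and let $P := \mathrm{Stab}_{\mathrm{GL}(V)}(W_0)$. The stabilizer $P$ is closed in $\mathrm{GL}(V)$ because the action of $\mathrm{GL}(V)$ on $(\mathrm{Gr}_k(V), \tau_{\mathrm{Chab}})$ is continuous and $\{W_0\}$ is closed (Chabauty is Hausdorff since metrizable). Since $\mathrm{GL}(V)$ acts transitively on $\mathrm{Gr}_k(V)$, the orbit map $g \mapsto gW_0$ factors through a continuous bijection
\[
\phi\colon\,\mathrm{GL}(V)/P \,\longrightarrow\, (\mathrm{Gr}_k(V),\tau_{\mathrm{Chab}}),
\]
where the left-hand side carries the quotient topology. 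By the universal property of the quotient, the continuity of $\phi$ is equivalent to the continuity of the orbit map, which is part of the continuity of the $\mathrm{GL}(V)$-action on Chabauty recorded above.

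Next, I would show that $\mathrm{GL}(V)/P$ is compact. Fix the inner product $\langle\cdot,\cdot\rangle$ on $V$ (as in Subsection~4.2) and let $K = K(V) \subset \mathrm{GL}(V)$ be its unitary group. By a Gram-Schmidt argument, every $k$-dimensional subspace admits an orthonormal basis that extends to an orthonormal basis of $V$; as $K$ acts transitively on orthonormal bases of $V$, it also acts transitively on $\mathrm{Gr}_k(V)$. Hence the restriction of the quotient map $\mathrm{GL}(V) \to \mathrm{GL}(V)/P$ to $K \hookrightarrow \mathrm{GL}(V)$ is a continuous surjection onto $\mathrm{GL}(V)/P$. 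Since $K$ is compact, $\mathrm{GL}(V)/P$ is compact.

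Finally, $(\mathrm{Gr}_k(V),\tau_{\mathrm{Chab}})$ is Hausdorff, being a subspace of the metrizable space $\mathcal{C}(V)$. A continuous bijection from a compact space to a Hausdorff space is automatically a homeomorphism, so $\phi$ is a homeomorphism, proving that the two topologies coincide. The only nontrivial input here is the transitivity of $K$ on $\mathrm{Gr}_k(V)$; I do not anticipate any real obstacle.
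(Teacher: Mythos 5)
Your proof is correct, but it takes a genuinely different route from the paper's. The paper simply invokes the open mapping theorem for homogeneous spaces (a general Baire-category result, citing Koshi--Takesaki) applied to the continuous transitive $\mathrm{GL}(V)$-action. You instead give a self-contained compactness argument: since the maximal compact subgroup $K(V)$ already acts transitively on $\mathrm{Gr}_k(V)$ (via Gram--Schmidt), the quotient $\mathrm{GL}(V)/P$ is compact, and a continuous bijection from a compact space to a Hausdorff space is a homeomorphism. Your argument is more elementary and avoids the Baire-category machinery, but it crucially exploits the compactness of the Grassmannian; the paper's appeal to the open mapping theorem is the ``bigger hammer'' that would also work for non-compact homogeneous spaces of a lcsc group, and the same one-line citation is reused in the paper for the symplectic Grassmannians in \autoref{lem:symp_grass}. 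Both proofs are valid here; yours has the pedagogical advantage of making the key geometric input (transitivity of $K(V)$, which reappears in \autoref{KInvariantMeasuresQuasiEquivariant}) explicit.
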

\begin{proof} Since ${\rm GL}(V)$ acts continuously and transitively on ${\rm Gr}_k(V)$, this follows from the open mapping theorem for homogeneous spaces; see e.g. \cite{Koshi-Takesaki}.
\end{proof}
The following provides yet another description of the same topology.
\begin{prop} \label{thm:chabauty_conv}
For a fixed $l$, let $(W_m)$ be a sequence in $\Gr_{l}(V)$, and $W \in \Gr_{l}(V)$. Then $W_m \to W$ as $m \to \infty$ if and only if Property (C1) above holds. 
\end{prop}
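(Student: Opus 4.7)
The forward direction is immediate: if $W_m \to W$ in the Chabauty topology, then condition (C1), holding for arbitrary closed sets, applies verbatim. Thus the entire content of the proposition is the converse: that for sequences of $l$-dimensional subspaces, the upper-semicontinuity condition (C1) alone forces also the lower-semicontinuity condition (C2). My plan is to prove the contrapositive using sequential compactness of $\Gr_l(V)$ (recorded above as a closed subset of the compact space $\mathcal{C}(V)$) together with the fact that all subspaces involved share the same dimension.

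So, suppose $(W_m)$ satisfies (C1) with respect to $W \in \Gr_l(V)$, but that $W_m \not\to W$ in the Chabauty topology of $\Gr_l(V)$. Then there exists an open neighborhood $U$ of $W$ in $\Gr_l(V)$ and a subsequence $(W_{m_k})$ lying outside $U$. Since $\Gr_l(V)$ is compact and metrizable, pass to a further subsequence (still denoted $(W_{m_k})$) that converges in the Chabauty topology to some $W' \in \Gr_l(V)$ with $W' \neq W$. Note crucially that $\dim W' = l = \dim W$.

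I then plan to derive a contradiction by showing $W' = W$. Since $W_{m_k} \to W'$ in the Chabauty topology, this subsequence satisfies the full (C1)+(C2) with respect to $W'$; in particular, (C2) gives that for every $w' \in W'$ there exist $w_{m_k} \in W_{m_k}$ with $w_{m_k} \to w'$. On the other hand, the subsequence $(W_{m_k})$ inherits from the original sequence property (C1) with respect to $W$, so any such limit $w'$ must lie in $W$. Hence $W' \subseteq W$, and since both are $l$-dimensional, $W' = W$, contradicting $W' \neq W$.

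The argument is short and the only nontrivial input is the compactness of $\Gr_l(V)$ (already established in the text) together with the equal-dimension hypothesis, which is what makes the inclusion $W' \subseteq W$ upgrade to equality. I do not anticipate any real obstacle; the slight subtlety worth flagging in the write-up is simply to emphasize the role of $\dim W = \dim W' = l$, as the analogous statement fails without a dimension constraint (e.g.\ $W_m \to W'$ with $W' \subsetneq W$ can hold in $\mathcal{C}(V)$ without $W_m \to W$).
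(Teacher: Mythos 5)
Your proof is correct and is essentially the paper's argument: both pass to a convergent subsequence $W_{m_k} \to W'$ using compactness of $\Gr_l(V)$, use (C2) for that subsequence together with the hypothesis (C1) to get $W' \subset W$, and then upgrade to $W' = W$ via $\dim W' = \dim W = l$. The only cosmetic difference is that you phrase the conclusion as a contradiction, whereas the paper phrases it as showing every convergent subsequence has limit $W$; these are interchangeable.
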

\begin{proof}
We only need to show the sufficiency of property (C1). Thus assume $(W_m)$ and $W$ satisfies (C1) and let $W_{m_k}$ be a convergent subsequence, say $\lim_{k \to \infty} W_{m_k} = W'$. If $w' \in W'$, then by (C2) we find $w_{m_k} \in W_{m_k}$ with $w_{m_k} \to w'$, but then $w' \in W$ since $(W_m)$ and $(W)$ satisfy (C1) and hence $W' \subset W$. However, since $\dim W = \dim W' = l$ we deduce that $W = W'$, and since $(W_{m_k})$ was arbitrary and ${\rm Gr}_l(V)$ is compact we deduce that $W_m \to W$.
\end{proof}
Here are two applications that we will use in the main part of this article.
\begin{lem}\label{IntersectionCts} For every subspace $W \subset V$ the map ${\rm Gr}(V) \to {\rm Gr}(V)$, $U \mapsto U \cap W$ is continuous.
\end{lem}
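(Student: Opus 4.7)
The plan is to prove sequential continuity with respect to the Chabauty topology on $\mathrm{Gr}(V)$. Let $(U_m)_{m \geq 1}$ be a sequence in $\mathrm{Gr}(V)$ converging to $U$. Since each $\mathrm{Gr}_k(V)$ is clopen in $\mathrm{Gr}(V)$, all but finitely many $U_m$ lie in $\mathrm{Gr}_k(V)$ with $k := \dim U$, so I may assume this for every $m$. By \autoref{thm:chabauty_conv}, convergence $U_m \cap W \to U \cap W$ in the ambient $\mathrm{Gr}(V)$ can be checked via property (C1) together with a dimension count, using compactness of $\mathcal{C}(V)$ to pass to subsequential limits in $\mathrm{Gr}(V)$.

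Upper semi-continuity is a direct application of (C1): by compactness of $\mathcal{C}(V)$ under Chabauty convergence and the fact that $\mathrm{Gr}(V)$ is closed therein, every subsequence of $(U_m \cap W)$ admits a Chabauty-convergent subsubsequence with limit $L \in \mathrm{Gr}(V)$; if $w_{m_j} \in U_{m_j} \cap W$ converges to $w$, then $w \in U$ by (C1) applied to $U_m \to U$, and $w \in W$ by closedness of $W$. Hence $L \subseteq U \cap W$ for every such subsequential limit. For the reverse inclusion, my strategy is to produce, for each $w \in U \cap W$, an approximating sequence $w_m \in U_m \cap W$ with $w_m \to w$. I would do this in coordinates adapted to the situation: fix a complement $V = U \oplus U^c$, observe that $U_m$ may be written as the graph of a linear map $\phi_m\colon U \to U^c$ with $\|\phi_m\| \to 0$ as a consequence of $U_m \to U$ in $\mathrm{Gr}_k(V)$, and solve the condition $u + \phi_m(u) \in W$ for $u \in U$ near a preimage of $w$.

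The main obstacle lies in the non-transverse case, where $\dim(U+W) < \min\{\dim V,\ \dim U + \dim W\}$: here the codimension of the solution set of the graph equation $u + \phi_m(u) \in W$ may behave discontinuously in $\phi_m$, and the rank of the natural projection $U_m \to V/W$ may strictly exceed that of $U \to V/W$. To handle this, I would adopt the finer block decomposition $U = (U \cap W) \oplus U'$ for a chosen complement $U' \subset U$ and pair it with a decomposition $V = W \oplus W^c$. Writing the graph equation in this block form, the component of $u$ inside $U \cap W$ passes unchanged to an element of $U_m \cap W$ in the limit, while the component in $U'$ contributes a small correction controlled by $\phi_m$. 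Combining this construction with the subsequence argument above yields both inclusions $L = U \cap W$, thereby completing the proof.
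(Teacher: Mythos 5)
Your worry about the non-transverse case is not merely a technical difficulty: it is fatal, and the lemma is false as stated. Take $V = \bbR^2$, $W = \bbR e_1$ and $U_m = \bbR(e_1 + m^{-1}e_2)$. Then $U_m \to W$ in $\Gr(V)$, while $U_m \cap W = \{0\}$ for every $m$; so $U_m \cap W \to \{0\} \neq W = W \cap W$ and condition (C2) fails. Your proposed repair does not salvage it either. In the decomposition $U = (U \cap W) \oplus U'$, $V = W \oplus W^c$, the graph equation $u_0 + u' + \phi_m(u_0 + u') \in W$ becomes $\pi_{W^c}(u') + \pi_{W^c}\phi_m(u_0 + u') = 0$, and this is solvable for small $u' \in U'$ only when $\pi_{W^c}|_{U'}$ is surjective, i.e.\ when $U + W = V$. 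In the counterexample $U' = \{0\}$ while $W^c$ is one-dimensional, so there is no correction available, and the claim that the $U\cap W$-component of $u$ ``passes unchanged'' into $U_m \cap W$ is simply false: $u_0 + \phi_m(u_0)$ has a nontrivial $W^c$-component whenever $\pi_{W^c}\phi_m(u_0) \neq 0$.

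For comparison, the paper's own proof also has this gap: it only establishes your (C1)/upper-semicontinuity step and then appeals to \autoref{thm:chabauty_conv}, but that proposition requires the sequence $U_m \cap W$ and its purported limit $U \cap W$ to live in a single $\Gr_l(V)$ --- exactly the hypothesis that fails in the non-transverse case. What both arguments actually establish is the weaker, correct statement that $U \mapsto U \cap W$ is continuous on each Borel stratum $\{U \in \Gr(V) : \dim(U\cap W) = l\}$. Fortunately that is all the downstream applications need: \autoref{IntersectionCts} is cited only to show that $s_W$ and $p \mapsto W \cap p^\omega$ are ``continuous'', but what is really used is Borel measurability of the resulting parameter-dependent integrals, and that follows from stratum-wise continuity together with \autoref{GeneralMeasurability}.
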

\begin{proof} Assume $U_n \to U$ in ${\rm Gr}(V)$ and assume that $u_{m_k} \in U_{m_k} \cap W$ and $u_{m_k} \to u$. Then $u_{m_k} \in U$ (by (C1), since $U_n \to U$) and $u_{m_k} \in W$, since $W$ is closed. Thus $u \in U \cap W$ and hence $U_n \cap W \to U \cap W$ by \autoref{thm:chabauty_conv}.
\end{proof}
\begin{lem}\label{SpanCts} Let $0 \leq l\leq k \leq \dim V-1$ and set
\[
X_{k,l} := \{ (p_0, \dots, p_{k}) \in \mathbb P(V)^{k+1} \mid \dim{\rm span}(p_0, \dots, p_k) = l+1\}.
\]
Then the map
\[
{\rm span}: X_{k,l} \to {\rm Gr}_{l+1}(V), \quad (p_0, \dots, p_k) \mapsto {\rm span}(p_0, \dots, p_k) 
\]
is continuous. In particular, the map ${\rm span}: \mathbb P(V)^{k+1} \to {\rm Gr}(V)$ is Borel measurable.
\end{lem}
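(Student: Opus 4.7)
My plan is to prove continuity on each stratum $X_{k,l}$ first, and then deduce Borel measurability of the full $\mathrm{span}$ map via the Borel stratification $\mathbb{P}(V)^{k+1} = \bigsqcup_l X_{k,l}$.

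For the continuity statement, I would use the characterization of Chabauty convergence given by \autoref{thm:chabauty_conv}: since both source and target spaces of the map are fixed-dimensional, it suffices to verify property (C1). The key reduction is the following. Given $p = (p_0,\ldots,p_k) \in X_{k,l}$ together with a convergent sequence $p^{(m)} \to p$ in $X_{k,l}$, I would pick an index subset $I = \{i_0 < \cdots < i_l\} \subset \{0,\ldots,k\}$ such that $p_{i_0},\ldots,p_{i_l}$ are linearly independent. Linear independence is an open condition (it can be detected by non-vanishing of a Plücker coordinate computed from any local continuous section of $V\smallsetminus\{0\}\to \mathbb P(V)$), so for $m$ large the same subset $I$ makes $p^{(m)}_{i_0},\ldots,p^{(m)}_{i_l}$ linearly independent; since both the sequence and the limit lie in $X_{k,l}$, the subspace $\mathrm{span}(p^{(m)})$ (respectively $\mathrm{span}(p)$) coincides with $\mathrm{span}(p^{(m)}_{i_0},\ldots,p^{(m)}_{i_l})$ (respectively its limit analogue). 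This reduces the problem to the case $k=l$, where the tuple in question is a projective basis of its span.

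In this reduced case, I would lift each $p_{i_s}^{(m)}$ to a unit vector $v_{i_s}^{(m)} \in V$ (the ambiguity of phase being harmless) and pass to a subsequence so that $v_{i_s}^{(m)} \to v_{i_s} \in p_{i_s}$. To verify (C1), suppose $w_{m_j} \in \mathrm{span}(v^{(m_j)}_{i_0},\ldots,v^{(m_j)}_{i_l})$ and $w_{m_j}\to w$. Expand $w_{m_j} = \sum_s \lambda_s^{(m_j)} v_{i_s}^{(m_j)}$. Because $v_{i_0},\ldots,v_{i_l}$ are linearly independent, there is a constant $c>0$ with $\|\sum_s \mu_s v_{i_s}\| \geq c \max_s|\mu_s|$, and this bound passes (for large $j$) to the perturbed bases; hence $(\lambda_s^{(m_j)})$ is bounded. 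Along a further subsequence $\lambda_s^{(m_j)} \to \lambda_s$, yielding $w = \sum_s \lambda_s v_{i_s} \in \mathrm{span}(p)$, which is property (C1).

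For the final Borel measurability assertion, I would argue as follows. Each stratum $X_{k,l}$ is an intersection of sets defined by non-vanishing/vanishing conditions on minors computed from (any local) lift to $V^{k+1}$; the conditions are invariant under rescaling coordinates, so they descend to well-defined locally closed (hence Borel) subsets of $\mathbb{P}(V)^{k+1}$. Since we already showed that the restriction of $\mathrm{span}$ to each $X_{k,l}$ is continuous into $\mathrm{Gr}_{l+1}(V) \subset \mathrm{Gr}(V)$, and a map defined piecewise on a countable (in fact finite) Borel partition is Borel iff each piece is, the conclusion follows. The main obstacle in this argument is really only the bookkeeping in the reduction step above: one has to verify that the index subset $I$ can be chosen to work simultaneously for the limit and for a cofinite subsequence, which is where the openness of linear independence is crucial.
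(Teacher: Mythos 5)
Your reduction to the case $k=l$ (picking an index subset $I$ of size $l+1$ whose points are independent, using openness of linear independence, and noting that the smaller tuple has the same span) is exactly the paper's reduction, and your Borel-measurability finish is the same stratification observation the paper leaves implicit. Where you diverge is in the base case $k=l$: the paper argues structurally, observing that $\GL(V)$ acts continuously and transitively on both $X_{k,k}$ and $\Gr_{k+1}(V)$, so by the open mapping theorem for homogeneous spaces (used via \autoref{ChabautyQuotientTop}) both topologies are quotient topologies from $\GL(V)$, and hence the equivariant map $\Span$ is automatically continuous. Your argument instead verifies Chabauty condition (C1) of \autoref{thm:chabauty_conv} directly by lifting to unit-vector representatives, passing to a convergent subsubsequence on the sphere, and using a uniform lower bound on norms of linear combinations to control the coefficients. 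Both are correct; yours is more elementary and self-contained (it avoids invoking the open mapping theorem) at the cost of the compactness and coefficient-bounding bookkeeping, while the paper's is shorter given the homogeneous-space machinery already in place. One stylistic note: you state ``pass to a subsequence so that $v_{i_s}^{(m)} \to v_{i_s}$'' before introducing the sequence $(w_{m_j})$ appearing in (C1); logically you want to first fix the arbitrary subsequence $(m_j)$ demanded by (C1) and then extract the further subsubsequence along which the lifts converge, but this is only a matter of ordering and the argument goes through.
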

\begin{proof} Assume first that $l = k$. Since the group ${\rm GL}(V)$ acts continuously and transitively on $X_{k,k}$, the topology on $X_{k,k}$ coincides with the quotient topology with respect to an orbit map ${\rm GL}(V) \to X_{k,k}$. Since the topology on ${\rm Gr}_{k+1}(V)$ is also a quotient topology by Lemma \ref{ChabautyQuotientTop}, we deduce that the map ${\rm span}: X_{k,k} \to {\rm Gr}_{k+1}(V)$ is a quotient map, in particular continuous.

Now let $0 \leq l\leq k \leq \dim V-1$ be arbitrary and assume that $(p_0^{(n)}, \dots, p_{k}^{(n)}) \to (p_0, \dots, p_k)$ in $X_{k,l}$. Then there exist indices $i_0, \dots, i_l$ such that 
 $(p_{i_0}, \dots, p_{i_l})$ are linearly independent. Since linear independence is an open condition, we deduce that  $(p_{i_0}^{(n)}, \dots, p_{i_l}^{(n)})$ are linearly independent for all sufficiently large $l$, hence $(p_{i_0}^{(n)}, \dots, p_{i_l}^{(n)}) \to (p_{i_0}, \dots, p_{i_l})$ in $X_{l,l}$. From the previous case we thus deduce that
\[
 {\rm span}(p_{i_0}^{(n)}, \dots, p^{(n)}_{i_k}) = {\rm span}(p_{i_0}^{(n)}, \dots, p_{i_l}^{(n)}) \to {\rm span}(p_{i_0}, \dots, p_{i_l}) = {\rm span}(p_{i_0}, \dots, p_{i_k}), 
\]
 which establishes the desired continuity.
\end{proof}
\subsection{Continuity of symplectic polarities} We return to the case where $(V, \omega)$ is a symplectic vector space. By \eqref{eq:sympl_complement} we then have an  ${\rm Sp}(V, \omega)$-equivariant involution
\[
(-)^\omega:  \Gr(V) \to \Gr(V), \quad W \mapsto W^\omega,
\]
called the \emph{symplectic polarity}. The purpose of this subsection is to show that taking symplectic complements in a symplectic vector space defines a continuous involution on the corresponding Grassmannian with respect to the various equivalent topologies discussed above.
\begin{lem} \label{thm:omega_Borel}
The symplectic polarity $(-)^\omega:  \Gr(V) \to \Gr(V)$ is continuous. 
\end{lem}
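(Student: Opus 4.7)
My plan is to verify continuity separately on each connected component $\Gr_l(V)$ of the decomposition $\Gr(V) = \bigsqcup_{l=0}^{\dim V} \Gr_l(V)$, since each piece is clopen. Since $(-)^\omega$ sends $\Gr_l(V)$ into $\Gr_{n-l}(V)$ (where $n := \dim V$) by the dimension formula in \eqref{eq:sympl_complement}, and $\Gr_{n-l}(V)$ is compact and metrizable under the Chabauty topology, it suffices to check sequential continuity on each $\Gr_l(V)$.

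The key observation is that \autoref{thm:chabauty_conv} reduces the job to verifying property (C1) alone: given a sequence $W_m \to W$ in $\Gr_l(V)$, I need to show that whenever $v_{m_k} \in W_{m_k}^\omega$ is a sequence along a subsequence with $v_{m_k} \to v$ in $V$, then $v \in W^\omega$. To see this, fix an arbitrary $w \in W$; by property (C2) of the convergence $W_m \to W$, there exist $w_m \in W_m$ with $w_m \to w$, so in particular $w_{m_k} \to w$. Since $v_{m_k} \in W_{m_k}^\omega$ and $w_{m_k} \in W_{m_k}$, we have $\omega(v_{m_k}, w_{m_k}) = 0$ for every $k$. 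Passing to the limit using the (jointly) continuous bilinearity of $\omega$, I obtain $\omega(v, w) = 0$. Since $w \in W$ was arbitrary, $v \in W^\omega$, as required.

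I do not anticipate any real obstacle: the argument is just the standard ``closed graph'' behaviour of the orthogonality relation under Chabauty limits, enabled by the dimension-preserving nature of $(-)^\omega$ on each component (which is what allows \autoref{thm:chabauty_conv} to apply and spares us from verifying (C2) for $W_m^\omega \to W^\omega$ directly). If one wanted to avoid the case split by $l$, one could alternatively invoke $\GL(V)$-compatibility of $\omega$ and the quotient description of \autoref{ChabautyQuotientTop}, but the sequential argument above is shorter and self-contained.
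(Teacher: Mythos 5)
Your proof is correct and follows essentially the same route as the paper: restrict to each component $\Gr_l(V) \to \Gr_{\dim V - l}(V)$, invoke \autoref{thm:chabauty_conv} to reduce to checking condition (C1), approximate an arbitrary $w \in W$ using (C2) of the convergence $W_m \to W$, and pass to the limit in $\omega(v_{m_k}, w_{m_k}) = 0$.
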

\begin{proof} It suffices to show that the restriction $(-)^\omega: \Gr_l(V) \to \Gr_{\dim(V)-l}(V)$ is continuous. Let $(W_m)$ be a sequence in $\Gr_l(V)$ converging to $W \in \Gr_l(V)$. We show that $W^\omega_m \to W^\omega$ as $m \to \infty$ by using \autoref{thm:chabauty_conv}. Let $x_{m_j} \in W^\omega_{m_j}$ and assume that $x_{m_j} \to x \in V$ as $j \to \infty$, and let $w \in W$. By the condition (C2) of Chabauty convergence, our assumption $W_m \to W$ implies that there exists a sequence $w_{m_j} \in W_{m_j}$ such that $w_{m_j} \to w$. But then 
\[
	\omega(x,w) = \lim_{j \to \infty} \omega\big(x_{m_j},w_{m_j}\big) = 0.
\]
Since the choice of $w \in W$ was arbitrary, it follows that $x \in W^\omega$, which finishes the proof. 
\end{proof}

\subsection{The symplectic Grassmannian}
Let $(V, \omega)$ be a symplectic vector space of dimension $2r$. For every $l \in \{0, \dots, r-1\}$ we define a subset $\clG_l $ of the Grassmannian ${\rm Gr}_{l+1}(V)$ by
\begin{equation}\label{SymplecticGrassmannian}
\clG_l(V, \omega) := \{W \in {\rm Gr}_{l+1}(V) \mid W \text{ is isotropic}\}.
\end{equation}
We refer to $\clG_l(V, \omega)$ as the \emph{symplectic Grassmannian} of \emph{type $l$}. In terms of incidence geometry, these are the points, lines, etc. of the polar geometry associated with $(V, \omega)$ (hence the shift in enumeration). We set $\clG(V, \omega) := \sqcup \clG_l(V, \omega)$ and we equip $\clG(V, \omega) \subset {\rm Gr}(V)$ with the subspace topology. By \autoref{thm:transitivity} the group ${\rm Sp}(V, \omega)$ acts transitively on $\clG_l(V, \omega)$ for all $l\in \{0, \dots, r-1\}$.
\begin{lem}\label{lem:symp_grass}
\begin{enumerate}[(i)]
\item $\clG(V, \omega)$ is compact, and all $\clG_l(V, \omega)$ are compact.
\item The subsets $ \clG_l(V, \omega) \subset \clG(V, \omega)$ are the connected components of $ \clG(V, \omega)$.
\item On each $\clG_l(V, \omega)$ the subspace topology coincides with the quotient topology from ${\rm Sp}(V, \omega)$.
\end{enumerate}
\end{lem}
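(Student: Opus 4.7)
My plan is to dispatch the three parts in sequence, each reducing quickly to a standard topological or group-theoretic fact.

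For (i), I would show that $\clG_l(V,\omega)$ is a closed subset of the compact space $\Gr_{l+1}(V)$ and is therefore itself compact. Let $(W_m)$ be a sequence in $\clG_l(V,\omega)$ converging to some $W \in \Gr_{l+1}(V)$. Given any $v, w \in W$, property (C2) of Chabauty convergence yields sequences $v_m, w_m \in W_m$ with $v_m \to v$ and $w_m \to w$; since each $W_m$ is isotropic we have $\omega(v_m, w_m) = 0$, and passing to the limit (using continuity of $\omega$) gives $\omega(v, w) = 0$. Hence $W$ is isotropic, so $\clG_l(V,\omega)$ is closed. Since $\clG(V,\omega) = \bigsqcup_{l=0}^{r-1} \clG_l(V,\omega)$ is a finite disjoint union of compact sets, it is compact as well.

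For (ii), I would note that $\clG_l(V,\omega) = \clG(V,\omega) \cap \Gr_{l+1}(V)$, and since the $\Gr_{l+1}(V)$ are clopen in $\Gr(V)$ by the decomposition \eqref{Grassmann}, the subsets $\clG_l(V,\omega)$ are clopen in $\clG(V,\omega)$ and therefore are unions of connected components. To see that each $\clG_l(V,\omega)$ is itself connected, I would use that $\Sp(V,\omega)$ is connected for both $\kay = \bbR$ and $\kay = \bbC$ (a standard fact about classical Lie groups) and acts transitively on $\clG_l(V,\omega)$ by \autoref{thm:transitivity}. Fixing a basepoint $W_0 \in \clG_l(V,\omega)$, the continuous orbit map $\Sp(V,\omega) \to \clG_l(V,\omega)$, $g \mapsto g.W_0$, is surjective, so its image $\clG_l(V,\omega)$ is connected.

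For (iii), I would apply the open mapping theorem for homogeneous spaces (as invoked in the proof of \autoref{ChabautyQuotientTop}, cf.\ \cite{Koshi-Takesaki}) to the continuous transitive action of $\Sp(V,\omega)$ on $\clG_l(V,\omega)$. Indeed, $\Sp(V,\omega)$ is a second-countable locally compact group, $\clG_l(V,\omega)$ is locally compact Hausdorff as a closed subset of the Chabauty space $\mathcal{C}(V)$, and transitivity was established in \autoref{thm:transitivity}. The open mapping theorem then asserts that the orbit map $\Sp(V,\omega) \to \clG_l(V,\omega)$ is open, yielding the desired identification of the subspace topology with the quotient topology.

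None of the steps presents a serious obstacle; the only mildly delicate point is in (i), where one must remember that the Chabauty limit of isotropic subspaces is necessarily isotropic of the same dimension (this would fail in an infinite-dimensional setting, but dimension is preserved by \autoref{thm:chabauty_conv} since all terms lie in a single fixed Grassmannian $\Gr_{l+1}(V)$).
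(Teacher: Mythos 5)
Your proof is correct, and it closely tracks the paper's argument while making a couple of minor deviations worth noting. For (iii) and (ii) your approach is essentially identical: the open mapping theorem for homogeneous spaces from \cite{Koshi-Takesaki} gives (iii), and the combination of clopen-ness of $\clG_l$ (from the Grassmannian decomposition) with connectedness of $\Sp(V,\omega)$ gives (ii). The paper phrases the connectedness step as a consequence of (iii), but in fact, as you correctly observe, it is the connectedness of $\Sp(V,\omega)$ together with the continuity and surjectivity of the orbit map that does the work; you are a bit more explicit about this, which is a small improvement. The genuine difference is in (i): you prove closedness of $\clG_l(V,\omega)$ directly by a sequential Chabauty argument, taking $v,w \in W$, lifting them via (C2) to $v_m, w_m \in W_m$, and passing to the limit in $\omega(v_m, w_m) = 0$. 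The paper instead observes that the isotropy condition $W \subset W^\omega$ is closed because the symplectic polarity $W \mapsto W^\omega$ is continuous (\autoref{thm:omega_Borel}) and inclusion of closed subspaces is a closed relation. Both are clean; yours is more self-contained and elementary, the paper's reuses machinery it has already built. One small quibble: the reason dimension is preserved under Chabauty limits is not really \autoref{thm:chabauty_conv}, but rather the fact that $\Gr_{l+1}(V)$ is clopen (hence closed, hence compact) in $\Gr(V)$, which the paper records just before \eqref{Grassmann}; your parenthetical remark slightly misattributes this, though it does not affect the validity of the argument.
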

\begin{proof} (iii) follows again from the open mapping theorem for homogeneous spaces \cite{Koshi-Takesaki}, since the action of ${\rm Sp}(V, \omega) < {\rm GL}(V)$ is continuous and transitive. This in turn implies that the $\clG_l(V, \omega)$ are connected, and being clopen we deduce that they are the connected components of $ \clG(V, \omega)$, and it remains to show only that $\clG(V, \omega)$ is closed in ${\rm Gr}(V)$. However, by \autoref{thm:omega_Borel} the symplectic polarity is continuous, and it follows that the condition $W \subset W^\omega$ is a closed condition.
\end{proof}

\section{Tools from measure theory}\label{sec:measuretheory}
In this appendix, we record some basic facts and notions from measure theory that we will use in the proofs of \autoref{thm:nu_Borel}, \autoref{thm:admissibility} and \autoref{MagicLemma}. The material of this appendix is completely standard and only compiled here for ease of reference. Throughout this appendix we fix locally compact second-countable (lcsc) spaces $X$ and $Y$ and consider a map
\[\sigma: Y \to {\rm Prob}(X), y \mapsto \sigma_y,\]
which is measurable with respect to the Borel structures on $Y$ and ${\rm Prob}(X)$ (see \autoref{NoncompactMeasures}).

\subsection{Integrating measures}\label{subsec:IntegralMeasure}
We recall the construction of an integral of a family of measures.
\begin{lem}\label{IntegralOfMeasures} For every $\mu \in {\rm Prob}(Y)$ the assignment
\[
\sigma_*\mu: C_c(X) \to \bbR, \quad f \mapsto  \int_Y \left(\int_X f \,\dd\!\sigma_y\right) \,\dd\!\mu(y)
\]
defines a probability measure on $X$.
\end{lem}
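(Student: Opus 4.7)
The plan is to apply the Riesz--Markov--Kakutani representation theorem to the functional defined by the right-hand side, and then verify that it has total mass one.

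First I would check that the formula makes sense, i.e.\ that for every $f \in C_c(X)$ the inner integrand $y \mapsto \int_X f \, \dd\!\sigma_y$ is Borel measurable and bounded. By the convention of \autoref{NoncompactMeasures}, ${\rm Prob}(X)$ sits inside ${\rm Prob}(X^+)$ with the weak-$\ast$ topology, so for every $f \in C_c(X) \subset C(X^+)$ the map $\eta \colon {\rm Prob}(X) \to \bbR$, $\nu \mapsto \int_X f \, \dd\!\nu$, is continuous. Since $\sigma$ is Borel measurable, the composition $\eta \circ \sigma$ is Borel measurable; it is also uniformly bounded by $\|f\|_\infty$, so it is $\mu$-integrable.

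Next I would verify that the assignment $f \mapsto (\sigma_*\mu)(f)$ is a positive linear functional on $C_c(X)$. Linearity is immediate from the linearity of both the inner and outer integrals, and positivity follows because if $f \geq 0$ then $\int_X f \, \dd\!\sigma_y \geq 0$ for every $y \in Y$. By the Riesz--Markov--Kakutani theorem applied to the lcsc space $X$, there is a unique Radon measure on $X$, which we also denote by $\sigma_*\mu$, representing this functional.

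Finally, I would show that $\sigma_*\mu$ is a probability measure. Since $X$ is lcsc, we can choose an increasing sequence $(f_n) \subset C_c(X)$ with $0 \leq f_n \leq 1$ and $f_n \uparrow \mathbb{1}_X$ pointwise. For every $y \in Y$, by monotone convergence $\int_X f_n \, \dd\!\sigma_y \uparrow \sigma_y(X) = 1$. Applying monotone convergence once more to the outer integral (the integrands form an increasing sequence dominated by $1$), we obtain
\[
(\sigma_*\mu)(X) = \lim_{n \to \infty} \int_Y \left( \int_X f_n \, \dd\!\sigma_y \right) \dd\!\mu(y) = \int_Y 1 \, \dd\!\mu(y) = 1.
\]
The only step that requires a bit of care is the measurability of $y \mapsto \int_X f \, \dd\!\sigma_y$ for $f \in C_c(X)$ in the non-compact case, but this is handled by embedding into the one-point compactification as above; everything else is routine.
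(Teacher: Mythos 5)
Your proof is correct and takes essentially the same route as the paper's: measurability of $y \mapsto \sigma_y(f)$ follows from the weak-$\ast$ topology on ${\rm Prob}(X^+)$, positivity and the uniform bound $\|f\|_\infty$ give a Radon measure via Riesz representation, and the total mass is $1$ because each $\sigma_y$ is a probability measure. The only difference is in handling the non-compact case: the paper first treats $X$ compact and then reduces the general case to $X^+$, observing that the resulting measure on $X^+$ assigns no mass to $\{\infty\}$, whereas you work directly on $X$ with $C_c(X)$ and compute the total mass via two applications of monotone convergence; both are equally valid.
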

\begin{proof} Assume first that $X$ is compact. Let $f \in C(X)$ and let $g: Y \to \bbR$ be given by $g(y) := \sigma_y(f)$. Since $\nu$ is Borel measurable, the function $g$ is Borel measurable, and since each $\nu_y$ is a probability measure, it is moreover bounded with $|g(y)| = |\mu_y(f)| \leq \|f\|_\infty$. This implies that
$\sigma_*\mu(f) = \int g \,\dd\!\mu$ exists, and since $\mu$ is a probability measure we have
\[
\left|\sigma_*\mu(f)\right|  =  |\mu(g)| \leq \|g\|_\infty \leq \|f\|_\infty.
\]
Since this holds for all $f \in C(X)$, the linear functional $\sigma_*\mu$ is bounded. Positivity of $\sigma_*\mu$ follows from positivity of the measures $\sigma_y$ and of $\mu$, and finally
\[
\sigma_*\mu(X) = \int \sigma_y(X) \,\dd\!\mu(y) = \int \,\dd\!\mu(y) = 1.
\]
If $X$ is non-compact and $X^+:= X \cup \{\infty\}$ denotes its one-point compactification, then apply this to the map $\sigma: Y \to {\rm Prob}(X) \hookrightarrow {\rm Prob}(X^+)$ to obtain a probability measure $\sigma_*\mu$ on $X^+$ and observe that $\sigma_*\mu(\{\infty\}) = 0$ since $\sigma_y(\{\infty\}) = 0$ for all $y \in Y$.
\end{proof}
In view of the defining formula we refer to $\sigma_*\mu$ as the \emph{integral} of the family of measures $\{\sigma_y\mid y\in Y\}$ and write
\[
 \int_Y \sigma_y \,\dd\!\mu(y) := \sigma_*\mu.
\]

\subsection{A general descent criterion}
 We now consider the map 
 \[
\sigma^*: \Binfty(X) \to \Binfty(Y), \quad \sigma^*f(y) = \int_X f d\sigma_y.
\]
 \begin{lem}\label{DescendCriterion} Let $X$ and $Y$ be lcsc spaces, let $\mu_X \in \mathcal P(X)$, $\mu_Y \in \mathcal P(Y)$ and assume that 
\[
\int_Y \sigma_y d\mu_Y(y)\ll \mu_X.
\]
Then $\sigma^*$ descends to a map
\[
\sigma^*: L^\infty(X, \mu_X) \to L^\infty(Y, \mu_Y), \quad [f] \mapsto [\sigma^*f].
\]
\end{lem}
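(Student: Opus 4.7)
The plan is to reduce the descent statement to a one‐line verification by exploiting linearity and the absolute continuity hypothesis. Namely, since $\sigma^{*}$ is linear on $\Binfty(X)$, showing that $[f]\mapsto[\sigma^{*}f]$ is well defined amounts to showing that $f=0$ $\mu_{X}$-almost everywhere implies $\sigma^{*}f=0$ $\mu_{Y}$-almost everywhere.

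First I would record the (obvious) fact that $\sigma^{*}$ maps $\Binfty(X)$ into $\Binfty(Y)$: since each $\sigma_{y}$ is a probability measure, $|\sigma^{*}f(y)|=|\int_{X}f\,\dd\sigma_{y}|\leq\|f\|_{\infty}$, and Borel measurability of $\sigma^{*}f$ follows from the Borel measurability of $\sigma$ together with \autoref{GeneralMeasurability}, exactly as was used in the construction of $\int_{Y}\sigma_{y}\,\dd\mu_{Y}(y)$ in \autoref{IntegralOfMeasures}.

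Now suppose $f\in\Binfty(X)$ vanishes $\mu_{X}$-almost everywhere, and set $N:=\{x\in X\mid f(x)\neq 0\}$, a Borel set with $\mu_{X}(N)=0$. Write $\mu_{X}':=\int_{Y}\sigma_{y}\,\dd\mu_{Y}(y)$. By the hypothesis $\mu_{X}'\ll\mu_{X}$, we have $\mu_{X}'(N)=0$, so unwinding the definition of $\mu_{X}'$ gives
\[
\int_{Y}\sigma_{y}(N)\,\dd\mu_{Y}(y)=\mu_{X}'(N)=0.
\]
Since $y\mapsto\sigma_{y}(N)$ is a non-negative Borel function, this forces $\sigma_{y}(N)=0$ for $\mu_{Y}$-almost every $y\in Y$. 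For any such $y$ we have $f=0$ $\sigma_{y}$-almost everywhere, and therefore $\sigma^{*}f(y)=\int_{X}f\,\dd\sigma_{y}=0$. Hence $\sigma^{*}f=0$ $\mu_{Y}$-almost everywhere, which is exactly what we needed.

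There is no genuine obstacle here; the only subtle point is the chain of implications $\mu_{X}(N)=0\Rightarrow\mu_{X}'(N)=0\Rightarrow\sigma_{y}(N)=0$ for $\mu_{Y}$-a.e.\ $y$, which uses the absolute continuity hypothesis in an essential way. Note that the apparent asymmetry between $X$ and $Y$ in the conclusion is reflected precisely by the asymmetry of the hypothesis: absolute continuity of the pushforward is the minimal requirement for the argument to work.
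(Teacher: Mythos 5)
Your proof is correct and follows essentially the same route as the paper's: both hinge on the identity $\int_Y\bigl(\int_X g\,\dd\sigma_y\bigr)\dd\mu_Y(y)=\bigl(\int_Y\sigma_y\,\dd\mu_Y\bigr)(g)$ combined with the absolute continuity hypothesis. In fact your version, which applies this identity to the indicator of the null set $N=\{f\neq 0\}$ and then concludes $\sigma_y(N)=0$ for $\mu_Y$-a.e.\ $y$, is tighter than the paper's as written: the paper works with $\mu_X(f)=0$ and $\mu_Y(\sigma^*f_1-\sigma^*f_2)=0$ (i.e.\ vanishing of integrals), which is not the same as vanishing almost everywhere, and strictly speaking one needs to pass to $|f|$ or, as you do, to $\chi_N$ to make the step legitimate.
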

\begin{proof} Let $f_1, f_2 \in \Binfty(X)$ be $\mu_X$-equivalent in the sense that $f:=f_1 - f_2$ satisfies $\mu_X(f) = 0$.
Since $\int_Y \sigma_y d\mu_Y(y)\ll \mu_X$ this implies $\left(\int_Y \sigma_y d\mu_Y(y)\right) (f) = 0$. We deduce that
\begin{eqnarray*}
\mu_Y(\sigma^* f_1 - \sigma^* f_2) &=& \int_Y\left(\int_X f_1(x) d\sigma_y(x) -  \int_X f_2(x) d\sigma_y(x)\right)d\mu_Y(y)\\
&=&  \int_Y f(x) d\sigma_y(x)d\mu_Y(y)\\
&=&\left(\int_Y \sigma_y d\mu_Y(y)\right) (f) = 0,
\end{eqnarray*}
i.e. $\sigma^*f_1$ and $\sigma^*f_2$ are $\mu_Y$-equivalent.
\end{proof}

\subsection{Measurability of parameter-dependent integrals}
In the sequel we denote by $C_b(X)$ the space of continuous bounded functions on $X$.
\begin{lem}\label{ContinuityMeasureFamilies} If $X$ is compact and $\sigma: Y \to {\rm Prob}(X)$ is continuous, then for every continuous bounded function $f \in C_b(X \times Y)$ the map
\begin{equation}\label{widetildefintegral}
\widetilde{f}: Y \to \bbR, \quad \widetilde{f}(y) := \int_X f(x,y) d\sigma_y(x)
\end{equation}
is continuous.
\end{lem}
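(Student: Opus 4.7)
The plan is to show sequential continuity: fix $y \in Y$ and a sequence $(y_n)$ in $Y$ with $y_n \to y$, and prove that $\widetilde{f}(y_n) \to \widetilde{f}(y)$. To do so, I would split the difference via the standard ``add-and-subtract'' trick:
\[
\widetilde{f}(y_n) - \widetilde{f}(y) = \underbrace{\int_X \big(f(x,y_n) - f(x,y)\big) \, \dd\!\sigma_{y_n}(x)}_{(\mathrm{I})_n} + \underbrace{\int_X f(x,y) \, \dd\!\sigma_{y_n}(x) - \int_X f(x,y) \, \dd\!\sigma_y(x)}_{(\mathrm{II})_n}.
\]

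The term $(\mathrm{II})_n$ is straightforward: since $X$ is compact, the section $f(\cdot, y)$ belongs to $C(X)$, and the continuity assumption on $\sigma$ (with respect to the weak-$\ast$-topology on $\mathrm{Prob}(X)$) immediately yields $(\mathrm{II})_n \to 0$.

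For the term $(\mathrm{I})_n$, the key observation is that $K := \{y\} \cup \{y_n \mid n \in \bbN\}$ is a compact subset of $Y$ (being the image of a convergent sequence), so the product $X \times K$ is compact. Since $X$ and $Y$ are lcsc, they are metrizable, and hence so is $X \times K$. The restriction $f|_{X \times K}$ is a continuous function on a compact metric space, hence uniformly continuous. Given $\varepsilon > 0$, uniform continuity produces an $N$ such that for all $n \geq N$ and all $x \in X$ one has $|f(x, y_n) - f(x, y)| < \varepsilon$, and since each $\sigma_{y_n}$ is a probability measure we conclude $|(\mathrm{I})_n| \leq \varepsilon$ for $n \geq N$. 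Combining the two estimates gives $\widetilde{f}(y_n) \to \widetilde{f}(y)$, proving continuity of $\widetilde{f}$.

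I do not anticipate any real obstacle: the argument only hinges on weak-$\ast$ continuity of $\sigma$ and on uniform continuity of $f$ on the compact set $X \times K$, the latter being automatic from lcsc $\Rightarrow$ metrizable. The compactness of $X$ is used in two places, namely to ensure $f(\cdot, y) \in C(X)$ (so that weak-$\ast$ convergence applies to it) and implicitly to bound $\sigma_{y_n}(X) = 1$ in the estimate for $(\mathrm{I})_n$.
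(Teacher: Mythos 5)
Your proof is correct and follows essentially the same strategy as the paper's: split $\widetilde{f}(y_n)-\widetilde{f}(y)$ by adding and subtracting a cross term, then control one piece by uniform convergence of the sections $f(\cdot,y_n)\to f(\cdot,y)$ (which you justify carefully via compactness of $X\times K$) and the other by weak-$\ast$ convergence of $\sigma_{y_n}\to\sigma_y$ applied to the fixed continuous function $f(\cdot,y)$.

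One small remark worth making: your specific choice of decomposition is actually a bit cleaner than the one written in the paper. You pair the difference $f(\cdot,y_n)-f(\cdot,y)$ against the \emph{varying} measure $\sigma_{y_n}$ and pair the \emph{fixed} section $f(\cdot,y)$ against $\sigma_{y_n}-\sigma_y$, so that the first term is bounded by $\|f(\cdot,y_n)-f(\cdot,y)\|_\infty$ (since $\sigma_{y_n}$ has mass one) and the second vanishes directly by weak-$\ast$ convergence. The paper instead pairs $f(\cdot,y_n)$ against $\sigma_{y_n}-\sigma_y$ and bounds that term by $\|f(\cdot,y_n)\|_\infty\,\|\sigma_{y_n}-\sigma_y\|$; since the total-variation norm $\|\sigma_{y_n}-\sigma_y\|$ need not tend to zero under weak-$\ast$ convergence, that estimate as literally written does not close the argument and requires one further add-and-subtract. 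Your arrangement sidesteps this entirely.
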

\begin{proof} If $y_n \to y$ in $Y$, then by assumption the functions $f_{y_n}(x) := f(x,y_n)$ converge uniformly to $f_y(x) := f(x,y)$ and $\sigma_{y_n} \to \sigma_y$ in the weak-$*$-topology. We deduce that 
\[
|\widetilde{f}(y)-\widetilde{f}(y_n)| = |\langle f_{y_n} - f_y, \sigma_y \rangle + \langle f_{y_n}, \sigma_{y_n} - \sigma_y\rangle | \leq \|f_{y_n} - f_{y}\|_\infty\|\sigma_y\| + \|f_{y_n}\|_\infty \|\sigma_{y_n} - \sigma_{y}\|.
\]
Since $\|f_{y_n}\|_\infty$ is bounded uniformly by $\|f\|_\infty$, we deduce that $|\widetilde{f}(y)-\widetilde{f}(y_n)| \to 0$.
\end{proof}
We will apply this in the following form:
\begin{cor}\label{GeneralMeasurability}
Let $Y = \bigcup_\alpha Y_\alpha$ be a partition of $Y$ into at most countably many Borel sets. 
Assume that $X$ is compact and let $\sigma: Y \to {\rm Prob}(X)$ be a Borel function and $f: X \times Y \to \bbR$ be a bounded Borel function. If for every $\alpha$ the maps $\sigma|_{Y_\alpha}$ and $f|_{X \times Y_\alpha}$ are continuous, then the function $\widetilde{f}: Y \to \bbR$ from \eqref{widetildefintegral} is Borel measurable.\qed
\end{cor}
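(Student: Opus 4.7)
The plan is to reduce Borel measurability of $\widetilde{f}$ on all of $Y$ to continuity of $\widetilde{f}$ on each piece $Y_\alpha$ of the given countable Borel partition, and then patch the pieces together using that a function is Borel measurable as soon as each of its restrictions to the members of a countable Borel partition is Borel measurable.

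More concretely, I would first fix an index $\alpha$ and consider $Y_\alpha$ as a topological space with the subspace topology inherited from $Y$. Since $Y$ is lcsc, it is in particular metrizable, so $Y_\alpha$ is metrizable as well, and continuity for functions on $Y_\alpha$ can be checked sequentially. By hypothesis, the restrictions $\sigma|_{Y_\alpha}\colon Y_\alpha \to \mathrm{Prob}(X)$ and $f|_{X\times Y_\alpha}\colon X \times Y_\alpha \to \mathbb{R}$ are continuous, and $f|_{X\times Y_\alpha}$ is bounded because $f$ is. I would then reapply the argument of \autoref{ContinuityMeasureFamilies} verbatim on $Y_\alpha$: for a sequence $y_n \to y$ in $Y_\alpha$, the sections $x \mapsto f(x,y_n)$ converge uniformly on $X$ (by compactness of $X$ together with continuity of $f|_{X\times Y_\alpha}$ at points of the compact set $X\times \{y\}$, via a standard uniform-continuity argument on neighbourhoods of $X\times \{y\}$), and $\sigma_{y_n}\to\sigma_y$ weakly-$*$, giving the estimate
\[
\bigl|\widetilde{f}(y)-\widetilde{f}(y_n)\bigr| \;\le\; \|f_{y_n}-f_y\|_\infty \;+\; \bigl|\langle f_{y_n},\,\sigma_{y_n}-\sigma_y\rangle\bigr| \;\longrightarrow\; 0.
\]
Thus $\widetilde{f}|_{Y_\alpha}$ is continuous, and in particular Borel measurable, as a map from $Y_\alpha$ to $\mathbb{R}$.

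To assemble, let $B\subset\mathbb{R}$ be any Borel set. Then
\[
\widetilde{f}^{-1}(B) \;=\; \bigcup_\alpha\,\bigl(\widetilde{f}|_{Y_\alpha}\bigr)^{-1}(B),
\]
and each $(\widetilde{f}|_{Y_\alpha})^{-1}(B)$ is Borel in $Y_\alpha$ with the subspace topology; since $Y_\alpha$ is itself a Borel subset of $Y$, any Borel subset of $Y_\alpha$ is Borel in $Y$. The union is countable, hence $\widetilde{f}^{-1}(B)$ is Borel in $Y$, which proves Borel measurability of $\widetilde{f}$.

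The only mildly delicate step is the reuse of \autoref{ContinuityMeasureFamilies}, which was stated for continuous data on a whole lcsc space, in the present situation where $Y_\alpha$ is only a Borel subspace. The point is that the proof of \autoref{ContinuityMeasureFamilies} is purely sequential and only uses compactness of $X$ plus weak-$*$ continuity of $\sigma$ and joint continuity of $f$ on $X\times Y_\alpha$; no completeness or local compactness of $Y_\alpha$ is invoked. Once this observation is made, the rest is bookkeeping with countable Borel partitions and requires no further ingredients.
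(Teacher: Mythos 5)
Your proof is correct and is precisely the argument the paper intends (the corollary is stated with a \qed because it follows by restricting to each $Y_\alpha$, rerunning the sequential argument of \autoref{ContinuityMeasureFamilies} there, and patching over the countable Borel partition). Your added observation that the lemma's proof only uses sequential convergence, compactness of $X$ and metrizability of the weak-$*$ topology on ${\rm Prob}(X)$ --- and hence applies verbatim to the Borel subspace $Y_\alpha$ --- is exactly the point that makes the reduction legitimate.
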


\vspace{10pt}


\begin{thebibliography}{123}
\bibitem{Bened-Petro}
	Benedetti, R.; Petronio, C.
	\emph{Lectures on hyperbolic geometry}. 
	Universitext. Springer-Verlag, Berlin, 1992. xiv+330 pp. ISBN: 3-540-55534-X 

\bibitem{Bestvina}
	Bestvina, M.
	\emph{Homological stability of Aut($F_n$) revisited}. 
	Hyperbolic geometry and geometric group theory, 1-11,
	Adv. Stud. Pure Math., 73, 
	Math. Soc. Japan, 
	Tokyo, 2017. 

\bibitem{BBI}
	Bucher, M.; Burger, M.; Iozzi, A.
	\emph{The bounded Borel class and 3-manifold groups}.
	Duke Math. J.
	Volume 167, Number 17 (2018), 3129-3169.

\bibitem{Buehler}
	B\"uhler, T.
	\emph{On the algebraic foundations of bounded cohomology}.  
	Mem. Amer. Math. Soc. 214 (2011), no. 1006, xxii+97 pp. ISBN: 978-0-8218-5311-5 
	
\bibitem{Burger-Monod1}
	Burger, M.; Monod, N.
	\emph{Bounded cohomology of lattices in higher rank Lie groups.} 
	J. Eur. Math. Soc. (JEMS) 1 (1999), no. 2, 199-235. 
	
\bibitem{Burger-Monod2}
	Burger, M.; Monod, N.
	\emph{Continuous bounded cohomology and applications to rigidity theory}. 
	Geom. Funct. Anal. 12 (2002), no. 2, 219-280. 
	
\bibitem{Devyatov}
	Devyatov, R.
	\emph{Generically transitive actions on multiple flag varieties}.
	Int. Math. Res. Not. IMRN 2014, no. 11, 2972-2989. 
	
\bibitem{Dupont}
    	Dupont, J.L.
    	\emph{Bounds for characteristic numbers of flat bundles},
  	Algebraic topology, Aarhus 1978 (Proc. Sympos., Univ. Aarhus, Aarhus, 1978), 			Lecture Notes in Math., vol. 763, Springer, Berlin, 1979,
  	pp. 109--119.
	
\bibitem{Essert}
	Essert, J.
	\emph{Homological stability for classical groups}. 
	Israel J. Math. 198 (2013), no. 1, 169-204. 
	
\bibitem{Federer}
	Federer, H.
	\emph{Geometric measure theory}.
	Die Grundlehren der mathematischen Wissenschaften, Band 153, 
	Springer-Verlag, New York Inc., New York 1969 xiv+676 pp. 

\bibitem{Frigerio}
	Frigerio, R.
	\emph{Bounded cohomology of discrete groups}.
	Mathematical Surveys and Monographs, 227. 
	American Mathematical Society, 
	Providence, RI, 2017. 
	xvi+193 pp. 
	ISBN: 978-1-4704-4146-3 
	
\bibitem{GHV} 
	Greub, W.; Halperin S.; Vanstone R. 
	\emph{Connections, curvature and cohomology, Volume III: Cohomology of principal bundles and homogeneous spaces}. 
	Pure and Applied Mathematics, Vol. 47-III. 
	Academic Press [Harcourt Brace Jovanovich, Publishers], New York--London, 1976. xxi+593 pp. 	

\bibitem{Gromov}
	Gromov, M.
	\emph{Volume and bounded cohomology}.
	Inst. Hautes \'Etudes Sci. Publ. Math. No. 56 (1982), 5-99 (1983). 

\bibitem{Harer}
	Harer, J.L.
	\emph{Stability of the homology of the mapping class groups of orientable surfaces}.
	Ann. of Math. (2) 121 (1985), no. 2, 215-249.
	
\bibitem{Hatcher-AT}
	Hatcher, A.
	\emph{Algebraic topology}.
	Cambridge University Press, Cambridge, 2002. xii+544 pp. ISBN: 0-521-79160-X; 0-521-79540-0 

\bibitem{HatVogt}
	Hatcher, A.; Vogtmann, K.
	\emph{Homology stability for outer automorphism groups of free groups}. 
	Algebr. Geom. Topol. 4 (2004), 1253-1272. 
	
\bibitem{HatVogtWahl}
	Hatcher, A.; Vogtmann, K.; Wahl, N.
	\emph{Erratum to: "Homology stability for outer automorphism groups of free groups by Hatcher and Vogtmann."}
	Algebr. Geom. Topol. 6 (2006), 573-579. 
	
\bibitem{vdK}
	Van der Kallen, W.
	\emph{Homology stability for linear groups}.
	Invent. Math. 60 (1980), no. 3, 269-295. 

\bibitem{Koshi-Takesaki}
	Koshi, S.; Takesaki, M.
	\emph{An open mapping theorem on homogeneous spaces}. 
	J. Austral. Math. Soc. Ser. A 53 (1992), no. 1, 51-54. 
	
\bibitem{McDuffSalamon}
	McDuff, D.; Salamon, D.
	\emph{Introduction to symplectic topology}. 
	Third edition. Oxford Graduate Texts in Mathematics. 
	Oxford University Press, Oxford, 2017. xi+623 pp. ISBN: 978-0-19-879490-5; 978-0-19-879489-9.

\bibitem{Monod-Survey}
	Monod, N.
	\emph{An invitation to bounded cohomology}. 
	International Congress of Mathematicians. Vol. II, 1183-1211, 
	Eur. Math. Soc., 
	Z\"urich, 2006. 

\bibitem{Monod-Book} 
	Monod, N. 
	\emph{Continuous bounded cohomology of locally compact groups}. 
	Lecture Notes in Mathematics, 1758. 
	Springer-Verlag, 
	Berlin, 2001. 
	
\bibitem{Monod-sot}
	Monod, N.
	\emph{On the bounded cohomology of semi-simple groups, S-arithmetic groups and products}.
	J. Reine Angew. Math. 640 (2010), 167-202. 
	
\bibitem{Monod-Stab} 
	Monod, N.  
	\emph{Stabilization for $\SL_n$ in bounded cohomology}. 
	Discrete geometric analysis, 191-202, Contemp. Math., 347, 
	Amer. Math. Soc., 
	Providence, RI, 2004. 
	
\bibitem{Monod-Vanish} 
	Monod, N.  
	\emph{Vanishing up to the rank in bounded cohomology}. 
	Math. Res. Lett. 14 (2007), 
	no. 4, 681-687. 
	
\bibitem{Pieters}
	Pieters, H.
	\emph{The boundary model for the continuous cohomology of ${\rm Isom}^+(\bbH^n)$}.
	Preprint, 
	https://arxiv.org/abs/1507.04915
	
\bibitem{Popov}
	Popov, V.L.
	\emph{Generically multiple transitive algebraic group actions}. 
	Algebraic groups and homogeneous spaces, 481-523,
	Tata Inst. Fund. Res. Stud. Math., 19, Tata Inst. Fund. Res., Mumbai, 2007. 
	
\bibitem{Stas} 
    	Stasheff, J.D. 
    	\emph{Continuous cohomology of groups and classifying spaces}. 
    	Bull. Amer. Math. Soc. 84 (1978), no. 4, 513-530.

\end{thebibliography}
\end{document}